\definecolor{cadmiumgreen}{rgb}{0.0, 0.42, 0.24}
\newtheorem{theorem}{Theorem}[section]
\newtheorem{lemma}[theorem]{Lemma}
\newtheorem{proposition}[theorem]{Proposition}
\newtheorem{corollary}[theorem]{Corollary}
\newtheorem{problem}[theorem]{Problem}
\theoremstyle{definition}
\newtheorem{definition}[theorem]{Definition}
\newtheorem{remark}[theorem]{Remark}
\numberwithin{equation}{section}
\newenvironment{claim}[1]{\par\noindent\underline{Claim:}\space#1}{}
\newenvironment{claimproof}[1]{\par\noindent\underline{Proof:}\space#1}{\hfill $\blacksquare$}
\newcommand{\ex}{\mathrm{ex}}
\newcommand{\EX}{\mathrm{EX}}
\newcommand{\calH}{\mathcal{H}}
\newcommand{\calL}{\mathcal{L}}
\newcommand{\x}{\mathrm{x}}
\newcommand{\comments}[1]{}
\newcommand{\Pa}{\mathcal{P}}
\title{\bf Minimum spectral radius of graphs of fixed order and dissociation number and its connection to Tur\'an problems}
\author{Dheer Noal Desai\thanks{Department of Mathematical Sciences, The University of Memphis, Memphis, TN 38152. E-mail: {\tt dndesai@memphis.edu}} \and Vishal Gupta \thanks{Department of Mathematics, University of Rochester, Rochester, NY 14627. E-mail: {\tt vishalgupta@rochester.edu}} } 
\date{\today}
\begin{document}
\maketitle

\begin{abstract}
    Let $\mathcal{D}_{n,\tau}$ be the set of all simple connected graphs of order $n$ and dissociation number $\tau.$ 
    In this paper, we study the minimum size and the minimum spectral radius of graphs in $\mathcal{D}_{n,\tau}$ in connection with Tur\'an-type problems for complete multipartite graphs.  
    We characterize the Tur\' an graphs for several complete multipartite graphs where the size of one of the partite sets is much smaller than the size of the remaining partites. This extends a result of Erd\H{o}s and Simonovits \cite{ErdosSimCompletemultipartite}. Additionally, we prove some stability results to get the structure of graphs without such a forbidden complete multipartite subgraph, and close to Tur\'an number of edges. 
    As an application, we show that a graph with the minimum spectral radius in $\mathcal{D}_{n,\tau}$ must be a graph with the minimum size in $\mathcal{D}_{n, \tau}$ when $n$ is sufficiently large and satisfies some parity conditions. We then describe a few structural properties of graphs with the minimum spectral radius in $\mathcal{D}_{n,\tau}$. 
    For even dissociation numbers and any order $n$, we compute the minimum size of a graph in $\mathcal{D}_{n,\tau}$ and use it to characterize the graphs in $\mathcal{D}_{n, 4}$ that attain the minimum size and the minimum spectral radius. 
    We also apply the stability results to upper bound the minimum number of edges and spectral radius for connected graphs with a given $d$-independence number when the order of the graph is sufficiently large.
    Finally, we derive two new bounds on the value of $\tau(G)$ for a given graph $G$. 
\medskip

\noindent \textbf{Keywords.} Dissociation number, $d$-independence number, spectral radius, Brualdi-Solheid problem, size minimization, spectral minimization, Tur\' an problem, complete multipartite graphs  

\noindent \textbf{Mathematics Subject Classification.} 05C35, 05C50, 05C69
\end{abstract}
\section{Introduction}

Let $G = (V, E)$ be a simple undirected graph. The order of $G$, denoted as $v(G)$, refers to the number of vertices in $G$, and the size of $G$, denoted as $e(G)$, refers to the number of edges in $G$. Let $V(G) = \{v_1, v_2, \ldots, v_n\}$. The adjacency matrix $A(G) = (a_{ij})$ of $G$ is a $(0,1)$-symmetric matrix of order $n$ with $a_{ij} = 1$  if vertices $v_i$ and $v_j$ are adjacent in $G$, and $a_{ij} = 0$ otherwise. Denote by $v_i\sim v_j$ (or resp. $v_i\not\sim v_j$) if the vertices $v_i, v_j$ are adjacent (or resp. not adjacent) in $G$. 
The largest eigenvalue of the adjacency matrix $A(G)$ is called the \textit{spectral radius} of $G$, and is denoted by $\rho(G)$. 
The \emph{principal eigenvector} of $G$ is the eigenvector corresponding to $\rho(G)$ that has all entries positive and Euclidean norm one. 
For a vector $x \in \mathbb{R}^n$, we use $x(v)$ to denote the component of $x$ corresponding to vertex $v\in V(G)$.
Denote by $N(v)$, the set of vertices in $G$ that are adjacent to $v$ in $G$ and $d(v) = |N(v)|$ the degree of the vertex $v$. For a subset $A$ of $V(G)$, denote by $d_A(v)$ the number of neighbors of $v$ in $A$.
For two subsets (or subgraphs) $A, B$ of $V(G)$ (or $G$), we use $e(A, B)$ to denote the number of edges in $G$ with one endpoint in $A$ and the other in $B$.
Further, 
$G \setminus A$ denotes the vertex-induced subgraph of $G$ on $V(G)\setminus A$ (or $V(G)\setminus V(A)$, respectively).
For two graphs $G_1 = (V_1, E_1)$ and $G_2 = (V_2, E_2)$, their join $G_1\vee G_2$ is the graph with vertex set $V = V_1\cup V_2$ and edge set $E = E_1\cup E_2\cup \{uv \, : \, u\in V_1, v\in V_2\}$.
We write $[k]$ for the set of first $k$ natural numbers. Denote by $K_n$, $P_n$, and $C_n$, respectively, the complete graph, the path graph, and the cycle graph on $n$ vertices unless stated otherwise in a section.

A central graph invariant of our paper is the dissociation number,  defined as follows.
\begin{definition}
 A subset $S\subseteq V(G)$ is called a \emph{dissociation set} in $G$ if the graph induced by the vertices in $S$, denoted as $G[S]$, has maximum degree at most $1$. The size of a largest dissociation set of $G$ is called the \emph{dissociation number} of $G$, denoted by $\tau(G).$    
\end{definition}
One can see the dissociation number $\tau(G)$ as a generalization of the independence number $\alpha(G)$ of $G$. In 1962, Ore \cite{Ore} posed a question to compute the minimum graph size from the set of all connected graphs of order $n$ and independence number $\alpha$. In this paper, we study the corresponding problem for dissociation number as well as the related problem of finding graphs with the minimum spectral radius among all connected graphs of order $n$ and dissociation number $\tau$. We study these two problems in connection with Tur\'an-type problems for complete multipartite graphs (see Theorem~\ref{thm: turan connection}).

Given a fixed family $\mathcal{F}$ of graphs, the Tur\'an number of $\mathcal{F}$, denoted as $\ex(n, \mathcal{F})$, is the maximum number of edges in a graph on $n$ vertices that does not contain any graph  $F\in\mathcal{F}$ as a subgraph.  Let $\EX(n, \mathcal{F})$ denote the set of $\mathcal{F}$-free graphs of order $n$ and size $\ex(n, \mathcal{F})$. We refer to these graphs as \emph{edge maximizers} (or \emph{edge extremal} graphs, as commonly used in the literature). Similar to the definition of $\ex(n, \mathcal{F})$, we use $\ex_{cc}(n, \mathcal{F})$ to denote the maximum number of edges in any $\mathcal{F}$-free graph on $n$ vertices with a connected complement, and $\EX_{cc}(n \mathcal{F})$ to denote the collection of all $\mathcal{F}$-free graphs on $n$ vertices with $\ex_{cc}(n, \mathcal{F})$ edges that have a connected complement. 
We denote by $\mathcal{D}_{n,\tau}$ the set of all connected graphs on $n$ vertices and dissociation number $\tau$. We denote the minimum spectral radius in $\mathcal{D}_{n,\tau }$ by $\rho_{min}(n,\tau)$ and the minimum size in $\mathcal{D}_{n,\tau}$ by $e_{min}(n,\tau)$. We refer to any graph in $\mathcal{D}_{n,\tau}$ whose spectral radius equals $\rho_{min}(n,\tau)$ as a \emph{spectral minimizer} and any graph in $\mathcal{D}_{n,\tau}$ with $e_{min}(\mathcal{D}_{n,\tau})$ number of edges as an \emph{edge minimizer}.

Determining spectral minimizers in $\mathcal{D}_{n,\tau}$ is a type of Brualdi-Solheid problem. In 1986, Brualdi and Solheid \cite{BruS} proposed the problem of characterizing graphs that achieve the maximum or minimum spectral radius in a given class of graphs. Since then, the problem has been extensively studied by several authors (see \cite{bellmax, BruSol, Vishal, DasM, DuShi, JiLu, JinZhang, Lou, LP, rowlinsonmax,  Simic, SimicMarziBelardo}, for example). Most papers deal with graphs that maximize the spectral radius under some prescribed structural invariants. However, studying graphs with the minimum spectral radius in a fixed graph family is equally important. For example, a smaller spectral radius in network models is directly linked to better virus protection and resistance to epidemic spread, making it essential for designing more resilient networks (see, for example, \cite{WangC}). Over the past two decades, several authors have studied the problem of characterizing graphs with fixed order and independence number that maximize or minimize the spectral radius (see \cite{DasM, DuShi, JiLu, JinZhang, Lou, Xu}). 
Recently, Huang, Geng, Li, and Zhou \cite{Huang} investigated this problem for a fixed dissociation number. The problem of determining the graphs that attain the maximum spectral radius in $\mathcal{D}_{n,\tau}$ turns out to be trivial. This is because the Perron-Frobenius Theorem guarantees that adding an edge to a connected graph strictly increases its spectral radius. Therefore, if we take the join of a maximum matching on a set of $\tau$ vertices and the complete graph $K_{n-\tau}$, we obtain the graph with the maximum spectral radius in $\mathcal{D}_{n,\tau}$ (see \cite[Theorem 1.1]{Huang}). However, the problem is not as easy for spectral minimizers. Huang et al. showed that for $\tau \geq \lceil \frac{2n}{3} \rceil$, a spectral minimizer is a tree. They also determined the spectral minimizers for $\tau\in\{2,\lceil \frac{2n}{3} \rceil, n-1, n-2\}.$ In \cite{dissociation2025paper}, Zhao, Liu, and Xiong resolved the case when $\tau =\lceil\frac{2n}{3} \rceil -1$. For the remaining values of $\tau$, the problem was open. 

In extremal graph theory, determining the Tur\'an numbers for a given forbidden graph is a central problem. In Section~\ref{sec: Edge minimizers and connection to Turan problems}, we establish a connection between the problem of finding the minimum size of a graph in $\mathcal{D}_{n, \tau}$ and the Tur\'an-type problems. In Section~\ref{sec: Turan number of odd cocktail party graphs L_{2k+1} for any n}, we calculate the Tur\'an number for odd cocktail party graphs (see Theorem~\ref{edge extremal graph}) and construct a set of graphs with size equal to the Tur\'an number. In Section~\ref{sec: Edge minimizer for given even dissociation number}, we then use it to determine the minimum number of edges $e_{min}(n,\tau)$ and a set of edge minimizers (refer to Theorem~\ref{edge minimizer}) within $\mathcal{D}_{n,\tau}$ for even values of $\tau$. As a result, we also obtain an upper bound on the minimum spectral radius of graphs in $\mathcal{D}_{n, \tau}$. In Section~\ref{sec: Spectral minimizers for fixed order n and  dissociation number tau =4}, we characterize the spectral minimizers in $\mathcal{D}_{n,4}$ (see Theorems~\ref{spec minimizer1} and \ref{spec minimizer2}).

Let $K_m(r_1, r_2, \ldots,$ $r_m)$  be the complete $m$-partite graph with parts of sizes $r_i$ for $1 \le i \le m$.  In Section~\ref{sec: edge minimizers d-independence}, we extend a result of Erd\H{o}s-Simonovits (see Theorem~\ref{thm: turan numbers multipartite graphs}) to prove that when $r_2 \ge (r_1 - 1)! + 1$ and $r_1 \le r_2 \le \ldots \leq r_m$, then any graph in $\EX_{cc}(n, K_{q+1}(r_1, \cdots, r_{q+1}))$  can be obtained by deleting $q-1$ edges from a graph in $\EX(n, K_{q+1}(r_1,$ $  \cdots, r_{q+1}))$ (see Theorem~\ref{thm: connected complement close to turan numbers multipartite graphs}). We apply these results in Sections~\ref{sec: lower bounds on spectral minimizers for d-independnce number} and \ref{sec: spectral minimizers for even dissociation numbers}. In Section~\ref{sec: lower bounds on spectral minimizers for d-independnce number}, we consider further generalizations of the independence number.
\begin{definition}
A subset of vertices in $G$ is said to be \emph{$d$-independent} if the subgraph induced by the subset has maximum degree at most $d$. The cardinality of the largest $d$-independent set in a graph $G$ is called the \textit{$d$-independence number} of $G$, denoted by $i_d(G)$.
\end{definition}
Note that when $d = 0$ or $d = 1$, the $d$-independence numbers of $G$ are just the independence number $\alpha(G)$, and the dissociation number $\tau(G)$, respectively. We apply Theorem~\ref{thm: connected complement close to turan numbers multipartite graphs} to derive lower bounds for the number of edges and spectral radius for graphs with a given $d$-independence number $s$, when the order $n$ is sufficiently large (see Proposition~\ref{thm: lower bound for spectral radius given d-independence number}).

Let $d$ be an even natural number. The cocktail party graph $CP_d$, also known as the hyperoctahedral graph, is the complement of a perfect matching in the complete graph $K_d$.  Lov\'asz and Pelik\'an \cite{LP} proved that among all simple connected graphs of order $n$ and size $n-1$, the path graph $P_n$ has the smallest spectral radius. 
From \cite[Proposition 3.1]{Vishal} we have that among all simple connected graphs of order $n$ and size $n$ (unicyclic graphs), the cycle graph $C_n$ is the unique spectral minimizer. This motivates the following two problems.
\begin{problem}\label{prob: path minimizer}
    If we replace each vertex of a tree of order $k$ with the cocktail party graph $CP_{m}$ such that the resultant graph is connected and has size $k-1 + e(CP_m)k$, then is is true that the path graph $P_k$ will yield a graph with the smallest spectral radius?

\end{problem}
\begin{problem}\label{prob: cycle minimizer}
    If we replace each vertex of a unicyclic graph of order $k$ with the cocktail party graph $CP_m$ such that the resultant graph is connected and has size $k+ e(CP_m)k$, then is it true that the cycle graph $C_k$ will yield a graph with the smallest spectral radius.
\end{problem}
These two problems and spectral radius minimization problems in general are quite challenging.  
In Section~\ref{sec: spectral minimizers for even dissociation numbers}, we show that our original problem of determining the spectral minimizers in $\mathcal{D}_{n, 2k}$ can be reduced to Problem~\ref{prob: path minimizer}. We prove that any $G$ which is a spectral minimizer in $\mathcal{D}_{n,2k}$ is also an edge minimizer in $\mathcal{D}_{n,2k}$ under some conditions on $n$ (see Theorem~\ref{spec is edge minimizer}). We will then describe a few structural properties of a spectral minimizer in $\mathcal{D}_{n,2k}$.  Next, we resolve Problem~\ref{prob: cycle minimizer} and prove a stronger result for $m  =\frac{n}{k}$. We show that among all graphs in $\mathcal{D}_{n,2k}$ that are not in the set of the edge minimizers, the aligned CP-cycle (see Definition~\ref{def: cp cycle}) has the smallest spectral radius.

In 1981, Yannakakis \cite{Yannakakis} first studied the problem of computing the value of $\tau(G)$ and showed that it is an NP-hard problem for bipartite graphs. Since then several bounds on the value of $\tau(G)$ have been obtained (see \cite{Bock}, for example). We conclude this paper with two new bounds on $\tau(G)$ (see Propositions~\ref{prop: hoffman type bound} and \ref{probab lower bound}).

\section{Edge minimizers and connection to Tur\' an problems}
\label{sec: Edge minimizers and connection to Turan problems}
In this section, we discuss an observation that connects the problem of finding graphs that have the minimum number of edges in $\mathcal{D}_{n, \tau}$ to Tur\'an-type problems. 
Recall that the cocktail party graph $CP_d$ is the complement of a perfect matching in the complete graph $K_d$, where $d$ is an even natural number. For odd values of $d$, we define an odd cocktail party graph $L_d$ to be the join of the cocktail party graph $CP_{d-1}$ and the complete graph $K_1$. 
\begin{lemma}\label{tau upper bound 1}
    Let $G = (V, E)$ be a graph on $n$ vertices and let $d>2$ be an even integer. If the complement graph $\overline{G}$ is $CP_d$-free, then $\tau(G) \le d-1.$ 
\end{lemma}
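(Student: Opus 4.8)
The plan is to prove the contrapositive: assuming $\tau(G) \ge d$, I would produce a copy of $CP_d$ inside $\overline{G}$, contradicting the hypothesis that $\overline{G}$ is $CP_d$-free. The guiding intuition is that a dissociation set is precisely a set on which $G$ looks like a matching, so on that same set the complement $\overline{G}$ looks like a complete graph with only a matching removed, which is ``denser'' than a cocktail party graph.

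First I would fix a dissociation set $S \subseteq V(G)$ and, using that any subset of a dissociation set is again a dissociation set, restrict to a subset of size exactly $d$ (possible since $\tau(G) \ge d$). By definition $G[S]$ has maximum degree at most $1$, so $G[S]$ is a (possibly incomplete) matching $M$ on the $d$ vertices of $S$ together with some isolated vertices. The key step is to pass to the complement restricted to $S$: since the non-edges of $G[S]$ are exactly the edges of $\overline{G}[S]$, we get $\overline{G}[S] = K_d - M$, the complete graph on $S$ with the matching $M$ deleted. Recalling that $CP_d = K_d - P$ for a perfect matching $P$, the task reduces to showing that $K_d - M$ contains $CP_d$ as a subgraph whenever $M$ is a matching on the $d$ vertices of $S$. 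For this I would extend $M$ to a perfect matching $P'$ of $S$: the vertices left uncovered by $M$ number $d - 2|M|$, which is even because $d$ is even, so they may be paired up arbitrarily. Any bijection $V(CP_d) \to S$ carrying $P$ onto $P'$ then sends every edge of $CP_d$ to a non-matching pair of $S$, hence to an edge of $\overline{G}[S]$; this exhibits $CP_d$ as a subgraph of $\overline{G}[S]$, and therefore of $\overline{G}$, yielding the contradiction and forcing $\tau(G) \le d-1$.

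The step requiring the most care is the embedding, and in particular keeping straight that \emph{subgraph} (not induced-subgraph) containment is what the $CP_d$-freeness hypothesis forbids. What makes the embedding legitimate is that the deleted-edge set $M$ of the host is a subset of the deleted-edge set $P'$ of the embedded $CP_d$, so every edge of the embedded copy is genuinely present in $\overline{G}$; no edge of $CP_d$ can accidentally land on a pair of $M$. The parity of $d$ is exactly what guarantees that $M$ extends to a perfect matching, which is where the evenness hypothesis enters (and the restriction $d>2$ simply ensures $CP_d$ is a nondegenerate graph). Everything beyond this is routine bookkeeping about relabeling vertices, so the whole argument is short once the passage to $\overline{G}[S]$ and the matching-extension are in place.
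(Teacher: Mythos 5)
Your proof is correct and takes essentially the same approach as the paper: assume a dissociation set of size at least $d$ exists, pass to the complement induced on it (a complete graph minus a matching), and exhibit $CP_d$ inside it as a subgraph. The only cosmetic difference is that you first trim the dissociation set to exactly $d$ vertices and extend the partial matching to a perfect one, whereas the paper keeps the full maximum dissociation set and splits on the parity of its size; your variant neatly avoids that case analysis.
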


\begin{proof}
   Suppose for the sake of contradiction that $\tau(G) = k \geq d$. Then, there exists a $k$-subset $S\subseteq V$ such that the induced subgraph $G[S]$ is isomorphic to  $l(K_2)\cup(k-2l)K_1$ for some nonnegative integer $l$. Therefore, $\overline{G}[S]$ is isomorphic to $CP_{2l}\vee K_{k-2l}$ which contains $CP_k$ if $k$ is even, or $L_k$ if $k$ is odd. Thus, $CP_d\subseteq \overline{G}$, a contradiction.
\end{proof}
 Similarly, for odd values of $d$, we have the following result.
\begin{lemma}\label{tau upper bound 2}
  Let $G$ be a graph on $n$ vertices and let $d>3$ be an odd natural number. If the complement graph $\overline{G}$ is $L_d$-free, then $\tau(G) \le d-1.$  
\end{lemma}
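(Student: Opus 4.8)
The plan is to mirror the proof of Lemma~\ref{tau upper bound 1} almost verbatim, since the combinatorial heart of both statements is identical: a dissociation set of size $k$ in $G$ induces a subgraph whose complement (restricted to those $k$ vertices) is a large cocktail-party-type graph. So I would again argue by contradiction, assuming $\tau(G) = k \geq d$ for the odd integer $d > 3$, and extract a $k$-subset $S \subseteq V$ with $G[S] \cong l(K_2) \cup (k-2l)K_1$ for some nonnegative integer $l$ (this is exactly the structure theorem for dissociation sets: the induced graph has maximum degree at most one, hence is a disjoint union of edges and isolated vertices).

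Next I would pass to the complement and observe, as before, that $\overline{G}[S] \cong CP_{2l} \vee K_{k-2l}$. The one genuine point to check is the subgraph containment at the right parity. Since $d$ is odd and $k \geq d$, I want to exhibit a copy of $L_d$ inside $\overline{G}[S]$. When $k$ is odd, the previous lemma's reasoning already shows $L_k \subseteq \overline{G}[S]$, and because $L_k$ contains $L_d$ for odd $k \geq d$ (one simply restricts to a smaller cocktail party piece plus the dominating vertex), we are done. When $k$ is even, $\overline{G}[S]$ contains $CP_k$, and since $k \geq d$ with $d$ odd we have $k \geq d+1$, so $CP_k$ contains $CP_{d-1} \vee K_1 = L_d$ (take any $d-1$ vertices forming a $CP_{d-1}$ together with one more vertex joined to all of them). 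Either way $L_d \subseteq \overline{G}[S] \subseteq \overline{G}$, contradicting the $L_d$-freeness hypothesis, and therefore $\tau(G) \le d-1$.

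The only step requiring any care is the monotonicity of these cocktail-party joins under taking subgraphs at the correct parity, i.e. verifying that both $CP_k$ (for even $k \geq d+1$) and $L_k$ (for odd $k \geq d$) contain $L_d$. This is where I expect the main obstacle to lie, though it is a mild one: I must make sure the perfect-matching complement structure restricts correctly, namely that deleting vertices from a cocktail party graph in matched pairs yields a smaller cocktail party graph, and that $L_d = CP_{d-1} \vee K_1$ embeds into these larger graphs by choosing an appropriate dominating vertex. Beyond that, the argument is a direct transcription of the even case, so I would keep the write-up short and simply point to the parallel structure with Lemma~\ref{tau upper bound 1}.
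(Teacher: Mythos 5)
Your proposal is correct and is exactly the argument the paper intends: the paper states this lemma with no written proof beyond the word ``Similarly,'' deferring to the proof of Lemma~\ref{tau upper bound 1}, and your write-up is precisely that mirrored argument with the parity-sensitive containments ($L_k \supseteq L_d$ for odd $k \ge d$, and $CP_k \supseteq CP_{d-1}\vee K_1 = L_d$ for even $k \ge d+1$) verified correctly via deleting matched pairs and choosing a dominating vertex. No gaps.
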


In the next result, we show that the upper bound on the dissociation number we obtained in the previous two lemmas is attained if the complement graph of $G$ in the lemma is, in fact, an edge maximizer that does not contain the cocktail party graph or the odd cocktail party graph.
\begin{theorem}\label{thm: turan connection}
    If $H$ is an edge maximizer for $L_d$ (or $CP_d$ if $d$ is even), then $\tau(\overline{H})=d-1.$
\end{theorem}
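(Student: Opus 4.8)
The plan is to establish the two inequalities $\tau(\overline H)\le d-1$ and $\tau(\overline H)\ge d-1$ separately. The upper bound is immediate: since $H$ is an edge maximizer for $CP_d$ (resp.\ $L_d$), it is in particular $CP_d$-free (resp.\ $L_d$-free), so applying Lemma~\ref{tau upper bound 1} (resp.\ Lemma~\ref{tau upper bound 2}) with $G=\overline H$, whose complement is $H$, yields $\tau(\overline H)\le d-1$. The whole content of the theorem is therefore the matching lower bound, which I would prove by exhibiting an explicit dissociation set of size $d-1$ in $\overline H$, using that an edge maximizer is in particular edge-maximal.

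First I would record a uniform description of the forbidden graph: both $CP_d$ (for even $d$) and $L_d$ (for odd $d$) have the form $K_d-M$, where $M$ is a matching on the $d$ vertices — a perfect matching when $d$ is even and a near-perfect matching (one unmatched apex) when $d$ is odd. Next, assuming $n\ge d$ (so that $K_n$ contains the forbidden graph and hence $H\ne K_n$), I fix any non-edge $uv$ of $H$. Because $H$ has the maximum number of edges among forbidden-subgraph-free graphs, the graph $H+uv$ must contain a copy of $K_d-M$; moreover this copy must use the new edge $uv$, for otherwise the copy would already live in $H$, contradicting that $H$ is forbidden-subgraph-free. Let $W$ be the $d$-vertex set of this copy and set $F:=\overline{H[W]}=\overline H[W]$, the induced complement on $W$. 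Since the copy is $K_d-M$ and uses $uv$, we have $uv\notin M$ and $F\subseteq M\cup\{uv\}$, with $uv\in F$ because $uv$ is a non-edge of $H$.

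The key structural observation is a short degree count: in $M\cup\{uv\}$ every vertex has degree at most $1$ except possibly $u$ and $v$, since adjoining the single edge $uv$ to the matching $M$ can only raise the degrees of $u$ and $v$. I would then delete $u$ from $W$ and argue that $F[W\setminus\{u\}]$ has maximum degree at most $1$: the vertex $u$ is gone; the vertex $v$ loses its incidence to $uv\in F$, so its degree drops from at most $2$ to at most $1$; and every other vertex already had degree at most $1$ in $F$, which can only decrease upon deleting a vertex. Hence $W\setminus\{u\}$ is a dissociation set in $\overline H$ of size $d-1$, giving $\tau(\overline H)\ge d-1$ and, with the upper bound, equality.

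I expect the main obstacle to be the bookkeeping in this last step — in particular making sure that a single deletion suffices to bring the induced complement down to maximum degree $1$, and checking that the same deletion works uniformly in the even ($CP_d$) and odd ($L_d$) cases, where $M$ is perfect or near-perfect and $u,v$ may or may not be matched in $M$. The crucial point that makes one deletion enough is that $uv$ itself is a missing edge of $H$ (so $uv\in F$), which is exactly what lets the removal of $u$ simultaneously discharge the only other potentially bad vertex $v$.
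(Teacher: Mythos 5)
Your proof is correct and follows essentially the same route as the paper's: both exploit edge-maximality to force a copy of the forbidden graph through a newly added non-edge and then extract $d-1$ of its vertices as a dissociation set of $\overline{H}$, with the upper bound supplied by Lemmas~\ref{tau upper bound 1} and~\ref{tau upper bound 2}. The only real difference is in the finishing step: where the paper simply asserts the containments $CP_{d-1}\subseteq L-e$ and $L_{d-1}\subseteq C-e$, you verify the equivalent fact in the complement via the matching-plus-one-edge degree count and deletion of one endpoint of the new edge, which treats both parities uniformly and in fact makes the paper's ``Note that'' step rigorous.
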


 \begin{proof}
  If $d$ is odd and $H$ is an edge maximizer for $L_d$, then adding an edge (say $e$) to $H$ creates a copy $L$ of $L_d$ in $H+e$. Note that $CP_{d-1}\subseteq L-e$. Therefore, $CP_{d-1}\subseteq H$ and thus $\tau(\overline{H})\geq d-1$.  Similarly, if $d$ is even and $H$ is an edge maximizer for $CP_d$, adding an edge (say $e$) to $H$ creates a copy $C$ of $CP_d$ in $H+e$. Note that $L_{d-1}\subseteq C-e$. Therefore, $L_{d-1}\subseteq H$ and thus $\tau(\overline{H})\geq d-1$. This proves the result.
  \end{proof}

\section{Tur\'an number of odd cocktail party graphs $L_{2k+1}$ for any $n$} 
\label{sec: Turan number of odd cocktail party graphs L_{2k+1} for any n}
Let $k\in\mathbf{N}, k\geq 2$. We define a set of graphs $G_{n,2k}$ as follows.
\begin{definition}
 Let $n_1, \ldots, n_k\in \mathbb{N}$ such that $\sum_{i=1}^k n_i=n$ and for any $i\not=j$, $|n_i-n_j|\leq 2$ (with $|n_i-n_j|= 2$ only when both $n_i$ and $n_j$ are even). The set $G_{n,2k}$ consists of graphs obtained by adding a maximum matching within each part of the complete $k$-partite graph $K_{n_1, \ldots, n_k}$. 
\end{definition}
See Figures~\ref{graph in Gn2k} and \ref{graphs in G64} for examples. Note that all graphs in $G_{n, 2k}$ have the same size. We use $e(G_{n, 2k})$ to denote the size of the graphs in the set $G_{n, 2k}$. In this section, we will show that the graphs in the set $G_{n, 2k}$ are edge maximizers for the odd cocktail party graph $L_{2k+1}$ and hence $e(G_{n, 2k}) = ex(n, L_{2k+1})$. We will need the following lemma.
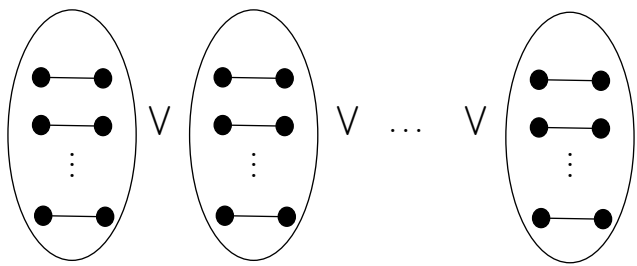
\begin{figure}
\label{fig: G_n,2k when n congruent to 0 mod 2k}
\begin{center}
\tikzset{every picture/.style={line width=0.5pt}} 
\begin{tikzpicture}[x=0.8pt,y=0.9pt,yscale=-1,xscale=1]

\draw   (78,513.5) .. controls (78,484.51) and (91.43,461) .. (108,461) .. controls (124.57,461) and (138,484.51) .. (138,513.5) .. controls (138,542.49) and (124.57,566) .. (108,566) .. controls (91.43,566) and (78,542.49) .. (78,513.5) -- cycle ;
\draw  [fill={rgb, 255:red, 0; green, 0; blue, 0 }  ,fill opacity=1 ] (122.56,485.72) .. controls (124.77,485.75) and (126.53,487.57) .. (126.5,489.77) .. controls (126.47,491.98) and (124.65,493.75) .. (122.44,493.71) .. controls (120.23,493.68) and (118.47,491.86) .. (118.5,489.66) .. controls (118.53,487.45) and (120.35,485.68) .. (122.56,485.72) -- cycle ;
\draw  [fill={rgb, 255:red, 0; green, 0; blue, 0 }  ,fill opacity=1 ] (93.56,485.29) .. controls (95.77,485.32) and (97.53,487.14) .. (97.5,489.34) .. controls (97.47,491.55) and (95.65,493.32) .. (93.44,493.28) .. controls (91.23,493.25) and (89.47,491.43) .. (89.5,489.23) .. controls (89.53,487.02) and (91.35,485.25) .. (93.56,485.29) -- cycle ;
\draw    (122.5,489.72) -- (93.5,489.28) ;
\draw  [fill={rgb, 255:red, 0; green, 0; blue, 0 }  ,fill opacity=1 ] (122.56,505.72) .. controls (124.77,505.75) and (126.53,507.57) .. (126.5,509.77) .. controls (126.47,511.98) and (124.65,513.75) .. (122.44,513.71) .. controls (120.23,513.68) and (118.47,511.86) .. (118.5,509.66) .. controls (118.53,507.45) and (120.35,505.68) .. (122.56,505.72) -- cycle ;
\draw  [fill={rgb, 255:red, 0; green, 0; blue, 0 }  ,fill opacity=1 ] (93.56,505.29) .. controls (95.77,505.32) and (97.53,507.14) .. (97.5,509.34) .. controls (97.47,511.55) and (95.65,513.32) .. (93.44,513.28) .. controls (91.23,513.25) and (89.47,511.43) .. (89.5,509.23) .. controls (89.53,507.02) and (91.35,505.25) .. (93.56,505.29) -- cycle ;
\draw    (122.5,509.72) -- (93.5,509.28) ;
\draw  [fill={rgb, 255:red, 0; green, 0; blue, 0 }  ,fill opacity=1 ] (123.56,543.72) .. controls (125.77,543.75) and (127.53,545.57) .. (127.5,547.77) .. controls (127.47,549.98) and (125.65,551.75) .. (123.44,551.71) .. controls (121.23,551.68) and (119.47,549.86) .. (119.5,547.66) .. controls (119.53,545.45) and (121.35,543.68) .. (123.56,543.72) -- cycle ;
\draw  [fill={rgb, 255:red, 0; green, 0; blue, 0 }  ,fill opacity=1 ] (94.56,543.29) .. controls (96.77,543.32) and (98.53,545.14) .. (98.5,547.34) .. controls (98.47,549.55) and (96.65,551.32) .. (94.44,551.28) .. controls (92.23,551.25) and (90.47,549.43) .. (90.5,547.23) .. controls (90.53,545.02) and (92.35,543.25) .. (94.56,543.29) -- cycle ;
\draw    (123.5,547.72) -- (94.5,547.28) ;
\draw   (163,513.5) .. controls (163,484.51) and (176.43,461) .. (193,461) .. controls (209.57,461) and (223,484.51) .. (223,513.5) .. controls (223,542.49) and (209.57,566) .. (193,566) .. controls (176.43,566) and (163,542.49) .. (163,513.5) -- cycle ;
\draw  [fill={rgb, 255:red, 0; green, 0; blue, 0 }  ,fill opacity=1 ] (207.56,485.72) .. controls (209.77,485.75) and (211.53,487.57) .. (211.5,489.77) .. controls (211.47,491.98) and (209.65,493.75) .. (207.44,493.71) .. controls (205.23,493.68) and (203.47,491.86) .. (203.5,489.66) .. controls (203.53,487.45) and (205.35,485.68) .. (207.56,485.72) -- cycle ;
\draw  [fill={rgb, 255:red, 0; green, 0; blue, 0 }  ,fill opacity=1 ] (178.56,485.29) .. controls (180.77,485.32) and (182.53,487.14) .. (182.5,489.34) .. controls (182.47,491.55) and (180.65,493.32) .. (178.44,493.28) .. controls (176.23,493.25) and (174.47,491.43) .. (174.5,489.23) .. controls (174.53,487.02) and (176.35,485.25) .. (178.56,485.29) -- cycle ;
\draw    (207.5,489.72) -- (178.5,489.28) ;
\draw  [fill={rgb, 255:red, 0; green, 0; blue, 0 }  ,fill opacity=1 ] (207.56,505.72) .. controls (209.77,505.75) and (211.53,507.57) .. (211.5,509.77) .. controls (211.47,511.98) and (209.65,513.75) .. (207.44,513.71) .. controls (205.23,513.68) and (203.47,511.86) .. (203.5,509.66) .. controls (203.53,507.45) and (205.35,505.68) .. (207.56,505.72) -- cycle ;
\draw  [fill={rgb, 255:red, 0; green, 0; blue, 0 }  ,fill opacity=1 ] (178.56,505.29) .. controls (180.77,505.32) and (182.53,507.14) .. (182.5,509.34) .. controls (182.47,511.55) and (180.65,513.32) .. (178.44,513.28) .. controls (176.23,513.25) and (174.47,511.43) .. (174.5,509.23) .. controls (174.53,507.02) and (176.35,505.25) .. (178.56,505.29) -- cycle ;
\draw    (207.5,509.72) -- (178.5,509.28) ;
\draw  [fill={rgb, 255:red, 0; green, 0; blue, 0 }  ,fill opacity=1 ] (208.56,543.72) .. controls (210.77,543.75) and (212.53,545.57) .. (212.5,547.77) .. controls (212.47,549.98) and (210.65,551.75) .. (208.44,551.71) .. controls (206.23,551.68) and (204.47,549.86) .. (204.5,547.66) .. controls (204.53,545.45) and (206.35,543.68) .. (208.56,543.72) -- cycle ;
\draw  [fill={rgb, 255:red, 0; green, 0; blue, 0 }  ,fill opacity=1 ] (179.56,543.29) .. controls (181.77,543.32) and (183.53,545.14) .. (183.5,547.34) .. controls (183.47,549.55) and (181.65,551.32) .. (179.44,551.28) .. controls (177.23,551.25) and (175.47,549.43) .. (175.5,547.23) .. controls (175.53,545.02) and (177.35,543.25) .. (179.56,543.29) -- cycle ;
\draw    (208.5,547.72) -- (179.5,547.28) ;
\draw   (311,514.5) .. controls (311,485.51) and (324.43,462) .. (341,462) .. controls (357.57,462) and (371,485.51) .. (371,514.5) .. controls (371,543.49) and (357.57,567) .. (341,567) .. controls (324.43,567) and (311,543.49) .. (311,514.5) -- cycle ;
\draw  [fill={rgb, 255:red, 0; green, 0; blue, 0 }  ,fill opacity=1 ] (355.56,486.72) .. controls (357.77,486.75) and (359.53,488.57) .. (359.5,490.77) .. controls (359.47,492.98) and (357.65,494.75) .. (355.44,494.71) .. controls (353.23,494.68) and (351.47,492.86) .. (351.5,490.66) .. controls (351.53,488.45) and (353.35,486.68) .. (355.56,486.72) -- cycle ;
\draw  [fill={rgb, 255:red, 0; green, 0; blue, 0 }  ,fill opacity=1 ] (326.56,486.29) .. controls (328.77,486.32) and (330.53,488.14) .. (330.5,490.34) .. controls (330.47,492.55) and (328.65,494.32) .. (326.44,494.28) .. controls (324.23,494.25) and (322.47,492.43) .. (322.5,490.23) .. controls (322.53,488.02) and (324.35,486.25) .. (326.56,486.29) -- cycle ;
\draw    (355.5,490.72) -- (326.5,490.28) ;
\draw  [fill={rgb, 255:red, 0; green, 0; blue, 0 }  ,fill opacity=1 ] (355.56,506.72) .. controls (357.77,506.75) and (359.53,508.57) .. (359.5,510.77) .. controls (359.47,512.98) and (357.65,514.75) .. (355.44,514.71) .. controls (353.23,514.68) and (351.47,512.86) .. (351.5,510.66) .. controls (351.53,508.45) and (353.35,506.68) .. (355.56,506.72) -- cycle ;
\draw  [fill={rgb, 255:red, 0; green, 0; blue, 0 }  ,fill opacity=1 ] (326.56,506.29) .. controls (328.77,506.32) and (330.53,508.14) .. (330.5,510.34) .. controls (330.47,512.55) and (328.65,514.32) .. (326.44,514.28) .. controls (324.23,514.25) and (322.47,512.43) .. (322.5,510.23) .. controls (322.53,508.02) and (324.35,506.25) .. (326.56,506.29) -- cycle ;
\draw    (355.5,510.72) -- (326.5,510.28) ;
\draw  [fill={rgb, 255:red, 0; green, 0; blue, 0 }  ,fill opacity=1 ] (356.56,544.72) .. controls (358.77,544.75) and (360.53,546.57) .. (360.5,548.77) .. controls (360.47,550.98) and (358.65,552.75) .. (356.44,552.71) .. controls (354.23,552.68) and (352.47,550.86) .. (352.5,548.66) .. controls (352.53,546.45) and (354.35,544.68) .. (356.56,544.72) -- cycle ;
\draw  [fill={rgb, 255:red, 0; green, 0; blue, 0 }  ,fill opacity=1 ] (327.56,544.29) .. controls (329.77,544.32) and (331.53,546.14) .. (331.5,548.34) .. controls (331.47,550.55) and (329.65,552.32) .. (327.44,552.28) .. controls (325.23,552.25) and (323.47,550.43) .. (323.5,548.23) .. controls (323.53,546.02) and (325.35,544.25) .. (327.56,544.29) -- cycle ;
\draw    (356.5,548.72) -- (327.5,548.28) ;

\draw (105,513) node [anchor=north west][inner sep=0.75pt]    {$\vdots $};
\draw (190,513) node [anchor=north west][inner sep=0.75pt]    {$\vdots $};
\draw (338,513) node [anchor=north west][inner sep=0.75pt]    {$\vdots $};
\draw (142,500) node [anchor=north west][inner sep=0.75pt]    {$\bigvee $};
\draw (230,500) node [anchor=north west][inner sep=0.75pt]    {$\bigvee $};
\draw (290,500) node [anchor=north west][inner sep=0.75pt]    {$\bigvee $};
\draw (255,510) node [anchor=north west][inner sep=0.75pt]    {$\ldots$};
\end{tikzpicture}
\end{center}
\caption{The graph in $G_{n,2k}$ when $n\equiv 0 \pmod {2k}.$}
\label{graph in Gn2k}
\end{figure}

\begin{lemma}\label{min degree}
Let $k\geq 2$ be an integer.  A graph $G$ on $2k+1$ vertices is $L_{2k+1}$ free if and only if $\delta(G)\leq 2k-2.$
\end{lemma}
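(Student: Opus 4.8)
The plan is to reformulate everything in the complement, where both $L_{2k+1}$ and the condition on $G$ become simple degree constraints. First I would compute $\overline{L_{2k+1}}$. Since the complement of a join is the disjoint union of the complements, and since $CP_{2k}$ is by definition the complement of a perfect matching on $2k$ vertices, I get
\[
\overline{L_{2k+1}} \;=\; \overline{CP_{2k}\vee K_1} \;=\; \overline{CP_{2k}} \cup \overline{K_1} \;=\; kK_2 \cup K_1,
\]
a matching of $k$ edges together with a single isolated vertex, on $2k+1$ vertices in total, and in particular a graph of maximum degree $1$.

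The key observation is that $G$ and $L_{2k+1}$ have the same number of vertices, so $G$ contains $L_{2k+1}$ as a subgraph if and only if it contains it as a spanning subgraph, which in turn holds if and only if $\overline{G}$ is isomorphic to a spanning subgraph of $\overline{L_{2k+1}} = kK_2\cup K_1$. Next I would translate the degree condition: on $2k+1$ vertices we have $\deg_{\overline G}(v) = 2k - \deg_G(v)$ for every $v$, so $\overline G$ has maximum degree at most $1$ exactly when $\delta(G)\geq 2k-1$. It therefore remains to check that $\overline G$ embeds as a spanning subgraph of $kK_2\cup K_1$ if and only if $\Delta(\overline G)\leq 1$.

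This last equivalence is where a little care is needed, though it is not deep. One direction is immediate, since any spanning subgraph of $kK_2\cup K_1$ inherits maximum degree at most $1$. For the converse, a graph on $2k+1$ vertices with maximum degree at most $1$ is a matching of some size $m$ together with isolated vertices; as $2m\leq 2k+1$ forces $m\leq k$, this matching embeds into the $k$-edge matching of $kK_2\cup K_1$, with the leftover vertices sent to the remaining isolated vertices. Combining the three reductions gives that $L_{2k+1}\subseteq G$ if and only if $\delta(G)\geq 2k-1$, and negating yields the stated equivalence that $G$ is $L_{2k+1}$-free if and only if $\delta(G)\leq 2k-2$. The only genuinely substantive step is the passage to the complement together with the spanning-subgraph embedding check; everything else is routine degree counting.
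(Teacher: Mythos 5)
Your proof is correct, and it takes a genuinely different route from the paper's. The paper argues directly in $G$: since $G$ and $L_{2k+1}$ have the same order, containment is spanning containment, so $\delta(G)\ge \delta(L_{2k+1}) = 2k-1$ in one direction; for the converse it invokes the handshaking lemma (with $\delta(G)\ge 2k-1$ all degrees cannot equal $2k-1$, as the degree sum $(2k+1)(2k-1)$ would be odd) to produce an apex vertex of degree $2k$, and then asserts that $G$ contains $L_{2k+1}$ --- a final step left implicit, which amounts to exactly the observation you make explicit, namely that the remaining $2k$ vertices span a graph whose complement has maximum degree at most $1$ and hence contains $CP_{2k}$. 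Your complementation argument bypasses the parity step entirely: a graph on $2k+1$ vertices with maximum degree at most $1$ is a matching of size $m\le k$ (since $2m\le 2k+1$) plus isolated vertices, so it embeds as a spanning subgraph of $kK_2\cup K_1$, and the apex of $L_{2k+1}$ emerges automatically as the complement of the isolated vertex rather than being hunted down by a degree-sum argument. What each buys: the paper's proof is shorter but leans on an unstated containment step; yours is fully explicit and self-contained, and the identical argument also proves the paper's subsequent Remark (a graph on $2k$ vertices is $CP_{2k}$-free if and only if $\delta(G)\le 2k-3$), a case in which there is no apex vertex for a handshaking-style argument to find, so the two statements are unified under one mechanism.
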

\begin{proof}
    If $G$ contains $L_{2k+1}$ as a subgraph then $\delta(G)\geq 2k-1.$ Conversely, if $\delta(G)\geq 2k-1,$ then by the Hand-shaking lemma $G$ contains a vertex of degree $2k$ and hence $G$ contains a $L_{2k+1}.$
\end{proof}

\begin{remark}
    Similarly, we can prove that a graph $G$ on $2k$ vertices is $CP_{2k}$ free if and only if $\delta(G)\leq 2k-3.$

\end{remark}
\begin{theorem}\label{edge extremal graph}
  For any order $n$, the graphs in $G_{n,2k}$ are edge maximizers for $L_{2k+1}$. 
\end{theorem}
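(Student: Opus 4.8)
The plan is to read the statement through the complement--dissociation dictionary already assembled in Section~\ref{sec: Edge minimizers and connection to Turan problems}. The crucial equivalence is that a graph $G$ is $L_{2k+1}$-free if and only if $\tau(\overline{G})\le 2k$. The forward direction is precisely Lemma~\ref{tau upper bound 2} applied to $H:=\overline{G}$ (whose complement $\overline H=G$ is $L_{2k+1}$-free), and the converse is immediate: if $\overline{G}$ had a dissociation set $S$ with $|S|=2k+1$, then $\overline{G}[S]$ would have maximum degree at most $1$, so its complement $G[S]$ would contain $K_{2k+1}$ minus a matching, hence a copy of $L_{2k+1}$. Consequently $\ex(n,L_{2k+1}) = \binom{n}{2} - \min\{\, e(H) : |V(H)|=n,\ \tau(H)\le 2k \,\}$, and the theorem is equivalent to the assertion that this minimum is attained by $\overline{G_{n,2k}}$.

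For the lower bound I would just verify that the graphs in $G_{n,2k}$ are $L_{2k+1}$-free by computing $\tau$ of their complements. The complement of a graph in $G_{n,2k}$ is the disjoint union, over the parts, of the within-part complement of a maximum matching; such a block on $n_i$ vertices is a copy of $CP_{n_i}$ ($n_i$ even) or $L_{n_i}$ ($n_i$ odd). A one-line check shows that every such block with $n_i\ge 2$ has dissociation number exactly $2$, since among any three of its vertices at least two are adjacent to a common third. Hence $\tau(\overline{G_{n,2k}})=\sum_i 2 = 2k$ whenever every part has size at least $2$, and the graphs in $G_{n,2k}$ are $L_{2k+1}$-free.

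The real content is the upper bound, i.e.\ that $k$ balanced complement-of-matching blocks minimize $e(H)$ subject to $\tau(H)\le 2k$. The first step is a structural reduction: I would show that an edge-minimal such $H$ may be taken to be a disjoint union of blocks, each of which is the complement of a \emph{maximum} matching on its own vertex set. The main local input is the elementary fact that a graph $B$ has $\tau(B)\le 2$ if and only if $\overline{B}$ is a matching; therefore a block with dissociation number $2$ on $m$ vertices needs at least $\binom{m}{2}-\lfloor m/2\rfloor$ edges, with equality exactly for $CP_m$ or $L_m$. (This is the complement-side analogue of Lemma~\ref{min degree} and the Remark following it, which give the corresponding exact degree thresholds and suggest an alternative, direct induction on $n$.) Writing $f(m):=\binom{m}{2}-\lfloor m/2\rfloor$, superadditivity of $f$ (that is, $f(a+b)\ge f(a)+f(b)$) means that splitting a block never raises the total, so one wants as many blocks as the dissociation budget permits; since each block of size at least $2$ consumes $2$ units and $\sum_i \tau(\text{block}_i)\le 2k$, the optimum uses exactly $k$ blocks.

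It then remains to minimize $\sum_{i=1}^{k} f(n_i)$ over $\sum_i n_i=n$. The marginal cost $f(m+1)-f(m)$ equals $m$ for even $m$ and $m-1$ for odd $m$, and a short rearrangement argument with these costs shows the minimizer is balanced except that two parts may differ by $2$ precisely when both are even (an odd--odd gap of $2$ strictly loses, while an even--even gap of $2$ merely ties with the balanced configuration). This is exactly the defining condition of $G_{n,2k}$, so $\min e(H)=e(\overline{G_{n,2k}})$ and the theorem follows. I expect the structural reduction of the previous paragraph to be the main obstacle: one must rule out that some cleverly sparse connected graph with a large dissociation number beats the union of dense blocks, and the argument must be made to work uniformly for every $n$, including the small regime $n\le 2k$, where the parts collapse to sizes $1$ and $2$, the minimum number of edges in $H$ drops to $0$, and $G_{n,2k}$ degenerates to the complete graph $K_n$; this boundary case should be checked directly.
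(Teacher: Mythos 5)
Your complement dictionary (via Lemma~\ref{tau upper bound 2} and its converse), the check that graphs in $G_{n,2k}$ are $L_{2k+1}$-free, and the balancing analysis of $f(m)=\binom{m}{2}-\lfloor m/2\rfloor$ are all sound, but the proposal has a genuine gap exactly where you flag it: the ``structural reduction'' is asserted, never proved, and it is not a preliminary step --- it \emph{is} the theorem. Your superadditivity argument only compares one disjoint union of complement-of-matching blocks against another; it says nothing about an arbitrary $n$-vertex graph $H$ with $\tau(H)\le 2k$, which may be connected, or may be a disjoint union in which some component has dissociation number $4,6,\ldots$ rather than $2$. To handle such a component you would need to know that an $m$-vertex graph with dissociation number at most $2t$ has at least $\min\left\{\sum_{i=1}^{t} f(m_i) : \sum_{i=1}^t m_i=m\right\}$ edges --- which is precisely the statement being proved, with a smaller parameter. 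So as written the core of the argument is circular, and no mechanism (an induction on $k$, a way to split off a single dense block, or otherwise) is supplied to break the circularity; the ``local input'' about $\tau\le 2$ graphs only settles the case $k=1$.

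The paper closes this gap by working on the primal side with an induction on $n$ in steps of $2k$, which sidesteps all component-by-component bookkeeping: if $\Gamma$ is an edge maximizer for $L_{2k+1}$ and $H$ is a $2k$-vertex subgraph of $\Gamma$ of maximum size, then Lemma~\ref{min degree} forces every vertex outside $H$ to send at most $2k-2$ edges into $H$, whence
\[
e(\Gamma)\;=\;e(H)+e(H,\Gamma\setminus H)+e(\Gamma\setminus H)\;\le\;\binom{2k}{2}+(2k-2)(n-2k)+e(G_{n-2k,2k})\;=\;e(G_{n,2k}),
\]
where the bound on $e(\Gamma\setminus H)$ comes from the induction hypothesis applied to the $L_{2k+1}$-free graph $\Gamma\setminus H$. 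Note the shape of this induction: it peels one ``row'' (a matched pair from each prospective block of the extremal example) rather than one block at a time, which is what makes the cross-edge accounting uniform and avoids ever having to decide whether the extremal graph decomposes into blocks. If you want to rescue your complement-side route, you would need an analogous peeling lemma for graphs of bounded dissociation number, at which point you would essentially have rederived the paper's argument in complementary language.
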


\begin{proof}
      We will prove this using induction on $n.$ For $n \leq 2k,$ the complete graph $K_n$ is clearly the edge maximizer graph for $L_{2k+1}$. Suppose the claim is true for all $n<N$. For $n = N$ let $\Gamma$ be an edge extremal graph for $L_{2k+1}.$ Let $H$ be a $2k$-vertex subgraph of $\Gamma$ of maximum possible size. By Lemma~\ref{min degree}, each vertex of $\Gamma\setminus H$ is adjacent to at most $2k-2$ vertices of $H$. 
      Therefore, $e(H, \Gamma\setminus H)\leq (2k-2)(N-2k)$, and we have 
      \[e(\Gamma) = e(H) + e(H, \Gamma\setminus H) + e(\Gamma\setminus H) \leq \binom{2k}{2} + (2k-2)(N-2k) + e(G_{N-2k,2k})=e(G_{N,2k}).\]
\end{proof}

\section{Edge minimizer for a given even dissociation number}
\label{sec: Edge minimizer for given even dissociation number}
In this section, we use the connection we made in Theorem~\ref{thm: turan connection} and the set of edge maximizers for a given odd cocktail party graph, obtained in the previous section, to compute the minimum size $e_{min}(\mathcal{D}_{n,\tau})$ of graphs in $\mathcal{D}_{n,\tau}$ when the dissociation number $\tau $ is even. Let $\tau = 2k$, for some positive integer $k$. For $k=1,$ we have the following.

\begin{proposition}
    Depending on whether $n$ is even or odd, the cocktail party graph $CP_n$ or the odd cocktail party graph $L_n$, is the edge minimizer among all graphs with dissociation number equal to 2. 
\end{proposition}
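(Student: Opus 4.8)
The plan is to pass to the complement. Since $e(G)+e(\overline{G})=\binom{n}{2}$, minimizing $e(G)$ over $\mathcal{D}_{n,2}$ is the same as maximizing $e(\overline{G})$ subject to $G$ being connected with $\tau(G)=2$, so I first want to describe exactly which graphs satisfy $\tau(G)=2$. For $n\ge 2$ any pair of vertices is a dissociation set, so $\tau(G)\ge 2$ holds automatically and the only real constraint is $\tau(G)\le 2$. The key step is the equivalence
\[
\tau(G)\le 2 \iff \overline{G}\text{ is }L_3\text{-free} \iff \Delta(\overline{G})\le 1,
\]
i.e.\ $\overline{G}$ is a disjoint union of edges and isolated vertices (a matching). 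To prove the first equivalence I would argue in both directions on three vertices: any dissociation set $\{u,v,w\}$ induces at most one edge in $G$, so $\overline{G}[\{u,v,w\}]$ carries at least two edges and hence contains a copy of $L_3=P_3$; conversely, if some vertex $w$ has two neighbors $u,v$ in $\overline{G}$, then $w$ is nonadjacent to both $u,v$ in $G$, so $G[\{u,v,w\}]$ has maximum degree at most $1$ and $\{u,v,w\}$ is a dissociation set of size $3$. The second equivalence is immediate, since containing $P_3$ as a subgraph is exactly the same as having a vertex of degree at least $2$. This is precisely the base case $d=3$ of the reasoning behind Theorem~\ref{thm: turan connection}.

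With this translation in hand, the optimization becomes transparent: maximizing $e(\overline{G})$ over all matchings on $n$ vertices forces $\overline{G}$ to be a \emph{maximum} matching, of size $\lfloor n/2\rfloor$. When $n$ is even the maximizer is (up to isomorphism) a perfect matching, whose complement is the complete multipartite graph $K_{2,\dots,2}=CP_n$; when $n$ is odd it is a matching saturating all but one vertex, whose complement is $CP_{n-1}\vee K_1=L_n$. I would then close the two loose ends that the reduction left implicit. First, these complements must actually lie in $\mathcal{D}_{n,2}$: the value of $\tau$ is exactly $2$ (not smaller) for all $n\ge 2$, and $CP_n$ (for even $n\ge 4$) and $L_n$ (for odd $n\ge 3$) are connected, being complete multipartite or a join with $K_1$; thus the connectivity requirement is not binding at the optimum and can be dropped. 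Second, uniqueness follows because any matching of size strictly less than $\lfloor n/2\rfloor$ has strictly fewer edges, so its complement has strictly more edges than $CP_n$ (resp.\ $L_n$).

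I expect the genuine content to be concentrated in two places. The first is getting the equivalence $\tau(G)\le 2\iff \overline{G}$ is a matching exactly right, which amounts to carefully enumerating the two admissible induced configurations $3K_1$ and $K_2\cup K_1$ on three vertices and complementing them. The second, slightly subtler, is verifying that the unconstrained matching maximizer already produces a connected complement, so that optimizing over $\mathcal{D}_{n,2}$ agrees with optimizing over all $\tau\le 2$ graphs; this is what upgrades ``a minimizer'' to ``the minimizer'' and also isolates the small exceptional range (notably $n=2$, where $CP_2$ is disconnected and the statement should be read with $n\ge 3$).
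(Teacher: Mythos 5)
Your proposal is correct and is essentially the paper's argument viewed in the complement: your equivalence $\tau(G)\le 2 \iff \Delta(\overline{G})\le 1$ is exactly the paper's observation that $\delta(G)\ge n-2$ (a low-degree vertex together with two of its non-neighbors is a dissociation set of size $3$), and your maximization of the matching $\overline{G}$ is the complement-side version of the paper's handshake-lemma count. The paper reaches the same conclusion by noting that the minimum forces $G$ to be the unique $(n-2)$-regular graph when $n$ is even, namely $CP_n$, or to have exactly one vertex of degree $n-1$ when $n$ is odd, namely $L_n$; your closing remarks on connectivity and the $n=2$ degeneracy are sound but not a substantive departure.
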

\begin{proof}
    Let $G$ be an edge minimizer in $\mathcal{D}_{n,2}$. We observe that $\delta(G)\ge n-2$ because if a vertex $v\in V(G)$ has degree $d(v)\le n-3$, then $\tau(G)\geq 3$ since $v$ along with any two of its non-neighbors forms a dissociation set of size 3, which is a contradiction. By the Handshaking lemma, $e(G) = \frac{1}{2}\left(\sum_{v\in V(G)}d(v)\right)$. Therefore, if $n$ is even, $G$ is an $n-2$-regular graph and $G =CP_n$ since it is the unique $n-2$ regular graph; otherwise when $G$ is odd, $G$ has exactly one vertex of degree $n-1$ and hence $G = L_n$. This completes the proof.
\end{proof}

\begin{lemma}\label{lemma for edge minimizer}
    Let $G_1$ be a graph in $\mathcal{D}_{n,k}$ and let $G_2$ be an edge minimizer in $\mathcal{D}_{n, k+1}$. Then the size of $G_1$ is greater than or equal to the size of $G_2$, i.e., $e(G_1)\geq e(G_2).$
\end{lemma}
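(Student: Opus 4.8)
The plan is to prove the equivalent inequality $e_{min}(n,k+1)\le e(G_1)$ (equivalent since $e(G_2)=e_{min}(n,k+1)$) by constructing, from the arbitrary graph $G_1\in\mathcal{D}_{n,k}$, a \emph{witness} graph $H\in\mathcal{D}_{n,k+1}$ with $e(H)\le e(G_1)$; this immediately yields $e(G_2)=e_{min}(n,k+1)\le e(H)\le e(G_1)$. The whole argument rests on two monotonicity facts for the dissociation number under single-edge changes, which I would establish first. For any graph $G$ and edge $e$, deleting an edge can only raise $\tau$, and by at most one: $\tau(G)\le\tau(G-e)\le\tau(G)+1$. The lower bound is clear, since a dissociation set of $G$ remains one in $G-e$; for the upper bound I would take a maximum dissociation set $S$ of $G-e$ and observe that if both endpoints of $e$ lie in $S$, deleting one of them recovers a dissociation set of $G$ of size $|S|-1$. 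Dually, adding an edge can only lower $\tau$ and by at most one, so $\tau(G)-1\le\tau(G+f)\le\tau(G)$.

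Next I would run a greedy deletion process: starting from $G_1$, repeatedly delete an edge as long as the graph stays connected and its dissociation number stays $\le k+1$. Let $H^*$ be the terminal graph; it is connected, has $e(H^*)\le e(G_1)$, and by monotonicity $\tau(H^*)\in\{k,k+1\}$. If $\tau(H^*)=k+1$, then $H^*$ is the desired witness and we are done. If $\tau(H^*)=k$, then by the upper monotonicity bound $\tau(H^*-e)\le k+1$ for \emph{every} edge $e$, so the only obstruction to further deletion is connectivity; hence every edge of $H^*$ is a bridge and $H^*$ is a spanning tree of $G_1$ with $\tau(H^*)=k$ and exactly $n-1$ edges.

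It remains to handle this tree case, which I expect to be the main obstacle. Here I would pass from the tree $H^*$ to the star $K_{1,n-1}$, which has $\tau=n-1\ge k+1$ (using $k+1\le n-1$, forced by $\mathcal{D}_{n,k+1}\neq\emptyset$), through a sequence of edge swaps, each removing one edge and adding another so as to stay a spanning tree. Combining the two monotonicity facts shows that a single edge swap changes $\tau$ by at most one. Since the family of spanning trees is connected under edge swaps, the value of $\tau$ travels from $k$ up to $n-1$ in steps of size at most one along such a sequence, so a discrete intermediate-value argument produces an intermediate tree $T'$ with $\tau(T')=k+1$; as $e(T')=n-1\le e(G_1)$, this $T'$ is the required witness. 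The only delicate points are verifying the $\pm 1$ monotonicity bounds carefully (via the endpoint-deletion argument above) and invoking the standard fact that any two spanning trees on the same vertex set are linked by a sequence of edge swaps.
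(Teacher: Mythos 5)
Your proof is correct, and it follows a genuinely different route from the paper's. The paper's argument is a one-shot local surgery: fix a dissociation set $S$ of size $k$ in $G_1$, pick a vertex $v\notin S$, and redirect all but one of $v$'s edges onto a single neighbour $u_1\notin S$ (with a separate case analysis when $v$ has no neighbour outside $S$, including the subcase where $V(G_1)\setminus S$ is a coclique, handled via a shortest path of the form $(u,s,v)$ or $(u,s,s',v)$). This turns $v$ into a pendant vertex, so $S\cup\{v\}$ witnesses $\tau=k+1$ exactly, while the edge count does not increase and connectivity is preserved; the witness is thus an explicit modification of $G_1$ itself. You instead factor the problem through two reusable $\pm 1$ monotonicity facts (single edge deletion raises $\tau$ by at most one, single edge addition lowers it by at most one), a greedy deletion whose only nontrivial terminal state is a spanning tree with $\tau=k$, and then the exchange-connectivity of spanning trees together with a discrete intermediate-value argument. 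What your route buys: it avoids the paper's case analysis entirely, and the monotonicity lemmas plus the tree-swap/IVT device are clean and of independent interest. What it costs: in the tree case your witness is some tree bearing no relation to $G_1$ (harmless, since any tree has $n-1\le e(G_1)$ edges), you must import the standard fact that spanning trees on a fixed vertex set are connected under single-edge exchanges, and you need $k+1\le n-1$, which, as you correctly note, is forced by $\mathcal{D}_{n,k+1}\neq\emptyset$ (guaranteed because $G_2$ exists). Both arguments ultimately do the same thing---construct a graph in $\mathcal{D}_{n,k+1}$ with at most $e(G_1)$ edges---but by entirely different mechanisms.
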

\begin{proof}
Let $S\subseteq V(G_1)$ be a dissociation set of $G_1$ that contains $k$ vertices. For any vertex $v\in V(G_1)\setminus S$, we denote its set of neighbors in $V(G_1)\setminus S$ by $N_1(v) = \{u_1, \ldots, u_\alpha\}$ and its set of neighbors in $S$ by $N_2(v) = \{ s_1, \ldots, s_\beta\}$. Pick a vertex $v\in V(G_1)\setminus S$. Note that $\beta \not = 0,$ otherwise we get a contradiction on $\tau(G_1) = k$. If $\alpha \not=0$, then delete the edge $vu_i$ and add the edge $u_1u_i$ (if edge $u_1 u_i$ is not already present in $G_1$) for all $1<i\leq \alpha$. Similarly, delete the edge $v s_i$ and add $u_1 s_i$ (if $u_1 s_i$ is not already present in $G_1$) for all $1\leq i \leq \beta$. If $\alpha=0$, then pick some other vertex from the set $V(G_1)\setminus S$ that has neighbors in $V(G_1)\setminus S$ and repeat the above process. If there is no such vertex in $V(G_1)\setminus S$, then the subgraph induced by $V(G_1)\setminus S$ is a coclique.
 In that case, pick two vertices $v,u \in V(G_1)\setminus S$ such that the distance between them is minimum in $G_1$. 
 Then, since the maximum degree in the graph induced by $S$ is at most 1, the shortest path from $u$ to $v$ in $G_1$ is either $P_1 = (u, s, v)$ or $P_2 = (u, s, s', v)$, for some $s, s' \in S.$ If it is path $P_1$, then same as above, delete $v s_i$ and add $us_i$ (if $u_1 u_i$ is not already present in $G_1$) for all $s_i \in N_2(v)$, and if it is path $P_2$, then delete $vs_i$ for all $s_i \in N_2(v)$, but only add those $u s_i$ for which  $s_i \not= s'$, and finally add the edge $u v$. Denote this new graph by $G'_1$. 
 Note that  $e(G'_1)\leq e(G_1)$ and  $G'_1$ is connected. 
 To check the dissociation number, first observe that $\tau(G_1') \ge k$ since $S$ is also a dissociation set in $G_1'$. Further, suppose $\tau(G_1') \geq k+2.$ Let $S'\subseteq V(G_1')$ be a max dissociation set. Note that $v\in S'$, otherwise we get a contradiction on $\tau(G_1) = k.$ The set $S'\setminus \{v\}$ has $k+1$ or more elements, which again contradicts $\tau(G_1) = k.$ Therefore, $G'_1\in \mathcal{D}_{n, k+1}$ and hence, $e(G_2)\leq e(G_1)$.    
\end{proof}

The following theorem follows immediately from the previous lemma and the fact that $\tau(P_n) = \lceil\frac{2n}{3}\rceil$.
\begin{theorem}\label{tree edge minimizer}
  For  $\tau \geq \lceil\frac{2n}{3}\rceil,$ an edge minimizer in $\mathcal{D}_{n,\tau}$ is a tree.  
\end{theorem}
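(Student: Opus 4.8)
The plan is to reduce everything to two facts: that a connected graph on $n$ vertices has at least $n-1$ edges (with equality precisely for trees), and that Lemma~\ref{lemma for edge minimizer} forces the quantity $e_{min}(n,\tau)$ to be non-increasing in $\tau$. First I would record the trivial lower bound: every graph in $\mathcal{D}_{n,\tau}$ is connected, so it has at least $n-1$ edges, whence $e_{min}(n,\tau)\ge n-1$ for every admissible $\tau$.

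Next I would use the path to pin down the base case $\tau=\lceil \tfrac{2n}{3}\rceil$. Since $\tau(P_n)=\lceil \tfrac{2n}{3}\rceil$ and $P_n$ is a connected graph on $n$ vertices with exactly $n-1$ edges, we have $P_n\in\mathcal{D}_{n,\lceil 2n/3\rceil}$ and hence $e_{min}(n,\lceil \tfrac{2n}{3}\rceil)\le n-1$. Combined with the universal lower bound this gives $e_{min}(n,\lceil \tfrac{2n}{3}\rceil)=n-1$, with $P_n$ itself a tree realizing it.

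For $\tau>\lceil \tfrac{2n}{3}\rceil$ I would iterate Lemma~\ref{lemma for edge minimizer}. Applying the lemma with $G_1$ an edge minimizer in $\mathcal{D}_{n,k}$ and $G_2$ an edge minimizer in $\mathcal{D}_{n,k+1}$ yields $e_{min}(n,k)\ge e_{min}(n,k+1)$; moreover the construction inside that lemma turns any member of $\mathcal{D}_{n,k}$ into an explicit member of $\mathcal{D}_{n,k+1}$, so starting from $P_n$ the classes $\mathcal{D}_{n,k}$ remain nonempty as $k$ increases and the chain of inequalities is legitimate. Chaining from $k=\lceil \tfrac{2n}{3}\rceil$ up to $\tau$ gives $e_{min}(n,\tau)\le e_{min}(n,\lceil \tfrac{2n}{3}\rceil)=n-1$, and together with $e_{min}(n,\tau)\ge n-1$ we conclude $e_{min}(n,\tau)=n-1$. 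Consequently any edge minimizer in $\mathcal{D}_{n,\tau}$ is a connected graph on $n$ vertices with $n-1$ edges, i.e.\ a tree.

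I would stress that there is essentially no genuine obstacle here; the statement is a direct corollary, exactly as the remark preceding it indicates. The only point deserving a moment's care — and the one I would make explicit — is the nonemptiness of the intermediate classes $\mathcal{D}_{n,k}$ required to run the monotonicity chain, which is supplied automatically by the constructive nature of the proof of Lemma~\ref{lemma for edge minimizer}.
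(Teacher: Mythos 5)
Your proposal is correct and is exactly the paper's argument: the paper proves this theorem in one line, citing Lemma~\ref{lemma for edge minimizer} (monotonicity of $e_{min}(n,\tau)$ in $\tau$) together with $\tau(P_n)=\lceil\frac{2n}{3}\rceil$, which is precisely the chain you spell out, combined with the fact that a connected graph on $n$ vertices has at least $n-1$ edges with equality exactly for trees. Your added remark about nonemptiness of the intermediate classes $\mathcal{D}_{n,k}$ (guaranteed by the constructive proof of the lemma) is a detail the paper leaves implicit, not a different approach.
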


Assume $4\leq \tau \leq \lfloor\frac{2n}{3}\rfloor$ for the rest of this section. Note that if $\tau = 2k$ for $k \in \mathbb{N}$, then $n\geq 3k$. Let $\overline{G_{n, 2k}} = \{ G \mid  \overline{G}\in G_{n,2k}\}$. By Theorems~\ref{thm: turan connection} and \ref{edge extremal graph}, we deduce that the dissociation number $\tau(G)$ of a graph $G\in\overline{G_{n,2k}}$ equals $2k$. However, note that $G$ is not connected, it has at least 
$k$ components. 
We construct a new set of graphs $\mathcal{T}_{n,2k}$ as follows. 
\begin{definition}
Take all the graphs from the set $\overline{G_{n,2k}}$ that have exactly $k$ components, i.e., $n_i\geq 3$ for all $i\in [k]$. 
 In each of these graphs, connect the $k$ components by adding $k-1$ new edges to form a connected graph. The set $\mathcal{T}_{n,2k}$ is the collection of all resultant graphs.
\end{definition}
 
See Figures~\ref{graphs in G64} and \ref{graphs in M64} for an example. 
Denote by $e(\mathcal{T}_{n,2k})$ the size of graphs in the set $\mathcal{T}_{n,2k}$. We will show that the graphs in $\mathcal{T}_{n,2k}$ are edge minimizers in $\mathcal{D}_{n,2k}$, i.e., $e(\mathcal{T}_{n,2k}) = e_{min}(\mathcal{D}_{n,2k})$. To achieve this, we will need the following two results.
\begin{figure}
    \centering
    \begin{tikzpicture}[every path/.append style={thick}, scale = 1.2]
       \draw (0,0) ellipse (0.5cm and 1cm);
     \draw (1.5,0) ellipse (0.5cm and 1cm); 
     \foreach \x in {(-0.2,0.3), (0.2,0.3), (1.3,0.3), (1.7,0.3), (-0.2, -0.3), (0.2, -0.3)}
     {\draw[fill] \x circle[radius = 0.05cm];}
     \draw (-0.2, -0.3)--(0.2, -0.3);
     \draw (-0.2,0.3)--(0.2,0.3);
     \draw (1.3,0.3)--(1.7,0.3);
     \node[] at (0.75, 0) {$\vee$};
     \end{tikzpicture}
     \hspace{2cm}
 \begin{tikzpicture}[every path/.append style={thick}, scale = 1.2]
       \draw (0,0) ellipse (0.5cm and 1cm);
     \draw (1.5,0) ellipse (0.5cm and 1cm); 
     \foreach \x in {(-0.2,0.3), (0.2,0.3), (1.3,0.3), (1.7,0.3), (0, -0.3), (1.5, -0.3)}
     {\draw[fill] \x circle[radius = 0.05cm];}
     \draw (-0.2,0.3)--(0.2,0.3);
     \draw (1.3,0.3)--(1.7,0.3);
      \node[] at (0.75, 0) {$\vee$};
     \end{tikzpicture}
    \caption{The graphs in $G_{6,4}$.}
    \label{graphs in G64}
\end{figure}
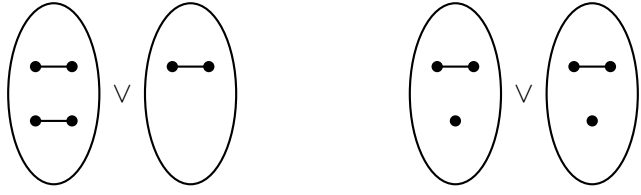
\begin{figure}
    \centering
\begin{tikzpicture}[every path/.append style={thick}, scale = 1.2]
       \draw (0,0) ellipse (0.5cm and 1cm);
     \draw (1.5,0) ellipse (0.5cm and 1cm); 
     \foreach \x in {(-0.2,0.3), (0.2,0.3), (1.3,0.3), (1.7,0.3), (0, -0.3), (1.5, -0.3)}
     {\draw[fill] \x circle[radius = 0.05cm];}
     \draw (0.2,0.3)--(0,-0.3)--(-0.2,0.3);
     \draw (1.3,0.3)--(1.5,-0.3)--(1.7,0.3);
     \draw (0.2,0.3)--(1.3,0.3);
     \end{tikzpicture}
     \hspace{1cm}
     \begin{tikzpicture}[every path/.append style={thick}, scale = 1.2]
       \draw (0,0) ellipse (0.5cm and 1cm);
     \draw (1.5,0) ellipse (0.5cm and 1cm); 
     \foreach \x in {(-0.2,0.3), (0.2,0.3), (1.3,0.3), (1.7,0.3), (0, -0.3), (1.5, -0.3)}
     {\draw[fill] \x circle[radius = 0.05cm];}
     \draw (0.2,0.3)--(0,-0.3)--(-0.2,0.3);
     \draw (1.3,0.3)--(1.5,-0.3)--(1.7,0.3);
     \draw (0,-0.3)--(1.5,-0.3);
     \end{tikzpicture}
     \hspace{1cm}
     \begin{tikzpicture}[every path/.append style={thick}, scale = 1.2]
       \draw (0,0) ellipse (0.5cm and 1cm);
     \draw (1.5,0) ellipse (0.5cm and 1cm); 
     \foreach \x in {(-0.2,0.3), (0.2,0.3), (1.3,0.3), (1.7,0.3), (0, -0.3), (1.5, -0.3)}
     {\draw[fill] \x circle[radius = 0.05cm];}
     \draw (0.2,0.3)--(0,-0.3)--(-0.2,0.3);
     \draw (1.3,0.3)--(1.5,-0.3)--(1.7,0.3);
     \draw (0,-0.3)--(1.3,0.3);
     \end{tikzpicture}
    \caption{The graphs in $\mathcal{T}_{6,4}$.}
    \label{graphs in M64}
\end{figure}
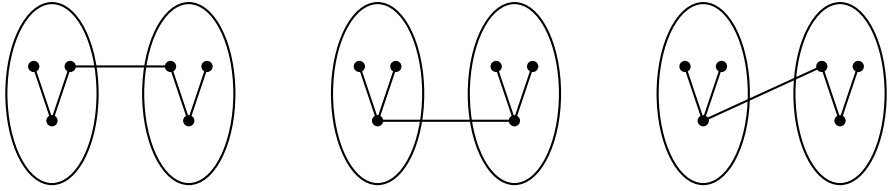
\begin{proposition}\cite{minimumkpathvertexcover}\label{prop: diss lower bound for trees}
    Let $G$ be a tree on $n$ vertices. Then its dissociation number $\tau(G)\geq \lceil\frac{2n}{3}\rceil$.
\end{proposition}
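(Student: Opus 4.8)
The plan is to prove the statement by induction on $n$, constructing a large dissociation set explicitly by peeling off a small piece of the tree at each step. It is worth noting that a dissociation set is exactly the complement of a set meeting every path on three vertices (a $3$-path vertex cover), so the claim $\tau(G) \ge \lceil 2n/3\rceil$ is equivalent to saying that every tree admits such a cover of size at most $\lfloor n/3\rfloor$; this is the form in which the cited reference works, but I would run the induction on the dissociation set directly. The base cases $n \le 3$ are immediate (a single vertex, an edge, and $P_3$ each attain $\tau = \lceil 2n/3\rceil$), and the star $K_{1,n-1}$ is handled separately, since its $n-1$ leaves form a dissociation set and $n-1 \ge \lceil 2n/3\rceil$ for $n \ge 3$.

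For the inductive step with $n \ge 4$ and diameter at least $3$, the key is to choose the peeled piece using a longest path $v_0 v_1 \cdots v_\ell$ in $T$. Because the path is longest, $v_0$ is a leaf and every neighbor of $v_1$ other than $v_2$ must also be a leaf; let $t \ge 1$ be the number of these leaf-neighbors. If $t \ge 2$, I would place all $t$ leaf-neighbors into the dissociation set (they are pairwise nonadjacent and adjacent only to $v_1$) and delete the connected pendant star $\{v_1\} \cup \{\text{its leaves}\}$, leaving a tree on $n-t-1$ vertices; since $t/(t+1) \ge 2/3$ precisely when $t \ge 2$, the gain of $t$ vertices pays for the removal of $t+1$. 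If $t = 1$, then $v_1$ has degree $2$, and I would instead take the induced edge $\{v_0,v_1\}$ into the dissociation set and delete just those two vertices, leaving a tree on $n-2$ vertices, where the gain-to-removal ratio is $1 > 2/3$. In both cases the remainder is again a tree, because a pendant subtree attached to the rest at a single vertex has been removed, so the induction hypothesis applies.

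The routine but genuinely necessary part is checking that the arithmetic survives the ceiling: one must verify $t + \lceil 2(n-t-1)/3\rceil = \lceil (2n+t-2)/3\rceil \ge \lceil 2n/3\rceil$ when $t \ge 2$, and $2 + \lceil 2(n-2)/3\rceil = \lceil (2n+2)/3\rceil \ge \lceil 2n/3\rceil$ when $t = 1$, both of which follow from monotonicity of $\lceil \cdot /3\rceil$ after clearing denominators. The main obstacle is conceptual rather than computational: one must guarantee that the chosen vertex $v_1$ has at most one non-leaf neighbor, which is exactly what the longest-path (equivalently, deepest-leaf) selection secures, and one must confirm that excising the pendant piece never disconnects the remaining tree. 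Once the selection rule is fixed, the two cases are exhaustive, and the bound is seen to be tight through the appearance of the critical ratio $2/3$ at $t = 2$.
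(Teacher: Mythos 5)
Your case $t \ge 2$ is sound: the $t$ leaf-neighbors of $v_1$ have no neighbors in $T' = T - (\{v_1\} \cup \{\text{leaves}\})$, so they can be added freely to any dissociation set of $T'$, and the ceiling arithmetic $t + \lceil 2(n-t-1)/3\rceil = \lceil (2n+t-2)/3\rceil \ge \lceil 2n/3\rceil$ checks out. (For the record, the paper does not prove this proposition at all — it cites it from \cite{minimumkpathvertexcover} — so your argument stands or falls on its own merits.) It falls in the case $t = 1$. There you add both endpoints of the pendant edge $\{v_0,v_1\}$ to a maximum dissociation set $S'$ of $T' = T - \{v_0,v_1\}$, but $v_1$ is still adjacent to $v_2$ in $T$, and nothing prevents $v_2$ from lying in $S'$; then $v_1$ has degree $2$ in the subgraph induced by $S' \cup \{v_0,v_1\}$, which is therefore not a dissociation set. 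The intermediate inequality you are implicitly using, $\tau(T) \ge 2 + \tau(T - \{v_0,v_1\})$, is simply false: for $T = P_4$ with longest path $v_0v_1v_2v_3$ we have $t = 1$ and $\tau(T - \{v_0,v_1\}) = \tau(K_2) = 2$, yet $\tau(P_4) = 3$. Indeed your computation $2 + \lceil 2(n-2)/3\rceil = \lceil (2n+2)/3\rceil$ would certify the false bound $\tau(P_4) \ge 4$, so the step does not merely lack justification — it proves too much.

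The repair is standard but forces a change in the induction. In the case $t = 1$, put $v_0, v_1$ in the set and delete the \emph{three} vertices $v_0, v_1, v_2$; now the only neighbor of $v_1$ outside $\{v_0\}$ is gone, so the union with any dissociation set of $T - \{v_0,v_1,v_2\}$ is genuinely a dissociation set, and $2 + \lceil 2(n-3)/3\rceil = \lceil 2n/3\rceil$ exactly. The price is that $T - \{v_0,v_1,v_2\}$ is in general a forest, not a tree (other subtrees may hang off $v_2$), so you must either state and prove the induction hypothesis for forests, or apply the tree statement to each component $T_i$ on $n_i$ vertices and use superadditivity of the ceiling, $\sum_i \lceil 2n_i/3\rceil \ge \lceil 2(\sum_i n_i)/3\rceil$. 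With that modification (and noting that the case where $T - \{v_0,v_1,v_2\}$ is empty is just $n = 3$, already in your base cases), your peeling argument goes through.
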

\begin{lemma}\label{lem: tree with k P3}
    Let $T$ be any tree on $3k$ vertices obtained by connecting $k$ copies of $P_3$ using $k-1$ edges. Then $\tau(T) = 2k$.
\end{lemma}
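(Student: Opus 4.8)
The plan is to establish both directions of the desired equality $\tau(T) = 2k$ for a tree $T$ built by joining $k$ copies of $P_3$ with $k-1$ connecting edges. For the lower bound $\tau(T) \geq 2k$, I would simply exhibit an explicit dissociation set: from each copy of $P_3$, pick the two endpoints (the degree-one vertices within that copy), omitting the central vertex. This gives $2k$ vertices in total, and I must verify that the induced subgraph on these $2k$ vertices has maximum degree at most $1$. Since the $k-1$ connecting edges and the internal $P_3$-edges are the only edges of $T$, the main thing to check is that choosing endpoints avoids creating any vertex of degree $2$ in the induced subgraph; one must be slightly careful because a connecting edge could attach to an endpoint of a $P_3$, so I would argue that one can always select the two $P_3$-endpoints so that the induced subgraph is a union of isolated vertices and single edges. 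This direction is essentially a construction and a routine degree check.

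For the upper bound $\tau(T) \leq 2k$, the cleanest route is to invoke a general fact about trees and combine it with the vertex count. Since $T$ is a tree on $3k$ vertices, I would like to use a complementary bound. One natural approach: a dissociation set $S$ of $T$ leaves out a set $W = V(T) \setminus S$ that must be a \emph{$3$-path vertex cover} in the sense that deleting $W$ destroys all paths on three vertices; equivalently, $|W| \geq \psi_3(T)$, the minimum $3$-path vertex cover number, and $\tau(T) = n - \psi_3(T)$. So I would reduce the claim to showing $\psi_3(T) \geq k$ for this particular tree on $3k$ vertices.

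The alternative and perhaps more self-contained approach, given what the excerpt makes available, is to exploit the structure directly. Each copy of $P_3$ contributes three vertices, and within a single $P_3$ the dissociation number is $2$; the hard part is controlling how the $k-1$ connecting edges might allow a larger dissociation set by combining vertices across different copies. The main obstacle is precisely this interaction: I must show that no clever selection spanning multiple $P_3$-blocks can beat the naive bound of $2$ per block. I expect the right tool is a counting or averaging argument over the blocks: for any dissociation set $S$, I would argue that $|S \cap V(P_3^{(i)})| \leq 2$ for each block $i$ (which is immediate since each block has only three vertices and taking all three forces the central vertex to have degree $2$), and summing over the $k$ blocks yields $|S| \leq 2k$ directly. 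This block-wise bound is in fact unconditional on the connecting edges, so the upper bound follows cleanly. I would therefore present the proof as: the per-block bound gives $\tau(T) \leq 2k$, the explicit endpoint construction gives $\tau(T) \geq 2k$, and together these force equality.
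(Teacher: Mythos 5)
Your upper bound is correct and is exactly the paper's argument: any dissociation set meets each $P_3$-block in at most two vertices (taking all three would give the block's center degree $2$ in the induced subgraph, since all internal edges of the block survive induction), so $\tau(T) \le 2k$. The genuine gap is in your lower bound. The proposed dissociation set --- the two endpoints of each $P_3$, centers omitted --- does not work in general, and your fallback claim that ``one can always select the two $P_3$-endpoints so that the induced subgraph is a union of isolated vertices and single edges'' is false as stated: the endpoints are determined by the block structure, so there is nothing left to select, and the set they form can fail to be a dissociation set. Concretely, take $k=3$ with blocks $a_i\, b_i\, c_i$ (center $b_i$) and connecting edges $a_1a_2$ and $a_1a_3$. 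This is a tree of the required form, but the endpoint set $\{a_1,c_1,a_2,c_2,a_3,c_3\}$ induces the two edges $a_1a_2$ and $a_1a_3$, so $a_1$ has degree $2$. Any correct construction must be allowed to use centers (here $\{b_1,c_1,a_2,c_2,a_3,c_3\}$ works), and proving that a good choice of two vertices per block always exists is precisely the nontrivial content of the lower bound; it is not a ``routine degree check.''

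The paper sidesteps this entirely by invoking Proposition~\ref{prop: diss lower bound for trees} (a cited result from \cite{minimumkpathvertexcover}): every tree on $n$ vertices satisfies $\tau \ge \lceil \frac{2n}{3} \rceil$, which for $n=3k$ gives $\tau(T) \ge 2k$ immediately. If you prefer a self-contained argument, your construction can be repaired by induction on $k$: since the quotient graph on blocks is a tree, there is a leaf block $B$ incident to exactly one connecting edge, attached at some vertex $v \in B$. If $v$ is the center of $B$, select the two endpoints of $B$; if $v$ is an endpoint, select the center and the other endpoint. In either case the two selected vertices induce at most one edge and have no neighbors outside $B$, and deleting $B$ leaves a tree built from $k-1$ copies of $P_3$ joined by $k-2$ edges, to which the induction hypothesis applies. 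Either route closes the gap; as written, your proof of the lower bound does not go through.
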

\begin{proof}
    We note that since $\tau(P_3)=2$, we have $\tau(T)\leq 2k$. Furthermore, using Proposition~\ref{prop: diss lower bound for trees} we get that $\tau(T)\geq 2k$. Therefore, $\tau(T)=2k$.
\end{proof}
\begin{theorem}\label{edge minimizer}
  The graphs in the set $\mathcal{T}_{n,2k}$ are edge minimizers in $\mathcal{D}_{n,2k}$ of size $e = \binom{n}{2}-ex(n, L_{2k+1}) + k-1.$
\end{theorem}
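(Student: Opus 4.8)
The plan is to pass to complements, where the problem becomes the Tur\'an problem for $L_{2k+1}$ already solved in Theorem~\ref{edge extremal graph}. Write $H=\overline{G}$, and recall $L_{2k+1}=CP_{2k}\vee K_1$, i.e.\ $L_{2k+1}$ is the complete multipartite graph $K_{k+1}(2,\dots,2,1)$ with $k$ parts of size $2$ and one part of size $1$. The arguments behind Lemmas~\ref{tau upper bound 1} and~\ref{tau upper bound 2} yield the equivalences
\[
\tau(G)\le 2k \iff H \text{ is } L_{2k+1}\text{-free}, \qquad \tau(G)\ge 2k \iff CP_{2k}\subseteq H.
\]
Since $e(G)=\binom{n}{2}-e(H)$ and $G$ is connected iff $\overline{H}$ is connected, the theorem is equivalent to showing that among all $L_{2k+1}$-free $H$ on $n$ vertices with $CP_{2k}\subseteq H$ and $\overline{H}$ connected, the maximum of $e(H)$ is $\ex(n,L_{2k+1})-(k-1)$, attained exactly by the complements of graphs in $\mathcal{T}_{n,2k}$. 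Set $t:=\binom{n}{2}-\ex(n,L_{2k+1})+k-1$.

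First I would verify $\mathcal{T}_{n,2k}\subseteq\mathcal{D}_{n,2k}$ and that $e(\mathcal{T}_{n,2k})=t$. By construction each $\Gamma\in\mathcal{T}_{n,2k}$ is connected, and $\overline{\Gamma}$ is obtained from some $H_0\in G_{n,2k}$ (an edge maximizer for $L_{2k+1}$ by Theorem~\ref{edge extremal graph}) by deleting the $k-1$ inter-part edges used to connect the components of $\overline{H_0}$; hence $e(\Gamma)=\binom{n}{2}-\ex(n,L_{2k+1})+(k-1)=t$. Since $\overline{\Gamma}\subseteq H_0$ is still $L_{2k+1}$-free, $\tau(\Gamma)\le 2k$. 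For the reverse inequality I would produce a dissociation set of size $2k$, equivalently a copy of $CP_{2k}=K_k(2,\dots,2)$ inside $\overline{\Gamma}$: as every part of $H_0$ has at least three vertices and only $k-1$ edges are deleted, one can select a non-matched (hence non-adjacent) pair in each part while avoiding all deleted inter-part edges, yielding $CP_{2k}\subseteq\overline{\Gamma}$. The extreme case $n=3k$, where each part is a $P_3$, is precisely Lemma~\ref{lem: tree with k P3}, which I would invoke as a base case and as a template for the general selection argument (with Proposition~\ref{prop: diss lower bound for trees} in reserve if one prefers to route through an induced spanning tree).

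The substance of the theorem is the lower bound $e(G)\ge t$ for every $G\in\mathcal{D}_{n,2k}$, i.e.\ $e(H)\le\ex(n,L_{2k+1})-(k-1)$ whenever $H$ is $L_{2k+1}$-free and $\overline{H}$ is connected. I would split on whether $H$ itself is connected. If $H$ is disconnected, with components of orders $m_1,\dots,m_p$ for $p\ge 2$, then $\overline{H}$ is automatically connected and each component is $L_{2k+1}$-free, so $e(H)\le\sum_{j=1}^{p}\ex(m_j,L_{2k+1})$. A short superadditivity estimate for $\ex(\cdot,L_{2k+1})=e(G_{\cdot,2k})$ — merging two balanced blocks creates many new inter-block edges — gives $\sum_j\ex(m_j,L_{2k+1})\le\ex(n,L_{2k+1})-(k-1)$ with ample slack, settling this case.

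The hard case, and the main obstacle, is when $H$ and $\overline{H}$ are \emph{both} connected, where no component splitting is available. Here it remains to prove a stability statement: no $L_{2k+1}$-free $H$ with $e(H)\ge\ex(n,L_{2k+1})-(k-2)$ can have a connected complement. The guiding picture is that any such near-extremal $H$ must arise from a graph $H_0\in G_{n,2k}$ by deleting at most $k-2$ edges; since $\overline{H_0}=\bigsqcup_{i=1}^{k}(K_{n_i}-M_i)$ has exactly $k$ components (each $n_i\ge 3$), the complement $\overline{H}$ is $\overline{H_0}$ together with at most $k-2$ added edges and therefore still has at least two components, contradicting connectivity of $\overline{H}=G$. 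Making this rigorous requires controlling the structure of all graphs within $k-2$ edges of $\ex(n,L_{2k+1})$, which I would extract by running the counting in the proof of Theorem~\ref{edge extremal graph} in a deficiency form: tracking how far each inequality there ($e(H\text{-block})\le\binom{2k}{2}$, the $(2k-2)$-bound on the cross edges, and the inductive bound on $\Gamma\setminus H$) may fall short of equality, and arguing that a total slack below $k-1$ forces the balanced multipartite-plus-matching shape whose complement has $k$ components. This structural/stability analysis is where the real work lies; the size computation, the membership $\tau=2k$, and the disconnected case are comparatively routine, and Lemma~\ref{lemma for edge minimizer} provides a consistency check since it already forces the minimum size to be monotone in $\tau$.
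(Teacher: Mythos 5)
Your framing and reductions are sound: the passage to complements, the equivalence between $\tau(G)\le 2k$ and $L_{2k+1}$-freeness of $\overline{G}$, the size computation $e(\mathcal{T}_{n,2k})=\binom{n}{2}-\ex(n,L_{2k+1})+k-1$, and the disconnected-complement case via superadditivity of $\ex(\cdot,L_{2k+1})$ are all correct. (One small repair: your selection of ``non-matched pairs avoiding deleted edges'' can fail, e.g.\ when every part has size $3$ and the $k-1$ connecting edges all use the unmatched vertices, since then every non-matched pair contains the constrained vertex; the fix is to allow matched pairs as well, which is harmless because subgraph containment of $CP_{2k}$ only requires the cross-pair edges to be present.) The genuine gap is your Case B, which is where the entire content of the theorem sits: you reduce the lower bound to the stability claim that no $L_{2k+1}$-free graph with at least $\ex(n,L_{2k+1})-(k-2)$ edges has a connected complement, and then only gesture at a proof (``run the induction of Theorem~\ref{edge extremal graph} in deficiency form''). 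This claim is far from routine. The paper proves such a statement only for $k=2$ (Proposition~\ref{unique L5 free}), and even there it is a multi-page induction with heavy case analysis; for general $k$ the paper obtains the analogous structure theorem only for $n$ sufficiently large with $2k\mid n$, via the Erd\H{o}s--Simonovits machinery of Section~\ref{sec: edge minimizers d-independence} (Theorem~\ref{thm: connected complement close to turan numbers multipartite graphs}). Your theorem is claimed for every admissible $n$, so an asymptotic argument cannot close the gap, and a deficiency-tracking induction must control how slack accumulates across $\Theta(n/k)$ induction steps; note also that your claim would incidentally yield the uniqueness of minimizers, which the paper itself only achieves under those extra hypotheses --- a strong hint that no short argument of this kind is available.

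The paper avoids this obstacle entirely by staying in the primal setting. Its proof is an induction on $n$ that peels off a maximum dissociation set $S$ chosen to maximize the number of isolated vertices in $\Gamma[S]$: this choice forces every vertex outside $S$ to send at least two edges into $S$, so $e(\Gamma\setminus S,S)\ge 2(n-2k)$, and the monotonicity Lemma~\ref{lemma for edge minimizer} (which you relegate to a ``consistency check'') bounds $e(\Gamma\setminus S)$ below by the inductive minimum, since $\tau(\Gamma\setminus S)\le 2k$. Together with base cases $3k\le n\le 5k$ handled by pigeonhole refinements of the same idea, this yields $e(\Gamma)\ge e(\mathcal{T}_{n,2k})$ for all $n$ with no Tur\'an stability whatsoever. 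So your route is genuinely different from the paper's, but it trades the theorem's actual difficulty for a harder, unproven one; as written, the proposal is not a proof.
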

\begin{proof}
For $n=3k,$ the result follows from Lemma~\ref{lem: tree with k P3}.
For $n = 3k+r, 1\leq r\leq k$, let $\Gamma$ be an edge minimizer in $\mathcal{D}_{n,2k}$,
and let $S\subseteq \Gamma$ be a dissociation set of size $2k$ with the maximum number of vertices of degree 0 in the subgraph induced by $S$. Thus, each vertex in $\Gamma\setminus S$ is adjacent to at least 2 vertices in $S$ as otherwise, we either get a contradiction on $\tau(\Gamma) =2k$, or on the maximality of the size of the independence set in $S$.
Therefore, $e(\Gamma\setminus S, S)\geq 2k+2r.$ By the pigeonhole principle, there exists an $r$-vertex subset $S_r\subseteq S$ such that $e(\Gamma\setminus S, S_r)\geq 2r$. Since $\tau(\Gamma\setminus S_r)\leq 2k$, by Lemma~\ref{lemma for edge minimizer}, we have $e(\Gamma\setminus S_r)\geq e(\mathcal{T}_{3k,2k}) = 3k-1$.  Therefore, $$e(\Gamma)\geq e(S_r) + e(\Gamma\setminus S_r, S_r) + e(\Gamma\setminus S_r) \geq 2r + 3k-1 = e(\mathcal{T}_{3k+r,2k}).$$ 
Next, for $n = 4k+r, 1\leq r\leq k.$ Let $\Gamma$ be an edge minimizer in $\mathcal{D}_{n,2k}$. Same as above, let $S\subseteq \Gamma$ be a dissociation set of size $2k$ with the maximum number of vertices of degree 0 in the subgraph induced by $S$. Thus, each vertex in $\Gamma\setminus S$ is adjacent to at least 2 vertices in $S$.
Therefore, $e(\Gamma\setminus S, S) = \lambda\geq 4k+2r.$ By averaging, there exists a $k$-vertex subset $S^1_k\subseteq S$ such that $e(\Gamma\setminus S, S^1_k) = 2k+r+\epsilon, \epsilon\geq 0.$ Hence $e(\Gamma\setminus S, S\setminus S^1_k)=\lambda -2k-r-\epsilon\geq 2k+r-\epsilon.$ By the pigeonhole principle, there exists an $r$-vertex subset $S_r\subseteq S\setminus S^1_k$ such that $e(\Gamma\setminus S, S_r)\geq 3r-\epsilon .$ Thus, $e(\Gamma\setminus S, S^1_k\cup S_r)\geq 2k+r+\epsilon + 3r -\epsilon =    2k+4r.$ Since $\tau(\Gamma\setminus (S^1_k\cup S_r))\leq 2k$, by Lemma~\ref{lemma for edge minimizer}, we have $e(\Gamma\setminus (S^1_k\cup S_r))\geq e(\mathcal{T}_{3k,2k}) = 3k-1$.
Therefore,
$$ e(\Gamma)\geq e(S^1_k\cup S_r) + e(\Gamma\setminus (S^1_k\cup S_r), S^1_k\cup S_r) + e(\Gamma\setminus (S^1_k\cup S_r))\geq 4r+5k-1 = e(\mathcal{T}_{4k +r,2k} ).$$ 
This proves the claim is true for all $3k\leq n\leq 5k.$ Suppose the claim is true for all $n<N$.
Let $\Gamma$ be an edge minimizer in $\mathcal{D}_{N,2k}$. 
Let $S\subseteq \Gamma$ be a largest dissociation set with maximum number of vertices of degree 0 in the subgraph induced by $S$. Thus, each vertex in $\Gamma\setminus S$ is adjacent to at least 2 vertices in $S$. Therefore, $e(\Gamma\setminus S, S )\geq 2(N-2k)$.
Since $\tau(\Gamma\setminus S)\leq 2k,$ by Lemma~\ref{lemma for edge minimizer} and the induction hypothesis, we have 
    $$e(\Gamma)= e(S)+ e(\Gamma\setminus S,S ) + e(\Gamma\setminus S)\geq  2(N-2k) + e(\mathcal{T}_{N-2k,2k}) = e(\mathcal{T}_{N,2k}). $$ This completes the proof. 
\end{proof}

\section{Spectral minimizers for fixed order $n$ and  dissociation number $\tau =4$}
\label{sec: Spectral minimizers for fixed order n and  dissociation number tau =4}

Let $G$ be a graph in $\overline{G_{n,2k}}$ with $k$ components. Depending on how we connect these $k$ components of $G$, we get various graphs in $\mathcal{T}_{n,2k}$. Denote by $C_i$ the $i$-th component and by $e_{ij}  (i\not=j)$ the edge connecting the components $C_i$ and $C_j$. Let $n_i$ be the number of vertices in $C_i$. Note that each $C_i$ is isomorphic to $CP_{n_i}$ or $L_{n_i}$ depending on whether $n_i$ is even or odd, respectively. When $n_i$ is odd, then one vertex in $C_i$, call it $v_\Delta^{(i)}$, has degree $n_i-1$ and all remaining vertices have degree $n_i-2$ in the component $C_i$. In what follows, we will show that using the vertex $v_\Delta^{(i)}$ in a connecting edge $e_{ij}$  results in a graph with a larger spectral radius compared to using any other vertex from $C_i$ of degree $n_i-2$. When $\tau=2k=4$, the graph $G$ has only two components $C_1, C_2$. Therefore, we need only one connecting edge $e_{12}$ to get a connected graph. If $n_i$ $(i\in\{1,2\})$ is odd and $v_\Delta^{(i)}$ is incident to the connecting edge $e_{12}$, we represent the resultant graph as $G_{v_\Delta^{(i)}}$. Likewise, if both $n_1$ and $n_2$ are odd, and $e_{12} = v_\Delta^{(1)} v_\Delta^{(2)}$, we represent the resultant graph as $G_{v_\Delta^{(1)}v_\Delta^{(2)}}$.

For the next lemma, we will need the following graph operation. Consider two vertices $u$ and $v$ of a graph $H$. Construct a new graph $H_{v \to u}$ from $H$ by deleting an edge $vx$ and adding a new edge $ux$ for each vertex $x$ (except possibly $u$) that is adjacent to $v$ and not adjacent to $u$. The vertices $u,v$ are adjacent in $H_{v \to u}$ if and only if they are adjacent in $H$.  Kelmans \cite{Kelman} proved that this operation increases the spectral radius of the graph, namely that $\rho(H) \le \rho(H_{v \to u})$ and this procedure is now known as the Kelmans operation.
\begin{lemma}\label{bad connection}
    Let $G_{v_\Delta^{(1)}}$ be a graph in $\mathcal{T}_{n, 4}$ as described above. Consider the graph $G$ which we obtain from $G_{v_\Delta^{(1)}}$ by replacing the connecting edge $e_{12} = v_\Delta^{(1)}w$ with $e_{12} = uw$, where $u\in C_1$ of degree $n_1-2$ in $C_1$, $w\in C_2$. Then, $\rho(G_{v_\Delta^{(1)}}) > \rho(G)$.
\end{lemma}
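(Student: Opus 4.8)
The plan is to exploit that $G$ and $G_{v_\Delta^{(1)}}$ differ in exactly one edge: both consist of the components $C_1=L_{n_1}$ (here $n_1$ is odd, so that $v_\Delta^{(1)}$ exists) and $C_2$ joined by a single bridge incident to the fixed vertex $w\in C_2$, and they differ only in whether the other endpoint is $v_\Delta^{(1)}$ or $u$. Since $N_{C_1}[u]\subsetneq N_{C_1}[v_\Delta^{(1)}]$, the vertex $w$ is the only neighbour of $u$ that is not already a neighbour of $v_\Delta^{(1)}$, so $G_{v_\Delta^{(1)}}$ is exactly the Kelmans transform of $G$ and the Kelmans operation yields the non-strict bound $\rho(G)\le\rho(G_{v_\Delta^{(1)}})$. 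The whole point, however, is the \emph{strict} inequality, and for that I will argue directly through a bridge/resolvent analysis that simultaneously re-derives the weak bound.

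First I would set up the reduction. For a graph obtained by joining $C_1$ and $C_2$ through a single bridge $aw$ with $a\in C_1$, write $\phi_H$ for the characteristic polynomial of $H$ and $f_i(x,\lambda)=[(\lambda I-A(C_i))^{-1}]_{xx}=\phi_{C_i-x}(\lambda)/\phi_{C_i}(\lambda)$ for the diagonal resolvent entry. Splitting the eigenvalue equation across the bridge shows that, for $\lambda>\max(\rho(C_1),\rho(C_2))$, such a joined graph admits a positive $\lambda$-eigenvector precisely when $f_1(a,\lambda)\,f_2(w,\lambda)=1$. On $\bigl(\max(\rho(C_1),\rho(C_2)),\infty\bigr)$ each $f_i(\cdot,\lambda)$ is positive and strictly decreasing, running from $+\infty$ down to $0$, so the product is strictly decreasing with a unique root of $f_1f_2=1$; since $\rho(G)$ and $\rho(G_{v_\Delta^{(1)}})$ both exceed $\max(\rho(C_1),\rho(C_2))$, each equals the corresponding root. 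As $f_2(w,\cdot)$ is common to both graphs, strict monotonicity reduces the entire lemma to the single-component inequality $f_1(v_\Delta^{(1)},\lambda)>f_1(u,\lambda)$ for every $\lambda>\rho(C_1)$: granting it, $f_1(v_\Delta^{(1)},\rho(G))f_2(w,\rho(G))>f_1(u,\rho(G))f_2(w,\rho(G))=1$, which forces the root for $G_{v_\Delta^{(1)}}$ strictly to the right of $\rho(G)$.

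The crux is thus to show $\phi_{C_1-v_\Delta^{(1)}}(\lambda)>\phi_{C_1-u}(\lambda)$ for $\lambda>\rho(C_1)$ (the common denominator $\phi_{C_1}(\lambda)$ being positive there). The key structural observation is that $C_1-v_\Delta^{(1)}=CP_{n_1-1}$, while $C_1-u$ arises from it by adding a single edge: writing $u'$ for the matching partner of $u$ in $CP_{n_1-1}$, both graphs are $CP_{n_1-3}$ joined to a pair of vertices, that pair being non-adjacent in $C_1-v_\Delta^{(1)}$ (the pair $\{u,u'\}$) and adjacent in $C_1-u$ (the pair $\{v_\Delta^{(1)},u'\}$). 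I would compute both characteristic polynomials from the equitable partition of $CP_{n_1-3}\vee(\text{pair})$: the two graphs share the factor $\lambda^{m}(\lambda+2)^{m-1}$ coming from the non-principal eigenvalues $0$ and $-2$ of $CP_{n_1-3}$ (with $m=(n_1-3)/2$), and differ only in a $2\times2$ quotient and the eigenvalue of the pair. The subtraction then collapses to the manifestly positive polynomial
\[
\phi_{C_1-v_\Delta^{(1)}}(\lambda)-\phi_{C_1-u}(\lambda)=\lambda^{m}(\lambda+2)^{m-1}\bigl(\lambda+n_1-1\bigr),
\]
which is strictly positive for all $\lambda>0$, giving $f_1(v_\Delta^{(1)},\lambda)>f_1(u,\lambda)$ and completing the argument. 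The degenerate case $n_1=3$ (where $C_1=P_3$) is checked directly: $\phi_{C_1-v_\Delta^{(1)}}-\phi_{C_1-u}=\lambda^2-(\lambda^2-1)=1>0$.

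I expect the main obstacle to be the strictness, and in particular the temptation to read it off an eigenvector. It is worth flagging that the naive test-vector estimate points the wrong way: in the Perron vector of $G$ one actually has $p_{v_\Delta^{(1)}}<p_u$ (solving $(\rho(G)+1)(p_{v_\Delta^{(1)}}-p_u)=p_{u'}-p_w$ together with the bridge recursion on $C_1$ gives $p_{u'}<p_w$), so substituting the Perron vector of $G$ into $G_{v_\Delta^{(1)}}$ recovers only the weak bound, and an equality assumption produces no eigenvector contradiction. This is exactly why I route the strict comparison through the characteristic polynomials of $C_1$ alone, where the strict closed-neighbourhood containment $N_{C_1}[u]\subsetneq N_{C_1}[v_\Delta^{(1)}]$ (difference $\{u'\}$) makes the sign of $f_1(v_\Delta^{(1)},\lambda)-f_1(u,\lambda)$ unambiguous and, notably, independent of the structure of $C_2$.
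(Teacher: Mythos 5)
Your proof is correct, but it takes a genuinely different route from the paper's. The paper's entire proof is the observation you make at the outset: $G_{v_\Delta^{(1)}}$ is the Kelmans transform of $G$ (the bridge endpoint is moved from $u$ to $v_\Delta^{(1)}$, since $w$ is the only neighbour of $u$ outside $N[v_\Delta^{(1)}]$), and the paper then simply asserts the strict inequality $\rho(G_{v_\Delta^{(1)}})>\rho(G)$ from this operation. Your argument instead treats the Kelmans observation as yielding only the weak bound (which is all that the non-strict statement $\rho(H)\le\rho(H_{v\to u})$ quoted in the paper literally provides) and then establishes strictness from scratch: the bridge factorization $\phi_H=\phi_{C_1}\phi_{C_2}-\phi_{C_1-a}\phi_{C_2-w}$, strict monotonicity of the resolvent entries $f_i$ above $\rho(C_i)$, identification of $\rho(G)$ and $\rho(G_{v_\Delta^{(1)}})$ as the unique solutions of $f_1f_2=1$, and the explicit comparison $\phi_{C_1-v_\Delta^{(1)}}(\lambda)-\phi_{C_1-u}(\lambda)=\lambda^{m}(\lambda+2)^{m-1}(\lambda+n_1-1)>0$. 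I verified this identity: indeed $C_1-v_\Delta^{(1)}=CP_{n_1-3}\vee\overline{K_2}$ and $C_1-u=CP_{n_1-3}\vee K_2$, and the equitable-partition computation gives exactly that difference, so every step of your chain holds. What the paper's approach buys is brevity, at the cost of resting on a strict form of Kelmans' theorem that it never states or justifies; what yours buys is a self-contained proof of the strict inequality whose key comparison is local to $C_1$ and entirely independent of the structure of $C_2$ (so it covers $C_2\cong CP_{n_2}$ and $C_2\cong L_{n_2}$ uniformly). One caveat on your closing aside: the claim that the Perron vector of $G$ always satisfies $p_{v_\Delta^{(1)}}<p_u$ is configuration-dependent --- when $C_1$ is much larger than $C_2$ one gets $p_{u'}>p_w$ and hence $p_{v_\Delta^{(1)}}>p_u$ --- so the correct statement is only that the sign of $p_{v_\Delta^{(1)}}-p_u$ is not controlled uniformly; but this remark is motivational, not load-bearing, and does not affect the validity of your proof.
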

\begin{proof}
Note that we can obtain the graph $G_{v_\Delta^{(1)}}$ from the graph $G$ by performing the Kelmans operation on the vertices $v_\Delta^{(1)}$ and $u$. Therefore, $\rho(G_{v_\Delta^{(1)}}) > \rho(G)$.
\end{proof}

In general, we prove the following.
\begin{lemma}
    Let $G$ be a graph in $\mathcal{T}_{n,2k}$ such that each of the $k$ parts have at least $k$ vertices in them. Suppose for some $i\not=j$, the connecting edge is $e_{ij} = v_{\Delta}^{(i)}w$, where $w\in C_j$. Then, $G$ is not a spectral minimizer in $\mathcal{T}_{n,2k}$. 
\end{lemma}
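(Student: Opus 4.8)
The plan is to extend the argument of Lemma~\ref{bad connection} from two components to arbitrary $k$, again realizing the comparison by a single Kelmans operation. Starting from $G$, whose connecting edge $e_{ij} = v_\Delta^{(i)}w$ uses the vertex $v_\Delta^{(i)}$ (which is adjacent to all of the odd component $C_i$), I would build a competitor $G' \in \mathcal{T}_{n,2k}$ by relocating just this edge: delete $v_\Delta^{(i)}w$ and insert $uw$, where $u \in C_i$ is an ordinary vertex (of degree $n_i-2$ inside $C_i$) to be chosen below, leaving every other edge of $G$ unchanged. Because $u$ and $v_\Delta^{(i)}$ lie in the same component, this rewiring does not alter the spanning-tree structure that the $k-1$ connecting edges induce on the components, so $G'$ is still connected and is obtained from the same member of $\overline{G_{n,2k}}$ by adding $k-1$ connecting edges; hence $G' \in \mathcal{T}_{n,2k}$.

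The decisive point is to select $u$ so that a single Kelmans operation converts $G'$ into $G$. I would require $u$ to be an ordinary vertex of $C_i$ incident to no connecting edge of $G$. Such a $u$ exists by a counting argument that uses the hypothesis $n_i \ge k$: the connecting edges form a spanning tree on the $k$ components, so at most $k-1$ vertices of $C_i$ serve as endpoints of connecting edges; since $e_{ij}$ already uses $v_\Delta^{(i)}$, at most $k-2$ of these endpoints are ordinary, while $C_i$ contains $n_i - 1 \ge k-1 > k-2$ ordinary vertices, leaving at least one ordinary vertex $u$ free of connecting edges. For this $u$, every neighbour of $u$ inside $C_i$ is also a neighbour of the all-adjacent vertex $v_\Delta^{(i)}$, and $u$ has no inter-component neighbour in $G'$ other than $w$; consequently $N_{G'}(u) \setminus (N_{G'}(v_\Delta^{(i)}) \cup \{v_\Delta^{(i)}\}) = \{w\}$, using that $w \notin N_{G'}(v_\Delta^{(i)})$ after the edge was removed.

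It then follows that $G = (G')_{u \to v_\Delta^{(i)}}$: the Kelmans operation applied to $G'$ deletes $uw$ and adds $v_\Delta^{(i)}w$ for the unique transported vertex $w$, while preserving the adjacency of $u$ and $v_\Delta^{(i)}$, and recovers $G$ exactly. Invoking the Kelmans inequality as in Lemma~\ref{bad connection} gives $\rho(G') < \rho(G)$, the strictness coming from the connectivity of $G'$ together with the fact that the operation genuinely moves an edge. Since $G' \in \mathcal{T}_{n,2k}$, we conclude that $G$ is not a spectral minimizer in $\mathcal{T}_{n,2k}$.

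The main obstacle is precisely the selection step: if the relocation target $u$ were itself incident to another connecting edge $uz$ with $v_\Delta^{(i)} \not\sim z$, the Kelmans operation would also transport $z$ and could move a connecting edge onto a new pair of components, destroying the one-edge comparison. The counting above --- and hence the assumption that each part has at least $k$ vertices --- is exactly what guarantees a connecting-edge-free ordinary vertex and keeps the Kelmans step surgical; once that is in place, everything else parallels the two-component case of Lemma~\ref{bad connection}.
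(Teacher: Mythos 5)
Your proposal is correct and follows essentially the same route as the paper: both select an ordinary vertex $u \in C_i$ (degree $n_i-2$) incident to no connecting edge, which exists because the $k-1$ connecting edges cannot exhaust the $n_i - 1 \ge k-1$ ordinary vertices of $C_i$, then form the competitor $H = G - v_\Delta^{(i)}w + uw \in \mathcal{T}_{n,2k}$ and recover $G$ from $H$ by a single Kelmans operation on $u$ and $v_\Delta^{(i)}$, giving $\rho(H) < \rho(G)$. Your write-up merely makes explicit the counting and the verification that the Kelmans step transports only the vertex $w$, details the paper leaves implicit.
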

\begin{proof}
Since there are only $k-1$ connecting edges and each part has at least $k$ vertices, we can find a vertex $u\in C_i$ of degree $n_i-2$ in $G$ that is not incident to any of the connecting edges. Let $H\in\mathcal{T}_{n,2k}$ be the graph we obtain from $G$ by deleting the edge $v_{\Delta}^{(i)}w$ and adding the edge $uw$. Note that we can obtain $G$ from $H$ by performing the Kelmans operation on the vertices $v_{\Delta}^{(i)}$ and $u$. Therefore, $\rho(G)>\rho(H)$.
\end{proof}

Theorem~\ref{edge extremal graph} gives us that the set of graphs $G_{n, 4} \subset \EX(n, L_5)$, and $ex(n, L_5) = \lfloor \frac{n^2}{4} + \frac{n}{2}\rfloor$ when $n\not\equiv 2 \pmod 4$ and $ex(n, L_5) = \lfloor \frac{n^2}{4} + \frac{n}{2}\rfloor -1$ when $n\equiv 2\pmod 4$.
Note that the set $G_{n,4}$ has only one graph in it when $n\equiv 0,1,3 \pmod 4,$ and has two graphs when $n\equiv 2 \pmod 4.$ For simplicity, when $G_{n,4}$ is a singleton, we consider it as a graph instead of a set. Theorem~\ref{thm: turan connection} and Theorem~\ref{edge minimizer} give us that the set of graphs $\mathcal{T}_{n, 4}$ are edge minimizers in $\mathcal{D}_{n,4}$. 
For $n\geq 8$, let $\widehat{\mathcal{T}}_{n,4}\in \mathcal{T}_{n,4}$ be the graph in which $|n_1-n_2|\leq 1$ and the connecting edge $e_{12} = uv$ is such that the degree of its endpoints $d_{C_1}(u) = n_1-2$ and $d_{C_2}(v) = n_2-2$ in their respective components. We call such a connecting edge \emph{a good connecting edge}. Denote by $C_1-C_2$ any graph isomorphic to a graph where the components $C_1, C_2$ are connected by a good connecting edge in $\mathcal{T}_{n,4}$.
Depending on the value of $n \pmod 4$, $\widehat{\mathcal{T}}_{n,4}$ is the following graph.
\begin{align*}
\widehat{\mathcal{T}}_{n,4} = 
    \begin{cases}
 CP_{\frac{n}{2}}-CP_{\frac{n}{2}}, &\text{ if } n \equiv 0 \pmod4,\\CP_{\frac{n-1}{2}}-L_{\frac{n+1}{2}},  &\text{ if } n \equiv 1 \pmod4, \\
 L_{\frac{n}{2}}-L_{\frac{n}{2}},  &\text{ if } n \equiv 2 \pmod4,\\
  CP_{\frac{n+1}{2}}-L_{\frac{n-1}{2}}, &\text{ if } n \equiv 3 \pmod4.
\end{cases}
\end{align*}

In the remaining part of this section, we will show for $n\geq 8$, $\widehat{\mathcal{T}}_{n,4}$ is the only spectral minimizer in $\mathcal{D}_{n,4}$. For $n\in \{5,6\}$, since the dissociation number of the path graph $P_n$ is $4$, by \cite[Theorem 3]{LP} we conclude that $P_n$ is the unique spectral minimizer in $\mathcal{D}_{n,\tau}$. For $n=7$, since the dissociation number of cycle $C_7$ is $4$, by \cite[Proposition 3.1]{Vishal} we conclude that $C_7$ is the unique spectral minimizer in $\mathcal{D}_{7,4}$. We start with the following result. 

Let $G^-_{n,4}$ be the set of graphs we obtain by deleting an edge from the graphs in the set $G_{n,4}$.
\begin{proposition}\label{unique L5 free}
 For $n\geq 8$, the graphs in the set $\mathcal{T}_{n,4}$ are the only connected graphs of size $e_{min}(\mathcal{D}_{n,4})$ and dissociation number $4$.    
\end{proposition}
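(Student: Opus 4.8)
The plan is to pass to the complement and reduce the statement to an \emph{exact} stability result for the Tur\'an problem of $L_5$. Let $\Gamma$ be any connected graph on $n\ge 8$ vertices with $\tau(\Gamma)=4$ and $e(\Gamma)=e_{min}(\mathcal{D}_{n,4})$, and set $F=\overline{\Gamma}$. First I would record three facts about $F$. By the same dissociation--complement correspondence used in Lemma~\ref{tau upper bound 1}, a copy of $L_5$ in $F$ would induce a subgraph of maximum degree at most $1$ on the corresponding $5$ vertices of $\Gamma$, i.e.\ a dissociation set of size $5$; since $\tau(\Gamma)=4$, this shows $F$ is $L_5$-free. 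By Theorem~\ref{edge minimizer}, $e_{min}(\mathcal{D}_{n,4})=\binom{n}{2}-\ex(n,L_5)+1$, so $e(F)=\binom{n}{2}-e(\Gamma)=\ex(n,L_5)-1$; that is, $F$ is an $L_5$-free graph with exactly one edge fewer than the maximum $\ex(n,L_5)=e(G_{n,4})$ (Theorem~\ref{edge extremal graph}). Finally $\overline{F}=\Gamma$ is connected. Thus it suffices to prove (i) every $L_5$-free graph on $n\ge 8$ vertices with $\ex(n,L_5)-1$ edges lies in $G^-_{n,4}$, and (ii) among the graphs of $G^-_{n,4}$, exactly those with a connected complement are the complements of the graphs in $\mathcal{T}_{n,4}$.

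For (i) I would prove, by induction on $n$ in steps of $4$ (mirroring the proof of Theorem~\ref{edge extremal graph}), the combined statement $P(n)$: every $L_5$-free graph $F$ on $n$ vertices with $e(F)\ge e(G_{n,4})-1$ satisfies $F\in G_{n,4}$ when $e(F)=e(G_{n,4})$, and $F\in G^-_{n,4}$ when $e(F)=e(G_{n,4})-1$. Carrying the maximiser case along is essential, since the reassembly step needs the exact structure of the remainder. Given such an $F$, choose a densest $4$-vertex subset $H$; as in the proof of Theorem~\ref{edge extremal graph}, Lemma~\ref{min degree} forces every vertex outside $H$ to have at most $2$ neighbours in $H$, whence
\[
e(F)=e(H)+e(H,F\setminus H)+e(F\setminus H)\le \binom{4}{2}+2(n-4)+e(G_{n-4,4})=e(G_{n,4}).
\]
When $e(F)\ge e(G_{n,4})-1$ the total deficiency in this chain is at most $1$, so at most one of the three inequalities is off and only by a single unit. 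I would then split into the three cases (A) $e(H)=5$, i.e.\ $H\cong K_4-e$ with the other two terms extremal; (B) some outside vertex has exactly one neighbour in $H$; (C) $e(F\setminus H)=e(G_{n-4,4})-1$, so $F\setminus H\in G^-_{n-4,4}$ by $P(n-4)$ (for $n\ge 12$ the remainder has $n-4\ge 8$ vertices). In cases (A) and (B) the remainder is extremal, $F\setminus H\in G_{n-4,4}$, and I would show that extremality of the join count $e(H,F\setminus H)=2(n-4)$ forces $H$ and $F\setminus H$ to glue along the complete bipartite skeleton with its within-part matchings, so that $F$ is a single-edge deletion from a member of $G_{n,4}$; case (C) is similar with the defect carried in the remainder.

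To finish, for (ii) observe that an edge of $G_{n,4}=\overline{C_1\sqcup C_2}$ is either a matching edge inside one part or a ``cross'' edge between the two parts. Deleting a within-part edge $e$ gives $\overline{G_{n,4}-e}=(C_1\sqcup C_2)+e$ with $e$ internal to a part, so the complement stays disconnected; deleting a cross edge $uv$ gives $(C_1\sqcup C_2)+uv$, which is connected exactly because $uv$ joins the two components (each $C_i$ is connected since $n_i\ge 3$ when $n\ge 8$). Hence $\overline{F}=\Gamma$ connected forces $F=G_{n,4}-(\text{cross edge})$, and unwinding the definition of $\mathcal{T}_{n,4}$ gives $\Gamma\in\mathcal{T}_{n,4}$; that $\tau(\Gamma)=4$ is then guaranteed by Theorems~\ref{thm: turan connection} and \ref{edge minimizer}. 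The main obstacle is the exact equality analysis in step (i): because the arithmetic slack is only $1$, I must show the single unit of deficiency cannot be ``spread out'' into a graph outside $G^-_{n,4}$, which requires the careful glueing in cases (A)--(C), a direct verification of the base cases $n\in\{8,9,10,11\}$, and attention to the genuinely two-element family $G_{n,4}$ arising when $n\equiv 2\pmod 4$. Establishing along the way that $G_{n,4}$ is the \emph{unique} family of edge maximisers, rather than merely a subfamily, is part of the same induction.
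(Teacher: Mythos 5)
Your proposal follows essentially the same route as the paper's proof: pass to the complement, reduce to the exact claim that $G_{n,4}$ and $G^-_{n,4}$ are the only $L_5$-free graphs with $\ex(n,L_5)$ and $\ex(n,L_5)-1$ edges respectively, prove that claim by induction using a densest $4$-vertex subgraph $H$ and Lemma~\ref{min degree}, and finish by observing that the complement of a graph in $G^-_{n,4}$ is connected exactly when the deleted edge is a cross edge between the two parts, which recovers $\mathcal{T}_{n,4}$. The one substantive difference is the induction scheme, and it matters for how the hardest step gets done. The paper runs a strong induction over \emph{all} orders and, crucially, invokes the hypothesis at order $N-1$ as the engine of its gluing step: it deletes a vertex $b \in B$ of minimum internal degree and classifies $\Gamma - b$ as lying in $G_{N-1,4}$ or $G^-_{N-1,4}$, which instantly forces the attachment pattern of all remaining vertices. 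Your steps-of-four induction (base cases $n \in \{8,\dots,11\}$, then $P(n-4)$) forecloses that trick, so your gluing in cases (B) and (C) must be carried out by direct $L_5$-hunting — for instance, two adjacent remainder vertices attaching to overlapping but unequal pairs of $H$ produce a wheel $K_1 \vee C_4 = L_5$. Such direct arguments do work (the paper itself uses exactly this style in its base cases $n \le 9$ and to rule out stray attachments like $b \sim h_1$), but they are where the real content of the proof lives, and your outline defers them. Two smaller remarks: your case (A) ($e(H)=5$ with the other two terms extremal) is vacuous, since then $F\setminus H \in G_{n-4,4}$ by $P(n-4)$ and so contains a $K_4$, contradicting the choice of $H$ as densest; and your steps-of-four bookkeeping does buy something, namely that for $n \ge 12$ the remainder has at least $8$ vertices, so the exceptional near-extremal graphs at orders $6$ and $7$ (the graphs of Figure~\ref{G64-} and $\overline{C_7}$) that the paper must excise inside its induction step for $N \in \{10,11\}$ never arise — at the price of pushing that same finite amount of work into your base cases.
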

\begin{proof}
 To prove this, we will show that for $n\geq 8$ the graphs in the set $G_{n,4}$ and $G^-_{n,4}$ are the only $L_5$-free graphs of size $ex(n, L_5)$ and $ex(n, L_5)-1$, respectively. It suffices to show the latter, 
 as adding an edge to a graph in $G^-_{n,4}$ either gives a graph in  $G_{n,4}$ or creates an $L_5$ as a subgraph. We will prove this using induction on $n$. Let $\Gamma$ be an $L_5$-free graph of order $n$ and size $ex(n, L_5)-1$. 
 Let $H$ be a 4-vertex subgraph of $\Gamma$ of maximum possible size with the vertex set $V(H) = \{h_1, h_2, h_3, h_4\}$. 
 Using Lemma~\ref{min degree}, we see that $e(\Gamma\setminus H)\leq ex(n-4, L_5)$ and any vertex in $\Gamma\setminus H$ has at most two neighbors in $H$, so $e( \Gamma\setminus H, H)\leq 2(n-4)$. From this we conclude $e(H) \ge 5$.  For $n=5$, since $e(H)\geq 5$ and the fifth vertex, call it $v$, is adjacent to at most 2 vertices in $H$, we get the four graphs shown in Figure~\ref{G54-}. 
 Therefore $\Gamma\in G^-_{5,4}$ and $G_{5,4}$ is the only graph of order $5$ and size $8$ that is $L_5$-free. 
 Similarly, for $n=6$, the graphs in the set $G^-_{6,4}$ as well as the two graphs shown in Figure~\ref{G64-} are the only graphs of order $6$ and size $10$ that are $L_5$-free. 
 Note that adding an edge in the graphs in Figure~\ref{G64-} creates an $L_5$. 
 Therefore, graphs in $G_{6,4}$ are the only $L_5$-free graphs of order $6$ and size $11$.\\

 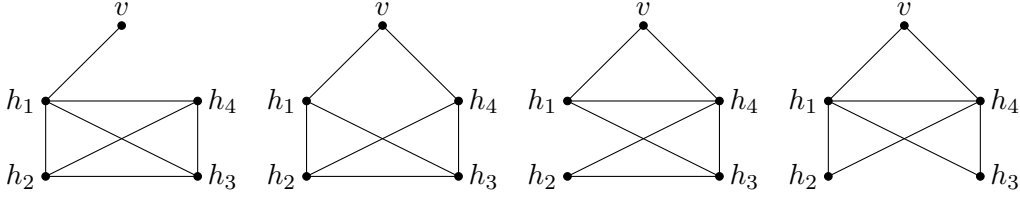
\begin{figure}
        \centering
\begin{tikzpicture}[scale=1]

\foreach \name/\x/\y in {
    v1/0/2, v2/0/1, v3/2/1, v4/2/2, v5/1/3}
    {
        \coordinate (\name) at (\x,\y);
        \draw[fill] (\name) circle[radius=0.05cm];
    }
\foreach \u/\v in {v1/v2, v1/v3, v1/v4, v2/v3, v2/v4, v3/v4, v5/v1}
    {
        \draw (\u) -- (\v);
    }
    
\node[anchor=east] at (v1) {$h_1$};
\node[anchor=east] at (v2) {$h_2$};
\node[anchor=west] at (v3) {$h_3$};
\node[anchor=west] at (v4) {$h_4$};
\node[anchor=south] at (v5) {$v$};
\end{tikzpicture}
\begin{tikzpicture}[scale=1]

\foreach \name/\x/\y in {
    v1/0/2, v2/0/1, v3/2/1, v4/2/2, v5/1/3}
    {
        \coordinate (\name) at (\x,\y);
        \draw[fill] (\name) circle[radius=0.05cm];
    }
\foreach \u/\v in {v1/v2, v1/v3, v2/v3, v2/v4, v3/v4, v5/v1, v5/v4}
    {
        \draw (\u) -- (\v);
    }
    
\node[anchor=east] at (v1) {$h_1$};
\node[anchor=east] at (v2) {$h_2$};
\node[anchor=west] at (v3) {$h_3$};
\node[anchor=west] at (v4) {$h_4$};
\node[anchor=south] at (v5) {$v$};
\end{tikzpicture}
\begin{tikzpicture}[scale=1]

\foreach \name/\x/\y in {
    v1/0/2, v2/0/1, v3/2/1, v4/2/2, v5/1/3}
    {
        \coordinate (\name) at (\x,\y);
        \draw[fill] (\name) circle[radius=0.05cm];
    }
\foreach \u/\v in { v1/v3, v1/v4, v2/v3, v2/v4, v3/v4, v5/v1, v5/v4}
    {
        \draw (\u) -- (\v);
    }
    
\node[anchor=east] at (v1) {$h_1$};
\node[anchor=east] at (v2) {$h_2$};
\node[anchor=west] at (v3) {$h_3$};
\node[anchor=west] at (v4) {$h_4$};
\node[anchor=south] at (v5) {$v$};
\end{tikzpicture}
\begin{tikzpicture}[scale=1]

\foreach \name/\x/\y in {
    v1/0/2, v2/0/1, v3/2/1, v4/2/2, v5/1/3}
    {
        \coordinate (\name) at (\x,\y);
        \draw[fill] (\name) circle[radius=0.05cm];
    }
\foreach \u/\v in {v1/v2, v1/v3, v1/v4, v2/v4, v3/v4, v5/v1, v5/v4}
    {
        \draw (\u) -- (\v);
    }
    
\node[anchor=east] at (v1) {$h_1$};
\node[anchor=east] at (v2) {$h_2$};
\node[anchor=west] at (v3) {$h_3$};
\node[anchor=west] at (v4) {$h_4$};
\node[anchor=south] at (v5) {$v$};
\end{tikzpicture}

\caption{Graphs in $G^-_{5,4}.$}
\label{G54-}
\end{figure}

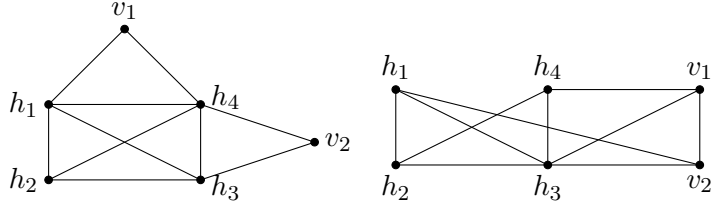
\begin{figure}
    \centering
\begin{tikzpicture}[scale=1]
\foreach \name/\x/\y in {
    v1/0/2, v2/0/1, v3/2/1, v4/2/2, v5/1/3, v6/3.5/1.5}
    {
        \coordinate (\name) at (\x,\y);
        \draw[fill] (\name) circle[radius=0.05cm];
    }
\foreach \u/\v in {v1/v2, v1/v3, v1/v4, v2/v3, v2/v4, v3/v4, v5/v1, v5/v4, v6/v4, v6/v3}
    {
        \draw (\u) -- (\v);
    }
    
\node[anchor=east] at (v1) {$h_1$};
\node[anchor=east] at (v2) {$h_2$};
\node[anchor=west] at (2,0.9) {$h_3$};
\node[anchor=west] at (2,2.1) {$h_4$};
\node[anchor=south] at (v5) {$v_1$};
\node[anchor=west] at (v6) {$v_2$};
\end{tikzpicture}
\begin{tikzpicture}[scale=1]
\foreach \name/\x/\y in {
    v1/0/2, v2/0/1, v3/2/1, v4/2/2, v5/4/2, v6/4/1}
    {
        \coordinate (\name) at (\x,\y);
        \draw[fill] (\name) circle[radius=0.05cm];
    }
\foreach \u/\v in {v1/v2, v1/v3, v2/v3, v2/v4, v3/v4, v5/v4, v6/v1, v6/v3, v5/v3, v5/v6}
    {
        \draw (\u) -- (\v);
    }
    
\node[anchor=south] at (v1) {$h_1$};
\node[anchor=north] at (v2) {$h_2$};
\node[anchor=north] at (v3) {$h_3$};
\node[anchor=south] at (v4) {$h_4$};
\node[anchor=south] at (v5) {$v_1$};
\node[anchor=north] at (v6) {$v_2$};
\end{tikzpicture}
    \caption{Graphs on $6$ vertices with $10$ edges that are not in the set $G^-_{6,4}$.}
    \label{G64-}
\end{figure}
 In the case of $n=7$, if $e(H)=6$, then there are two possibilities: either $e(\Gamma\setminus H, H) = 5$ and $e(\Gamma\setminus H)=3$, or $e(\Gamma\setminus H, H) = 6$ and $e(\Gamma\setminus H)=2$. Let $\{v_1, v_2, v_3\}$ be the vertex set of $\Gamma \setminus H$.  Suppose $e(\Gamma\setminus H, H) = 5$, and let $v_3$ is adjacent to only one vertex of $H$. Consider the graph $\Gamma-v_3$. It has $11$ edges, hence from the case of $n=6$, we conclude that $\Gamma-v_3\in G_{6,4}$. If $v_1, v_2$ are adjacent to $h_1, h_2$, then note that if $v_3$ is adjacent to either $h_1$ or $h_2$, the subgraph induced by the set $X= \{h_1, h_2, v_1, v_2, v_3\}$ contains a $L_5$, which is a contradiction. Thus, $v_3$ is adjacent to either $h_3$ or $h_4$, which proves $\Gamma\in G^-_{7,4}$. If $v_1\sim h_1, h_2$ and $v_2\sim h_3, h_4$, then $v_3$ adjacent to any one vertex of $H$ proves that $\Gamma\in G^-_{7,4}.$ Next, suppose $e(\Gamma\setminus H) = 2.$ Say $v_3\not\sim v_2$. Consider the graph $\Gamma-v_3$. It has $11$ edges, hence from the case of $n=6$, we conclude that $\Gamma-v_3\in G_{6,4}$. Suppose $v_1$ is adjacent to $h_1, h_2$. If $v_3\sim h_1$ and $v_3$ is adjacent to $h_3$ or $h_4$, say $v_3\sim h_3$, then the subgraph induced by the set $X = \{h_1, h_2, h_3, v_1, v_3\}$ contains a $L_5$, a contradiction. Thus, $v_3$ is either adjacent to $h_1, h_2$ or it is adjacent to $h_3, h_4$. Now, suppose $v_2$ is also adjacent to $h_1$ and $h_2$. Then, $v_3$ cannot be adjacent to either $h_1$ or $h_2$. Otherwise, the subgraph induced by the set $X = \{v_1, v_2, v_3, h_1, h_2\}$ would contain a $L_5$, which is a contradiction. Therefore, $\Gamma \in G^-_{7,4}$. Similarly, when $e(H)=5$, $\Gamma$ is either a graph in the set $G^-_{7,4}$ or it is the complement of cycle $C_7$. Finally, note that adding an edge to the complement graph of cycle $C_7$ creates an $L_5$. Therefore, $G_{7,4}$ is the only graph of order $7$ and size $15$ that is $L_5$-free. The proof of the remaining cases when $n \in \{8, 9\}$ uses similar arguments. For completeness, we provide the arguments below.\\
 
 For $n\in\{8,9\}$, we note that $e(H) = 6$. Otherwise, $e(\Gamma\setminus H) = ex(n-4, L_5)$, which implies that $\Gamma\setminus H$ is either $K_4$ (when $n=8$) or $G_{5,4}$ (when $n=9$). However, both of these graphs contain the complete graph $K_4$, which contradicts our choice of $H$. 
 In the case of $n=8$, we have $H = K_4$. This means either $e(\Gamma\setminus H) = 5$ and $e(\Gamma\setminus H, H) = 8$, or $e(\Gamma\setminus H) = 6$ and $e(\Gamma\setminus H, H) = 7$. Let $\{v_1, v_2, v_3, v_4\}$ be the vertex set of $\Gamma\setminus H$. 
 If $e(\Gamma\setminus H) = 5$, let $v_1$ be a vertex of minimum degree in $\Gamma\setminus H$, and if $e(\Gamma\setminus H, H) = 7$, let $v_1$ be the vertex which is adjacent to only one vertex of $H$. Consider the graph $\Gamma - v_1$. It has 15 edges. 
 The previous case for $n=7$ shows that $\Gamma-v_1$ is the graph $G_{7,4}$. Suppose $v_3, v_4$ are adjacent to both $h_3, h_4$. 
 If $v_1$ is not adjacent to $h_3$ and $h_4$, then we are done. Suppose not. Let $v_1\sim h_3$. If $v_1\sim v_3, v_4$, then the subgraph induced by the set $X = \{v_1, v_3, v_4, h_3, h_4\}$ contains $L_5$, a contradiction. If $v_1$ is not adjacent to both $v_3$ and $v_4$, say $v_1\not \sim v_4$, then $v_1$ is adjacent to two vertices in $H$. If $v_1\sim h_3, h_4$, then the subgraph induced by same set $X \{v_1, v_3, v_4, h_3, h_4\}$ contains $L_5$, a contradiction, else say if $v_1\sim h_3, h_2$, then the subgraph induced by the set $X = \{v_1, v_2, v_3, h_2, h_3\}$ contains $L_5$, a contradiction. This proves $\Gamma\in G^-_{8,4}$.\\

 In the case of $n=9$, we have $H = K_4$. Then there are two possibilities: either $e(\Gamma\setminus H) = 7$ and $e(\Gamma\setminus H, H) = 10$, or $e(\Gamma\setminus H) = 8$ and $e(\Gamma\setminus H, H) = 9$. Let $\{v_1, v_2, \ldots, v_5\}$ be the vertex set of $\Gamma\setminus H$. If $e(\Gamma\setminus H)=8,$ then previous case for $n=5$, shows that $\Gamma\setminus H$ is $G_{5,4}$. Let $v_5$ be the vertex of minimum degree in $\Gamma\setminus H$. Consider the graph $\Gamma - v_5$. If $v_5$ is adjacent to only one vertex in $H$, then $\Gamma -v_5$ has $20$ edges and, by the previous case of $n=8$, we conclude that $\Gamma-v_5$ is $G_{8,4}$. Otherwise, if $v_5$ is adjacent to two vertices of $H$, then $\Gamma-v_5$ has $19$ edges and, by the previous case of $n=8$, we conclude that $\Gamma-v_5\in G^-_{8,4}$. Suppose that $v_5\sim v_1, v_2$ in $\Gamma\setminus H$, and that $v_1, v_2$ are not adjacent to $h_3, h_4$. Then we observe that $v_5$ is not adjacent to $h_1$ or  $h_2$, otherwise the subgraph induced by set $X = \{h_1, h_2, v_1, v_2, v_5\}$ would contain an $L_5$, which is a contradiction. This proves $\Gamma \in G^-_{9,4}$.
 The proof arguments are similar for the remaining case when $e(\Gamma\setminus H) = 7$.\\  

 Next, suppose the result is true for all $n<N$. For $n=N$, where $N$ is at least $10$, let $\Gamma$ be an $L_5$-free graph of size $ex(N, L_5)-1.$ 
 Same as above, note that $e(H) = 6$, since otherwise $e(\Gamma\setminus H)= ex(N-4, L_5)$ and by induction hypothesis  $\Gamma\setminus H \cong G_{N-4,4} \supseteq K_4$, contradicting our choice of $H$.  We have the following two cases.\\

{\bf Case 1 :} $e( \Gamma\setminus H, H) =  2(N-4).$ This means each vertex of $\Gamma\setminus H$ is adjacent to exactly two vertices of $H$ and furthermore, $e(\Gamma\setminus H) = ex(N-4, L_5)-1.$ By induction hypothesis, $\Gamma\setminus H \in G^-_{N-4,4}$ (except when $n\in\{10,11\}$, if $\Gamma\setminus H$ is a graph in Figure \ref{G64-} for $n=10$, or $\Gamma\setminus H$ is the complement of cycle $C_7$ for $n=11$, then using similar arguments as above we can show that $\Gamma$ contains $L_5$). Let $\Gamma\setminus H = G - e$, where $e = \{u,v\}$ is an edge in $G$ and $G\in G_{N-4, 4}$ with the partite sets $A, B$. Let $|A|\leq |B|$. Suppose $a_1\in A$ is adjacent to $h_1, h_2$. We need to show all the vertices of $A$ are adjacent to $h_1, h_2$ and all the vertices of $B$ are adjacent to $h_3, h_4$. For $n\not \equiv 3 \pmod 4$, pick a vertex $b\in B$ such that the degree of $b$ in the subgraph induced by $B$ is minimum.  Consider the graph $\Gamma - b$ and note that if $b$ is an endpoint of the edge $e = \{u,v\}$, then $e(\Gamma - b) = ex(N-1, L_5)$ , and by the induction hypothesis, $\Gamma - b \in G_{N-1,4}$, else $e(\Gamma - b) = ex(N-1, L_5) -1$ and by the induction hypothesis, $\Gamma - b \in G^-_{N-1,4}$. Thus, all the vertices of $A$ are adjacent to $h_1, h_2$ and all the vertices of $B\setminus \{b\}$ are adjacent to $h_3, h_4$. It remains to show that $b$ is not adjacent to either $h_1$ or $h_2$. Suppose $b\sim h_1$. 
Then pick two vertices from $A$, say $a_s, a_t$, such that $b\sim a_s, a_t$. 
The subgraph induced by the set $X = \{a_s, a_t, b, h_1, h_2\}$ contains $L_5$, a contradiction. For $n\equiv 3 \pmod 4$, instead of one vertex, we pick two vertices $b_1, b_2$ from the set $B$ with the minimum possible degree in the subgraph induced by the vertex set $B$ in $\Gamma\setminus H$ and repeat the same process as described above.\\

 {\bf Case 2 :}  $e( \Gamma\setminus H, H) =  2(N-4)-1$. This means except for one vertex, say $x$, all vertices of $\Gamma\setminus H$ are adjacent to exactly two vertices in $H$, and $x$ is adjacent to only one vertex in $H$. Furthermore, we get $e(\Gamma\setminus H) = ex(N-4, L_5)$ and hence, $\Gamma\setminus H \in G_{N-4, 4}$ by the induction hypothesis. Suppose $A, B$ are the partite sets of $\Gamma\setminus H$ and let $|A|\leq |B|$. Suppose $a_1\in A$ is adjacent to $h_1, h_2$. We need to show that no vertex of $A$ is adjacent to $h_3$ or $h_4$ and no vertex of $B$ is adjacent to $h_1$ or $h_2$. For $n\not \equiv 3 \pmod 4$, pick a vertex $b\in B$ such that the degree of $b$ in the subgraph induced by the vertex set $B$ is minimum in $\Gamma\setminus H$. Consider the graph $\Gamma - b$. 
 If $b=x$, then $e(\Gamma - b) = ex(N-1, L_5)$ , and by the induction hypothesis, $\Gamma - b \in G_{N-1,4}$, else $e(\Gamma - b) = ex(N-1, L_5) -1$ and by the induction hypothesis, $\Gamma - b \in G^-_{N-1,4}$. Thus, no vertex of $A$ is adjacent to $h_3$ or $h_4$ and no vertex of $B\setminus\{b\}$ is adjacent to $h_1$ or $h_2$. It remains to show that $b$ is not adjacent to either $h_1$ or $h_2$. Suppose $b\sim h_1$. 
 Then pick two vertices from $A$, say  $a_s, a_t$, such that $a_s\sim a_t$. The subgraph induced by the set $X = \{a_s, a_t, b, h_1, h_2\}$ contains $L_5$, a contradiction. For $n\equiv 3 \pmod 4$, instead of one vertex, we pick two adjacent vertices $b_1, b_2$ from the set $B$ and repeat the same process as described above. 
 
 The proof completes by observing that the complement of a graph in the set $G^-_{n,4}$ is either disconnected or isomorphic to a graph in $\mathcal{T}_{n,4}$.
\end{proof}

\begin{theorem}\label{spec minimizer1}
 Let $n \equiv 0 \pmod 4$. The graph $\widehat{\mathcal{T}}_{n,4}$ is the unique spectral minimizer in $\mathcal{D}_{n, 4}.$ 
\end{theorem}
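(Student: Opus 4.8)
The plan is to reduce the whole statement to a single comparison, using the fact that for $n\equiv 0\pmod 4$ the family of edge minimizers collapses to one graph. First I would observe that the partition rule in the definition of $G_{n,2k}$ forces the only admissible split of $n$ into two parts to be $(n/2,n/2)$ with both parts even (a $(n/2-1,n/2+1)$ split is illegal since those parts are odd yet differ by $2$). Hence every graph in $\mathcal{T}_{n,4}$ is two copies of $CP_{n/2}$ joined by a single edge, and since $CP_{n/2}$ is vertex-transitive, all such graphs are isomorphic to $\widehat{\mathcal{T}}_{n,4}$. By Proposition~\ref{unique L5 free} the edge minimizers in $\mathcal{D}_{n,4}$ are exactly these graphs, so the set of edge minimizers is precisely $\{\widehat{\mathcal{T}}_{n,4}\}$, and it suffices to prove that every spectral minimizer must be an edge minimizer.

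To compute $\rho(\widehat{\mathcal{T}}_{n,4})$ I would exploit the automorphism swapping the two copies. The Perron vector is invariant under it, hence takes equal values on corresponding vertices; restricting the eigenequation to one copy shows that $\rho(\widehat{\mathcal{T}}_{n,4})$ equals the Perron root of $B:=A(CP_{n/2})+e_u e_u^{\top}$, where $u$ is the endpoint of the connecting edge. Since $CP_{n/2}$ is $(n/2-2)$-regular with spectrum $\{\,n/2-2,\ 0^{(n/4)},\ (-2)^{(n/4-1)}\,\}$ and is vertex-transitive, the $(u,u)$-entry of its resolvent is the normalized trace, so the eigenvalues of $B$ exceeding $d:=n/2-2$ are the roots of
\[
\frac{1}{n/2}\left(\frac{1}{\lambda-d}+\frac{n/4}{\lambda}+\frac{n/4-1}{\lambda+2}\right)=1 .
\]
As the left-hand side is strictly decreasing on $(d,\infty)$ and blows up as $\lambda\to d^{+}$, it is enough to evaluate it at $\lambda=d+\tfrac{4}{n}$ and check the value is below $1$ (the three terms contribute $\tfrac12$, less than $\tfrac{1}{n-4}$, and less than $\tfrac1n$ respectively, and $\tfrac{1}{n-4}+\tfrac1n<\tfrac12$ for $n\ge 8$). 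This yields $\rho(\widehat{\mathcal{T}}_{n,4})<\tfrac{n}{2}-2+\tfrac{4}{n}$.

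For the lower bound on competitors I would use the standard estimate $\rho(G)\ge 2e(G)/n$. By Theorem~\ref{edge minimizer} the edge minimizers satisfy $e_{min}(\mathcal{D}_{n,4})=\tfrac{n^2}{4}-n+1$, so any $G\in\mathcal{D}_{n,4}$ with $e(G)\ge e_{min}(\mathcal{D}_{n,4})+1$ has $\rho(G)\ge\tfrac{n}{2}-2+\tfrac{4}{n}>\rho(\widehat{\mathcal{T}}_{n,4})$. Therefore no graph with more than the minimum number of edges can be a spectral minimizer; every spectral minimizer lies in $\mathcal{T}_{n,4}$, and by the first paragraph this forces it to equal $\widehat{\mathcal{T}}_{n,4}$, proving uniqueness.

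The hard part will be making the upper bound on $\rho(\widehat{\mathcal{T}}_{n,4})$ sharp enough to beat the average-degree bound for a graph with just one extra edge: a crude rank-one perturbation estimate only gives $\rho\le d+1$, which is hopelessly weak, so I genuinely need the explicit spectrum of $CP_{n/2}$ together with vertex-transitivity to locate the Perron root within $O(1/n)$ of $d$. I would also verify the boundary case $n=8$ directly, since the inequality $\tfrac{1}{n-4}+\tfrac1n<\tfrac12$ used in the secular estimate is tightest there (it reads $\tfrac38<\tfrac12$).
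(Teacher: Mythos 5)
Your argument is correct, and its top-level strategy coincides with the paper's: identify the edge minimizers with the single graph $\widehat{\mathcal{T}}_{n,4}$ via Proposition~\ref{unique L5 free} (using that the only admissible part sizes are $(n/2,n/2)$ when $n\equiv 0 \pmod 4$), prove the sharp bound $\rho(\widehat{\mathcal{T}}_{n,4})<\frac{n}{2}-2+\frac{4}{n}$, and eliminate every non-minimizer $H$ by the average-degree bound $\rho(H)\ge \frac{2e(H)}{n}\ge \frac{n}{2}-2+\frac{4}{n}$. Where you genuinely diverge is the technique for the spectral upper bound. The paper partitions $V(\widehat{\mathcal{T}}_{n,4})$ into the three classes $\{u',v'\},\{u,v\},V\setminus\{u,u',v,v'\}$, observes this partition is equitable, and works with the characteristic polynomial $P(x)=x^3+\left(3-\frac{n}{2}\right)x^2-\frac{n}{2}x+\frac{n}{2}-2$ of the $3\times 3$ quotient matrix, checking $P$ is positive and increasing at and beyond $x=\frac{n}{2}+\frac{4}{n}-2$. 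You instead fold along the automorphism swapping the two copies, identify $\rho(\widehat{\mathcal{T}}_{n,4})$ with the Perron root of the rank-one perturbation $A(CP_{n/2})+e_ue_u^{\top}$, and locate that root through the secular equation, using vertex-transitivity to write $e_u^{\top}(\lambda I-A)^{-1}e_u$ as the normalized trace over the explicit spectrum $\left\{\frac{n}{2}-2,\,0^{(n/4)},\,(-2)^{(n/4-1)}\right\}$ of the cocktail party graph; your estimates of the three terms at $\lambda=\frac{n}{2}-2+\frac{4}{n}$, including the boundary check at $n=8$, are all accurate and land on exactly the same threshold as the paper. The paper's route is more elementary (a $3\times 3$ determinant plus a monotonicity check, no perturbation theory needed); yours is more structural and would generalize directly to a pendant edge attached at a vertex of any vertex-transitive graph whose spectrum is known, which could be useful for the higher-$k$ problems left open in the later sections.
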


\begin{proof}
Let $e_{12} = uv$ be the connecting edge that links the two cocktail party graphs, $CP_{\frac{n}{2}}$, in $\widehat{\mathcal{T}}_{n,4}$. Let $u'$ (respectively $v'$) be the non-neighbor of $u$ (respectively $v$) in its respective cocktail party graph. Consider the partition of $\widehat{\mathcal{T}}_{n,4}=(V,E)$ into the following three parts: $\{u', v'\}, \{u, v\}, V\setminus\{u',u,v',v\}.$ The corresponding quotient matrix and the characteristic polynomial are 
$$Q_{\widehat{\mathcal{T}}_{n,4}} =\begin{bmatrix}
    0&0&\frac{n}{2}-2\\0&1&\frac{n}{2}-2\\1&1&\frac{n}{2}-4
    \end{bmatrix}$$ 
    and 
    $$P(x) = x^3 + \left(3-\frac{n}{2}\right)x^2-\frac{n}{2}x +\frac{n}{2}-2,$$ 
    respectively. Since the partition is equitable, $\rho(Q_{\widehat{\mathcal{T}}_{n,4}}) = \rho(\widehat{\mathcal{T}}_{n,4}).$ Note that when $x= \frac{n}{2}+\frac{4}{n}-2$, 
    
    $$P\left(\frac{n}{2}+\frac{4}{n}-2\right) = \frac{n}{2}-4+\frac{16}{n}-\frac{48}{n^2}+\frac{64}{n^3}>0$$
    
    for all $n>7$ and since $P(x)$ is monotonically increasing for $x\geq \frac{n}{2}+\frac{4}{n}-2,$ we get $\rho(\widehat{\mathcal{T}}_{n,4})<\frac{n}{2}+\frac{4}{n}-2$. Any other graph $H$ with $\tau(H) = 4$ has average degree $\frac{2e}{n} \geq \frac{n}{2}+\frac{4}{n}-2$ by Proposition~\ref{unique L5 free}, hence $\rho(H)>\rho(G).$ This completes the proof.  
\end{proof}

\begin{theorem}\label{spec minimizer2}
    Let $n\equiv 1,2,3 \pmod 4$. The graph $\widehat{\mathcal{T}}_{n,4}$ is the unique spectral minimizer in $\mathcal{D}_{n, 4}.$ 
\end{theorem}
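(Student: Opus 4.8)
The plan is to follow the template of the proof of Theorem~\ref{spec minimizer1}. By Proposition~\ref{unique L5 free}, every graph in $\mathcal{D}_{n,4}$ other than those in $\mathcal{T}_{n,4}$ has at least $e_{min}(\mathcal{D}_{n,4})+1$ edges, so its average degree, and hence its spectral radius, is at least $\beta_n := \frac{2\,(e_{min}(\mathcal{D}_{n,4})+1)}{n}$. It therefore suffices to prove two statements: (i) $\rho(\widehat{\mathcal{T}}_{n,4}) < \beta_n$, which rules out every graph that is not an edge minimizer; and (ii) $\widehat{\mathcal{T}}_{n,4}$ has strictly smaller spectral radius than every other graph in $\mathcal{T}_{n,4}$. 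Combining (i) and (ii), and recalling that the cases $n\in\{5,6,7\}$ are already settled (by $P_5,P_6,C_7$ respectively), yields uniqueness for all $n\ge 8$.

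For (i) I would work with the orbit partition of $\widehat{\mathcal{T}}_{n,4}$, which is equitable, so that $\rho(\widehat{\mathcal{T}}_{n,4})$ equals the spectral radius of the associated quotient matrix. The classes are the two connecting endpoints, their unique non-neighbors inside the respective components, the apex vertices $v_\Delta$ of any odd component, and the bulk of the remaining base vertices. For $n\equiv 2 \pmod 4$ the part-swap symmetry of $L_{n/2}-L_{n/2}$ collapses this to the $4\times 4$ matrix
\[
Q=\begin{bmatrix} 1 & 0 & 1 & \tfrac n2-3\\ 0 & 0 & 1 & \tfrac n2-3\\ 1 & 1 & 0 & \tfrac n2-3\\ 1 & 1 & 1 & \tfrac n2-5\end{bmatrix},
\]
whereas for $n\equiv 1,3 \pmod 4$ the two components are not isomorphic and the quotient is a $7\times 7$ matrix. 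In each case I would evaluate the characteristic polynomial of the quotient at $x=\beta_n$, check that the value is positive, and verify monotonicity of the polynomial beyond $\beta_n$, exactly as in Theorem~\ref{spec minimizer1}, to conclude $\rho(\widehat{\mathcal{T}}_{n,4})<\beta_n$.

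For (ii) I would first invoke the Kelmans operation of Lemma~\ref{bad connection}: any graph in $\mathcal{T}_{n,4}$ whose connecting edge meets an apex $v_\Delta$ has strictly larger spectral radius than the graph obtained by re-routing that edge to a degree-$(n_i-2)$ vertex. Iterating, every edge minimizer with a bad connecting edge is beaten by the good-connecting-edge graph on the same partition. For $n\equiv 1,3 \pmod 4$ the part sizes are forced to be $\{\tfrac{n-1}{2},\tfrac{n+1}{2}\}$, so that good-edge graph is exactly $\widehat{\mathcal{T}}_{n,4}$, which finishes those residues. For $n\equiv 2 \pmod 4$ there is a second admissible partition giving the good-edge graph $CP_{n/2-1}-CP_{n/2+1}$; I would dispatch it without further polynomial work by observing that $\widehat{\mathcal{T}}_{n,4}=L_{n/2}-L_{n/2}$ is connected and non-regular with maximum degree $\tfrac n2-1$, so that
\[
\rho(\widehat{\mathcal{T}}_{n,4}) < \tfrac n2-1 = \rho\!\left(CP_{n/2+1}\right) \le \rho\!\left(CP_{n/2-1}-CP_{n/2+1}\right),
\]
the last inequality being Cauchy interlacing applied to the induced copy of $CP_{n/2+1}$.

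The main obstacle is step (i): unlike the $3\times 3$ quotient appearing in Theorem~\ref{spec minimizer1}, the relevant quotient matrices here have order up to $7$, and the threshold $\beta_n$ differs across the residues $1,2,3 \pmod 4$, so the characteristic-polynomial sign analysis must be carried out separately and carefully in each case. Everything else — the reduction via Proposition~\ref{unique L5 free}, the Kelmans step, and the interlacing comparison for $n\equiv 2 \pmod 4$ — is structural and short.
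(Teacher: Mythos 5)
Your step (i) is false for two of the three residue classes, and this is fatal to the proposal as written. For odd $n$ the two components of $\widehat{\mathcal{T}}_{n,4}$ have sizes $\frac{n-1}{2}$ and $\frac{n+1}{2}$, and the larger component is an \emph{induced} subgraph of $\widehat{\mathcal{T}}_{n,4}$: it is $L_{(n+1)/2}$ when $n\equiv 1 \pmod 4$ and $CP_{(n+1)/2}$ when $n\equiv 3\pmod 4$. In either case its spectral radius exceeds $\frac{n+1}{2}-2=\frac{n}{2}-\frac{3}{2}$ (for $L_m$ one has $\rho(L_m)=\frac{m-3+\sqrt{(m-1)^2+4}}{2}>m-2$; $CP_{(n+1)/2}$ is $\bigl(\frac{n}{2}-\frac{3}{2}\bigr)$-regular), so $\rho(\widehat{\mathcal{T}}_{n,4})>\frac{n}{2}-\frac{3}{2}$. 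On the other hand, for odd $n$ one computes $e_{min}(\mathcal{D}_{n,4})=\frac{n^2-4n+7}{4}$, hence $\beta_n=\frac{2(e_{min}+1)}{n}=\frac{n}{2}-2+\frac{11}{2n}$, which is \emph{below} $\frac{n}{2}-\frac{3}{2}$ as soon as $n\ge 11$ (and the inequality also fails at $n=9$: there $\rho(\widehat{\mathcal{T}}_{9,4})\ge\rho(L_5)=1+\sqrt{5}\approx 3.236$ while $\beta_9=\frac{28}{9}\approx 3.111$). So $\rho(\widehat{\mathcal{T}}_{n,4})>\beta_n$ for all relevant $n\equiv 1,3\pmod 4$; no quotient-matrix sign analysis can rescue (i), because the statement you are trying to verify is simply not true. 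Structurally, the problem is that for odd $n$ the larger component alone has spectral radius about $\frac{1}{2}$ above the average degree of the whole graph, so the average-degree bound cannot eliminate competitors with anywhere between $e_{min}+1$ and roughly $e_{min}+\frac{n}{4}$ edges; those must instead be excluded by a structural/stability argument (showing any such graph still contains $L_{(n+1)/2}$, or a denser part such as $CP_{(n+3)/2}$, and comparing by subgraph monotonicity and Kelmans). Be aware that the paper's own detailed proof covers only $n\equiv 2\pmod 4$ — where (i) \emph{is} true and is proved exactly as you propose — and its closing assertion that the other residues follow by ``the same calculations'' glosses over precisely this obstruction.

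On the positive side, for $n\equiv 2\pmod 4$ your argument is complete and in one respect better than the paper's: your $4\times 4$ quotient and the Kelmans reductions coincide with the paper's, but where the paper disposes of the remaining competitor $CP_{n/2-1}-CP_{n/2+1}$ by writing down a six-part equitable quotient and evaluating its characteristic polynomial at $\rho(G_{v_\Delta^{(1)}v_\Delta^{(2)}})=\frac{n-4+\sqrt{n^2-8n+48}}{4}$, your two-line alternative — $\widehat{\mathcal{T}}_{n,4}$ is connected and non-regular with $\Delta=\frac{n}{2}-1$, hence $\rho(\widehat{\mathcal{T}}_{n,4})<\frac{n}{2}-1$, while the competitor contains the $\bigl(\frac{n}{2}-1\bigr)$-regular graph $CP_{n/2+1}$ as an induced subgraph, hence has spectral radius at least $\frac{n}{2}-1$ — is correct and substantially shorter. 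As it stands, however, the proposal proves the theorem only for the residue class $n\equiv 2\pmod 4$.
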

\begin{proof}
 Let $n\equiv 2 \pmod 4$. There are four non-isomorphic graphs in $\mathcal{T}_{n,4}$. Following the notation of Lemma~\ref{bad connection}, let $v_\Delta^{(i)}\in C_i$ be the vertex of degree $n_i-1$ in $C_i$, if $n_i$ is odd. The four graphs are $G_{v_\Delta^{(1)}}$, where the connecting edge $e_{12} = \{v_\Delta^{(1)}, w\}$, for $w\not=v_\Delta^{(2)} \in C_2$, $G_{v_\Delta^{(1)}v_\Delta^{(2)}}$, where $e_{12} = \{v_\Delta^{(1)}, v_\Delta^{(2)}\}$,
 $\widehat{\mathcal{T}}_{n,4}= L_{\frac{n}{2}}-L_{\frac{n}{2}}$, and $CP_{\frac{n-2}{2}}-CP_{\frac{n+2}{2}}$ (when $n>6$). By Lemma~\ref{bad connection} we get $$\rho(\widehat{\mathcal{T}}_{n,4})<\rho(G_{v_\Delta^{(1)}})<\rho(G_{v_\Delta^{(1)}v_\Delta^{(2)}})$$ as we can obtain $G_{v_\Delta^{(1)}}$ from $\widehat{\mathcal{T}}_{n,4}$ and $G_{v_\Delta^{(1)}v_\Delta^{(2)}}$ from $G_{v_\Delta^{(1)}}$ by doing Kelmans operation. Let $e_{12} = uv$ be the connecting edge that links the two odd cocktail party graphs, $L_{\frac{n}{2}}$, in $\widehat{\mathcal{T}}_{n,4}$. Let $u'$ (respectively $v'$) be the non-neighbor of $u$ (respectively $v$) in its respective cocktail party graph. Consider the partition of $\widehat{\mathcal{T}}_{n,4} = (V,E) $ into the following four parts (in order): $\{u', v'\}, \{u, v\}, \{v_1, v_2\}, V\setminus\{u',u,v',v, v_1, v_2\}.$  The corresponding quotient matrix and the characteristic polynomial are
    $$Q_{\widehat{\mathcal{T}}_{n,4}} = \begin{bmatrix}
        0&0&1&\frac{n}{2}-3\\
        0&1&1&\frac{n}{2}-3\\
        1&1&0&\frac{n}{2}-3\\
        1&1&1&\frac{n}{2}-5
    \end{bmatrix},$$ and
    $$P_{\widehat{\mathcal{T}}_{n,4}}(x) = x^4 +\left(4-\frac{n}{2}\right)x^3 +(2-n)x^2 -3x +\frac{n}{2}-1.$$
    
Since the partition is equitable, $\rho(Q_{\widehat{\mathcal{T}}_{n,4}}) = \rho(\widehat{\mathcal{T}}_{n,4}).$ When $x = \frac{n}{2}+\frac{6}{n}-2,$
$$P_{\widehat{\mathcal{T}}_{n,4}}\left(\frac{n}{2}+\frac{6}{n}-2\right) = \frac{n^2}{4}-3n+24 - \frac{114}{n}+\frac{396}{n^2}-\frac{864}{n^3}+\frac{1296}{n^4}>0$$
for any $n\geq 6$. Since $P_{\widehat{\mathcal{T}}_{n,4}}(x)$ is increasing for $x\geq \frac{n}{2}+\frac{6}{n}-2$, we get that $\rho(\widehat{\mathcal{T}}_{n,4})<\frac{n}{2}+\frac{6}{n}-2$. 
Therefore, any graph $H\in \mathcal{D}_{n,4}$ with average degree $\frac{2e}{n}\geq \frac{n}{2}+\frac{6}{n}-2$ has spectral radius $\rho(H)> \rho(\widehat{\mathcal{T}}_{n,4}).$ By Proposition~\ref{unique L5 free}, it remains to show $\rho(CP_{\frac{n-2}{2}}-CP_{\frac{n+2}{2}})>\rho(\widehat{\mathcal{T}}_{n,4}).$ Let $e_{12} = u_2w_2$ be the edge connecting the two components in $CP_{\frac{n-2}{2}}-CP_{\frac{n+2}{2}}$. Consider the partition of $CP_{\frac{n-2}{2}}-CP_{\frac{n+2}{2}}$ into the following six parts (in order) : $\{u_1\}, \{w_1\},\{u_2\},\{w_2\},$ $V(C_1)\setminus\{u_1,u_2\}, V(C_2)\setminus\{w_1,w_2\},$ where $C_1 = CP_{\frac{n}{2}-1}$, $C_2 = CP_{\frac{n}{2}+1}, u_1, u_2$ is a non-adjacent pair in $C_1$, and $w_1, w_2$ is a non-adjacent pair in $C_2.$ The corresponding quotient matrix and the characteristic polynomial are 
       $$Q_{CP_{\frac{n-2}{2}}-CP_{\frac{n+2}{2}}} = \begin{bmatrix}
        0&0&0&0&\frac{n}{2}-3&0\\
        0&0&0&0&0&\frac{n}{2}-1\\
        0&0&0&1&\frac{n}{2}-3&0\\
        0&0&1&0&0&\frac{n}{2}-1\\
        1&0&1&0&\frac{n}{2}-5&0\\
        0&1&0&1&0&\frac{n}{2}-3
    \end{bmatrix},$$ and  
$$
P_{CP_{\frac{n-2}{2}}-CP_{\frac{n+2}{2}}}(x)=\frac{1}{2}(2x^2 +x(4-n)-4)q(x) +r(x),   $$ 
where $$q(x) = x^4 + \left(6-\frac{n}{2}\right)x^3 + (12-2n)x^2 +(8-2n)x + 1+n-\frac{n^2}{4},$$ and $$r(x) = -\left(\frac{n^3}{8}-\frac{n^2+n}{2}\right)x-\frac{3n^2}{4}+4n-1.$$ Since the partition is equitable, $\rho(Q_{CP_{\frac{n-2}{2}}-CP_{\frac{n+2}{2}}}) = \rho(CP_{\frac{n-2}{2}}-CP_{\frac{n+2}{2}}).$\\
When $x = $ $ \rho(G_{v_\Delta^{(1)}v_\Delta^{(2)}}) = \frac{n-4+\sqrt{n^2-8n+48}}{4}$, we have
\begin{align*}
 P_{CP_{\frac{n-2}{2}}-CP_{\frac{n+2}{2}}}( \rho(G_{v_\Delta^{(1)}v_\Delta^{(2)}})) &= 0 + r( \rho(G_{v_\Delta^{(1)}v_\Delta^{(2)}}))\\ 
 &=\resizebox{0.6\textwidth}{!}{$-\frac{(n^3-4n^2-4n)(n-4+\sqrt{n^2-8n+48})}{32}-\frac{3n^2}{4}+4n-1 <0$}
\end{align*}

for all $n>4.$ This implies $\rho(G_{v_\Delta^{(1)}v_\Delta^{(2)}})<\rho(CP_{\frac{n-2}{2}}-CP_{\frac{n+2}{2}}).$ Therefore, we have 
$$\rho(\widehat{\mathcal{T}}_{n,4})<\rho(G_{v_\Delta^{(1)}})<\rho(G_{v_\Delta^{(1)}v_\Delta^{(2)}})<\rho(CP_{\frac{n-2}{2}}-CP_{\frac{n+2}{2}}).$$

This completes the proof for $n\equiv 2\pmod 4.$ The proof of the remaining two cases follows the same idea and the same calculations.
\end{proof}

\section{Extending a result of Erd\H{o}s-Simonovits and a stability result}
\label{sec: edge minimizers d-independence}

In Theorem~\ref{edge extremal graph} we determined edge maximizer graphs when forbidding $L_{2k+1}$. We used this to determine edge and spectral minimizers among connected graphs on $n$ vertices with dissociation 4 in Theorem~\ref{edge minimizer} and Proposition~\ref{unique L5 free}, respectively. In this section we will characterize edge maximizer graphs for several other complete multipartite graphs. We will also prove a stability result which will allow us to characterize edge maximizer graphs while requiring additionally that their compliments are connected, for these same complete multipartite graphs.
As a corollary, we also determine edge minimizers among connected graphs on $n$ vertices with given even dissociation number, when $n$ satisfies some criteria. 
We will use the results of this section to prove bounds for edge and spectral minimizer results in Section~\ref{sec: lower bounds on spectral minimizers for d-independnce number} and spectral minimizers in Section~\ref{sec: spectral minimizers for even dissociation numbers}. Later, we will use the structure of extremal graphs for forbidden complete multipartite graphs  in a companion paper \cite{DVSpTurcompemult}, on the spectral Tur\' an problem for joins of complete multipartite graphs.

The results in this section are asymptotic in nature and are true for $n$ sufficiently large.
\subsection{Main results and supporting lemmas for Tur\' an problems}
Here and now onward, we will use $K_m(r_1, r_2, \ldots,$ $r_m)$ to denote the complete $m$-partite graph with parts of sizes $r_i$ for $1 \le i \le m$.  We use $G(n)$ to denote a graph on $n$ vertices throughout this section.
\begin{remark}
\label{rem: ErdosSim on general structure of extremal graphs}
In \cite{ErdosSimCompletemultipartite} Erd\H{o}s and Simonovits compile their results from  \cite{limittheoremErdosSimonovits, erdos1966new,  simonovits1968method} where they had already proved the following results for a finite arbitrary family $\mathcal{F}$ of forbidden graphs. Let $q := \min \{\chi(F) \mid F \in \mathcal{F}\}-1$.
\begin{enumerate}
    \item \label{rem: ErdosSim on general structure of extremal graphs 1.} Then $\ex(n, \mathcal{F}) = \binom{n}{2} (1 - \frac{1}{q} + o(1))$ as $n\to \infty$.  
    \item \label{rem: ErdosSim on general structure of extremal graphs 2.} For $n$ sufficiently large and any $G(n) \in \EX(n, \mathcal{F})$,  there exists a positive integer $r$ (that depends on some coloring properties of the elements of $\mathcal{F}$) such that the following hold:
    \begin{enumerate}
        \item[a.] $G(n)$ can be obtained from a graph product $\bigvee_{i=1}^{q} N_i$ by adding and deleting at most $O(n^{2-\frac{1}{r}})$ edges.
        \item[b.] Each part $N_i$ of the join $\bigvee_{i=1}^{q} N_i$ has roughly the same number of vertices, that is, $|N_i| = n_i = \frac{n}{q} + O(n^{1-\frac{1}{r}})$.
        \item[c.] Each vertex $v \in G(n)$ has degree $d(v) > \frac{n}{q}(q-1) - c_1 n^{1-\frac{1}{r}}$, for some suitable constant $c_1$.
        \item[d.] For any constant $\epsilon > 0$, there exists a constant $k_{\epsilon}$, such that there are at most $k_{\epsilon}$ vertices $v \in N_i$, where $v$ is adjacent to at least $\epsilon n_i$ neighbors of $N_i$. 
    \end{enumerate}
\end{enumerate}
    
\end{remark}

Using these results, Erd\H{o}s and Simonovits \cite{ErdosSimCompletemultipartite}  proved the following theorem.
\begin{theorem}
\label{thm: Erdos-Sim comp. multipartite}
    Let $r_1 = 1,2$ or $3$, $r_1 \le r_2 \le \ldots \le r_{q+1}$ be given integers. 
    If $n$ is large enough, then each extremal graph $G(n)$ in $\EX(n, K_{q+1}(r_1, \ldots, r_{q+1}))$ is of the following form:
    \[G(n) = \bigvee_{i=1}^q N_i\] where 
    \begin{enumerate}
        \item $n_i = v(N_i) = \frac{n}{q} + o(n)$;
        \item $N_1$ is an extremal graph for $K_2(r_1, r_2)$;
        \item $N_2, \ldots, N_q$ are extremal graphs for $K_2(1, r_2)$.
    \end{enumerate}    
        Conversely, if $\hat{N_1}, \ldots, \hat{N_q}$ are given graphs such that
    \begin{enumerate}
            \item[4.] there exists an extremal graph $\bigvee_{i=1}^q N_i$ satisfying 1., 2., 3. such that $v(\hat{N}_i) = v(N_i)$;
            \item[5.] $\hat{N}_i$ is an extremal graph for $K_2(r_1, r_2)$;
            \item[6.] $\hat{N}_i$ is an extremal graph for $\{K_2 (1, r_2), K_2(2,2), K_3(1,1,1)\}$ for $i \neq 1$,
    \end{enumerate}
        then $\hat{G}(n) = \bigvee_{i=1}^q\hat{N}_i$ is an extremal graph for $K_{q+1}(r_1, \ldots, r_{q+1})$.
\end{theorem}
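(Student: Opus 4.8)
The plan is to take the general structural description of extremal graphs recorded in Remark~\ref{rem: ErdosSim on general structure of extremal graphs} as the starting point and to upgrade it to the exact form claimed. Since $\chi(K_{q+1}(r_1,\ldots,r_{q+1})) = q+1$, the relevant parameter is $q$, so part~\ref{rem: ErdosSim on general structure of extremal graphs 2.} of the Remark already gives, for $n$ large, an approximate join $\bigvee_{i=1}^q N_i$ with balanced parts $n_i = \frac{n}{q} + o(n)$, minimum degree exceeding $\frac{n}{q}(q-1) - c_1 n^{1-1/r}$, and only boundedly many vertices of large internal degree inside each part. I would prove the two directions separately: first that every extremal graph has the stated structure, and then the converse that the prescribed graphs are extremal.

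For the forward direction I would proceed in three steps. \emph{Step 1 (exact join).} Using edge-maximality together with $K_{q+1}(r_1,\ldots,r_{q+1})$-freeness, upgrade the approximate decomposition to an exact complete join: if a pair of vertices in distinct parts $N_i, N_j$ were non-adjacent, the high minimum degree and the bounded count of high-internal-degree vertices let one add that cross-edge without creating the forbidden graph, contradicting extremality; an analogous symmetrization (or local-switching) argument absorbs the $O(n^{2-1/r})$ discrepancy and shows every cross-part pair is an edge. \emph{Step 2 (decomposition family).} Establish the combinatorial lemma that, for large balanced parts, $\bigvee_{i=1}^q N_i$ is $K_{q+1}(r_1,\ldots,r_{q+1})$-free if and only if at most one part (say $N_1$) contains a copy of $K_2(r_1,r_2)$ while every other part is free of each graph in $\{K_2(1,r_2),\, K_2(2,2),\, K_3(1,1,1)\}$; the point is that any embedding of the forbidden graph must, by pigeonhole on $q+1$ colour classes among $q$ parts, place two colour classes into a common $N_i$, and one checks that splitting a colour class across parts forces precisely a star, a $C_4$, or a triangle inside some $N_i$. \emph{Step 3 (optimization).} Maximize $\sum_i e(N_i) + \sum_{i<j} n_i n_j$ subject to the constraints from Step~2 and $\sum_i n_i = n$. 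The K\H{o}v\'ari--S\'os--Tur\'an bound makes a single $K_2(r_1,r_2)$-free part the most edge-rich option, while the linear edge bound for graphs of bounded degree and girth shows the remaining parts are far sparser; comparing these quantities shows the optimum concentrates the dense part in exactly one $N_i$ and forces balancedness, yielding properties 1.--3.

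For the converse I would verify directly that any $\hat G(n) = \bigvee_{i=1}^q \hat N_i$ satisfying 4.--6. is $K_{q+1}(r_1,\ldots,r_{q+1})$-free, by running the embedding analysis of Step~2 in reverse: a purported copy would require two colour classes inside one $\hat N_i$ (producing $K_2(r_1,r_2)\subseteq \hat N_1$, impossible) or a split colour class producing a $K_2(1,r_2)$, $K_2(2,2)$, or $K_3(1,1,1)$ in some $\hat N_i$ with $i\neq 1$, again impossible. A direct edge count then shows $e(\hat G(n)) = \ex(n, K_{q+1}(r_1,\ldots,r_{q+1}))$, so these graphs are extremal.

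The main obstacle is Step~2, the decomposition-family lemma, and specifically ruling out embeddings that split a colour class of the forbidden graph across several parts. Naively one would expect only the condition ``every $N_i$ is $K_2(r_1,r_2)$-free,'' but part-splitting embeddings (for instance realizing three colour classes by a single vertex in each of two parts that both contain a triangle) are strictly more powerful, and are exactly what forces the sparser family $\{K_2(1,r_2),\, K_2(2,2),\, K_3(1,1,1)\}$ on the non-distinguished parts. Making this equivalence precise, and confirming in Step~3 that the asymmetric allocation (one dense part, the rest sparse) strictly beats any more balanced distribution of density, is where the restriction $r_1 \in \{1,2,3\}$ and the hypothesis that $n$ is large are genuinely used.
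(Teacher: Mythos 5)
You should first note that the paper does not actually prove this statement: it is quoted from Erd\H{o}s--Simonovits \cite{ErdosSimCompletemultipartite}, and what the paper proves with this machinery is its generalization, Theorem~\ref{thm: turan numbers multipartite graphs}. Comparing your outline against that argument, the overall shape (approximate structure $\to$ exact structure $\to$ converse via a join lemma plus counting) is right, but your Step~2 contains a genuine error that Step~3 then inherits. The ``if and only if'' in Step~2 is false: only the ``if'' direction (Lemma~\ref{lem: product graphs not contain forbidden graph}) holds. For a counterexample to your ``only if,'' take $q=2$, forbidden graph $K_3(3,3,3)$, $N_1$ edgeless and $N_2$ a triangle plus isolated vertices. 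Any copy of $K_3(3,3,3)$ in $N_1 \vee N_2$ would have to place all its $N_1$-vertices in a single colour class (they are pairwise non-adjacent), hence two full classes inside $N_2$, i.e.\ a $K_{3,3}$ inside a graph with three edges --- impossible. So the join is free of the forbidden graph even though $N_2$ contains $K_3(1,1,1)$. This is not a technicality: the theorem itself is deliberately asymmetric --- in the forward direction the parts $N_2, \ldots, N_q$ are merely extremal for $K_2(1,r_2)$ and may contain triangles and quadrilaterals, while only the converse demands freeness from the whole family $\{K_2(1,r_2), K_2(2,2), K_3(1,1,1)\}$. A decoupled per-part optimization as in your Step~3 cannot produce this asymmetry. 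The actual proof works directly with (almost-)extremal graphs and exploits the interaction \emph{between} parts: e.g.\ a star $K_2(1,r_2)$ inside some $U_j'$, $j \ge 2$, forces the common neighbourhood inside $U_1'$ to be $K_2(r_1-1,r_3)$-free and hence makes $e(U_1')$ too small to be compatible with the edge lower bound (Lemma~\ref{lem: max degree r_2 -1 in U_j'}).

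Step~1 has a second gap, of ordering. You want to complete the join first, justified only by the minimum-degree bound and the bounded number of high-internal-degree vertices from Remark~\ref{rem: ErdosSim on general structure of extremal graphs}; but those facts do not exclude dense configurations inside the parts which, together with the added cross edge, complete a forbidden copy --- ruling those out is exactly the content of the internal-sparsity lemmas, which therefore have to come first. (Indeed every extremal graph is saturated, so adding \emph{any} missing edge creates a copy; any contradiction argument here must lean on structure you have not yet derived.) In the paper's argument the exact join appears \emph{last}, not first: once $N_1$ is shown $K_2(r_1,r_2)$-free and $N_i$ ($i \ge 2$) are shown $K_2(1,r_2)$-free, one gets $e(G(n)) \le E + \ex(n_1, K_2(r_1,r_2)) + \sum_{i\ge 2}\ex(n_i, K_2(1,r_2))$, and matching this against the constructive lower bound forces equality, which simultaneously yields that every cross pair is an edge and every part is extremal. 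Finally, you misplace where $r_1 \le 3$ enters: the K\H{o}v\'ari--S\'os--Tur\'an \emph{upper} bound cannot make one part ``edge-rich''; what is needed are the matching lower-bound constructions $\ex(n, K_2(r_1,r_2)) = \Omega(n^{2-1/r_1})$ (classical for $r_1 \le 3$, and Lemma~\ref{lem: Tightness for KST bound when t > s-1!} for $r_2 \ge (r_1-1)!+1$), which give $\ex(n, K_2(r_1-1,r_2)) = o(\ex(n, K_2(r_1,r_2)))$; the converse additionally needs $(r_2-1)$-(near-)regular graphs of girth at least five, so that extremality for the three-graph family costs no edges compared with extremality for $K_2(1,r_2)$ alone.
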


In this paper, we generalize Theorem~\ref{thm: Erdos-Sim comp. multipartite} to cover several cases where $r_1$ may be greater than 3, as follows.
\begin{theorem}
\label{thm: turan numbers multipartite graphs}
    Let $r_2 \ge (r_1 - 1)! + 1$, $r_1 \le r_2 \le \ldots\le  r_{q+1}$ be given integers. 
    If $n$ is large enough, then each extremal graph $G(n)$ in $\EX(n, K_{q+1}(r_1, \ldots, r_{q+1}))$ is of the following form:
    \[G(n) = \bigvee_{i=1}^q N_i\] where 
    \begin{enumerate}
        \item $n_i = v(N_i) = \frac{n}{q} + o(n)$, that is, for any positive constant $\delta$ there exists a positive integer $N_{\delta}$ so that for all $n \ge N_{\delta}$ we have $\frac{n}{q} - \delta n \le n_i \le \frac{n}{q} + \delta n$;
        \item $N_1$ is an extremal graph for $K_2(r_1, r_2)$;
        \item $N_2, \ldots, N_q$ are extremal graphs for $K_2(1, r_2)$.
    \end{enumerate}    
        Conversely, if $\hat{N_1}, \ldots, \hat{N_q}$ are given graphs such that
    \begin{enumerate}
            \item[4.] there exists an extremal graph $\bigvee_{i=1}^q N_i$ satisfying 1., 2., 3. such that $v(\hat{N}_i) = v(N_i)$;
            \item[5.] $\hat{N}_i$ is an extremal graph for $K_2(r_1, r_2)$;
            \item[6.] $\hat{N}_i$ is an extremal graph for $\{K_2 (1, r_2), K_2(2,2), K_3(1,1,1)\}$ for $i \neq 1$,
    \end{enumerate}
        then $\hat{G}(n) = \bigvee_{i=1}^q\hat{N}_i$ is an extremal graph for $K_{q+1}(r_1, \ldots, r_{q+1})$.
\end{theorem}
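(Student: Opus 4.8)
The plan is to mirror the structure of Erd\H{o}s and Simonovits' original proof of Theorem~\ref{thm: Erdos-Sim comp. multipartite}, and to locate precisely where the hypothesis $r_1 \le 3$ was used so that it can be replaced by the weaker numerical condition $r_2 \ge (r_1-1)! + 1$. Since $\chi(K_{q+1}(r_1,\ldots,r_{q+1})) = q+1$, the value of $q$ in Remark~\ref{rem: ErdosSim on general structure of extremal graphs} matches the $q$ in the statement, so part~\ref{rem: ErdosSim on general structure of extremal graphs 2.} already guarantees that any extremal $G(n)$ is, up to $O(n^{2-1/r})$ edge modifications, a join $\bigvee_{i=1}^{q} N_i$ with each $n_i = n/q + o(n)$. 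This hands us conclusion~1 essentially for free, and the analysis of parts~(c) and (d) of the Remark forces each $N_i$ to be relatively sparse inside while the cross-edges between parts are essentially complete. The bulk of the work is then to pin down exactly which sparse graphs the $N_i$ must be, namely to prove conclusions~2 and~3.

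First I would establish the forward direction. The key structural input is that because the parts are joined, any copy of $K_2(1,r_2) = K_{1,r_2}$ (a star) appearing inside some part $N_i$, together with one vertex chosen from each of the other $q-1$ parts, yields a copy of $K_{q+1}(1,1,\ldots,1,r_2)$, and more generally a copy of $K_2(r_1,r_2)$ inside $N_1$ combined with appropriate bipartite structures in the remaining parts produces the forbidden $K_{q+1}(r_1,\ldots,r_{q+1})$. Thus $N_1$ must be $K_2(r_1,r_2)$-free and each $N_i$ ($i \ge 2$) must be $K_2(1,r_2)$-free. The heart of the argument is the counting step: to maximize the total number of edges, one wants each $N_i$ as dense as possible subject to these forbidden-subgraph constraints. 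The point where $r_1 \le 3$ entered the original proof is in controlling the interaction between a dense $N_1$ and the other parts; this is a Kővári–Sós–Turán-type estimate where one must ensure that a $K_2(r_1,r_2)$-free graph on $N_1$ cannot contain too many copies of smaller bipartite configurations that would, via the join, complete a forbidden subgraph. The condition $r_2 \ge (r_1-1)! + 1$ is exactly the threshold at which the relevant counting bound (of the form: a graph with many edges contains $K_{r_1,r_2}$ once $r_2$ exceeds a factorial-type function of $r_1$, cf.\ the standard Kővári–Sós–Turán bound where $r_2 > (r_1-1)!$ guarantees the bipartite Turán exponent collapses to the right value) goes through, so I expect this to be the replacement for the $r_1 \le 3$ hypothesis.

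For the converse direction, I would verify directly that any $\hat{G}(n) = \bigvee_{i=1}^q \hat{N}_i$ satisfying conditions~4--6 is $K_{q+1}(r_1,\ldots,r_{q+1})$-free and has the maximum number of edges. Freeness follows by a pigeonhole argument: a hypothetical embedding of $K_{q+1}(r_1,\ldots,r_{q+1})$ must distribute its $q+1$ parts among the $q$ parts $\hat{N}_i$, so by pigeonhole two of the $(q+1)$ parts land in a single $\hat{N}_i$; one then checks that the induced bipartite or triangle structure this forces inside $\hat{N}_i$ is precisely what conditions~5 and~6 (being extremal, hence free of $K_2(r_1,r_2)$, and for $i \ne 1$ also free of $K_2(2,2)$ and $K_3(1,1,1)$) forbid. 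The edge-count maximality is then an optimization over the sizes $v(\hat N_i)$ that reproduces the quantity in condition~1.

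The main obstacle, and the only genuinely new content beyond adapting the Erd\H{o}s–Simonovits framework, will be the counting step in the forward direction: showing that the factorial threshold $r_2 \ge (r_1-1)!+1$ suffices to force $N_1$ to be a genuine $K_2(r_1,r_2)$-extremal graph rather than some denser configuration that avoids the forbidden multipartite graph for a more subtle reason. Concretely, I expect the difficulty to lie in bounding the number of $K_{r_1}$-like or $r_1$-wise common-neighborhood configurations inside a candidate $N_1$ and ruling out that redistributing edges among the parts could increase the total while preserving freeness; this is where the Kővári–Sós–Turán estimate with its factorial dependence on the smaller side is invoked, and getting the constants to line up with $(r_1-1)!+1$ rather than a larger bound is the delicate part.
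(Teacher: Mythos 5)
Your high-level plan does match the paper's strategy (mirror Erd\H{o}s--Simonovits, extract conclusion~1 from their structure theorem, and isolate where $r_1 \le 3$ was used), but there is a genuine gap at exactly the step you yourself identify as the heart of the argument. You attribute the threshold $r_2 \ge (r_1-1)!+1$ to a K\H{o}v\'ari--S\'os--Tur\'an-type statement (``a graph with many edges contains $K_{r_1,r_2}$ once $r_2$ exceeds a factorial-type function of $r_1$''). That is not the role the hypothesis plays: the KST upper bound $\ex(n, K_2(s,t)) = O(n^{2-1/s})$ holds for \emph{all} $s \le t$ with no factorial condition whatsoever. What the factorial condition actually buys --- and what the paper invokes --- is the Alon--R\'onyai--Szab\'o norm-graph construction, a matching \emph{lower} bound $\ex(n, K_2(r_1,r_2)) \ge \tfrac{1}{2}n^{2-1/r_1} - o(n^{2-1/r_1})$, valid precisely when $r_2 \ge (r_1-1)!+1$. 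Combining the two gives $\ex(n, K_2(r_1,r_2)) = \Theta(n^{2-1/r_1})$, hence $\ex(n, K_2(r_1-1,t')) = o(\ex(n, K_2(r_1,r_2)))$ for any fixed $t'$; this is exactly the ``main hurdle'' from Remark~3 of Erd\H{o}s--Simonovits that the paper resolves. That $o(\cdot)$ comparison is the engine of every contradiction in the forward direction: the extremal graph has at least $E + \ex(n_1, K_2(r_1,r_2)) + \sum_{i \ge 2}\ex(n_i, K_2(1,r_2))$ edges by the explicit construction, and each structural deviation (a vertex of large internal degree, all parts being $K_2(r_1-1,r_2)$-free, a second part containing a star $K_2(1,r_2)$) caps the internal edge count at $O(\ex(n, K_2(r_1-1,\cdot)))$ or $(1-c)\ex(n, K_2(r_1,r_2))$, which contradicts the construction only because of this comparison. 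With upper bounds alone, as in your sketch, you can bound both the candidate and the alternatives from above but can never certify that the candidate wins, so the counting step you flag as delicate cannot be closed.

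Two secondary imprecisions. In the converse, the $q+1$ parts of an embedded $K_{q+1}(r_1,\ldots,r_{q+1})$ need not each land inside a single $\hat{N}_i$; pigeonhole only gives two parts that \emph{intersect} a common $\hat{N}_i$, and the genuine freeness proof (Lemma~1 of Erd\H{o}s--Simonovits, which the paper quotes) is a finer case analysis --- this is also precisely why condition~6 forbids $K_2(2,2)$ and $K_3(1,1,1)$ in addition to the star, and one must further check that these extra constraints cost no edges (there exist near-$(r_2-1)$-regular graphs with no $C_3$ or $C_4$) for the edge count to match. In the forward direction, the Erd\H{o}s--Simonovits remark only gives an \emph{approximate} join; your claim that a star inside $N_i$ plus vertices of the other parts completes a forbidden graph needs the common-neighborhood lemma (most vertices of each part are adjacent to almost all of every other part), and upgrading ``approximately a join'' to the exact equality $G(n) = \bigvee_{i=1}^q N_i$ is a further argument, not a consequence of the remark alone.
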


The proof of this generalization duplicates the proof of Erd\H{o}s and Simonovits \cite{ErdosSimCompletemultipartite} and makes use of already known lower and upper bounds for $\ex(n, K_2(r_1, r_2))$ which allow us to show that $\ex(n, K_{2}(r_1, r_2)) = \Theta(n^{2 - \frac{1}{r_1}})$ when  $r_2 \ge (r_1-1)! + 1$, which allows us to observe that $\ex(n, K_2(r_1 - 1, r_2)) = o(\ex(n, K_2(r_1, r_2)))$ as long as $r_2$ is large enough. This helps us get over the main hurdle towards generalizing  Theorem~\ref{thm: Erdos-Sim comp. multipartite}. This main hurdle of showing $\ex(n, K_2(r_1 - 1, r_2)) = o(\ex(n, K_2(r_1, r_2)))$ was suggested in Remark 3 of the same paper.

Results of K\H{o}v\' ari-S\'os-Tur\'an give upper bounds $\ex(n, K_{s,t}) \le Cn^{2-\frac{1}{s}}$. 
\begin{lemma}[K\H{o}v\'ari-S\'os-Tur\'an Theorem. \cite{KovariSosTuran}]
    \label{lem: KST}
    Let $s \le t$ be positive integers. Then as $n \to \infty$,
    \[\ex(n, K_2(s, t)) \le \left(\frac{(t-1)^{\frac{1}{s}}}{2} + o(1)\right)n^{2-\frac{1}{s}}.\]
\end{lemma}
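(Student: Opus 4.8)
The plan is to prove the bound by a double-counting argument on stars, which is the classical Kővári–Sós–Turán method. Let $G$ be any $K_2(s,t)$-free graph on $n$ vertices with $e = e(G)$ edges, and let $\bar d = \frac{1}{n}\sum_{v} d(v) = \frac{2e}{n}$ denote its average degree. The quantity I would count in two ways is the number of pairs $(v, S)$, where $v \in V(G)$ and $S$ is an $s$-element subset of the neighborhood $N(v)$; equivalently, this is the number of copies of the star $K_2(1,s)$ in $G$ whose center is $v$ and whose leaves form the set $S$. Counting by the center gives exactly $\sum_{v} \binom{d(v)}{s}$.

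Next I would produce the two bounds. For the upper bound I use $K_2(s,t)$-freeness: counting the same pairs by the leaf-set $S$, each fixed $s$-set $S$ can lie in $N(v)$ for at most $t-1$ vertices $v$, since $t$ such vertices together with $S$ would form a copy of $K_2(s,t)$ (the $t$ common neighbors are automatically disjoint from $S$, as no vertex is its own neighbor, and $s \le t$). Hence
\[
\sum_{v} \binom{d(v)}{s} \;=\; \sum_{|S| = s} \bigl|\{ v : S \subseteq N(v)\}\bigr| \;\le\; (t-1)\binom{n}{s}.
\]
For the lower bound I apply convexity (Jensen's inequality) to relate $\sum_v \binom{d(v)}{s}$ to $\binom{\bar d}{s}$, obtaining $\sum_v \binom{d(v)}{s} \ge n\binom{\bar d}{s}$.

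The step I expect to be the main technical obstacle is this convexity move, because the polynomial $x \mapsto \binom{x}{s} = \frac{1}{s!}\prod_{j=0}^{s-1}(x-j)$ is \emph{not} convex on all of $[0,\infty)$ when $s \ge 3$; it is only convex for $x \ge s-1$. I would get around this by working with the truncated function $f(x)$ that equals $\binom{x}{s}$ for $x \ge s-1$ and equals $0$ for $x \le s-1$. One checks that $f$ is convex on $[0,\infty)$ (the flat piece has slope $0$, and the right derivative of $\binom{x}{s}$ at $x = s-1$ is $\tfrac{1}{s} > 0$), and that $f(k) = \binom{k}{s}$ for every nonnegative integer $k$, so the identity $\sum_v \binom{d(v)}{s} = \sum_v f(d(v)) \ge n f(\bar d)$ holds. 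I would also dispose of the degenerate regime separately: if $\bar d \le s-1$ then $e \le \tfrac{(s-1)n}{2} = o(n^{2 - 1/s})$ for $s \ge 2$ and the claim is immediate, so I may assume $\bar d \ge s-1$, where $f(\bar d) = \binom{\bar d}{s}$.

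Finally I would combine the two estimates and extract the degree bound. Using $\binom{\bar d}{s} \ge \frac{(\bar d - s + 1)^s}{s!}$ and $\binom{n}{s} \le \frac{n^s}{s!}$, the inequality $n\binom{\bar d}{s} \le (t-1)\binom{n}{s}$ simplifies to $(\bar d - s + 1)^s \le (t-1)\,n^{s-1}$, whence
\[
\bar d \;\le\; (t-1)^{1/s}\, n^{1 - 1/s} + (s-1).
\]
Since $e = \frac{n\bar d}{2}$, this yields $e \le \frac{(t-1)^{1/s}}{2}\,n^{2 - 1/s} + \frac{(s-1)n}{2}$, and the additive term $\frac{(s-1)n}{2}$ is $o(n^{2-1/s})$ as $n \to \infty$ (the exponent $2 - \frac1s > 1$ for $s \ge 2$, while for $s = 1$ the term vanishes and the bound holds exactly). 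This gives the asserted estimate $\ex(n, K_2(s,t)) \le \bigl(\frac{(t-1)^{1/s}}{2} + o(1)\bigr)n^{2 - 1/s}$.
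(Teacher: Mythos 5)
Your proof is correct. Note that the paper states Lemma~\ref{lem: KST} purely as a citation of the original K\H{o}v\'ari--S\'os--Tur\'an paper and supplies no proof of its own, so there is no internal argument to compare against; what you have written is the classical double-counting proof, and it is in fact the same method the paper does spell out for its bipartite-host variant, Lemma~\ref{lem: Zarankiewicz with large partite sets}: count pairs $(v,S)$ with $S$ an $s$-subset of $N(v)$, bound the count above by $(t-1)\binom{n}{s}$ using $K_2(s,t)$-freeness, and bound it below via Jensen's inequality applied to the average degree. One place where your write-up is more careful than the paper's analogous proof: you handle the failure of convexity of $x \mapsto \binom{x}{s}$ on $[0,s-1]$ explicitly, by truncating to the function that vanishes below $s-1$ (and separately disposing of the regime $\bar d \le s-1$, where the bound is trivial), whereas the paper's proof of Lemma~\ref{lem: Zarankiewicz with large partite sets} invokes convexity of $\frac{x(x-1)\cdots(x-s+1)}{s!}$ directly and absorbs the resulting slack into a $(1+o(1))$ factor. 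Your final extraction of the degree bound $\bar d \le (t-1)^{1/s}n^{1-1/s} + (s-1)$ and the observation that the additive term $\frac{(s-1)n}{2}$ is $o(n^{2-1/s})$ are both sound, so the stated asymptotic form of the lemma follows.
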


Constructions of $K_{s,t}$-free subgraphs of $K_n$ that have asymptotically (in the exponent) the same number of edges as the upper bound in Lemma~\ref{lem: KST} (consequently showing $\ex(n, K_{s,t}) \ge cn^{2-\frac{1}{s}})$ are known to exist only when $s,t \le 3$ or when $t \ge (s-1)! + 1$ (\cite{alon1999norm, brown1966, erdosrenyisos1966, kollar1996norm}).

The following result of Alon, R\'onyai, and Szab\'o from 1999 gives an asymptotically matching lower bound, as long as $t \ge (s - 1)! + 1$.

\begin{lemma}[Alon, R\'onyai, Szab\'o \cite{alon1999norm}]
\label{lem: Tightness for KST bound when t > s-1!}
For every fixed $s \ge 2$ and $t \ge (s-1)! + 1$ we have
\[\ex(n, K_{s, t}) \ge \frac{1}{2}n^{2- 1/s} - O(n^{2-1/t - c}),\]
where $c>0$ is an absolute constant.
\end{lemma}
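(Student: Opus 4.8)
The plan is to prove the lower bound by an explicit algebraic construction, namely the \emph{projective norm graph} of Alon, R\'onyai, and Szab\'o. Fix a prime power $q$, let $F = \mathbb{F}_{q^{s-1}}$, and let $N \colon F \to \mathbb{F}_q$ be the field norm $N(X) = X^{1 + q + \cdots + q^{s-2}}$. I would take the vertex set to be $V = F \times \mathbb{F}_q^*$, so that $|V| = q^{s-1}(q-1)$, and join two distinct vertices $(A,a)$ and $(B,b)$ exactly when $N(A+B) = ab$. The first routine step is a degree count: for a fixed vertex $(A,a)$, every $B$ with $A+B \neq 0$ determines $N(A+B) \in \mathbb{F}_q^*$, which forces the unique value $b = N(A+B)a^{-1} \in \mathbb{F}_q^*$, so the graph is $(q^{s-1}-O(1))$-regular. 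Writing $n = |V| = q^s - q^{s-1}$, so that $q = (1+o(1))\,n^{1/s}$, this gives $e(G) = \tfrac12 |V|\,(q^{s-1}-O(1)) = (\tfrac12 + o(1))\,n^{2-1/s}$, which is the main term.

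The crux, and the step I expect to be the main obstacle, is to show that $G$ is $K_{s,t}$-free. Since $t \ge (s-1)! + 1$, it suffices to prove the stronger, purely algebraic statement that any $s$ distinct vertices $(B_1, b_1), \ldots, (B_s, b_s)$ have at most $(s-1)!$ common neighbors. A common neighbor $(X,x)$ must satisfy $N(X + B_i) = x b_i$ for all $i$; eliminating $x$ by dividing the $i$-th equation by the $s$-th yields the system
\[ N\bigl((X+B_i)(X+B_s)^{-1}\bigr) = b_i b_s^{-1}, \qquad 1 \le i \le s-1, \]
a collection of $s-1$ norm conditions in the single unknown $X \in F$. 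The hard part is bounding the number of solutions of this system by $(s-1)!$: one expands each norm as a product of Frobenius conjugates $\prod_{j=0}^{s-2}(X+B_i)^{q^j}$, clears denominators, and argues that, because the $B_i$ are distinct, the resulting polynomial system is non-degenerate, so a resultant/B\'ezout-type count caps the number of admissible $X$ at $(s-1)!$. This counting lemma is where all the arithmetic of the norm map is genuinely used, and it is the only delicate ingredient; everything else is bookkeeping.

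With $K_{s,t}$-freeness established, it remains to pass from the special orders $n = q^s - q^{s-1}$ to arbitrary $n$ and to extract the stated error term. Given $n$, I would take the largest prime power $q$ with $q^{s-1}(q-1) \le n$, form the norm graph on $n' = q^{s-1}(q-1)$ vertices, and pad with the remaining $n - n'$ vertices (added so as not to create a forbidden subgraph). The density of prime powers gives $n' = (1-o(1))\,n$, whence $e(G) = (\tfrac12 + o(1))\,n^{2-1/s}$. Converting the $o(1)$ into the explicit error $O(n^{2-1/t-c})$ is a lower-order estimate: it accounts for the $O(1)$-per-vertex degree deficiency (loops and the excluded antipode), the padding vertices, and the gap between consecutive admissible orders, none of which disturbs the leading coefficient $\tfrac12$.
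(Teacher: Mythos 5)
The paper offers no proof of this lemma: it is quoted directly from Alon, R\'onyai, and Szab\'o \cite{alon1999norm}, so your proposal has to be measured against the original norm-graph argument, which is indeed exactly the route you sketch. Your description of the projective norm graph, the degree and edge count, and the reduction of $K_{s,t}$-freeness to the claim that any $s$ distinct vertices have at most $(s-1)!$ common neighbors all faithfully reproduce the source. The problem is that the step you yourself call the crux is asserted rather than proved, and the justification you gesture at would not deliver it. Working with the single unknown $X$, each cleared equation $N(X+B_i)\,b_s - N(X+B_s)\,b_i = 0$ has degree roughly $(q^{s-1}-1)/(q-1)$, so gcd/resultant bounds in one variable only give estimates that grow with $q$, not a bound independent of the field. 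The actual argument of \cite{alon1999norm} replaces the Frobenius conjugates $X^{q^j}$ by \emph{independent} variables $x_j$, turning the conditions into $s-1$ multilinear equations $\prod_{j=0}^{s-2}(x_j - a_{ij}) = \beta_i$ in $s-1$ unknowns with pairwise distinct coefficient rows; but even there, the ``B\'ezout-type count'' you invoke gives at most $(s-1)^{s-1}$ solutions, which for $s \ge 3$ is strictly larger than $(s-1)!$ and would only prove $K_{s,t}$-freeness for $t \ge (s-1)^{s-1}+1$, missing the claimed range $t \ge (s-1)!+1$ entirely. Obtaining the factorial bound, together with the finiteness of the solution set (which is where the distinctness of the $B_i$ is genuinely used), is the content of the key algebraic lemma of Alon--R\'onyai--Szab\'o, proved by a delicate elimination/induction argument; alternatively one can reach $(s-1)!$ through a BKK mixed-volume bound, but that requires verifying non-degeneracy and handling solutions outside the torus. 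None of this appears in your sketch, so the heart of the theorem is missing.

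A secondary, smaller gap: the passage from the special orders $n' = q^{s-1}(q-1)$ to all $n$ with the stated error term is not mere bookkeeping. Padding with isolated vertices (or deleting vertices) combined with the best known prime gaps yields a deficit of order $n^{2-(1+\theta)/s}$ with $\theta = 0.475$, so to match the statement one must check $2-(1+\theta)/s \le 2 - 1/t - c$, i.e.\ $c \le (1+\theta)/s - 1/t$. For each fixed pair $(s,t)$ in the allowed range this is a positive constant, but it tends to $0$ as $s$ grows, so the constant you can actually extract depends on $s$; the uniform ``absolute constant'' phrasing of the statement cannot be recovered from the argument as outlined, and at minimum the quantification deserves an explicit computation rather than an appeal to lower-order estimates.
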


\begin{remark}
\label{comparing turan numbers of complete bipartite graphs}
    Combining Lemma~\ref{lem: KST} and \ref{lem: Tightness for KST bound when t > s-1!} allows us to conclude that for $t \ge (s - 1)! + 1$, 
    \[\ex(n, K_{2}(s, t)) = \Theta(n^{2 - \frac{1}{s}}).\]
    Consequently, for $s\ge 4$, $t \ge (s-1)! + 1$ and for any positive integer $t'$, we have 
    \[\ex(n, K_2(s-1, t')) = o(\ex(n, K_2(s, t))).\]
    This means that given any constant $\alpha > 0$, there exists a positive integer $N_{\alpha}$ such that for all $n \ge N_{\alpha}$ we have $\ex(n, K_{s-1, t'}) < \alpha\ex(n, K_{s, t}))$.
\end{remark}

 We will also use the following result regarding forbidding complete bipartite graphs in complete bipartite hosts.
\begin{lemma}
\label{lem: Zarankiewicz with large partite sets}
Let $\delta$ be an arbitrary constant in  $(0,1)$ and $s \le  t$ be positive integers. Let $A$ and $B$ be the partite sets of $K_2(\lceil\delta n\rceil, \lfloor(1-\delta)n\rfloor)$, such that $|A| = \lceil\delta n\rceil$ and $|B| = \lfloor(1-\delta)n\rfloor)$. 
For $n$ large enough, any subgraph of $K_2(\lceil\delta n\rceil, \lfloor(1-\delta)n\rfloor)$ that is $K_{s, t}$-free, where the partite sets of size $s$ and $t$ are contained in $A$ and $B$, respectively, has at most $(t-1)^{\frac{1}{s}}n^{2-\frac{1}{s}}$ edges. 
\end{lemma}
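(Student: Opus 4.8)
The plan is to run the standard Kővári–Sós–Turán double-counting argument (as in Lemma~\ref{lem: KST}), but to exploit the fact that the part $A$ in which the small side of the forbidden $K_{s,t}$ must sit is the \emph{smaller} of the two partite sets. This imbalance is exactly what forces the leading coefficient of the edge count strictly below $(t-1)^{1/s}$, leaving room to absorb the lower-order error terms. First I would count, in two ways, the number of incidences
\[I = \#\{(S, b) : S \subseteq A,\ |S| = s,\ b \in B,\ S \subseteq N(b)\}.\]
Counting over $b \in B$ gives $I = \sum_{b \in B}\binom{d_A(b)}{s}$, since in the bipartite host every neighbour of $b$ lies in $A$. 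On the other hand, the hypothesis that the graph contains no $K_{s,t}$ with its $s$-side in $A$ and its $t$-side in $B$ means that no fixed $s$-subset $S \subseteq A$ can be contained in the neighbourhoods of $t$ distinct vertices of $B$; hence each such $S$ is counted at most $t-1$ times, giving $I \le (t-1)\binom{|A|}{s}$. Combining the two counts yields
\[\sum_{b \in B}\binom{d_A(b)}{s} \le (t-1)\binom{|A|}{s}.\]

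Next I would apply convexity of $x \mapsto \binom{x}{s}$ on $[s-1,\infty)$ through Jensen's inequality, noting that the average of $d_A(b)$ over $b \in B$ is $m/|B|$, where $m = e(A,B)$ denotes the number of edges. I would first dispose of the trivial regime $m \le (s-1)|B|$: there $m = O(n)$ is already $o(n^{2-1/s})$ for $s \ge 2$, while for $s = 1$ the claim reduces to the maximum-degree bound $m \le (t-1)|A|$, which is immediate. Otherwise the average degree is at least $s-1$, Jensen gives $|B|\binom{m/|B|}{s} \le (t-1)\binom{|A|}{s}$, and using $\binom{m/|B|}{s} \ge \frac{1}{s!}\bigl(\tfrac{m}{|B|}-s+1\bigr)^s$ together with $\binom{|A|}{s} \le \frac{|A|^s}{s!}$ and taking $s$-th roots produces
\[m \le (t-1)^{1/s}\,|A|\,|B|^{1-1/s} + (s-1)|B|.\]
Finally I would substitute $|A| = \lceil \delta n\rceil \le \delta n + 1$ and $|B| = \lfloor(1-\delta)n\rfloor \le (1-\delta)n$, so that the leading term is at most $(t-1)^{1/s}\,\delta\,(1-\delta)^{1-1/s}\,n^{2-1/s}$ up to an $O(n^{1-1/s})$ correction. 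Since $\delta \in (0,1)$ is a fixed constant, the quantity $c := \delta(1-\delta)^{1-1/s}$ satisfies $c < 1$, so the leading coefficient is strictly below $(t-1)^{1/s}$; the positive gap $(t-1)^{1/s}(1-c)\,n^{2-1/s}$ then dominates both the error term $(s-1)|B| = O(n)$ and the $O(n^{1-1/s})$ contribution from the ``$+1$'' in $|A|$, once $n$ exceeds a threshold depending only on $\delta, s, t$. This gives $m \le (t-1)^{1/s}n^{2-1/s}$, as claimed.

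The main obstacle is purely the quantitative bookkeeping: one must verify that the slack created by $c < 1$ genuinely beats all of the lower-order error terms for $n$ large, and one must legitimise the convexity step by separating out the low-average-degree regime. There is no structural difficulty here; the unbalanced host is precisely what makes the target coefficient $(t-1)^{1/s}$ (rather than some $\delta$-dependent constant) attainable, and the rest is the classical Kővári–Sós–Turán estimate adapted to partite sets of sizes $\delta n$ and $(1-\delta)n$.
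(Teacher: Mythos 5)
Your proposal is correct and follows essentially the same argument as the paper's proof: the K\H{o}v\'ari--S\'os--Tur\'an double count of pairs $(S,b)$ with $S\subseteq A$, $|S|=s$, $b\in B$, Jensen's inequality applied to the degrees into $A$, and the key observation that the coefficient $\delta(1-\delta)^{1-1/s}<1$ leaves enough slack to absorb all lower-order terms for large $n$. The only differences are cosmetic: the paper argues by contradiction and hides the error terms in $(1+o(1))$ factors, whereas you argue directly and handle the convexity step (separating the low-average-degree regime) and the rounding of $|A|,|B|$ more explicitly.
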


\begin{proof}

Assume to the contrary that $G$ is a subgraph of $K_2(\lceil\delta n\rceil, \lfloor(1-\delta)n\rfloor)$ with $e(G) > (t-1)^{\frac{1}{s}}n^{2-\frac{1}{s}}$ and contains no copy of $K_{s, t}$ where the partite sets of size $s$ and $t$ are contained in $A$ and $B$, respectively.
    We will obtain a contradiction using a standard double counting argument. Let $P$ be the number of ordered pairs $(v, S)$ where $v \in B$ and $S \subset A$ is a set of $s$ neighbors of $v$. Then by Jensen's inequality and the convexity of $f(x) = \frac{x(x-1)\ldots(x-s+1)}{s!}$ we get that for $n$ sufficiently large
    \[P = \sum_{v \in B}\binom{d(v)}{s} \ge |B|\binom{\frac{e(G)}{|B|}}{s} \ge \frac{(e(G))^s}{|B|^{s-1}s!(1+o(1))}.\]
    
    Next, since $G$ has no $K_{s, t}$ with partite sets of size $s$ and $t$ in $A$ and $B$, respectively, we have
    \[P \le \binom{|A|}{s}(t-1) \le \frac{|A|^s}{s!}(t-1).\]
    Thus, $\frac{(e(G))^s}{|B|^{s-1}s!(1+o(1))} \le \frac{|A|^s}{s!}(t-1)$ and therefore 
    \[e(G) \le |A||B|^{1-\frac{1}{s}}(t-1)^{\frac{1}{s}}(1+o(1))\le \delta(1-\delta)^{1-\frac{1}{s}}(t-1)^{\frac{1}{s}}(1+o(1))n^{2-\frac{1}{s}}.\]
    Since $\delta, 1-\delta < 1$, we have a contradiction to the assumption that $e(G) > (t-1)^{\frac{1}{s}}n^{2-\frac{1}{s}}$ for $n$ sufficiently large.
\end{proof}

 We will now work toward proving Theorem~\ref{thm: turan numbers multipartite graphs}. We will require the following lemmas from \cite{ErdosSimCompletemultipartite} that are used to prove Theorem~\ref{thm: Erdos-Sim comp. multipartite}.

 \begin{lemma}(Lemma 1 of \cite{ErdosSimCompletemultipartite})
     \label{lem: product graphs not contain forbidden graph}
    Given positive integers $r_1 \le r_2 \le \ldots \le r_{q+1}$. Let $G_1$ be a graph not containing $K_2(r_1, r_2)$ and let $G_i$ $(i=2, \ldots, q)$ be graphs not containing $K_2(1, r_2),$ $ K_2(2,2), $ and $K_3(1, 1, 1)$. Then $\bigvee_{i = 1}^q G_i$ does not contain $K_{q+1}(r_1, \ldots, r_{q+1})$.
 \end{lemma}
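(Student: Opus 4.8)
The plan is to prove the contrapositive reformulation: show that any copy of $K_{q+1}(r_1, \ldots, r_{q+1})$ inside $\bigvee_{i=1}^q G_i$ forces one of the forbidden configurations in some $G_i$. Suppose for contradiction that $\bigvee_{i=1}^q G_i$ contains a copy of $K_{q+1}(r_1, \ldots, r_{q+1})$, and let $W_1, W_2, \ldots, W_{q+1}$ denote the $q+1$ parts of this complete multipartite subgraph, with $|W_j| = r_{\pi(j)}$ for some assignment of the part-sizes (the sizes may get permuted when we map them into the $G_i$, so I would not assume $|W_j| = r_j$). Since these $q+1$ parts are pairwise completely joined, and the only nonadjacencies in $\bigvee_{i=1}^q G_i$ occur \emph{within} a single factor $G_i$, the crucial structural fact is that any two vertices lying in different factors are automatically adjacent. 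Hence each part $W_j$, being an independent set (or at least containing independent pairs), must interact delicately with the factor decomposition.

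The key combinatorial step is a pigeonhole argument on how the $q+1$ parts distribute among the $q$ factors. First I would observe that each part $W_j$ must be entirely contained in a single factor $G_i$: if $W_j$ had two vertices $u \in G_a$, $w \in G_b$ with $a \ne b$, then $u \sim w$ in the join, contradicting that $W_j$ is an independent part of the multipartite graph. So I get a map from the $q+1$ parts to the $q$ factors, and by pigeonhole two distinct parts $W_j, W_{j'}$ land in the same factor $G_i$. These two parts are completely joined to each other in the host graph, so inside $G_i$ we find a complete bipartite graph $K_2(|W_j|, |W_{j'}|)$. The plan is then to track which factor absorbs the two parts and extract the forbidden subgraph accordingly.

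The main obstacle — and the reason the hypotheses split $G_1$ from $G_2, \ldots, G_q$ — is handling the sizes carefully. If the doubled-up factor is $G_1$, the two parts trapped inside it have sizes at least $r_1$ and $r_2$ (after possibly relabeling, the two smallest available sizes are at least $r_1$ and $r_2$ since all $r_i \ge r_1$ and the two smallest among $q+1$ parts can be taken to have sizes $\ge r_1, \ge r_2$), yielding a $K_2(r_1, r_2) \subseteq G_1$, the forbidden configuration. If instead the doubled factor is some $G_i$ with $i \ge 2$, I would argue that it must contain $K_2(1, r_2)$: the two parts there have sizes at least $1$ and at least $r_2$ (indeed any two of the parts have sizes at least $r_1 \ge 1$ and the larger at least $r_2$), giving $K_2(1, r_2) \subseteq G_i$. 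The delicate accounting is to confirm that whichever two parts coincide, one can always guarantee sizes meeting the $K_2(1,r_2)$ or $K_2(r_1,r_2)$ thresholds; this uses that the parts can be assigned greedily so that $G_1$ receives the part of size $r_1$ and the remaining largest parts are distributed to force a large complete bipartite graph somewhere.

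I expect the genuinely subtle point to be the bookkeeping that lets us \emph{choose} which parts go where: since we only need \emph{some} factor to contain a forbidden graph, I would argue by considering the $q+1$ parts sorted by size and invoking that mapping $q+1$ parts into $q$ bins forces a collision, then checking the two smallest sizes involved in any collision already exceed the relevant thresholds. The roles of $K_2(2,2)$ and $K_3(1,1,1)$ in the hypothesis (which are implied by larger complete bipartite graphs anyway when $r_2 \ge 2$) are there to cover small-$r_2$ edge cases and to make the converse direction of Theorem~\ref{thm: turan numbers multipartite graphs} clean; for this lemma's direction I anticipate only the $K_2$-type forbidden graphs actually get used. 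Once the collision is located and sizes verified, the contradiction is immediate, completing the proof.
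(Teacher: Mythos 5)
There is a genuine gap, and it occurs at the very first structural step of your argument. You assert that each part $W_j$ of the copy of $K_{q+1}(r_1, \ldots, r_{q+1})$ must lie entirely inside a single factor $G_i$, because two vertices of $W_j$ in different factors would be adjacent in the join, ``contradicting that $W_j$ is an independent part.'' This is false for the notion of containment used here and throughout Tur\'an theory: the lemma asserts that $\bigvee_{i=1}^q G_i$ does not contain $K_{q+1}(r_1, \ldots, r_{q+1})$ \emph{as a subgraph}, not as an induced subgraph. The parts of a subgraph copy are only required to be completely joined to one another; they are \emph{not} required to be independent sets in the host graph, so a part may well contain pairs of adjacent host vertices and may straddle several factors (for example, a part of size $r$ could place one vertex in each of $r$ different factors). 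As a tiny illustration: if $G_1$ is the single edge $ab$ and $G_2$ is the single vertex $c$, then $G_1 \vee G_2$ is a triangle, which contains $K_2(1,2)$ with part $\{b, c\}$ meeting both factors. Once this step fails, the map ``parts $\to$ factors'' on which your pigeonhole rests does not exist, and the rest of the argument collapses.

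The tell-tale symptom is your closing remark that the hypotheses $K_2(2,2)$-free and $K_3(1,1,1)$-free are only needed for ``small-$r_2$ edge cases'' and would not actually be used: in a correct proof they are precisely the tools that control how parts straddle factors. Since $G_i$ ($i \ge 2$) is triangle-free, its vertex set can meet at most two parts of the copy (vertices from three parts inside $G_i$ would span a triangle of cross-part edges, which are edges of $G_i$); since $G_i$ is $K_2(2,2)$-free, if it meets two parts then one of the two intersections is a single vertex; and since $G_i$ is $K_2(1,r_2)$-free, the other intersection then has at most $r_2 - 1$ vertices. With these facts one can induct on $q$: if $G_q$ meets at most one part, the remaining $q$ parts lie in $\bigvee_{i=1}^{q-1} G_i$ and their sizes dominate $(r_1, \ldots, r_q)$, contradicting the induction hypothesis; if $G_q$ meets two parts $W_a$ and $W_b$, merge the leftover vertices $(W_a \cup W_b) \setminus V(G_q)$ into a single part --- legitimate precisely because parts of a subgraph copy need not be independent --- whose size is at least $(r_a - 1) + (r_b - r_2 + 1) = r_a + r_b - r_2 \ge \min(r_a, r_b)$, so the resulting $q$ part sizes again dominate $(r_1, \ldots, r_q)$, and the induction hypothesis applies. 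Note also that the paper itself does not reprove this statement; it cites Lemma 1 of Erd\H{o}s--Simonovits, whose argument runs along the lines just sketched, so your proposal cannot be repaired by minor edits --- the case analysis of factor--part intersections is the essential missing idea.
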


\begin{lemma}(Generalizing Lemma 2 of \cite{ErdosSimCompletemultipartite})
    \label{lem: extremal graphs do not have large internal degree}
  Let $r_2 \ge r_1$ be two positive integers and $\delta$ be a positive constant. Let $G(n)$ be graphs on $n$ vertices which do not contain $K_2(r_1, r_2)$.
  \begin{enumerate}
      \item If either $r_1 = 1$, then for $n$ sufficiently large, every vertex $v$ of $G(n)$ has degree $d(v) \le r_2 - 1 \le \delta n$.
      \item If $r_1 \ge 2$,
      then there exists a positive constant $c_{\delta, r_1, r_2}$ depending only on $\delta, r_1$ and $r_2$ such that if $n$ is sufficienlty large and $G(n)$ has a vertex $v$ with degree $d(v) \ge \delta n$, then 
      \begin{equation*}
      \begin{aligned}
      e(G(n)) &\le \ex(n, K_2(r_1, r_2)) + O(n^{2-\frac{1}{r_1 - 1}}) - \ex(\lceil\delta n\rceil, K_2(r_1, r_2))\\
      &\le (1 - c_{\delta,r_1,  r_2})\ex(n, K_2(r_1, r_2)).
      \end{aligned}
      \end{equation*}
  \end{enumerate}
  
\end{lemma}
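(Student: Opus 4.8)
The plan is to dispatch part (1) immediately and to spend the effort on part (2), whose point is that the presence of a vertex $v$ of large degree forces the edges incident to $N(v)$ to be sparse, pushing essentially all of the density into a graph on about $(1-\delta)n$ vertices.

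For part (1), if $r_1 = 1$ then $K_2(1,r_2)$ is the star $K_{1,r_2}$, so a $K_2(1,r_2)$-free graph has maximum degree at most $r_2 - 1$; for $n \ge (r_2-1)/\delta$ this is at most $\delta n$, as claimed.

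For part (2), fix $v$ with $d(v) \ge \delta n$, pick $A \subseteq N(v)$ with $|A| = \lceil\delta n\rceil$, and set $B = V(G(n))\setminus A$ (so $v \in B$) and $B' = B\setminus\{v\}$. I would first record the two structural facts that make the argument work, both exploiting that $v$ is joined to all of $A$. (i) $G(n)[A]$ is $K_2(r_1-1,r_2)$-free: a copy inside $A$ with parts $S_1$ (size $r_1-1$) and $S_2$ (size $r_2$) would, together with $v$, give a $K_2(r_1,r_2)$ on parts $S_1\cup\{v\}$ and $S_2$, since $v$ is adjacent to all of $S_2\subseteq A$. (ii) The bipartite graph between $A$ and $B'$ contains no $K_{r_1-1,r_2}$ with its $(r_1-1)$-part in $B'$ and its $r_2$-part in $A$, by the same appending of $v$ to the $B'$-side. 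Since $r_1 - 1 \ge 1$, Lemma~\ref{lem: KST} gives $e(G(n)[A]) = O(n^{2-1/(r_1-1)})$, and a double-counting argument as in the proof of Lemma~\ref{lem: Zarankiewicz with large partite sets} gives $e(A,B') = O(n^{2-1/(r_1-1)})$, whence $e(A,B)\le e(A,B')+|A| = O(n^{2-1/(r_1-1)})$. Using $e(G(n)[B])\le \ex(n-\lceil\delta n\rceil, K_2(r_1,r_2))$ together with the superadditivity $\ex(n-\lceil\delta n\rceil, K_2(r_1,r_2)) \le \ex(n,K_2(r_1,r_2)) - \ex(\lceil\delta n\rceil, K_2(r_1,r_2))$ (a disjoint union of two $K_2(r_1,r_2)$-free graphs is $K_2(r_1,r_2)$-free because $K_2(r_1,r_2)$ is connected), I obtain
\[e(G(n)) = e(G(n)[A]) + e(A,B) + e(G(n)[B]) \le \ex(n,K_2(r_1,r_2)) - \ex(\lceil\delta n\rceil, K_2(r_1,r_2)) + O(n^{2-1/(r_1-1)}),\]
which is the first inequality. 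For the second inequality I would invoke the growth rate $\ex(m,K_2(r_1,r_2)) = \Theta(m^{2-1/r_1})$ from Remark~\ref{comparing turan numbers of complete bipartite graphs}: since $\tfrac{1}{r_1-1} > \tfrac{1}{r_1}$, the error $O(n^{2-1/(r_1-1)})$ is $o(n^{2-1/r_1})$, hence negligible against $\ex(\lceil\delta n\rceil, K_2(r_1,r_2)) = \Theta((\delta n)^{2-1/r_1})$, which is a fixed positive fraction of $\ex(n,K_2(r_1,r_2))$; this yields a constant $c_{\delta,r_1,r_2}>0$ with $\ex(\lceil\delta n\rceil, K_2(r_1,r_2)) - O(n^{2-1/(r_1-1)}) \ge c_{\delta,r_1,r_2}\,\ex(n,K_2(r_1,r_2))$ for all large $n$.

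The main obstacle is step (ii). Treating $(A,B)$ merely as a $K_{r_1,r_2}$-free bipartite graph gives only $e(A,B) = O(n^{2-1/r_1})$, which is of the same order as $\ex(n,K_2(r_1,r_2))$ and would swamp the intended saving $\ex(\lceil\delta n\rceil, K_2(r_1,r_2))$. The argument succeeds only because $v$ lets us drop the complete-bipartite index from $r_1$ to $r_1-1$ both inside $A$ and across $(A,B')$, turning these contributions into genuine lower-order error terms. I would also be careful that the second inequality relies on the matching lower bound for $\ex(m,K_2(r_1,r_2))$ (available here since $r_2 \ge (r_1-1)!+1$) so that the $O(n^{2-1/(r_1-1)})$ error is truly dominated by the saving.
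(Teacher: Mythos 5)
Your proof is correct and follows essentially the same route as the paper's: both take $\lceil\delta n\rceil$ neighbors of $v$, drop the Zarankiewicz index from $r_1$ to $r_1-1$ inside that set (via Lemma~\ref{lem: KST}) and across to the rest (via Lemma~\ref{lem: Zarankiewicz with large partite sets}), apply superadditivity of Tur\'an numbers, and finish with the $\Theta(n^{2-1/r_1})$ growth rate from Remark~\ref{comparing turan numbers of complete bipartite graphs}. Your closing caveat that the second inequality needs the matching lower bound (hence $r_2 \ge (r_1-1)!+1$) applies equally to the paper's own proof, which invokes the same remark.
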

\begin{proof}
The first part is straightforward. Assume $r_1 \ge 2$.
    We begin by observing that if $n_1, n_2$ are positive integers such that $n_1 + n_2 = n$, then $\ex(n, \mathcal{F}) \ge \ex(n_1, \mathcal{F}) + \ex(n_2, \mathcal{F})$, for any family of forbidden graphs $\mathcal{F}$. Thus, in particular, $\ex(n, K_{2}(r_1, r_2)) \ge \ex(\lceil\delta n\rceil, K_2(r_1, r_2)) + \ex(\lfloor(1-\delta)n\rfloor, K_{2}(r_1, r_2))$. 
    Next, assume $G(n)$ is a graph not containing $K_2(r_1, r_2)$, but containing a vertex $v$ with degree at least $\delta n$. Let $C$ be a collection of $\lceil\delta n \rceil$ neighbors of $v$. Then the graph induced by the vertices of $C$ does not contain any $K_2(r_1 - 1, r_2)$ since otherwise, $G(n)$ contains $K_2(r_1, r_2)$. 
    Thus, from the result of K\H{o}v\' ari-S\' os-Tur\' an (Lemma~\ref{lem: KST})) we can conclude that for $n$ sufficiently large there are at most $(r_2-1)^{\frac{1}{r_1 - 1}}(\delta n)^{2-\frac{1}{r_1 - 1}}$  (and in fact $\ex(\lceil \delta n\rceil, K_2(r_1 - 1, r_2))$) edges with both endpoints in $C$. Moreover, the bipartite graph induced by the set of all edges with one endpoint in $C$ and the other in $V(G(n)) - $C$ - \{v\}$ does not contain a $K_2(r_1 - 1, r_2)$ where the part with $r_2$ vertices lies in $C$. This is again because $G(n)$ does not contain any $K_2(r_1, r_2)$. Then, by Lemma~\ref{lem: Zarankiewicz with large partite sets}, we can show that the number of such edges is at most $(r_2-1)^{\frac{1}{r_1 - 1}} n^{2-\frac{1}{r_1 - 1}}$. 
    Hence, $e(G(n)) \le 2(r_2-1)^{\frac{1}{r_1 - 1}}n^{2-\frac{1}{r_1 - 1}} + \ex(\lfloor(1 -\delta)n\rfloor, K_2(r_1, r_2)) \le \ex(n, K_2(r_1, r_2))) + 2(r_2-1)^{\frac{1}{r_1 - 1}}n^{2-\frac{1}{r_1 - 1}} - \ex(\lceil \delta n\rceil, K_2(r_1, r_2))$.  Consequently, it follows from Remark~\ref{comparing turan numbers of complete bipartite graphs} that for $n$ sufficiently large the middle term is dominated by the remaining terms, and there exists a constant $c_{\delta, r_1, r_2}$ depending only on $\delta, r_1$ and $r_2$, such that $e(G(n))\le (1 - c_{\delta,r_1,  r_2})\ex(n, K_2(r_1, r_2))$. 
\end{proof}

\begin{remark}
    According to Theorem~\ref{thm: turan numbers multipartite graphs} any extremal graph $G(n)$ in $\EX(n, K_{q+1}(r_1, \ldots, r_{q+1}))$ is a join of $q$ graphs $N_1, N_2, \ldots, N_q$:
    \[G(n) = \bigvee_{i=1}^q N_i.\]
     Therefore, its complement $\overline{G(n)}$ is not connected. Note however, later in this paper, we would like to use $K_{\lceil\frac{s+1}{d+1}\rceil}(a, d+1,\ldots,d+1)$-free graphs that have connected complements and still have close to the Tur\'an number of edges $\ex(n, K_{\lceil\frac{s+1}{d+1}\rceil}(a, d+1,\ldots,d+1)) = e(G(n))$. Their compliments will then produce strong upper bounds for the connected edge minimization problem for a given $d$-independence number $i_d = s$. 
\end{remark}

We will prove that when $r_1$ is relatively small compared to $r_2$, then among all $n$-vertex $K_{q+1}(r_1, \ldots, r_{q+1})$-free graphs that have connected complements, the graphs with the most number of edges are obtained by deleting $q-1$ edges from some graph $G(n) = \bigvee_{i=1}^q N_i$ in $\EX(n, K_{q+1}(r_1, \ldots, r_{q+1}))$.

\begin{theorem}
\label{thm: connected complement close to turan numbers multipartite graphs}
    Let $r_2 \ge (r_1 - 1)! + 1$, $r_1 \le r_2 \le \ldots\leq r_{q+1}$ be given integers. 
    If $n$ is large enough, then
    any graph $G'(n) \in \EX_{cc}(n, K_{q+1}(r_1, \ldots, r_{q+1}))$ is obtained by deleting 
    $q-1$  edges from some graph $G(n)$ in $\EX(n, K_{q+1}(r_1, \ldots, r_{q+1}))$ to make the complement $\overline{G'(n)}$ connected.   
        Conversely, if $G'(n)$ is any graph on $n$ vertices obtained from any graph $G(n) \in \EX(n, K_{q+1}(r_1, \ldots, r_{q+1}))$ by deleting $q-1$  edges of $G(n)$ such that $\overline{G'(n)}$ is connected, 
        then $G'(n) \in \EX_{cc}(n, K_{q+1}(r_1, \ldots, r_{q+1}))$.
\end{theorem}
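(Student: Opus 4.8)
The plan is to prove both directions by first pinning down the exact value $\ex_{cc}(n,\mathcal{K}) = \ex(n,\mathcal{K}) - (q-1)$, where I abbreviate $\mathcal{K} := K_{q+1}(r_1,\ldots,r_{q+1})$, and then reading off the extremal structure. The basic observation is that the complement of a join is the disjoint union of the complements, so for any extremal $G(n) = \bigvee_{i=1}^q N_i$ (Theorem~\ref{thm: turan numbers multipartite graphs}) we have $\overline{G(n)} = \bigsqcup_{i=1}^q \overline{N_i}$. Each $\overline{N_i}$ is connected: for $i \ge 2$ the graph $N_i$ is $K_2(1,r_2)$-free and hence has maximum degree at most $r_2 - 1$, while $N_1$ has maximum degree $o(n_1)$ by Lemma~\ref{lem: extremal graphs do not have large internal degree} (a vertex of linear degree would push $N_1$ below the extremal size); in either case $\delta(\overline{N_i}) > n_i/2$ for $n$ large, so $\overline{N_i}$ is connected. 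Thus $\overline{G(n)}$ has exactly $q$ connected components.

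\emph{Converse direction and lower bound.} Given $G(n) \in \EX(n,\mathcal{K})$, choose $q-1$ edges of $G(n)$ joining distinct parts $N_i$ whose images in the complement form a spanning tree on the $q$ components of $\overline{G(n)}$; such edges exist because distinct parts are completely joined. Deleting them yields $G'(n) \subseteq G(n)$, which is $\mathcal{K}$-free, has connected complement, and has exactly $\ex(n,\mathcal{K})-(q-1)$ edges. Hence $\ex_{cc}(n,\mathcal{K}) \ge \ex(n,\mathcal{K}) - (q-1)$, and any $G'(n)$ arising this way lies in $\EX_{cc}(n,\mathcal{K})$ once the matching upper bound is established.

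\emph{Forward direction and upper bound (the crux).} Let $G'(n) \in \EX_{cc}(n,\mathcal{K})$, so $e(G'(n)) \ge \ex(n,\mathcal{K}) - (q-1)$ and $\overline{G'(n)}$ is connected. The key input is a stability statement drawn from the proof of Theorem~\ref{thm: turan numbers multipartite graphs}: a $\mathcal{K}$-free graph whose size is within a constant of $\ex(n,\mathcal{K})$ admits a vertex partition $V = V_1 \cup \cdots \cup V_q$ into nonempty parts such that $N_1' := G'(n)[V_1]$ is $K_2(r_1,r_2)$-free, each $N_i' := G'(n)[V_i]$ for $i \ge 2$ is $K_2(1,r_2)$-free, and all but a set $F$ of cross-pairs are edges. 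Replacing each $N_i'$ by an extremal graph on $V_i$ and filling in every cross-edge produces a join that is $\mathcal{K}$-free by Lemma~\ref{lem: product graphs not contain forbidden graph}, hence has at most $\ex(n,\mathcal{K})$ edges; since this completion dominates $G'(n)$ except for the $|F|$ missing cross-edges, one gets $e(G'(n)) \le \ex(n,\mathcal{K}) - |F|$, so $|F| \le q-1$. On the other hand, $\overline{G'(n)}$ is the disjoint union of the $q$ nonempty graphs $\overline{N_i'}$ together with the $|F|$ cross non-edges; a disjoint union of $q$ nonempty graphs has at least $q$ components, so connectivity of $\overline{G'(n)}$ forces $|F| \ge q-1$. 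Combining, $|F| = q-1$ and every inequality is tight, so the completion is genuinely extremal, each $N_i'$ is extremal for its constraint, and $G'(n)$ is precisely an extremal graph in $\EX(n,\mathcal{K})$ with the $q-1$ edges of $F$ deleted. This yields $\ex_{cc}(n,\mathcal{K}) = \ex(n,\mathcal{K}) - (q-1)$ together with the claimed characterization, and closes both directions.

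\emph{Main obstacle.} The delicate step is the stability input underlying the completion bound: the Erd\H{o}s--Simonovits structure results (Remark~\ref{rem: ErdosSim on general structure of extremal graphs}) only guarantee closeness to a join up to $O(n^{2-1/r})$ edge edits, whereas here the partition must be clean enough that the entire edge deficiency $\ex(n,\mathcal{K}) - e(G'(n))$ is captured by missing cross-edges together with sub-extremal within-part structure, with no hidden loss. In particular one must verify that the within-part graphs really do satisfy the forbidden-subgraph constraints (so that the completion is $\mathcal{K}$-free), and one must rule out the near-extremal ``unbalanced join'' phenomenon --- the analogue of $K_2\big(\tfrac{n}{2}-1,\tfrac{n}{2}+1\big)$ in the triangle-free case, which is $\mathcal{K}$-free with the right edge count but has a \emph{disconnected} complement. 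Showing that such configurations are exactly the ones excluded by the connected-complement hypothesis, and that maximality forces the parts to take their extremal sizes with only $q-1$ cross-edges removed, is where the real work lies; the remaining complement-connectivity counting is then elementary.
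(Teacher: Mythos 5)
Your proposal is correct and takes essentially the same route as the paper: both directions rest on the stability machinery developed for Theorem~\ref{thm: turan numbers multipartite graphs} (which the paper packages as Remark~\ref{rem: stability remark}), the lower bound comes from deleting $q-1$ cross edges whose complement images form a spanning tree on the $q$ components of $\overline{G(n)}$, and the upper bound comes from splitting the edge deficiency into missing cross-pairs plus within-part sub-extremality and using connectivity of the complement to force at least $q-1$ missing cross-pairs, exactly as in the paper's proof. The one detail worth making explicit is that in your completion step the parts $N_i'$ for $i \ge 2$ should be replaced by extremal graphs for the family $\left\{K_2(1, r_2), K_2(2,2), K_3(1,1,1)\right\}$ (which the paper notes have the same size as $K_2(1,r_2)$-extremal graphs) rather than by arbitrary $K_2(1,r_2)$-extremal graphs, so that Lemma~\ref{lem: product graphs not contain forbidden graph} actually applies and the completed join is genuinely $K_{q+1}(r_1,\ldots,r_{q+1})$-free.
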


To prove Theorem~\ref{thm: connected complement close to turan numbers multipartite graphs} we will require the following lemmas and theorems given in this section which help us recover some of the features of extremal graphs expressed in Remark~\ref{rem: ErdosSim on general structure of extremal graphs}.2..

We use the following results of Erd\H{o}s from \cite{erdos1964extremal} and \cite{erdos1967some}.

\begin{lemma}\cite{erdos1964extremal}
\label{lem: large common neighborhoods when large degrees}
    Let $S$ be a set of $N$ elements $y_1, \ldots, y_N$, and let $A_i$, $1 \le i \le m$ be subsets of $S$. Suppose 
    \begin{equation}
        \sum_{i=1}^{m} |A_i| \ge \frac{mN}{w}
    \end{equation}
  for some constant $w$.
    Then, if $m \ge 2l^2 w^l$, there are $l$ distinct $A$'s, $A_{i_1}, \ldots, A_{i_l}$, so that 
    \begin{equation}
        \left|\bigcap_{j=1}^l A_{i_j}\right| \ge \frac{N}{2w^l}.
    \end{equation}
\end{lemma}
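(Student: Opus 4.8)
The plan is to deduce the lemma from a single double-counting identity on $l$-element index sets, together with the convexity of the binomial coefficient and one elementary estimate that extracts precisely the constant $\tfrac{1}{2w^l}$ from the hypothesis $m \ge 2l^2 w^l$. Note first that since each $|A_i| \le N$, the hypothesis $\sum_i |A_i| \ge mN/w$ forces $w \ge 1$, a fact I will use repeatedly.

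For each $y \in S$ let $d(y) := |\{\, i : y \in A_i \,\}|$ record how many of the sets contain $y$. Double counting incidences gives $\sum_{y \in S} d(y) = \sum_{i=1}^m |A_i| \ge mN/w$, so the average of $d(y)$ over $S$ is at least $m/w$. I would then count pairs $(T, y)$ where $T$ ranges over $l$-element subsets of $[m]$ and $y \in \bigcap_{i \in T} A_i$. For a fixed $y$, the number of admissible $T$ is exactly $\binom{d(y)}{l}$, so the total number of such pairs equals $\sum_{y \in S} \binom{d(y)}{l}$.

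The next step is to lower bound this sum. Since $\binom{x}{l}$ is convex and nondecreasing for $x \ge l-1$, and the average degree $\tfrac1N\sum_y d(y) \ge m/w$ exceeds $l-1$ (as $m/w \ge 2l^2 w^{l-1} \ge l$), Jensen's inequality yields $\sum_{y} \binom{d(y)}{l} \ge N\binom{m/w}{l}$. Averaging this total over the $\binom{m}{l}$ choices of $T$ via the pigeonhole principle produces an $l$-subset $T = \{i_1, \ldots, i_l\}$ with
\[
\Big|\bigcap_{j=1}^l A_{i_j}\Big| \;\ge\; \frac{N\binom{m/w}{l}}{\binom{m}{l}} \;=\; N\prod_{j=0}^{l-1}\frac{m/w - j}{m - j}.
\]

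It remains to show this product is at least $\tfrac{1}{2w^l}$, and this elementary estimate is where the exact shape of the hypothesis on $m$ is used; I expect it to be the only delicate point. Writing each factor as $\tfrac1w\cdot\tfrac{m-jw}{m-j}$ and checking that $\tfrac{m-jw}{m-j}$ is nonincreasing in $j$ for $w \ge 1$, each factor is at least $\tfrac1w\big(1-\tfrac{(l-1)w}{m}\big)$, so the product is at least $w^{-l}\big(1-\tfrac{(l-1)w}{m}\big)^l \ge w^{-l}\big(1-\tfrac{l(l-1)w}{m}\big)$ by Bernoulli's inequality. Finally, since $m \ge 2l^2w^l \ge 2l(l-1)w$ (using $w \ge 1$ and $lw^{l-1}\ge l-1$), the bracket is at least $\tfrac12$, giving $\big|\bigcap_{j} A_{i_j}\big| \ge \tfrac{N}{2w^l}$ as required. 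The main obstacle is thus purely arithmetic: confirming that the threshold $m \ge 2l^2 w^l$ is exactly strong enough to absorb the loss in the ratio $\binom{m/w}{l}/\binom{m}{l}$ down to the clean constant $\tfrac12$; the convexity step is routine once one fixes the convention $\binom{d}{l} = 0$ for $d < l$.
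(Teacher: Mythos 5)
Your proof is correct. The paper itself does not reprove this lemma --- it is quoted from Erd\H{o}s's 1964 paper --- and your argument is essentially the classical one from that source: double count pairs $(T, y)$ with $y \in \bigcap_{i \in T} A_i$, lower bound $\sum_{y} \binom{d(y)}{l}$ via convexity of the extended binomial coefficient, and pigeonhole over the $\binom{m}{l}$ choices of $T$. All the steps check out: the monotonicity of $\frac{m - jw}{m - j}$ in $j$ for $w \ge 1$ (its derivative has sign $m(1-w) \le 0$), the Bernoulli step, and the final arithmetic, which as you observe only requires $m \ge 2l(l-1)w$, so the stated threshold $m \ge 2l^2 w^l$ is comfortably sufficient for $w \ge 1$.
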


\begin{lemma}\cite{erdos1967some}
\label{original lem: large r-partite graphs in asymptotiacally extremal graphs}
    Let $G(n)$ be a graph with $(1 + o(1))\frac{n^2}{2}\left(1 - \frac{1}{r-1}\right)$ edges that does not contain some $K_r(t, t, \ldots, t)$ as a subgraph. Then there exists some $K_{r-1}(p_1, \ldots, p_{r-1})$ which differs from $G(n)$ by $o(n^2)$ edges such that $\sum_{i=1}^{r-1} p_i = n$ and $p_i = (1 + o(1))\frac{n}{r-1}$ for all $i\in [r-1]$. 
\end{lemma}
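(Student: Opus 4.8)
The plan is to locate the claimed balanced complete $(r-1)$-partite graph as a \emph{maximum $(r-1)$-cut} of $G(n)$ and then to show, using the forbidden structure together with the near-extremal edge count, that this cut is almost complete across classes and almost balanced. Write $F = K_r(t,\ldots,t)$ and set $d = 1 - \frac{1}{r-1}$, so the hypothesis reads $e(G(n)) = (d + o(1))\binom{n}{2}$. Let $V_1, \ldots, V_{r-1}$ be a partition of $V(G(n))$ minimizing the number of within-class edges $W_{\mathrm{in}} := \sum_{i=1}^{r-1} e(G[V_i])$ (equivalently, maximizing the number of edges running between classes). The entire argument reduces to the single estimate $W_{\mathrm{in}} = o(n^2)$; everything else is bookkeeping.

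First I would record the local optimality of the cut: if $v \in V_a$, then moving $v$ to another class $V_j$ changes $W_{\mathrm{in}}$ by $d_{V_j}(v) - d_{V_a}(v)$, so minimality gives $d_{V_a}(v) \le d_{V_j}(v)$ for every $j \neq a$. In particular, a vertex of $V_a$ whose internal degree $d_{V_a}(v)$ is large automatically has large degree into \emph{every} class. This is the mechanism that lets a locally dense class communicate with all other classes simultaneously, which is exactly what is needed to assemble the $r$-partite forbidden graph.

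The heart of the argument -- and the step I expect to be the main obstacle -- is to show that a class carrying many internal edges forces a copy of $F$. Suppose for contradiction that $W_{\mathrm{in}} \ge \epsilon n^2$ for some fixed $\epsilon > 0$; then some class, say $V_a$, satisfies $e(G[V_a]) \ge \epsilon' n^2$. After discarding from $V_a$ the vertices of internal degree below a suitable multiple of $n$ (which costs only $o(n^2)$ edges), the remainder is still dense, so by the K\H{o}v\'ari--S\'os--Tur\'an bound (Lemma~\ref{lem: KST}) it contains two disjoint $t$-sets $P, Q$ spanning a complete bipartite graph $K_2(t,t)$; these will be two of the $r$ parts of $F$. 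By the local optimality above, every vertex of $P \cup Q$ has linearly many neighbours in each class $V_j$ with $j \neq a$, so by iterated application of Erd\H{o}s' common-neighbourhood lemma (Lemma~\ref{lem: large common neighborhoods when large degrees}) I would extract, one class at a time, $t$-sets $R_j \subseteq V_j$ lying in the common neighbourhood of $P \cup Q \cup \bigcup_{j' < j} R_{j'}$. Since the $r-2$ remaining classes supply exactly the remaining $r-2$ parts, the sets $P, Q, (R_j)_{j \neq a}$ together realize $K_r(t,\ldots,t) = F$, a contradiction. The delicate point is quantitative: one must choose the internal-degree threshold and run the extraction so that each of the nested common neighbourhoods still has size at least $t$ after all $r$ steps. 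This is where the constant-factor shrinkage guaranteed by Lemma~\ref{lem: large common neighborhoods when large degrees}, together with the cross-degree lower bounds supplied by the cut, must be tracked carefully, and it is the genuine content of the proof.

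Granting $W_{\mathrm{in}} = o(n^2)$, the conclusion is a short count. Writing $p_i = |V_i|$, every edge of $G(n)$ lies within a class or across two classes, so $e(G(n)) \le \sum_{i<j} p_i p_j + W_{\mathrm{in}} = \tfrac12\big(n^2 - \sum_i p_i^2\big) + o(n^2)$. Comparing with the hypothesis $e(G(n)) \ge (d-o(1))\tfrac{n^2}{2} = \tfrac12\big(n^2 - \tfrac{n^2}{r-1}\big) - o(n^2)$ forces $\sum_i p_i^2 \le \tfrac{n^2}{r-1} + o(n^2)$; since $\sum_i p_i = n$, the stability of the Cauchy--Schwarz inequality yields $p_i = (1+o(1))\tfrac{n}{r-1}$ for every $i$. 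Feeding these sizes back shows the number of \emph{missing} cross-edges is also $o(n^2)$. Hence $G(n)$ differs from the complete $(r-1)$-partite graph $K_{r-1}(p_1,\ldots,p_{r-1})$ on the parts $V_1,\ldots,V_{r-1}$ in only $W_{\mathrm{in}} + o(n^2) = o(n^2)$ edges, with $\sum_i p_i = n$ and $p_i = (1+o(1))\tfrac{n}{r-1}$, which is exactly the assertion.
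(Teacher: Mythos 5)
The paper does not actually prove this lemma; it is imported verbatim from Erd\H{o}s \cite{erdos1967some}, so your proposal has to be judged against the classical argument rather than against anything in the text. Your outer skeleton is the standard one and is correct: take a partition $V_1,\ldots,V_{r-1}$ minimizing the internal edge count $W_{\mathrm{in}}$, record the local optimality $d_{V_a}(v) \le d_{V_j}(v)$, reduce everything to $W_{\mathrm{in}} = o(n^2)$, and then close with the count $e(G) \le \frac{1}{2}\left(n^2 - \sum_i p_i^2\right) + W_{\mathrm{in}}$, Cauchy--Schwarz stability for the part sizes, and the observation that the missing cross-pairs number at most $W_{\mathrm{in}} + o(n^2)$. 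All of that bookkeeping is fine (the deletion of low internal-degree vertices costs $\gamma n^2$, not $o(n^2)$, but choosing $\gamma$ small against $\epsilon'$ repairs the wording).

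The genuine gap is in the embedding step, exactly where you locate the ``genuine content,'' and it is structural rather than merely quantitative. Lemma~\ref{lem: large common neighborhoods when large degrees} is a \emph{selection} lemma: among $m \ge 2l^2w^l$ given sets it finds \emph{some} $l$ of them with large intersection; it says nothing about the common neighbourhood of a \emph{prescribed} bounded family. Once you have fixed a particular copy of $K_2(t,t)$ with vertex set $P \cup Q$ inside the dense class via Lemma~\ref{lem: KST}, each of its $2t$ vertices has $\ge \gamma n$ neighbours in $V_j$, but those neighbourhoods can be pairwise disjoint whenever $2t\gamma n \le |V_j|$, so the common neighbourhood of $P \cup Q$ in $V_j$ can be empty and there is nothing for the lemma to act on. To invoke it you must retain a large pool to select from (say a $K_2(T,T)$ with $T$ a tower-type constant), and then the selection has to be performed on both sides of the bipartite structure, with the second side selected inside an already-shrunk common neighbourhood where the $\gamma n$ degree bound is no longer available. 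A second, independent failure occurs at the iteration: to place $R_{j_2}$ you need the vertices of $R_{j_1}$ to have linearly many neighbours in $V_{j_2}$, but your only source of cross-degree lower bounds is the cut's local optimality, which applies to vertices of large \emph{internal} degree; the members of $R_{j_1}$ are arbitrary vertices of a common neighbourhood and carry no such guarantee, and at this stage of the contradiction argument you have not yet shown that cross pairs are mostly edges. So the extraction chain breaks after the first class. The classical proofs (Erd\H{o}s' induction on $r$, or arguments that first select, from a large pool of high-cross-degree vertices \emph{in a single class}, $t$ vertices whose common neighbourhood is simultaneously large in every other class, and only then find the remaining parts inside those controlled neighbourhoods) are organized precisely to avoid these two traps; your reduction and endgame can be kept, but the middle step must be rebuilt along those lines.
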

We will be using the following version of Lemma~\ref{original lem: large r-partite graphs in asymptotiacally extremal graphs}.

\begin{corollary}
\label{lem: large r-partite graphs in asymptotiacally extremal graphs}
    Given positive constants $\delta$ and $\xi$ there exists a natural number $N_{\delta, \xi}$ such that if $n \ge N_{\delta, \xi}$ then any $K_r(t, \ldots, t)$-free graph $G(n)$ on $n$ vertices and at least $\frac{n^2}{2}(1 - \frac{1}{r-1}) - n^{1.9}$ edges must differ from some complete multipartite graph $K_{r-1}(p_1, \ldots, p_{r-1})$ by less than $\xi n^2$ edges, where $\frac{1 - \delta}{r-1}n \le p_i \le \frac{1 + \delta}{r-1}n$ for all $i \in [r-1]$.
\end{corollary}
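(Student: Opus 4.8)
The plan is to deduce this finite, quantitative statement from the asymptotic Lemma~\ref{original lem: large r-partite graphs in asymptotiacally extremal graphs} by unwinding the $o(\cdot)$ notation through a compactness (subsequence) argument. First I would record the only analytic input beyond the lemma itself: since $n^{1.9} = o(n^2)$, any graph $G(n)$ with $e(G(n)) \ge \frac{n^2}{2}(1 - \frac{1}{r-1}) - n^{1.9}$ automatically has $e(G(n)) = (1 + o(1))\frac{n^2}{2}(1 - \frac{1}{r-1})$ edges, the matching upper bound coming for free from the Erd\H{o}s--Stone--Simonovits theorem applied to the $K_r(t,\ldots,t)$-free graph $G(n)$ (as $\chi(K_r(t,\ldots,t)) = r$). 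Hence every graph satisfying the hypothesis of the corollary satisfies the hypothesis of Lemma~\ref{original lem: large r-partite graphs in asymptotiacally extremal graphs}.

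Next I would argue by contradiction. Fix $\delta, \xi > 0$ and suppose no threshold $N_{\delta, \xi}$ works. Then for each integer $N$ there is some $n \ge N$ and some $K_r(t,\ldots,t)$-free graph on $n$ vertices meeting the edge bound yet differing from \emph{every} balanced complete $(r-1)$-partite graph $K_{r-1}(p_1, \ldots, p_{r-1})$ with $\frac{1-\delta}{r-1}n \le p_i \le \frac{1+\delta}{r-1}n$ by at least $\xi n^2$ edges. Letting $N = 1, 2, 3, \ldots$ yields a strictly increasing sequence $n_1 < n_2 < \cdots$ together with counterexample graphs $G(n_j)$.

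To invoke the asymptotic lemma, which is phrased for a sequence indexed by all large $n$, I would embed this subsequence into a full sequence $(H_m)_{m \ge 1}$ by setting $H_{n_j} = G(n_j)$ and taking $H_m$ to be the balanced complete $(r-1)$-partite Tur\'an graph $T_{r-1}(m)$ for every other $m$. Each $H_m$ is $K_r(t,\ldots,t)$-free and has $(1+o(1))\frac{m^2}{2}(1 - \frac{1}{r-1})$ edges, so Lemma~\ref{original lem: large r-partite graphs in asymptotiacally extremal graphs} applies to $(H_m)$ and produces complete multipartite graphs $K_{r-1}(p_1^{(m)}, \ldots, p_{r-1}^{(m)})$ differing from $H_m$ by $o(m^2)$ edges, with $p_i^{(m)} = (1+o(1))\frac{m}{r-1}$ and $\sum_i p_i^{(m)} = m$. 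Restricting to $m = n_j$ and taking $j$ large enough, the error $o(n_j^2)$ drops below $\xi n_j^2$ and each part $p_i^{(n_j)}$ falls inside the window $[\frac{1-\delta}{r-1}n_j, \frac{1+\delta}{r-1}n_j]$, contradicting the defining property of $G(n_j)$.

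The content of the corollary is essentially a uniformization of the lemma, so the only genuine point requiring care is that a single threshold $N_{\delta, \xi}$ must govern all admissible graphs simultaneously; the subsequence argument handles exactly this, since a failure of uniformity manufactures an infinite family violating the (already uniform) conclusion of Lemma~\ref{original lem: large r-partite graphs in asymptotiacally extremal graphs}. Accordingly, I expect the main, though modest, obstacle to be the bookkeeping of embedding the counterexample subsequence into a sequence to which the asymptotic lemma literally applies and then checking that each asymptotic conclusion---both the $o(n^2)$ edge difference and the near-balanced part sizes---eventually forces its finite counterpart.
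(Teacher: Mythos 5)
Your proposal is correct and matches the paper's intent: the paper states this corollary without proof, treating it as a direct restatement of Lemma~\ref{original lem: large r-partite graphs in asymptotiacally extremal graphs} with the $o(\cdot)$ notation unwound, and your contradiction/compactness argument (padding the counterexample subsequence with Tur\'an graphs $T_{r-1}(m)$, which are $K_r(t,\ldots,t)$-free, and using Erd\H{o}s--Stone--Simonovits for the matching upper edge bound) is exactly the standard formalization of that deduction.
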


We will prove the following generalization of a result of Erd\H{o}s (Theorem 3 of \cite{erdos1966new}). 
\begin{theorem}
\label{thm: generalization of a structural result for extremal graphs to almost extremal graphs}

Let $F$ be a fixed graph with $\chi(F) = r$. For any positive constants $\delta$ and $\epsilon$ there exist natural numbers $N_{\delta, \epsilon}$ and $c_{\delta, \epsilon}$ such that if $n \ge N_{\delta, \epsilon}$ then the following is true:
   Any $F$-free graph $G'(n)$ on $n$ vertices 
        and at least $\ex(n, F) - \delta n$ edges can be partitioned into $r-1$ sets $V_1, \ldots, V_{r-1}$ where $\frac{(1-\delta)}{r-1}n \le |V_i| \le \frac{(1+\delta)}{r-1}n$ and
        there are at least $|V_i| - c_{\delta, \epsilon}$ vertices in any $V_i$ which are adjacent to more than $|V_i^c| - \epsilon n$ vertices of $V_i^c$.
\end{theorem}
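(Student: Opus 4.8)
The plan is to reduce the statement to a purely $K_r(t,\ldots,t)$-free problem and then clean up the partition handed to us by Corollary~\ref{lem: large r-partite graphs in asymptotiacally extremal graphs}. Set $t := v(F)$. Since $\chi(F) = r$, the graph $F$ embeds into the complete $r$-partite graph with all parts of size $t$, so $F \subseteq K_r(t,\ldots,t)$, and hence $G'(n)$ is $K_r(t,\ldots,t)$-free. The Tur\'an graph $T_{r-1}(n)$ (the balanced complete $(r-1)$-partite graph) is $(r-1)$-chromatic and therefore $F$-free, so $\ex(n,F) \ge e(T_{r-1}(n)) = \frac{n^2}{2}(1-\frac{1}{r-1}) - O(1)$; together with the hypothesis $e(G'(n)) \ge \ex(n,F) - \delta n$ this gives $e(G'(n)) \ge \frac{n^2}{2}(1-\frac{1}{r-1}) - n^{1.9}$ once $n$ is large. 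Corollary~\ref{lem: large r-partite graphs in asymptotiacally extremal graphs}, applied with forbidden graph $K_r(t,\ldots,t)$ and a small constant $\xi$ to be fixed last, then produces the required partition $V_1,\ldots,V_{r-1}$ with $\frac{1-\delta}{r-1}n \le |V_i| \le \frac{1+\delta}{r-1}n$ such that $G'(n)$ differs from the complete $(r-1)$-partite graph on $V_1,\ldots,V_{r-1}$ by at most $\xi n^2$ edges. What remains is to bound, by an $n$-independent constant, the number of \emph{bad} vertices in each part, where $v \in V_i$ is bad if it is non-adjacent to at least $\epsilon n$ vertices of $V_i^c$ (equivalently, adjacent to at most $|V_i^c| - \epsilon n$ of them).

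First I would record the observation that drives the clean-up: modulo a bounded exceptional set, $G'(n)[V_i^c]$ contains no $K_{r-1}(t,\ldots,t)$. Indeed such a copy uses only $(r-1)t$ vertices, and since $G'(n)$ is close to complete $(r-1)$-partite, all but $O(1)$ of the $|V_i| \ge t$ vertices of $V_i$ are adjacent to all of its (boundedly many) vertices; any $t$ of these, used as an $r$-th colour class, would complete a $K_r(t,\ldots,t) \supseteq F$. I would then bound bad vertices in $V_1$ (say) by splitting them according to internal degree at the threshold $\epsilon n/2$. For a \emph{light} bad vertex $v$ (internal degree $< \epsilon n/2$), consider deleting all of $v$'s edges inside $V_1$ and joining $v$ to all of $V_1^c$. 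This changes the edge count by at least $\epsilon n - \epsilon n/2 = \epsilon n/2 > 0$, and it preserves $F$-freeness: in the modified graph $N(v) = V_1^c$, so any forbidden copy through $v$ would require a complete $(r-1)$-partite structure lying entirely inside $V_1^c$, which does not exist by the preceding observation.

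Performing this operation over all $N_L$ light bad vertices in sequence only increases the internal-degree savings, so the total gain is at least $\frac{\epsilon n}{2}N_L$; since the final graph is still $F$-free, its size is at most $\ex(n,F)$, while it is at least $e(G'(n)) + \frac{\epsilon n}{2}N_L \ge \ex(n,F) - \delta n + \frac{\epsilon n}{2}N_L$. Hence $N_L \le 2\delta/\epsilon$, a constant. For the \emph{heavy} bad vertices (internal degree $\ge \epsilon n/2$), the internal neighbourhoods are linear-sized subsets of $V_1$, so if there were more than $2t^2 w^t$ of them, with $w = \frac{2(1+\delta)}{(r-1)\epsilon}$, then Lemma~\ref{lem: large common neighborhoods when large degrees} (applied with $S = V_1$ and the internal neighbourhoods as the sets $A_i$) would produce $t$ heavy bad vertices $u_1,\ldots,u_t$ sharing a common internal neighbourhood $W \subseteq V_1$ with $|W| \ge |V_1|/(2w^t) = \Omega(n)$, i.e.\ a complete bipartite subgraph $K_{t,|W|}$ inside $V_1$. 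Taking $\{u_1,\ldots,u_t\}$ and a $t$-subset of $W$ as two colour classes, and supplying the remaining $r-2$ classes from a $K_{r-2}(t,\ldots,t)$ found in the common cross-neighbourhood of these $2t$ vertices inside $V_1^c$ (which is close to complete $(r-2)$-partite with linear parts), would assemble a forbidden $K_r(t,\ldots,t) \supseteq F$.

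Setting $c_{\delta,\epsilon} = 2t^2 w^t + 2\delta/\epsilon$ (up to lower-order terms), choosing $\xi = \xi(\epsilon,t,r)$ small only after $\epsilon, t, r$ are fixed, and taking $N_{\delta,\epsilon}$ large enough to validate all the asymptotic estimates then finishes the proof. The main obstacle is exactly the final assembly in the heavy case: one must guarantee that the $2t$ extracted vertices have a common cross-neighbourhood large enough to host $K_{r-2}(t,\ldots,t)$. Since extracted heavy vertices may themselves carry large cross-deficiencies, I expect the argument to require first discarding the boundedly many vertices of very large deficiency (via a nested instance of the light-vertex edge-surplus argument) and only then running the common-neighbourhood extraction among vertices whose cross-deficiency is $O(\epsilon n)$, so that the union of their cross non-neighbourhoods omits only an $O(t\epsilon n)$ portion of each $V_j$. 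Controlling this interaction between the internal and cross structure, in the spirit of Erd\H{o}s's treatment in \cite{erdos1966new}, is where the quantitative heart of the proof lies.
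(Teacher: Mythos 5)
Your reduction to $K_r(t,\ldots,t)$-freeness and your use of Corollary~\ref{lem: large r-partite graphs in asymptotiacally extremal graphs} to obtain the partition match the paper, but the step that does all of the counting --- the light-vertex edge-surplus argument --- has a genuine gap: deleting the internal edges at a light bad vertex $v$ and joining $v$ to \emph{all} of $V_1^c$ does not preserve $F$-freeness. Concretely, take $F=K_3$ (so $r=3$) and let $G'(n)$ be the complete bipartite graph between $V_1\setminus\{v\}$ and $V_2\setminus\{x,y\}$, plus the edge $xy$ inside $V_2$ and the single edge $vx$. This graph is triangle-free, has at least $\ex(n,K_3)-2n$ edges, and $v$ is a light bad vertex; after your modification $v$ is adjacent to both $x$ and $y$, so the triangle $vxy$ appears. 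The error is twofold. First, a copy of $F$ through $v$ in the modified graph does not force a $K_{r-1}(t,\ldots,t)$ inside $V_1^c$: it only forces a copy of $F$ minus one vertex, with that vertex's $F$-neighborhood embedded in $V_1^c$ (for $K_3$, a single edge of $V_1^c$), and the remaining vertices of the copy may even sit in $V_1$. Second, your justification of the ``preceding observation'' is invalid: the $\xi n^2$ edit-distance bound cannot show that all but $O(1)$ vertices of $V_i$ are adjacent to every vertex of a fixed $(r-1)t$-set in $V_i^c$, since the missing cross-edges incident to such a set may touch $\Theta(n)$ vertices of $V_i$. The standard repair --- and exactly what the paper does in Lemma~\ref{lem: stability complete multipartite min degree} --- is Erd\H{o}s' symmetrization: re-attach $v$ only to the \emph{common neighborhood} of $k\ge v(F)$ vertices of small deficiency, so that any copy of $F$ through $v$ can be rerouted through one of those $k$ vertices and hence must have existed already in $G'(n)$.

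For contrast, the paper's proof of the theorem avoids your heavy/light dichotomy entirely: it argues by contradiction, takes an $(r-1)$-partition minimizing the number of internal edges, uses the minimum-degree bound of Lemma~\ref{lem: stability complete multipartite min degree} together with minimality of the partition to show that every bad vertex has $\Omega(\epsilon n)$ neighbors in \emph{each} part, applies Lemma~\ref{lem: large common neighborhoods when large degrees} to extract $t$ bad vertices whose common neighborhood in every part has size $\Omega(\epsilon^t n)$, and then combines Corollary~\ref{lem: large r-partite graphs in asymptotiacally extremal graphs} with the Erd\H{o}s--Stone count of Remark~\ref{rem: ErdosSim on general structure of extremal graphs} to locate a complete $(r-1)$-partite $K_{r-1}(t,\ldots,t)$ inside the union of those common neighborhoods; adjoining the $t$ bad vertices then yields $K_r(t,\ldots,t)\supseteq F$, a contradiction. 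Your heavy case is, by your own admission, incomplete precisely where the cross-deficiencies of the extracted vertices must be controlled, and the fix you sketch routes back through the flawed light-vertex operation; so both halves of your bound on $c_{\delta,\epsilon}$ rest on the invalid modification step.
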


First, we prove the generalization of a lemma from \cite{erdos1966new} used to prove Theorem 3 of the same paper.

\begin{lemma}
\label{lem: stability complete multipartite min degree}   
For any arbitrary positive constant $\mu$
    there exists a natural number $N_{\mu}$ such that for all $n \ge N_{\mu}$, if 
    $G'(n)$ is an $F$-free graph on $n$ vertices with
    $e(G'(n)) \ge \ex(n, F) - \mu n$, then the minimum degree of $G'(n)$ is at least $
    n\left(1 - \frac{1}{r-1} - 4\mu\right)$, where $\chi(F) = r$. 
\end{lemma}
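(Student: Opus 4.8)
The plan is to argue by contradiction via a \emph{delete-and-reconnect} operation, reducing the whole statement to the existence of one large induced subgraph of small chromatic number. Write $\theta = 1 - \frac{1}{r-1}$. Since $\chi(F) = r$, the Tur\'an graph $T_{r-1}(n)$ is $(r-1)$-colorable and hence $F$-free, so $\ex(n,F) \ge e(T_{r-1}(n)) \ge \theta\binom{n}{2}$, and therefore $e(G'(n)) \ge \theta\binom{n}{2} - \mu n$. Suppose, for contradiction, that some vertex $u$ has $d(u) < (\theta - 4\mu)n$. I would form a new graph $G''$ from $G'(n)$ by deleting all edges at $u$ and joining $u$ to every vertex of a set $S \subseteq V(G'(n)) \setminus \{u\}$ with $\chi(G'(n)[S]) \le r-2$. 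Any copy of $F$ in $G''$ either avoids $u$, hence lies in the $F$-free graph $G'(n)$, or contains $u$, hence lies inside $\{u\} \cup S$; but $G''[\{u\} \cup S]$ has chromatic number at most $(r-2)+1 < r = \chi(F)$, so no such copy exists. Thus $G''$ is $F$-free and $e(G'') \le \ex(n,F)$. As $e(G'') = e(G'(n)) - d(u) + |S|$ and $e(G'(n)) \ge \ex(n,F) - \mu n$, this gives $d(u) \ge |S| - \mu n$.

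Hence it suffices to exhibit $S$ with $\chi(G'(n)[S]) \le r-2$ and $|S| \ge (\theta - 3\mu)n$, for then $d(u) \ge (\theta - 3\mu)n - \mu n = (\theta - 4\mu)n$, contradicting the choice of $u$. To build $S$, I would use that $\chi(F) = r$ forces $F \subseteq K_r(t,\dots,t)$ with $t = |V(F)|$, so $G'(n)$ is $K_r(t,\dots,t)$-free; since $e(G'(n)) \ge \theta\frac{n^2}{2} - n^{1.9}$ for large $n$, Corollary~\ref{lem: large r-partite graphs in asymptotiacally extremal graphs} supplies a partition $V(G'(n)) = V_1 \cup \dots \cup V_{r-1}$ with $\frac{1-\delta_0}{r-1}n \le |V_i| \le \frac{1+\delta_0}{r-1}n$ and fewer than $\xi n^2$ edges inside the parts, where $\delta_0$ and $\xi$ (and hence $N_\mu$) are taken small relative to $\mu$. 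I would then discard the smallest part together with a cleaning set $B$ that carries the intra-part edges, and set $S$ to be the remaining $r-2$ parts with $B$ removed; by construction $G'(n)[S]$ has no edge inside a part, so it is $(r-2)$-colorable, and $|S| \ge (r-2)\tfrac{1-\delta_0}{r-1}n - |B| = \theta n - O(\delta_0 n) - |B|$.

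The crux, and the step I expect to be the main obstacle, is controlling $B$, i.e.\ upgrading the corollary's \emph{quadratic} edit-distance guarantee into a genuinely $(r-2)$-colorable induced subgraph spanning a $(1-o(1))$ fraction of the retained parts. The corollary by itself is not enough, since $\xi n^2$ misplaced edges could in principle concentrate into a single dense part; the argument must exploit the \emph{linear} edge deficiency $\mu n$, which is far tighter. I expect the mechanism to be that $F$-freeness forbids rich intra-part structure: a $K_{t,t}$ inside one part, together with the near-complete cross edges forced by near-extremality, would already yield a $K_r(t,\dots,t) \supseteq F$. Turning this into a quantitative cleaning bound, so that deleting only $O(\mu n)$ vertices leaves an induced subgraph on $\theta n(1 - o(1))$ vertices that is $(r-2)$-colorable, is the technically delicate heart of the proof, and is where I would mirror the supersaturation and cleaning steps of Erd\H{o}s and Simonovits.
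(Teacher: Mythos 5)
Your proof has a genuine gap, and it is not the one you flag as the "crux." The fatal step is the $F$-freeness verification for $G''$: the claim that any copy of $F$ in $G''$ containing $u$ must "lie inside $\{u\} \cup S$" is false. Since $N_{G''}(u) = S$, the only vertices of such a copy forced into $S$ are the \emph{neighbors of $u$ within the copy}; the vertices of the copy that are non-adjacent to $u$'s image in $F$ may sit anywhere in $V(G'')$, and the copy then uses new edges from $u$ to $S$ together with old edges of $G'(n)$ among vertices partly outside $S$. The bound $\chi(G''[\{u\}\cup S]) \le r-1$ says nothing about these mixed copies. Your containment claim is valid only when $F$ is a complete graph, but the lemma is stated for general $F$ and is applied in this paper to complete multipartite graphs $K_{q+1}(r_1,\ldots,r_{q+1})$, which are not complete: a copy of $F$ through $u$ can place the other vertices of $u$'s part (non-neighbors of $u$ in $F$) outside $S$. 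So $e(G'') \le \ex(n,F)$ is not established, and the inequality $d(u) \ge |S| - \mu n$ that drives your whole argument collapses. A secondary problem is the cleaning set $B$: even $O(n)$ intra-part edges surviving Corollary~\ref{lem: large r-partite graphs in asymptotiacally extremal graphs} can form a matching of size linear in $n$, so any vertex cover $B$ of the intra-part edges can have $|B| = \Omega(n)$ with a constant not controlled by $\mu$; but this is moot, since even a perfect $S$ would not rescue the main step.

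The paper's proof takes the same delete-and-reconnect strategy but repairs exactly this defect by choosing the reconnection target differently. Instead of an $(r-2)$-colorable set, it rewires the low-degree vertex $y$ to the \emph{common neighborhood} $N(x_1,\ldots,x_k)$ of $k \ge v(F)$ vertices $x_1,\ldots,x_k$ picked (using the same corollary) in $V_1 \setminus A$, where $A$ is the small set of vertices of $V_1$ with many non-neighbors in $V_1^c$; this common neighborhood has size at least $n\left(1 - \frac{1}{r-1} - 2\mu\right)$, so the edge count rises by more than $\mu n$, exceeding $\ex(n,F)$. Crucially, $F$-freeness of the new graph is then proved by a substitution argument that \emph{does} handle mixed copies: any copy of $F$ through $y$ has all its $y$-neighbors inside the common neighborhood, some $x_i$ is unused by the copy (pigeonhole, since $k \ge v(F)$), and that $x_i$ can replace $y$ vertex-for-vertex to yield a copy of $F$ in $G'(n)$, a contradiction. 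If you replace your "large $(r-2)$-chromatic $S$" with "common neighborhood of $v(F)$ typical vertices of one part," your outline becomes essentially the paper's proof; as written, however, the argument does not go through.
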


\begin{proof}
It suffices to assume that $\mu \le \frac{1}{4}\left(1 - \frac{1}{r-1}\right)$, otherwise the lower bound on the minimum degree is negative. Let $F$ be a graph with $\chi(F) = r$. Assume to the contrary that there exists some positive constant $\mu
$ and graphs $G'(n)$ on $n$ vertices with $e(G'(n)) \ge \ex(n, F) - \mu n$ for infinitely many values of $n$, where $G'(n)$ does not contain $F$,  and $G'(n)$ has a vertex $y$ with degree less than 
$n\left(1 - \frac{1}{r-1} - 4\mu\right)$. Then by Remark~\ref{rem: ErdosSim on general structure of extremal graphs}.1.\ we have $e(G'(n)) \ge \ex(n, F) - \mu n \ge \frac{n^2}{2}(1 - \frac{1}{r-1}) - n^{1.9}$ for $n$ sufficiently large.
Next, since $G'(n)$ is $F$-free, $G'(n)$ must also be $K_{r}(t, \ldots, t)$-free where $t$ is the size of a largest color class of $F$.
Given any positive constant $\delta$, it follows then from Corollary~\ref{lem: large r-partite graphs in asymptotiacally extremal graphs} that for $n$ sufficiently large, $G'(n)$ differs from some $K_{r-1}(p_1, \ldots, p_{r-1})$ by fewer that $\delta n^2$ edges, 
where $\frac{1 - \delta}{r-1}n \le p_i \le \frac{1 + \delta}{r-1}n$ for all $i \in [r-1]$. For all $i \in [r-1]$, let $V_i$ be the set of vertices of $G'(n)$ associated to the partite set of $K_{r-1}(p_1, \ldots, p_{r-1})$ on $p_i$ vertices.
Let $k \ge v(F)$ be some fixed natural number.
Let $a:=\frac{2\delta n k}{\mu}$ and $b:=\frac{\mu n}{k}$.
Let $A \subset V_1$ be the collection of vertices with more than $b$ non-neighbors among the remaining $n - p_1$ vertices of $G'(n)$, that is $V_1^c$. Then $|A| < a$ since $|A|b < \delta n^2 <2\delta n^2 = ab$.  
Next, if we set $\delta < 
\frac{\mu}{4k(r-1)}$, 
then because $p_1 \ge \frac{1-\delta}{r-1}n$, we can find at least $k$ vertices $x_1, \ldots, x_k$ in $V_1 \setminus A$. Further, using $p_1 \le \frac{1+\delta}{r-1}n$, we show that their common neighborhood $N(x_1, \ldots, x_k):=\cap_{i=1}^k N(x_i)$ has size at least $n\left(1-\frac{1}{r-1} - 2\mu\right)$.

To see this, observe that \begin{equation}
    \begin{aligned}
        |N(x_1, \ldots, x_k)| &\ge \sum_{i=1}^k|N_{V_{
        1}^c}(x_i)| - (k-1)|V_1^c|\\
        &\ge k(n-p_1-b) - (k-1)(n-p_1)
        \ge n-p_1-kb \\ 
        &\ge n-\left(\frac{1+\delta}{r-1}\right)n - k\frac{\mu n}{k} \\
        &\ge n\left(1-\frac{1}{r-1} - 2\mu\right).
    \end{aligned}
\end{equation} 

Next to prove the lower bound on the minimum degree of $G'(n)$, we will modify the graph as follows. Delete all the edges adjacent to $y$ from $G'(n)$ and add edges between $y$ and every vertex of $N(x_1, \ldots, x_k) \setminus \{y\}$. 
The new graph we obtain has more than $\ex(n, F)$ edges since we have increased the edge count by more than $\mu n$.  However, this resultant graph must also be $F$-free since otherwise $G'(n)$ itself would have contained $F$. 
To see this, suppose to the contrary that a copy of $F$ is created after the modification. Then $y$ must be a vertex of it. 
However, each of the $k$  vertices, $x_1, \ldots, x_k$ in $G'(n)$ are adjacent to all the neighbors of $y$ in the new graph, and since $k \ge v(F)$, at least one of $x_i$ is not a part of the $F$ in the new graph, and could be used in place of $y$ to obtain a copy of $F$ in $G'(n)$. 
Since, $G'(n)$ was assumed to be $F$-free, we must have that the new graph is also $F$-free. 
This contradicts the fact that the number of edges in the new graph is more than $\ex(n, F)$. Hence, there does not exist any vertex $y$ whose degree is less than $n\left(1 - \frac{1}{r-1} - 4\mu\right)$.   
\end{proof}

We are now ready to prove Theorem~\ref{thm: generalization of a structural result for extremal graphs to almost extremal graphs}. 

\begin{proof}[Proof of Theorem~\ref{thm: generalization of a structural result for extremal graphs to almost extremal graphs}]
Assume, to the contrary, that for any natural number $k$, there exist positive $\delta$ and $\epsilon$ such that there are infinitely many values of $n$ for which there exists an $F$-free graph $G’(n)$ on $n$ vertices with at least $\ex(n, F) - \delta n$ edges. Furthermore, all possible partitions of the vertices of $G’(n)$ into $r-1$ partite sets $V_1, \ldots, V_{r-1}$ must have a partite set $V_i$ for some $i \in [r-1]$ with at least $k+1$ vertices such that each vertex in $V_i$ is not adjacent to more than $\epsilon n$ vertices of $V_i^c$.

Let $\cup_{j=1}^{r-1}P_j(n)$ be a partition of the vertices of $G'(n)$ which has the minimum possible number of edges where both vertices of any edge lie in the same part. For simplicity, we will suppress $n$ and use $P_j$ instead of $P_j(n)$ when the order of the graph is clear from context. Then for any fixed constant $k$, there exists $n$ sufficiently large such that for some $1 \le i \le r-1$ there exist at least $k$ vertices $x_1, \ldots, x_k$ in $P_i(n)$ such that for all $p \in [k]$, $x_p$ is not adjacent to at least $\epsilon n$ vertices that lie in $\cup_{j \ne i}P_j$. Now say $t$ is the size of the largest color class of $F$.

By an $(r-1)$-fold application of Lemma~\ref{lem: large common neighborhoods when large degrees} (with $N=n$, $y_1, \ldots, y_N$ representing the $n$ vertices of $G'(n)$, $A_i$ representing the neighborhood of vertex $x_i$ in $P_j$, $m = k = 2t^2\epsilon^t$, $w = \frac{1}{\epsilon}$ and $l=t$)
we can find $t$ vertices, 
$x_1, \ldots, x_t$ in $P_i$ along with $s \ge \frac{\epsilon^t}{2}n$ vertices from each of the partite sets $P_j$, $y_1^j, \ldots, y_s^j$,  $j=1, \ldots, r-1$, such that $x_i$ is adjacent to $y_l^j$ for all $1 \le i \le t$, $1\le l \le s$ and $1 \le j \le r-1$. 
It follows from Corollary~\ref{lem: large r-partite graphs in asymptotiacally extremal graphs} that for any positive constant $\xi$, if $n$ is sufficiently large then all but at most $\xi n^2$ edges of the form $y_{l_1}^{j_1} y_{l_2}^{j_2}$ occur in $G'(n)$ for any $1 \le l_1, l_2 \le s$ and $1 \le j_1, j_2 \le r-1$, with $j_1 \ne j_2$.
Let $Y = \{y_l^j \mid 1 \le l \le s, 1 \le j \le r-1\}$ be the collection of all $y$'s obtained above. Let $e(Y)$ denote the number of edges of $G'(n)[Y]$ (that is, the subgraph induced by $Y$ in $G'(n)$). Then there exist small positive constants $\epsilon'$ and $\xi$ with respect to which we may choose $n$ sufficiently large so that 
\begin{equation}
    \begin{aligned}
 e(Y) 
 &\ge \binom{r-1}{2}s^2 - \xi n^2\\
 &\ge \left(1-\frac{1}{r-2} + \epsilon'\right)\frac{s^2}{2} .       
    \end{aligned}
\end{equation}

Thus, by Remark~\ref{rem: ErdosSim on general structure of extremal graphs}.1.\ we have that there must exist a $K_{r-2}(t, \ldots, t)$ contained in the graph induced by the $y$'s and consequently there exists a $K_{r-1}(t, \ldots, t)$ contained in the graph induced by the $x$'s and $y's$ and therefore in  $G'(n)$, a contradiction. 
\end{proof}
The following corollary follows from the above result. 
\begin{corollary}
\label{cor: stability complete multipartite few large internal degree}
 
For any positive constants $\delta$ and $\epsilon$ there exist natural numbers $N_{\delta, \epsilon}$ and $c'_{\delta, \epsilon}$ such that if $n \ge N_{\delta, \epsilon}$
    then 
   any $F$-free graph $G'(n)$ on $n$ vertices 
        and at least $\ex(n, F) - \delta n$ edges can be partitioned into $r-1$ sets $V_1, \ldots, V_{r-1}$ where $\frac{(1-\delta)}{r-1}n \le |V_i| \le \frac{(1+\delta)}{r-1}n$ and
        there are at most $c_{\delta, \epsilon}'$ vertices in any $V_i$ which are adjacent to more than $\epsilon n$ vertices of $V_i$.
\end{corollary}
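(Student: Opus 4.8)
The plan is to use the partition $V_1,\ldots,V_{r-1}$ furnished by Theorem~\ref{thm: generalization of a structural result for extremal graphs to almost extremal graphs} and to argue by contradiction. First I would record why the naive approach is too weak: since $G'(n)$ has at least $\ex(n,F)-\delta n$ edges and, by Corollary~\ref{lem: large r-partite graphs in asymptotiacally extremal graphs}, differs from a complete $(r-1)$-partite graph with parts $V_i$ by few edges, the total number of edges lying inside the parts is $o(n^2)$; but this only bounds the number of vertices of internal degree exceeding $\epsilon n$ by $O(n)$, not by a constant. To obtain the constant bound $c'_{\delta,\epsilon}$ I would instead adapt the embedding argument used to prove Theorem~\ref{thm: generalization of a structural result for extremal graphs to almost extremal graphs}, showing that too many vertices of large internal degree force a copy of $K_r(t,\ldots,t)$, where $t$ is the size of a largest color class of $F$ and $\chi(F)=r$; since $F\subseteq K_r(t,\ldots,t)$, this contradicts $F$-freeness.

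The argument would run as follows. Fix a small constant $\epsilon'>0$ (to be chosen in terms of $\epsilon,\delta,r$) and apply Theorem~\ref{thm: generalization of a structural result for extremal graphs to almost extremal graphs} with parameters $\delta$ and $\epsilon'$, so that in each $V_i$ all but at most $c_{\delta,\epsilon'}$ vertices are adjacent to more than $|V_i^c|-\epsilon' n$ vertices of $V_i^c$; call these vertices \emph{good}. Suppose, for contradiction, that some $V_i$ contains more than $c'_{\delta,\epsilon}$ vertices of internal degree exceeding $\epsilon n$. Discarding the at most $c_{\delta,\epsilon'}$ non-good vertices, I would still be left with more than $2t^2 w^t$ vertices of $V_i$ that are simultaneously good and have more than $\epsilon n$ neighbours inside $V_i$, where $w:=\lceil 2/((r-1)\epsilon)\rceil$. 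Applying Lemma~\ref{lem: large common neighborhoods when large degrees} with $S=V_i$, with $A_p$ the neighbourhood inside $V_i$ of the $p$-th such vertex, $m=2t^2w^t$ and $l=t$ (the hypothesis $\sum_p|A_p|\ge m|V_i|/w$ holds because $|A_p|>\epsilon n\ge |V_i|/w$, using $|V_i|\le \tfrac{1+\delta}{r-1}n$), I obtain $t$ good vertices $x_1,\ldots,x_t\in V_i$ whose common neighbourhood $W$ inside $V_i$ satisfies $|W|\ge |V_i|/(2w^t)=\Omega(n)$. Thus $X:=\{x_1,\ldots,x_t\}$ is completely joined to the linear-sized set $W\subseteq V_i$.

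Finally I would combine this internal complete bipartite block with the near-complete cross structure. Let $U$ be the common neighbourhood of $X$ in $G'(n)$. Since each $x_p$ is good, $X$ misses at most $t\epsilon' n$ vertices of $V_i^c$, so $U$ contains all but $O(\epsilon' n)$ vertices of each part $V_j$ ($j\ne i$) together with $W$; in particular $U\cap V_i=W$ and $U$ meets every part in a set of size $\Omega(n)$. The induced graph $G'(n)[U]$ is an almost complete $(r-1)$-partite graph with parts $W$ and $V_j\cap U$: between any two parts at most $O(\epsilon' n^2)$ edges are missing, since good vertices miss at most $\epsilon' n$ neighbours outside their own part. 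Hence $G'(n)[U]$ has edge density exceeding $1-\frac{1}{r-1}-o(1)$, which for $\epsilon'$ small lies above the Tur\'an threshold $1-\frac{1}{r-2}$ for $K_{r-1}(t,\ldots,t)$, so by Remark~\ref{rem: ErdosSim on general structure of extremal graphs}.1.\ the graph $G'(n)[U]$ contains a $K_{r-1}(t,\ldots,t)$ using one vertex class from each part. As every vertex of $U$ is adjacent to all of $X$, appending $X$ as an $r$-th class yields a $K_r(t,\ldots,t)\supseteq F$ in $G'(n)$, the desired contradiction; setting $c'_{\delta,\epsilon}:=c_{\delta,\epsilon'}+2t^2w^t$ and taking $N_{\delta,\epsilon}$ large finishes the proof. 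The main obstacle is precisely this last embedding: the constant bound cannot be read off the edge count, so one must instead recover enough extremal structure—the complete bipartite block inside $V_i$ produced by Lemma~\ref{lem: large common neighborhoods when large degrees} together with the near-complete cross edges—to assemble the full $K_r(t,\ldots,t)$.
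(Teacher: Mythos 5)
Your strategy is the right one, and it is essentially what the paper leaves implicit: the paper gives no proof of this corollary beyond asserting that it ``follows from'' Theorem~\ref{thm: generalization of a structural result for extremal graphs to almost extremal graphs}, and you correctly observe that it does \emph{not} follow by mere edge counting (which bounds the number of high-internal-degree vertices only by $O(n)$), so the real content is a re-run of that theorem's embedding machinery: Lemma~\ref{lem: large common neighborhoods when large degrees} applied to the internal neighborhoods of the alleged bad vertices, producing a set $X$ of $t$ good vertices completely joined to a linear-sized $W \subseteq V_i$, which is then combined with the near-complete cross structure to assemble a $K_r(t,\ldots,t) \supseteq F$, contradicting $F$-freeness. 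Your verification of the hypothesis of Lemma~\ref{lem: large common neighborhoods when large degrees} (namely $|A_p| > \epsilon n \ge |V_i|/w$) and the final bookkeeping $c'_{\delta,\epsilon} = c_{\delta,\epsilon'} + 2t^2w^t$ are sound.

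The one genuine flaw is the density claim in the last step: $G'(n)[U]$ does \emph{not} have density exceeding $1 - \frac{1}{r-1} - o(1)$, because its parts are far from balanced. The part $W$ is only guaranteed to have $|W| \ge |V_i|/(2w^t)$ vertices, smaller than each $V_j \cap U$ by a factor of roughly $2w^t$, and the density of a complete multipartite graph drops under imbalance: writing $\beta = |W|/|U|$, the density of $G'(n)[U]$ is about $1 - \beta^2 - \frac{(1-\beta)^2}{r-2}$, which exceeds the threshold $1 - \frac{1}{r-2}$ only by $\beta\left(\frac{2-\beta}{r-2} - \beta\right) \approx \frac{2\beta}{r-2}$, not by the fixed margin $\frac{1}{r-2} - \frac{1}{r-1}$ that your claim implies. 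The argument can still be closed, but only if $\epsilon'$ is chosen small relative to $\beta \ge \frac{1-\delta}{2(r-1)w^t}$, i.e.\ small relative to $w^{-t}$ --- a dependence on $\epsilon$ and $t$ that you never record --- so that the $O(\epsilon' n^2)$ missing cross edges do not consume this much smaller margin. Two local repairs: (i) truncate each $V_j \cap U$ to exactly $|W|$ vertices, so that the $r-1$ parts are balanced and the density genuinely is $1 - \frac{1}{r-1} - O(\epsilon' w^{2t})$, then invoke Remark~\ref{rem: ErdosSim on general structure of extremal graphs}.1.\ on this balanced piece (the classes of the resulting $K_{r-1}(t,\ldots,t)$ need not lie one per part, but that is irrelevant since every vertex of $U$ is joined to all of $X$); or (ii) avoid density altogether and embed greedily: all but $c_{\delta,\epsilon'}$ vertices of each part are good, so one can choose $t$ good vertices of $W$ and then, part by part, $t$ good vertices adjacent to everything chosen so far, since each previously chosen good vertex excludes at most $\epsilon' n$ candidates and each part meets $U$ in $\Omega(n)$ good vertices. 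With either repair your proof is complete and is, in substance, the derivation the paper intends.
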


\subsection{Proof of Theorems~\ref{thm: turan numbers multipartite graphs} and \ref{thm: connected complement close to turan numbers multipartite graphs}}
\label{subsec: proof of Turan numbers for multipartite graphs}

For any non-negative integer $m$, let $\EX(n, K_{q+1}(r_1, \ldots, r_{q+1}), m)$  denote the collection of all $n$-vertex $K_{q+1}(r_1, \ldots, r_{q+1})$-free graphs with at least $\ex(n, K_{q+1}(r_1, \ldots, r_{q+1}))-   m$  edges. We informally refer to this collection of graphs as being ``almost" extremal if $m$ is either a constant or $m \le q \epsilon n$, where $\epsilon$ is some given arbitrarily small positive constant.

Note that under this new notation $\EX(n, K_{q+1}(r_1, \ldots, r_{q+1}), 0)= \EX(n, K_{q+1}(r_1, \ldots,$ $r_{q+1})
).$ In the following two remarks, we will summarize some of the properties of graphs in $\EX(n, K_{q+1}$ $(r_1, \ldots, r_{q+1}))$ and more generally $\EX(n, K_{q+1}(r_1, \ldots, r_{q+1}), m)$,
that we will be using to prove some of our main results.

We introduce a definition which we will be referring to in the coming sections.
\begin{definition}
For a natural number $k \ge 2$ and any graph $G$, we call a partition $\mathcal{P}:=\{V_1, \ldots V_k\}$ of the vertices of $G$ a \textit{$k$-partition of $G$} if $V(G) = \cup_{i=1}^k V_i$. For a given $k$-partition $\Pa$ of $G$ we say an  edge $e$ of $G$ is an \textit{internal edge of the partition} if it is contained entirely inside one of the parts, that is $e \in V_j$ for some $j \in [k]$, and we say $e$ is an \textit{external edge of the partition} if it is not an internal edge, that is the two end points of $e$ belong to two different parts $V_{j_1}$ and $V_{j_2}$ of $G$.   
We say a $k$-partition of $G$ is a \textit{$k$-good partition} of $G$ if it has the minimum number of internal edges among all $k$-partitions of $G$. Note that this is equivalent to saying that a $k$-partition of $G$ is a $k$-good partition of $G$ if it has the maximum number of external edges among all $k$-partitions of $G$. Further, if $\Pa = \{V_1, \ldots V_k\}$ is a $k$-good partition of $G$, and $v$ is an arbitrary vertex of $G$, if $v \in V_i$ for some $i \in [k]$, then $d_{V_i}(v) \le d_{V_j}(v)$ for all $j \in [k]$.  
\end{definition}

\begin{remark}
\label{rem: structural properties of extremal and stability graphs complete multipartie}    

Using Remark~\ref{rem: ErdosSim on general structure of extremal graphs}.2., observe that given any real and arbitrarily small $\epsilon > 0$, we can find an order $N_{\epsilon}$ such that for all $n \ge N_{\epsilon}$, any graph $K(n) \in \EX(n, K_{q+1}(r_1, \ldots,$ $ r_{q+1}))$ has the following structural properties:

If we take any $q$-good partition $\Pa = \{U_1, \ldots, U_q\}$ of $K(n)$ and define graphs $N_1, N_2, \ldots, N_q$ with respect to this partition, such that $N_i = G[U_i]$ for all $i \in [q]$, then the following hold.
\begin{enumerate}
    \item[$\alpha$)] \label{rem: structural properties of extremal and stability graphs complete multipartie alpha}  Let $n_i := |U_i|$ for all $i \in [q]$. Then $\frac{n}{q} - \epsilon n \le n_i \le \frac{n}{q} + \epsilon n$ for all $i = 1, \ldots, q$.
    \item[$\beta$)] \label{rem: structural properties of extremal and stability graphs complete multipartie beta} The minimum degree of $K(n)$ is at least $\frac{n}{q}(q-1) - \epsilon n$.
    \item[$\gamma$)] \label{rem: structural properties of extremal and stability graphs complete multipartie gamma} Let $U_i'$ denote the vertices of $U_i$ that are adjacent to less than $\epsilon n$ other vertices of $U_i$. Then there exists a constant $k_{\epsilon}$ that depends only on $\epsilon$ such that $|U_i \setminus U_i'| \le k_{\epsilon}$. Consequently, every vertex $v \in U_i'$ is adjacent to at most $\epsilon n$ vertices of $U_i$, and therefore $v$ is adjacent to at least $|U_j| -  3\epsilon n$ vertices of $U_j$ for each $j \neq i$ and thereby adjacent to at least $|U_j'| -  3\epsilon n$ vertices of $U_j'$ for each $j \neq i$. 
\end{enumerate}
\end{remark}

\begin{remark}
\label{rem: structure of stability extremal graphs}
Using Corollaries~\ref{lem: large r-partite graphs in asymptotiacally extremal graphs}, \ref{cor: stability complete multipartite few large internal degree}, Lemma~\ref{lem: stability complete multipartite min degree}, and Theorem~\ref{thm: generalization of a structural result for extremal graphs to almost extremal graphs} we can show that given any real and arbitrarily small $\epsilon > 0$, we can choose $\delta = q \epsilon$ for some natural number $q$ and find an order $N_{\epsilon}$ such that for all $n \ge N_{\epsilon}$,
any graph $K(n)$ in $\EX(n, K_{q+1}(r_1, \ldots, r_{q+1}), m)$ also satisfies the structural properties outlined in Remark~\ref{rem: structural properties of extremal and stability graphs complete multipartie} for extremal graphs, for any $m \le q\epsilon n$.
In particular this is also true if $K(n) \in \EX(n, K_{q+1}(r_1, \ldots, r_{q+1}), q-1)$  or $K(n) \in \EX(n, K_{q+1}(r_1, \ldots, r_{q+1}), q)$.
\end{remark}

Now, for any $K(n) \in \EX(n, K_{q+1}(r_1, \ldots, r_{q+1}), m)$ let $E = \sum_{1 \le i,j \le q}n_i n_j$  denote the number of pairs of vertices in $K(n)$ where the vertices come from different parts of a $q$-good partition.
Then it follows from Lemma~\ref{lem: product graphs not contain forbidden graph} that 
\begin{equation}
\label{eqn: lower bound on e(K(n))}
    e(K(n)) \ge E + \ex(n_1, K_{2}(r_1, r_2)) + \sum_{i=2}^q\ex(n_i, K_2(1, r_2)) - m.
\end{equation} 
To see this, note that when $G(n_1)$ is an extremal graph for $K_2(r_1, r_2)$, and $G(n_i)$ are extremal graphs for $\{K_{2}(1, r_2), K_2(2,2), K_3(1, 1, 1)\}$ where $i = 2, \ldots, q$, then $G(n) = \bigvee_{i=1}^q G(n_i)$ does not contain any $K_{q+1}(r_1, \ldots, r_{q+1})$. 
Thus $e(K(n)) \ge e(G(n)) - m$. 
Moreover, it can be checked that $\EX(n_i, \{K_{2}(1, r_2), K_2(2,2), K_3(1, 1, 1)\}) \subseteq \EX(n_i, K_2(1, r_2))$. 
The extremal graphs in $\EX(n_i, K_2(1, r_2))$ are $(r_2 -1)$-regular if either $n_i$ or $r_2 - 1$ are even, otherwise the extremal graphs are almost-$(r_2 - 1)$-regular graphs. Here, an almost-$k$-regular graph is a graph where all vertices have degree $k$, except for one vertex of degree $k-1$. 
It can then be verified that there always exists a $K_2(1, r_2-1)$ extremal graph that does not contain any $C_3$ or $C_4$. Thus, (\ref{eqn: lower bound on e(K(n))}) holds. 

In terms of $m$ we will be using Equation~(\ref{eqn: lower bound on e(K(n))}) particularly when $m = 0, q-1, q$ or for any $m \le q\epsilon n$.

Throughout the rest of this section, we will be assuming that given an arbitrarily small positive constant $\epsilon$, the graph $K(n) \in \EX(n, K_{q+1}(r_1, \ldots, r_{q+1}), m)$ for $m \le q \epsilon n$ and $n$ is taken to be sufficiently large to make our arguments go through and apply Remarks~\ref{rem: structural properties of extremal and stability graphs complete multipartie} and~\ref{rem: structure of stability extremal graphs}. 
We will also use $N_i$ for $i = 1, \ldots, q$ to denote the same induced graphs that appear in the two remarks.
To make the statements and proofs of subsequent lemmas cleaner we will not be repeating this assumption for the graphs $K(n)$. 

We now state a lemma which will be very useful in determining the finer structure of extremal (and ``almost" extremal) graphs $K(n)$. In what follows, we will use $U_i$ and $U_i'$ as defined above.
 
\begin{lemma}
\label{lem: size of common neighborhood}
    Let $1 \le j \le q$ be fixed. Let $t > 0$ be any fixed integer.  For $n$ sufficiently large if $u_1, \ldots, u_s$ are $s$ vertices of $K(n)$ contained in $\bigcup_{i \ne j }U_i'$, then the common neighborhood $N(u_1, \ldots, u_s)$ contained in $U_j'$ (and by a similar argument  the common neighborhood in $U_j$) has size at least $t$, that is, 
 \begin{equation}
        |N(u_1, \ldots, u_s) \cap U_j'| \ge t.
    \end{equation}

\end{lemma}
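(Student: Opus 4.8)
The plan is to prove this directly from property $\gamma)$ of Remark~\ref{rem: structural properties of extremal and stability graphs complete multipartie} (which applies to $K(n)$ by Remark~\ref{rem: structure of stability extremal graphs}) via a union bound over the $s$ vertices. The key point is that each $u_\ell$ lies in some $U_{i_\ell}'$ with $i_\ell \neq j$, so $u_\ell$ is adjacent to all but an $O(\epsilon n)$ fraction of $U_j'$; since $s$ is a fixed constant, the vertices of $U_j'$ excluded from the common neighborhood still form only an $O(\epsilon n)$ fraction, leaving a linear-sized common neighborhood.

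First I would record the per-vertex estimate. For each $\ell \in \{1, \ldots, s\}$, since $u_\ell \in U_{i_\ell}'$ with $i_\ell \neq j$, property $\gamma)$ gives that $u_\ell$ is adjacent to at least $|U_j'| - 3\epsilon n$ vertices of $U_j'$; equivalently, the set $B_\ell$ of vertices of $U_j'$ that are \emph{not} neighbors of $u_\ell$ satisfies $|B_\ell| \le 3\epsilon n$. By the union bound, the number of vertices of $U_j'$ lying outside the common neighborhood is at most $\big|\bigcup_{\ell=1}^s B_\ell\big| \le 3s\epsilon n$, so that
\[ |N(u_1, \ldots, u_s) \cap U_j'| \ge |U_j'| - 3s\epsilon n. \]
To lower-bound $|U_j'|$ I would combine $\alpha)$, which gives $n_j \ge \frac{n}{q} - \epsilon n$, with $\gamma)$, which gives $|U_j \setminus U_j'| \le k_\epsilon$; hence $|U_j'| \ge \frac{n}{q} - \epsilon n - k_\epsilon$. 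Putting these together,
\[ |N(u_1, \ldots, u_s) \cap U_j'| \ge \frac{n}{q} - (3s+1)\epsilon n - k_\epsilon = \left(\frac{1}{q} - (3s+1)\epsilon\right)n - k_\epsilon. \]

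The only point requiring care is the order of quantifiers. Since $s$ and $q$ are fixed constants, I would first shrink $\epsilon$ (which we are free to do, as it is taken arbitrarily small at the start of the subsection) so that $\frac{1}{q} - (3s+1)\epsilon > 0$. With $\epsilon$ so fixed, the coefficient of $n$ above is a positive constant and $k_\epsilon$ is a constant, so for all $n$ sufficiently large the right-hand side exceeds the fixed integer $t$, giving the claim. The parenthetical assertion about the common neighborhood inside all of $U_j$ follows identically, using instead the bound $|N_{U_j}(u_\ell)| \ge |U_j| - 3\epsilon n$ from $\gamma)$ together with $|U_j| \ge \frac{n}{q} - \epsilon n$. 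There is no genuine obstacle here beyond fixing $\epsilon$ small relative to the constants $s$ and $q$ before sending $n \to \infty$.
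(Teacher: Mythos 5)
Your proof is correct and takes essentially the same approach as the paper: your union bound over the non-neighbor sets $B_\ell$ is just the complementary formulation of the paper's estimate $|N(u_1,\ldots,u_s)\cap U_j'| \ge \left(\sum_{i=1}^s |N_{U_j'}(u_i)|\right) - (s-1)|U_j'|$, and both yield exactly $|U_j'| - 3s\epsilon n$. You additionally make explicit the lower bound $|U_j'| \ge \frac{n}{q} - \epsilon n - k_\epsilon$ and the quantifier order (shrinking $\epsilon$ relative to $s$ and $q$ before sending $n \to \infty$), details the paper's one-line proof leaves implicit.
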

    \begin{proof}
    Given any fixed integer $t > 0$, we can choose a small enough $\epsilon > 0$ and $n$ sufficiently large so that by Remarks~\ref{rem: structural properties of extremal and stability graphs complete multipartie} and \ref{rem: structure of stability extremal graphs}
    \begin{equation}
        |N(u_1, \ldots, u_s) \cap U_j'| \ge \left(\sum_{i=1}^s |N_{U_j'}(u_i)|\right) - (s-1)|U_j'| \ge \left(\sum_{i=1}^s (|U_j'| -3\epsilon n)\right) - (s-1)|U_j'|
    \end{equation}
    which is at least $t$.    
    \end{proof}

\begin{lemma}
\label{lem: Forbidden complete bipaprtite graph in U_i'}
For $i\in [q]$, the graphs induced by $U_i'$ do not contain $K_2(r_1, r_2)$, whenever $n$ is sufficiently large.    
\end{lemma}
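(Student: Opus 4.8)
The plan is to argue by contradiction: if some induced subgraph $K(n)[U_i']$ contained a copy of $K_2(r_1, r_2)$, I would extend it to a copy of the forbidden graph $K_{q+1}(r_1, \ldots, r_{q+1})$ inside $K(n)$ itself, which is impossible. So, fixing $i \in [q]$, suppose toward a contradiction that $K(n)[U_i']$ contains $K_2(r_1, r_2)$ with partite classes $A$ and $B$ of sizes $r_1$ and $r_2$. Then $W := A \cup B$ is a set of $r_1 + r_2$ vertices of $U_i'$ with all $r_1 r_2$ edges between $A$ and $B$ present. These will serve as the first two classes, of sizes $r_1$ and $r_2$, of the copy of $K_{q+1}(r_1, \ldots, r_{q+1})$ I aim to build.

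The remaining $q-1$ classes, of sizes $r_3, \ldots, r_{q+1}$, I would place one each into the $q-1$ parts $U_j'$ with $j \neq i$, constructing them greedily via Lemma~\ref{lem: size of common neighborhood}. Fix an enumeration $j_1, \ldots, j_{q-1}$ of $[q] \setminus \{i\}$. At stage $m$ (for $1 \le m \le q-1$), suppose classes $C_1 \subseteq U_{j_1}', \ldots, C_{m-1} \subseteq U_{j_{m-1}}'$ have already been chosen with $|C_\ell| = r_{\ell+2}$. The accumulated set $W \cup C_1 \cup \cdots \cup C_{m-1}$ is a fixed-size collection of at most $r_1 + \cdots + r_{q+1}$ vertices, lying entirely in $\bigcup_{p \neq j_m} U_p'$, since $W \subseteq U_i'$ and each $C_\ell \subseteq U_{j_\ell}'$ with $j_\ell \neq j_m$. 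Hence Lemma~\ref{lem: size of common neighborhood}, applied with $j = j_m$ and $t = r_{m+2}$, produces a common neighborhood inside $U_{j_m}'$ of size at least $r_{m+2}$, and I would choose any $r_{m+2}$ of these vertices to form the class $C_m$.

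By construction every vertex of $C_m$ is adjacent to all of $W$ and to all previously built classes $C_1, \ldots, C_{m-1}$; therefore after $q-1$ stages the classes $A, B, C_1, \ldots, C_{q-1}$, of respective sizes $r_1, r_2, r_3, \ldots, r_{q+1}$, are pairwise completely joined. These classes are pairwise disjoint, since they live in distinct parts $U_i', U_{j_1}', \ldots, U_{j_{q-1}}'$ and $A, B$ are disjoint within $U_i'$. Thus they form a copy of $K_{q+1}(r_1, \ldots, r_{q+1})$ in $K(n)$ (extra edges within classes are harmless for subgraph containment), contradicting that $K(n)$ is $K_{q+1}(r_1, \ldots, r_{q+1})$-free. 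The argument is clean once Lemma~\ref{lem: size of common neighborhood} is in hand; the only point requiring care is the stagewise bookkeeping, namely confirming that the already-chosen vertices always lie in parts other than $U_{j_m}'$ so that the hypothesis of the lemma is met, and that the fixed target size $t = r_{m+2}$ is attainable, which holds because the lemma grants any fixed $t$ once $n$ is sufficiently large.
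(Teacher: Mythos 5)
Your proposal is correct and follows essentially the same route as the paper: assume a copy of $K_2(r_1,r_2)$ inside some $U_i'$ and extend it, one part at a time, to a copy of $K_{q+1}(r_1,\ldots,r_{q+1})$ by repeatedly invoking Lemma~\ref{lem: size of common neighborhood} on the common neighborhood in the next part $U_j'$, contradicting $K_{q+1}(r_1,\ldots,r_{q+1})$-freeness of $K(n)$. The only cosmetic difference is that the paper writes out the first common-neighborhood estimate explicitly and then says ``proceeding similarly,'' whereas you make the stagewise bookkeeping (and the verification of the lemma's hypothesis at each stage) explicit, which is a faithful formalization of the same argument.
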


\begin{proof}
Assume to the contrary that there exists some $i$ such that the edges in $U_i'$ induce a $K_2(r_1, r_2)$. We can assume without loss of generality that $i = 1$. Say $u_1, \ldots, u_{r_1 + r_2}$ are the vertices of this $K_2(r_1, r_2)$. Then by Lemma~\ref{lem: size of common neighborhood} the size of their common neighborhood lying in $U_2'$ is at least
\[\left(\sum_{l = 1}^{r_1 + r_2} (|U_2'| - 3\epsilon n)\right) - (r_1 + r_2 - 1)|U_2'| \ge |U_2'| -3(r_1 + r_2)\epsilon n>  r_{q+1}\] 
as long as $\epsilon$ is sufficiently small and $n$ is sufficiently large. So we can find at least $r_3$ vertices in $U_2'$ that are adjacent to all the vertices of the $K_2(r_1, r_2)$  contained in $U_1'$. Hence the graph induced by $U_1' \cup U_2'$ contains a $K_3(r_1, r_2, r_3)$ . Similarly, we can find $r_4$ vertices in $U_3'$ that are all adjacent to the $K_3(r_1, r_2, r_3)$ already found in $K(n)\left[U_1' \cup U_2'\right]$. Proceeding similarly we find a $K_{q+1}(r_1, \ldots, r_{q+1})$  in $K(n)\left[\cup_{i=1}^q U_i'\right]$, a contradiction.   
\end{proof}

We have shown that the graphs induced by the $U_i'$ for all $i = 1, \ldots, q$ do not contain any $K_2(r_1, r_2)$.

Now, if the graph induced by every $U_i'$ for $i = 1 \ldots, q$ is $K_2(r_1-1, r_2)$-free, then $\sum_{i=1}^q e(U_i') \le \ex(n, K_2(r_1-1, r_2))$. Since $|U_i \setminus U_i'| \le k_{\epsilon}$ for each $i \in [q]$, we have that 
\begin{equation}
\label{eqn: upperbound if each U_i' is Kr_1-1,r_2 free}
    e(K(n)) \le E + O(n) + O(\ex(n, K_2(r_1-1, r_2))) = E + O(\ex(n, K_2(r_1-1, r_2))). 
\end{equation}

Then, using Remark~\ref{comparing turan numbers of complete bipartite graphs} to compare, we can observe that (\ref{eqn: upperbound if each U_i' is Kr_1-1,r_2 free}) contradicts (\ref{eqn: lower bound on e(K(n))}) when $m \le q \epsilon n$.
Hence, at least one of the $U_i'$ must induce a graph containing a $K_2(r_1 -1, r_2)$.

Now, without loss of generality assume that $K(n)[U_1']$ contains a $K_2(r_1 - 1, r_2)$. 
For all $j \in [q], j \ne 1$, let $B_j$ denote the subset of vertices of $U_j'$ that are in the common neighborhood of every vertex of this $K_2(r_1 - 1, r_2)$.

\begin{lemma}
    The maximum degree of the graph induced by the vertices of $B_j$ is at most $r_3 -1$ for all $j = 2, \ldots, q$. 
\end{lemma}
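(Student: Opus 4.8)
The plan is to argue by contradiction: if some vertex of $B_j$ had at least $r_3$ neighbours inside $B_j$, then I would assemble a copy of $K_{q+1}(r_1, \ldots, r_{q+1})$ inside $K(n)$, contradicting that $K(n)$ is $K_{q+1}(r_1, \ldots, r_{q+1})$-free. Throughout, write $A$ and $C$ for the two colour classes of the fixed $K_2(r_1 - 1, r_2)$ sitting in $U_1'$, so $|A| = r_1 - 1$, $|C| = r_2$, every vertex of $A$ is joined to every vertex of $C$, and by the definition of $B_j$ every vertex of $B_j$ is adjacent to every vertex of $A \cup C$. The governing idea is that $U_1'$, being $K_2(r_1, r_2)$-free by Lemma~\ref{lem: Forbidden complete bipaprtite graph in U_i'}, can only supply two parts of sizes $r_1 - 1$ and $r_2$; a single high-degree vertex of some $B_j$ would make up the one-vertex deficit in the first part while simultaneously furnishing the third part.

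Concretely, suppose $v \in B_j$ satisfies $\deg_{K(n)[B_j]}(v) \ge r_3$, and pick distinct neighbours $w_1, \ldots, w_{r_3} \in B_j$ of $v$. I would take as the first three parts
\[ P_1 = A \cup \{v\}, \qquad P_2 = C, \qquad P_3 = \{w_1, \ldots, w_{r_3}\}, \]
of sizes $r_1, r_2, r_3$; these are pairwise disjoint since $A, C \subseteq U_1'$ while $v, w_1, \ldots, w_{r_3} \in U_j'$ with $j \ne 1$. All required between-part edges are present: $A$--$C$ is the given complete bipartite graph; $v$ and each $w_i$ lie in $B_j$ and so are joined to all of $A \cup C$; and $v$ is joined to each $w_i$ by the choice of the $w_i$. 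Since $v$ and $A$ lie in the common part $P_1$, and the $w_i$ lie in the common part $P_3$, no edges between $v$ and $A$ nor among the $w_i$ are needed; the one edge $v w_i$ per $i$ supplied by the degree hypothesis is exactly what realizes the $P_1$--$P_3$ join for the vertex $v$.

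It then remains to find the parts $P_4, \ldots, P_{q+1}$ of sizes $r_4, \ldots, r_{q+1}$, one inside each $U_i'$ with $i \in \{2, \ldots, q\} \setminus \{j\}$ (a set of $q - 2$ indices, matching the $q - 2$ remaining parts). I would build these greedily, exactly as in the proof of Lemma~\ref{lem: Forbidden complete bipaprtite graph in U_i'}: having chosen the constantly-many vertices forming $P_1 \cup \cdots \cup P_{\ell}$, all of which lie in $\bigcup_{i \ne i_0} U_i'$ for the next target index $i_0$, Lemma~\ref{lem: size of common neighborhood} guarantees that their common neighbourhood inside $U_{i_0}'$ has size at least any prescribed constant, in particular at least $r_{\ell+1}$, from which $P_{\ell+1}$ is extracted. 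Iterating through the $q - 2$ remaining indices produces a full $K_{q+1}(r_1, \ldots, r_{q+1})$ in $K(n)$, the desired contradiction; hence every vertex of $B_j$ has at most $r_3 - 1$ neighbours in $B_j$.

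The one point requiring care — the analogue of the bookkeeping in the previous lemma — is to check at each greedy step that the already-chosen vertices all lie outside the current target set $U_{i_0}'$, so that Lemma~\ref{lem: size of common neighborhood} applies with its index equal to $i_0$. This holds because $P_1, P_2$ live in $U_1'$, the vertex $v$ and all of $P_3$ live in $U_j'$, and each earlier $P_\ell$ lives in a distinct $U_i'$, while we never revisit an index. Everything else is the standard common-neighbourhood embedding, valid provided $\epsilon$ is small and $n$ is large, as assumed throughout.
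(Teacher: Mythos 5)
Your proof is correct and follows essentially the same route as the paper: the paper also merges the high-degree vertex into the $(r_1-1)$-side of the $K_2(r_1-1,r_2)$ in $U_1'$ to form a $K_3(r_1,r_2,r_3)$ (the paper takes $j=2$ without loss of generality, where you keep $j$ general), and then completes the forbidden $K_{q+1}(r_1,\ldots,r_{q+1})$ by the same recursive application of Lemma~\ref{lem: size of common neighborhood} to the remaining $q-2$ parts. No substantive difference beyond your more explicit bookkeeping of part disjointness and index tracking.
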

\begin{proof}
    Towards a contradiction, assume without loss of generality that $B_2$ has a vertex $u$ with $r_3$ neighbors $v_1, \ldots, v_{r_3}$ in $B_2$. Then this $K_2(1, r_3)$ along with the $K_2(r_1 - 1, r_2)$ contained in $K(n)[U_1']$ forms a $K_3(r_1, r_2, r_3)$ in $K(n)[U_1' \cup U_2']$. Using a recursive application of Lemma~\ref{lem: size of common neighborhood} we can find $r_4, \ldots, r_{q+1}$ vertices in $U_3', \ldots, U_q'$, respectively, that together with the $K_3(r_1, r_2, r_3)$ form a $K_{q+1}(r_1, \ldots, r_{q+1})$ in $K(n)$, a contradiction.
\end{proof}
Therefore, $e(B_j) \le \frac{(r_3 -1)}{2}\left(\frac{n}{q} + \epsilon n\right) = O(n)$ for all $j=2, \ldots q$. Moreover, let $m_j = |U_j' \setminus B_j| \le (r_1 +r_2 -1)3\epsilon n$ as a consequence of Remark~\ref{rem: structural properties of extremal and stability graphs complete multipartie gamma}($\gamma$). Then the number of edges with both end points in $U_j' \setminus B_j$ is at most $\ex(m_j, K_2(r_1, r_2))$. 
Further, if $m_j \ne 0$, we can partition the vertices of $U'_j$ into at most $\frac{n+qn\epsilon}{qm_j}$ sets of size at most $m_j$, so that between any such set and the $m_j$ vertices of $U_j' \setminus B_j$ there are no more that $\ex(2m_j, K_2(r_1, r_2))$ edges, and consequently, 
$e(U_j', U_j' \setminus B_j) = O(\frac{n+q\epsilon n}{q m_j}\ex(2 m_j, K_2(r_1, r_2)))$. 
Therefore, for $j =2, \ldots, q$, since $m_j \le (r_1 +r_2 -1)3\epsilon n$,  we get
\begin{align*}
   e(U_j') &= O(n) + O(\ex(m_j, K_2(r_1, r_2))) + O\left(\frac{n+q\epsilon n}{q m_j}\ex(2m_j, K_2(r_1, r_2))\right)\\ &= \epsilon^{1-\frac{1}{r_1}}O(\ex(n, K_2(r_1, r_2))) 
\end{align*}
  by Remark~\ref{comparing turan numbers of complete bipartite graphs}. 
Since there are at most $k_\epsilon$ vertices in $U_j \setminus U_j'$, the same bounds also hold for $e(U_j)$. Thus, 
   \begin{equation}
   \label{eqn: bound on edges in Uj'}
       e(U_j) < e(U_j') + k_\epsilon n \le \epsilon^{1-\frac{1}{r_1}}O(\ex(n, K_2(r_1, r_2))) \text{ for all $j = 2, \ldots, q$.}
   \end{equation}

We have so far obtained an upper bound for the maximum degree of graphs induced by $B_j$ where $j =2, \ldots, q$. Next we will obtain stronger upper bounds for the maximum degree in graphs induced by $U_j'$ for each $j=2, \ldots, q$. Since $B_j \subset U_j'$, this strengthens the upper bound on the maximum degree for the graphs induced by each $B_j$, for $j = 2, \ldots, q$.

\begin{lemma}
\label{lem: max degree r_2 -1 in U_j'}
    The maximum degree of the graph induced by $U_j'$ is at most $r_2 -1$, for all $j = 2, \ldots, q$. 
\end{lemma}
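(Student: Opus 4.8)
The plan is to argue by contradiction. Suppose some $U_j'$ with $j \ge 2$ contains a vertex $u$ with at least $r_2$ neighbours inside $U_j'$; fix a set $V \subseteq U_j' \cap N(u)$ with $|V| = r_2$, so that $\{u\} \cup V$ spans a $K_2(1, r_2)$ inside $U_j'$. I will manufacture a copy of $K_{q+1}(r_1, \ldots, r_{q+1})$ inside $K(n)$, contradicting the hypothesis that $K(n)$ is $K_{q+1}(r_1, \ldots, r_{q+1})$-free. Take $j = 2$ without loss of generality. First I record that $U_1'$ carries almost all of the internal edges: by~\eqref{eqn: lower bound on e(K(n))} the internal edges of $K(n)$ number at least $\ex(n_1, K_2(r_1, r_2)) - m$, whereas by~\eqref{eqn: bound on edges in Uj'} each $U_j'$ with $j \ge 2$ contributes only $\epsilon^{1-1/r_1}O(\ex(n, K_2(r_1, r_2)))$ of them, so $e(U_1') = \Omega\!\left(n^{2 - \frac{1}{r_1}}\right)$.

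The heart of the construction is a seed triangle of parts inside $U_1' \cup U_2'$. To force all of its cross-part edges to be present even though the bipartite graph between $U_1'$ and $U_2'$ is only almost complete, I localise to $U_1^* := U_1' \cap N(\{u\} \cup V)$, the set of vertices of $U_1'$ adjacent to every one of the $r_2 + 1$ vertices of $\{u\} \cup V$. Since each vertex of $\{u\} \cup V$ lies in $U_2'$, it misses at most $3\epsilon n$ vertices of $U_1'$ by Remark~\ref{rem: structural properties of extremal and stability graphs complete multipartie}$(\gamma)$, so $|U_1' \setminus U_1^*| \le (r_2 + 1)\,3\epsilon n$. Granting for the moment that $U_1'[U_1^*]$ contains a copy of $K_2(r_1 - 1, r_3)$, with sides $A_0$ of size $r_1 - 1$ and $D_0$ of size $r_3$, the three sets
\[ A_0 \cup \{u\}, \qquad V, \qquad D_0 \]
form a $K_3(r_1, r_2, r_3)$: the $A_0$--$D_0$ edges come from the copy of $K_2(r_1 - 1, r_3)$, the $u$--$V$ edges come from the choice of $V$, and every other cross-pair is an edge because $A_0 \cup D_0 \subseteq U_1^* \subseteq N(\{u\} \cup V)$.

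From this seed I would complete a $K_{q+1}(r_1, \ldots, r_{q+1})$ by the same recursive use of Lemma~\ref{lem: size of common neighborhood} employed in the proof of Lemma~\ref{lem: Forbidden complete bipaprtite graph in U_i'}: having fixed parts of sizes $r_1, r_2, r_3$ in $U_1' \cup U_2'$, choose parts of sizes $r_4, \ldots, r_{q+1}$ one at a time inside $U_3', \ldots, U_q'$, each taken from the common neighbourhood (within the relevant $U_i'$) of all previously selected vertices. Lemma~\ref{lem: size of common neighborhood} keeps these common neighbourhoods larger than any fixed constant, so the process never stalls, and the resulting complete $(q+1)$-partite graph has exactly the part sizes $r_1, r_2, \ldots, r_{q+1}$, giving the contradiction.

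The main obstacle is the deferred claim that $U_1'[U_1^*]$ still contains a $K_2(r_1 - 1, r_3)$. The danger is that the at most $(r_2 + 1)\,3\epsilon n$ vertices of $U_1' \setminus U_1^*$ might carry a disproportionate share of the edges of $U_1'$. I would control this by observing that each vertex of $U_1'$ has at most $\epsilon n$ neighbours inside $U_1'$, so the edges meeting $U_1' \setminus U_1^*$ number at most $(r_2 + 1)\,3\epsilon n \cdot \epsilon n = O(\epsilon^2 n^2)$; taking $\epsilon$ to decay slowly with $n$ (which the polynomial error terms underlying Remarks~\ref{rem: structural properties of extremal and stability graphs complete multipartie} and~\ref{rem: structure of stability extremal graphs}, via Remark~\ref{rem: ErdosSim on general structure of extremal graphs}, permit) makes this $o(n^{2 - 1/r_1}) = o(e(U_1'))$. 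Hence $e(U_1'[U_1^*]) = \Omega(n^{2 - 1/r_1})$, which by the K\H{o}v\'ari--S\'os--Tur\'an bound (Lemma~\ref{lem: KST}) and Remark~\ref{comparing turan numbers of complete bipartite graphs} dwarfs $\ex(|U_1^*|, K_2(r_1 - 1, r_3))$ and therefore forces the desired $K_2(r_1 - 1, r_3)$. Calibrating this trade-off between the size of the excluded set (governed by $\epsilon$) and the density of $U_1'$ is the only delicate point; everything else is bookkeeping with common neighbourhoods.
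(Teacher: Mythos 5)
Your skeleton is the same one the paper uses---a star $K_2(1,r_2)$ in $U_2'$, its common neighbourhood $U_1^*$ inside $U_1'$ (the paper's $A_1$), a $K_2(r_1-1,r_3)$ there yielding a $K_3(r_1,r_2,r_3)$, then completion to $K_{q+1}(r_1,\ldots,r_{q+1})$ by recursive use of Lemma~\ref{lem: size of common neighborhood}---but the paper runs it in the opposite logical direction. It never tries to \emph{produce} a $K_2(r_1-1,r_3)$ in $A_1$; it uses your seed construction only to prove that $A_1$ must be $K_2(r_1-1,r_3)$-free, and then converts that freeness into the upper bound $e(U_1') \le \epsilon^{1-\frac{1}{r_1}}O(\ex(n,K_2(r_1,r_2)))$, which contradicts the lower bound (\ref{eqn: lower bound on e(K(n))}) once $\epsilon$ is a sufficiently small \emph{constant}. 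That inversion is not cosmetic: it is exactly what lets the paper avoid the estimate on which your argument breaks.

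The genuine gap is in your deferred claim. You bound the edges meeting $U_1'\setminus U_1^*$ by $(r_2+1)\,3\epsilon n\cdot\epsilon n = O(\epsilon^2 n^2)$ and need this to be $o\bigl(n^{2-1/r_1}\bigr)$; for any fixed constant $\epsilon$ this is false, since $\epsilon^2 n^2 / n^{2-1/r_1} = \epsilon^2 n^{1/r_1}\to\infty$. Your proposed repair---letting $\epsilon$ decay polynomially in $n$---is not available in this framework: every ingredient you invoke (Remark~\ref{rem: structural properties of extremal and stability graphs complete multipartie}($\gamma$), Remark~\ref{rem: structure of stability extremal graphs}, Corollary~\ref{cor: stability complete multipartite few large internal degree}, and the bound (\ref{eqn: bound on edges in Uj'}) itself) is stated for a fixed constant $\epsilon$, with constants $k_\epsilon$, $c_{\delta,\epsilon}$ and thresholds $N_\epsilon$ whose dependence on $\epsilon$ is never quantified; the quantifier order throughout is ``fix $\epsilon$, then let $n\to\infty$,'' and the stability window $m\le q\epsilon n$ would also degenerate. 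The correct fix is not to shrink $\epsilon$ but to bound the offending edges better: since $U_1'$ is $K_2(r_1,r_2)$-free (Lemma~\ref{lem: Forbidden complete bipaprtite graph in U_i'}), one can, as the paper does, partition $U_1'$ into roughly $n_1/m_1$ blocks of size at most $m_1=|U_1'\setminus U_1^*|$ and apply $\ex(2m_1,K_2(r_1,r_2))$ to each block together with $U_1'\setminus U_1^*$, giving $e(U_1',U_1'\setminus U_1^*) = O\left(\tfrac{n}{m_1}\,m_1^{2-1/r_1}\right)=\epsilon^{1-\frac{1}{r_1}}O\bigl(n^{2-1/r_1}\bigr)$---a small constant multiple of $n^{2-1/r_1}$, not $\epsilon^2 n^2$. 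With that estimate your route does close (either in your direction, since then $e(U_1'[U_1^*]) = \Omega\bigl(n^{2-1/r_1}\bigr)$ dwarfs $\ex(|U_1^*|,K_2(r_1-1,r_3)) = O\bigl(n^{2-\frac{1}{r_1-1}}\bigr)$ and forces the desired $K_2(r_1-1,r_3)$, or in the paper's direction as an upper bound on $e(U_1')$ contradicting (\ref{eqn: lower bound on e(K(n))})); without it, the step fails as written.
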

\begin{proof}
    Say there is a star $K_2(1, r_2)$ in $K(n)[U_j']$ for some $2 \le j \le q$. Assume without loss of generality that the  $K_2(1, r_2)$ is contained in $K(n)[U_2']$. Let $A_1$ be the subset of vertices of $U_1'$ that lie in the common neighborhood of this $K_2(1, r_2)$.

    \begin{claim}
The graph induced by the vertices of $A_1$ do not contain any $K_2(r_1 - 1, r_3)$.
\end{claim}
      \begin{claimproof}
    To the contrary, if $K(n)[A_1]$ contains a $K_2(r_1 - 1, r_3)$, then along with the  $K_2(1, r_2)$ contained in $K(n)[U_2']$, this forms a $K_3(r_1, r_2, r_3)$ in $K(n)[U_1' \cup U_2']$. Then repeating the application of Lemma~\ref{lem: size of common neighborhood} as we did above, one can show that this $K_3(r_1, r_2, r_3)$ along with $r_4, \ldots, r_{q+1}$ vertices from $U_3', \ldots, U_q'$, respectively, forms a $K_{q+1}(r_1, \ldots, r_{q+1})$, a contradiction. 
    \end{claimproof}
    
Using arguments similar to those leading to (\ref{eqn: bound on edges in Uj'}), we can show that $U_1'$ also contains at most $\epsilon^{1-\frac{1}{r_1}}O(\ex(n, K_2(r_1, r_2)))$ edges.
To see this, let $m_1 = |U_1' \setminus A_1|$. 
Since $A_1$ is the common neighborhood of a $K_2(1, r_2)$ in $U_2'$, we have by Remark~\ref{rem: structural properties of extremal and stability graphs complete multipartie gamma}($\gamma$) that $m_1 = |U_1' \setminus A_1| \le (1+r_2)3\epsilon n$. Then the number of edges with both endpoints in $U_1' \setminus A_1$ is at most $\ex(m_1, K_2(r_1, r_2))$. If $m_1 \ne 0$, we can similarly partition the vertices of $U_1'$ into at most $\frac{n+ q\epsilon n}{q m_1}$ sets of size at most $m_1$, so that between any such set and the $m_1$ vertices of $U_1' \setminus A_1$ there are no more than $\ex(2m_1, K_2(r_1, r_2))$ many edges, and consequently, 
$e(U_1', U_1' \setminus A_1) = O(\frac{n+qn\epsilon}{q m_1}\ex(2 m_1, K_2(r_1, r_2)))$. Thus, 
\begin{equation*}
    \begin{aligned}
        e(U_1') &\leq e(U_1' \setminus A_1)+ e(U_1', U_1' \setminus A_1) + e(A_1)\\ 
        &\le O\left(\ex(m_1, K_2(r_1, r_2))\right) + O\left(\frac{n+q\epsilon n}{q m_1}\ex(2 m_1, K_2(r_1, r_2))\right) \\ &+ \ex\left(\frac{n+q\epsilon n}{q}, K_2(r_1 -1, r_3)\right).
        \end{aligned}
\end{equation*}
Therefore, using Remark~\ref{comparing turan numbers of complete bipartite graphs} and Lemma~\ref{lem: KST}, we get
\begin{equation*}
    e(U_1') \le \epsilon^{1-\frac{1}{r_1}}O(\ex(n, K_2(r_1, r_2))), 
\end{equation*}
as claimed.

By Remark~\ref{rem: structural properties of extremal and stability graphs complete multipartie}, there are at most $k_{\epsilon}$ vertices in $U_1 \setminus U_1'$. So, 
\begin{equation}
\label{eqn: an upperbound for edges in U_1'}
 e(U_1) < e(U_1') + k_\epsilon n \le \epsilon^{1-\frac{1}{r_1}}O(\ex(n, K_2(r_1, r_2))).   
\end{equation}

Combining (\ref{eqn: bound on edges in Uj'}) and (\ref{eqn: an upperbound for edges in U_1'}) gives
    \begin{equation}
    \label{eqn1: for contradiction for lower bound on number of edges of K(n)}
        e(K(n)) \le E + \epsilon^{1-\frac{1}{r_1}}O(\ex(n, K_2(r_1, r_2))).
    \end{equation}
Consequently, for $\epsilon$ small enough we get that (\ref{eqn1: for contradiction for lower bound on number of edges of K(n)}) contradicts (\ref{eqn: lower bound on e(K(n))}) when $m \le q \epsilon n$.
Thus, $U_2'$ does not contain $K_2(1, r_2)$ and more generally, $U_j'$ for $j \ge 2$ must be $K_2(1, r_2)$-free. 
\end{proof}

Next we show that $U_i = U_i'$ for all $i=1,\ldots, q$.
\begin{lemma}
    For every $i \in [q]$, we have that $U_i = U_i'$, that is, every vertex in $U_i$ is adjacent to less that $\epsilon n$ other vertices in $U_i$.
\end{lemma}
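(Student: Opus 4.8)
The plan is to argue by contradiction: assuming some vertex $v \in U_i$ has $d_{U_i}(v) \ge \epsilon n$, I will exhibit a copy of $K_{q+1}(r_1, \ldots, r_{q+1})$ inside $K(n)$, contradicting its $K_{q+1}(r_1, \ldots, r_{q+1})$-freeness. The first step is to show that such a $v$ must have $\Omega(n)$ neighbours in \emph{every} other part. Since $\Pa = \{U_1, \ldots, U_q\}$ is a $q$-good partition, $d_{U_i}(v) \le d_{U_j}(v)$ for all $j$, so $d_{U_j}(v) \ge d_{U_i}(v)$. On the other hand, the minimum-degree estimate of Remark~\ref{rem: structural properties of extremal and stability graphs complete multipartie} (part ($\beta$)) gives $d(v) \ge \frac{q-1}{q}n - \epsilon n$, and bounding the degree into the remaining parts by $|U_l| \le \frac{n}{q}+\epsilon n$ yields $d_{U_j}(v) \ge \frac{n}{q} - d_{U_i}(v) - (q-1)\epsilon n$. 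Taking the larger of these two lower bounds and optimising over $d_{U_i}(v)$ gives $d_{U_j}(v) \ge \frac{n}{2q} - \frac{(q-1)\epsilon n}{2}$ for every $j \ne i$; hence $v$ has at least $\frac{n}{2q} - O(\epsilon n)$ neighbours in each $U_j'$.

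The purpose of this estimate is that $v$ may be treated as a single ``almost complete'' vertex inside the common-neighbourhood computation of Lemma~\ref{lem: size of common neighborhood}. Indeed, if a bounded collection $u_1, \ldots, u_{s-1} \in \bigcup_l U_l'$ is augmented by $v$, then for $n$ large the common neighbourhood of $\{u_1, \ldots, u_{s-1}, v\}$ inside any $U_j'$ still has size at least $|U_j'| - (s-1)3\epsilon n - \bigl(\tfrac{n}{2q} + O(\epsilon n)\bigr) \ge \frac{n}{2q} - O(\epsilon n) > r_{q+1}$, once $\epsilon$ is small. Thus $v$ can be adjoined exactly once to any of the greedy extension steps used in the preceding lemmas without spoiling the argument, whereas the ordinary $U_l'$-vertices (with deficiency only $3\epsilon n$) may be used freely.

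With this in hand I build the forbidden graph around the dense part $U_1'$. Using the lower bound (\ref{eqn: lower bound on e(K(n))}) together with $e(U_j)=O(n)$ for $j\ge 2$ (which holds since Lemma~\ref{lem: max degree r_2 -1 in U_j'} caps the maximum degree of $U_j'$ by $r_2-1$ and there are at most $k_\epsilon$ further vertices per part), one records that $e(U_1') = \Theta(\ex(n,K_2(r_1,r_2))) = \Theta(n^{2-1/r_1})$ by Remark~\ref{comparing turan numbers of complete bipartite graphs}, so $U_1'$ contains a copy of $K_2(r_1-1,r_2)$ as already established. The idea is then to take such a copy with sides $P$ (of size $r_1-1$) and $Q$ (of size $r_2$), set $V_1 := P \cup \{v\}$ and $V_2 := Q$, and extend greedily: invoking the enlarged common-neighbourhood estimate above (using $v$ exactly once) I repeatedly select $r_3, \ldots, r_{q+1}$ further vertices, one group inside each remaining part $U_j'$ and one group inside $N(v)\cap U_i'$ in $v$'s own part, all completely joined to what has been constructed, thereby producing a $K_{q+1}(r_1,\ldots,r_{q+1})$ and the desired contradiction.

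The step I expect to be the main obstacle is ensuring that $v$ is adjacent to the entire anchor side $Q$, i.e.\ that the copy of $K_2(r_1-1,r_2)$ in $U_1'$ can be chosen with $Q \subseteq N(v)$. This is forced in the easy case $r_1=1$ (where $P$ is empty and any $r_2$ neighbours of $v$ in $U_1'$ serve as $Q$), but for $r_1 \ge 2$ the difficulty is genuine: although $v$ has $\Theta(n)$ neighbours in $U_1'$, the edges of $U_1'$ need not meet $N(v)$, so density of $U_1'$ alone does not locate the anchor inside $N(v)$. The plan is to attack this by working inside $M := N(v)\cap U_1'$ and counting $(r_1-1)$-sets weighted by their codegree in $M$, namely by analysing $\sum_{w\in M}\binom{d_{U_1'}(w)}{r_1-1} = \sum_{|P|=r_1-1}\lvert N(P)\cap M\rvert$ and showing that the absence of a suitable anchor would force the edges of $U_1'$ incident to $M$ to be too sparse relative to $e(U_1') = \Theta(n^{2-1/r_1})$; this is exactly where the hypothesis $r_2 \ge (r_1-1)! + 1$ enters, through $\ex(n,K_2(r_1,r_2)) = \Theta(n^{2-1/r_1})$. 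I anticipate that pinning down this localisation, and separately treating the case $v \in U_1$ where $v$ has only $\ge \epsilon n$ neighbours in the anchor part, will be the delicate technical heart of the proof, possibly requiring the bound on the number of exceptional vertices together with a finer structural analysis rather than a single counting inequality.
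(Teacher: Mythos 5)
Your proposal has two genuine gaps, and the paper's proof is organized precisely so that neither issue ever arises. The first gap is the one you flag yourself: you never produce the anchor copy of $K_2(r_1-1,r_2)$ in $U_1'$ whose $r_2$-side lies in $N(v)$, and the counting attack you sketch does not close it, because its stated target is not actually a contradiction. Showing that ``no anchor implies the edges of $U_1'$ incident to $M$ are sparse relative to $e(U_1')$'' proves nothing by itself: all $\Theta(n^{2-1/r_1})$ edges of $U_1'$ could simply sit on $U_1'\setminus M$. To finish you would still have to convert local sparsity at $M$ into a \emph{global} deficit, e.g.\ $e(U_1') \le \ex(n_1-|M|, K_2(r_1,r_2)) + o(n^{2-1/r_1}) \le \ex(n_1, K_2(r_1,r_2)) - \ex(|M|, K_2(r_1,r_2)) + o(n^{2-1/r_1})$ by superadditivity of Tur\'an numbers, and then invoke Lemma~\ref{lem: Tightness for KST bound when t > s-1!} to get $\ex(|M|, K_2(r_1,r_2)) = \Omega\left(\epsilon^{2-1/r_1}n^{2-1/r_1}\right)$, which finally contradicts near-extremality; none of this is in your write-up. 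The paper sidesteps the localization question entirely with a dichotomy: either $G^* = U_1'\cup\{v\}$ contains a $K_2(r_1,r_2)$ --- which must then pass through $v$, and such a copy is just as usable as your anchor because $v$ has large degree into every other $U_j'$ --- or $G^*$ is $K_2(r_1,r_2)$-free, in which case Lemma~\ref{lem: extremal graphs do not have large internal degree} (a $K_2(r_1,r_2)$-free graph with a vertex of linear degree falls a constant factor short of the extremal number) immediately gives the edge-count contradiction. The ``delicate technical heart'' you anticipate simply never has to be proved.

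The second gap you do not flag: your construction fails when $v \in U_i$ with $i \ne 1$. There you must host one part of the forbidden graph inside $N(v)\cap U_i'$, but your common-neighbourhood estimate only covers parts $U_j'$ with $j \ne i$, where $v$'s deficiency is about $\frac{n}{2q}$; inside $v$'s own part you are only guaranteed $d_{U_i'}(v) \ge \epsilon n - k_\epsilon$, while each of the $r_1+r_2+\cdots$ previously chosen vertices may miss up to $3\epsilon n$ vertices of $U_i'$, so the common neighbourhood inside $N(v)\cap U_i'$ can be empty whenever $d_{U_i}(v)$ is close to $\epsilon n$. This is exactly why the paper splits that case further: when $\epsilon n \le d_{U_i}(v) \le 3\epsilon n q r_{q+1}+k_{\epsilon}$, it anchors a star $K_2(1,r_2)$ at $v$ with leaves in $U_i'$, passes to the common neighbourhood $A_1 \subseteq U_1$ of that star, and either finds a $K_2(r_1-1,r_3)$ in $A_1$ (so that $U_1$ hosts \emph{two} parts of the forbidden graph, and $U_i'$ hosts only the star leaves) or, if $A_1$ is $K_2(r_1-1,r_3)$-free, bounds $e(U_1)$ to contradict near-extremality. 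Without this device, or an equivalent one, your Case 2 does not go through.
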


\begin{proof}
Assume to the contrary that there exists some $i \in [q]$ for which there exists a vertex $v \in U_i$ satisfying $d_{U_i}(v) \ge \epsilon n$. Consequently, $d_{U_j}(v) \ge \epsilon n$ for all $j = 1, \ldots, q$, since $\{U_1, \ldots, U_q\}$ forms a $q$-good partition for $V(K(n))$. Remark~\ref{rem: structural properties of extremal and stability graphs complete multipartie beta}($\beta$) implies that $d(v) = \sum_{j=1}^q d_{U_j}(v) \ge  \frac{n}{q}(q-1) - \epsilon n$. 
Therefore, $d_{U_j}(v) \ge 3\epsilon n q r_{q+1} 
+ k_{\epsilon}
$ for all $j \in [q] \setminus \{i\}$. 
This is because otherwise there exists some $l \in [q] \setminus \{i\}$ such that $d_{U_i}(v) \le d_{U_{l}}(v) \le 3\epsilon n q r_{q+1} 
+ k_{\epsilon}
$, and consequently $d(v) \le 6\epsilon n q r_{q+1} 
+ 2k_{\epsilon}
+ (q-2)\left(\frac{n}{q} + \epsilon n\right) < \frac{n}{q}(q-1) - \epsilon n$, contradicting Remark~\ref{rem: structural properties of extremal and stability graphs complete multipartie beta}($\beta$). 
Thus, $d_{U_{j\ne i}}(v) \ge 3\epsilon nq r_{q+1} 
+ k_{\epsilon}
$, as claimed above. 
Since $|U_j \setminus U_j'| \le k_{\epsilon}$ (by Remark~\ref{rem: structural properties of extremal and stability graphs complete multipartie gamma}($\gamma$)), we have $d_{U_j'}(v) \ge 3\epsilon n q r_{q+1}$ for all $j \in [q]\setminus \{i\}$. Now we will proceed by considering two cases depending on whether or not $v \in U_1$.

{\bf Case 1:} $v \in U_1$.

Let $G^*$ be the graph induced by the vertices of $U_1'$ along with $v$. 
First assume that $G^*$ contains a $K_2(r_1, r_2)$. Then $v$ must be a vertex of this $K_2(r_1, r_2)$, since otherwise $U_1'$ contains a $K_2(r_1, r_2)$, which contradicts Lemma~\ref{lem: Forbidden complete bipaprtite graph in U_i'}. 
Now, $d_{U_{j \ne 1}'}(v) \ge 3\epsilon n q r_{q+1}$ and every vertex in $U_l'$ is adjacent to at least $|U_j'| - 3\epsilon n$ vertices of $U_j'$ for all $j \ne l$ (by Remark~\ref{rem: structural properties of extremal and stability graphs complete multipartie gamma}($\gamma$)). Therefore, the common neighborhood in $U_2'$ of the $K_2(r_1, r_2)$ in $G^*$ has size at least

\begin{equation}
    \begin{aligned}
   3\epsilon n q r_{d+1} + (r_1 + r_2 - 1)(|U_2'| - 3\epsilon n) - (r_1 + r_2 - 1)|U_2'|
  &\ge 3\epsilon n q r_{d+1} - (3\epsilon n (q r_{d+1}-1))\\ 
  &\ge 3\epsilon n \ge r_{d+1} \ge r_3.      
    \end{aligned}
\end{equation}
So we can find a $K_3(r_1, r_2, r_3)$ in $\{v\} \cup U_1' \cup U_2'$ as long as $n$ is sufficiently large. Likewise,
we can apply Lemma~\ref{lem: size of common neighborhood} repeatedly to obtain a $K_{q+1}(r_1, \ldots, r_{q+1})$ in $\{v\} \cup U_1' \cup \ldots \cup U_q' \subseteq K(n)$, a contradiction.

Thus, $G^*$ is $K_2(r_1, r_2)$-free. By Lemma~\ref{lem: extremal graphs do not have large internal degree}, there exists a constant $c>0$ such that  $e(G^*) \le (1-c)\ex(n_1 +1, K_2(r_1, r_2))$,
since $d_{U_1'}(v) \ge \epsilon n -k_{\epsilon} \ge \epsilon(n_1+1)$. Moreover, $e(U_i) = O(n)$  for all $i \ge 2$ since $U_i'$ is $K_2(1, r_2)$-free and $e(U_i) - e(U_i') \le k_{\epsilon} n_i$,  because $|U_i \setminus U_i'| \le k_{\epsilon}$.
As a consequence,
\begin{equation}
    e(K(n)) \le e(G^*) + E + O(n) \le E + (1-c')\ex(n_1, K_2(r_1, r_2)) 
\end{equation}
for some positive constant $c'$.
This contradicts (\ref{eqn: lower bound on e(K(n))}) for
$m \le q\epsilon n$.
Thus, $U_1 = U_1'$.

{\bf Case 2:} $v \in U_i$ for some $i \ne 1$.

In case $d_{U_i}(v) \ge 3\epsilon n q r_{q+1} + k_{\epsilon}$, then the proof flows similar to the previous case:
Let $G^*$ be the graph induced by the vertices of $U_1$ along with $v$. 
Note that we have already shown that $U_1 = U_1'$.
If $G^*$ contains a $K_2(r_1, r_2)$. Then $v$ must be a vertex of this $K_2(r_1, r_2)$ and one can find a 
$K_3(r_1, r_2, r_3)$ in $\{v\} \cup U_1' \cup U_2'$ as long as $n$ is sufficiently large. Likewise one can repeatedly apply Lemma~\ref{lem: size of common neighborhood} to obtain a $K_{q+1}(r_1, \ldots, r_{q+1})$ in $\{v\} \cup U_1' \cup \ldots \cup U_q' \subseteq K(n)$, a contradiction.
Thus, $G^*$ is $K_2(r_1, r_2)$-free, and by Lemma~\ref{lem: extremal graphs do not have large internal degree}, there exists a constant $c>0$ such that  $e(G^*) \le (1-c)\ex(n_1 +1, K_2(r_1, r_2))$, 
since $d_{U_1'}(v) \ge \epsilon n$. Moreover, $e(U_i) = O(n)$  for all $i \ge 2$ since $U_i'$ is $K_2(1, r_2)$-free and $e(U_i) - e(U_i') \le k_{\epsilon} n_i$,  due to $|U_i \setminus U_i'| \le k_{\epsilon}$.
As a consequence,
\begin{equation}
    e(K(n)) \le e(G^*) + E + O(n) \le E + (1-c')\ex(n_1, K_2(r_1, r_2))
\end{equation}
for some positive constant $c'$.
This contradicts (\ref{eqn: lower bound on e(K(n))}) for $m \le q \epsilon n$.

Thus, it remains to consider the case $\epsilon n \le d_{U_i}(v) \le 3\epsilon n q r_{q+1} 
+ k_\epsilon
$. Since $|U_i \setminus U_i'| \le k_{\epsilon}$, $v$ must have at least $r_{2}$ neighbors in $U_i'$ in this case. Now let
 $A_1$ be the subset of vertices of $U_1$ that lie in the common neighborhood of this $K_2(1, r_2)$.
Then the graph induced by the vertices of $A_1$ does not contain any $K_2(r_1 - 1, r_3)$ as otherwise the $K_2(r_1 - 1, r_3)$ in $K(n)[A_1]$ along with the  $K_2(1, r_2)$ contained in $K(n)[U_{i}' \cup \{v\}]$ forms a $K_3(r_1, r_2, r_3)$ in $K(n)[U_1 \cup U_{i}' \cup \{v\}]$. 
Next, it follows from Remark~\ref{rem: structural properties of extremal and stability graphs complete multipartie beta}($\beta$) that for any fixed $j \in [q]\setminus\{i\}$ we have $d_{U_{j \neq i}}(v) = d(v) - d_{U_i}(v) - \sum_{l\in[q],l\not\in\{i, j\}}d_{U_l}(v) \ge \frac{n}{q}(q-1) - \epsilon n - 3\epsilon n q r_{d+1} 
- k_\epsilon
- (q-2)(\frac{n}{q}+\epsilon n) \ge \frac{n}{q} - \epsilon n q(3r_{d+1}+1) 
- k_\epsilon
> 3\epsilon n q r_{d+1} 
+ k_\epsilon
$ for $n$ sufficiently large. 
Thus, $d_{U_{j \neq i}'}(v) \ge 3\epsilon n q r_{d+1}$.  
Then repeating the application of Lemma~\ref{lem: size of common neighborhood} as we have done previously, one can show that this $K_3(r_1, r_2, r_3)$ along with $r_4, \ldots, r_{q+1}$ vertices from $U_2', \ldots, U_q'$ (skipping $U_i'$), respectively, form a $K_{q+1}(r_1, \ldots, r_{q+1})$, a contradiction. 
    
Next we will see that $U_1$ contains at most $(3+3r_2+r_3)\epsilon^{1-\frac{1}{r_1}}(\ex(n, K_2(r_1, r_2)))$ edges, by Lemma~\ref{lem: KST} and Remark~\ref{comparing turan numbers of complete bipartite graphs}.
To see this, let $m_1 = |U_1 \setminus A_1|$. 
Since $A_1$ is the common neighborhood of a $K_2(1, r_2)$ in $U_{{i}}' \cup \{v\}$, we have by Remark~\ref{rem: structural properties of extremal and stability graphs complete multipartie gamma}($\gamma$) that $m_1 = |U_1 \setminus A_1| \le(1+r_2)3\epsilon n$. Now $U_1 = U_1'$, so $U_1$ is $K_{r_1, r_2}$-free and therefore the number of edges with both endpoints in $U_1 \setminus A_1$ is at most $\ex(m_1, K_2(r_1, r_2))$. If $m_1 \ne 0$, we can similarly partition the vertices of $U_1 
$ into at most $\frac{n+ q\epsilon n}{q m_1}$ sets of size at most $m_1$, so that between any such set and the $m_1$ vertices of $U_1 \setminus A_1$ there are no more that $\ex(2m_1, K_2(r_1, r_2))$ edges, and consequently, 
$e(U_1 \setminus A_1) + e(A_1, U_1 \setminus A_1) \le \ex(m_1, K_2(r_1, r_2))+  \frac{n+q\epsilon n}{q m_1}\ex(2 m_1, K_2(r_1, r_2)))$. Thus, using Lemma~\ref{lem: KST} and  Remark~\ref{comparing turan numbers of complete bipartite graphs}, 
\begin{equation}
\label{eqn: an upperbound for edges in U_1}
    \begin{aligned}
        e(U_1) &= e(U_1 \setminus A_1)+ e(A_1, U_1 \setminus A_1) + e(A_1)\\ 
        &\le \left(1+  \frac{n+q\epsilon n}{q m_1}\right)\ex(2 m_1, K_2(r_1, r_2)) + \ex\left(\frac{n+q\epsilon n}{q}, K_2(r_1 -1, r_3)\right)\\
        &\le n m_1^{1-\frac{1}{r_1}} + r_3 n^{2-\frac{1}{r_1 - 1}}\\
        &\le(3+3r_2+r_3)\epsilon^{1-\frac{1}{r_1}}\ex(n, K_2(r_1, r_2)),
        \end{aligned}
\end{equation}

as claimed.

Combining  Lemma~\ref{lem: max degree r_2 -1 in U_j'} and (\ref{eqn: an upperbound for edges in U_1}) gives
\begin{equation}
\label{eqn: for contradiction for lower bound on number of edges of K(n)}
\begin{aligned} 
e(K(n)) &\le E + (3+3r_2+r_3)\epsilon^{1-\frac{1}{r_1}}\ex(n, K_2(r_1, r_2)) + O(n)\\
&\le E + 8r_3 \epsilon^{1-\frac{1}{r_1}}\ex(n, K_2(r_1, r_2)).
\end{aligned}    
\end{equation}
    Thus, for $\epsilon$ small enough we get that (\ref{eqn: for contradiction for lower bound on number of edges of K(n)}) contradicts (\ref{eqn: lower bound on e(K(n))}) for any $m \le q\epsilon n$.
    Thus, $U_i = U_i'$  for all $i \ge 2$. 
\end{proof}

\begin{proof}[Proof of Theorems~\ref{thm: turan numbers multipartite graphs} and \ref{thm: connected complement close to turan numbers multipartite graphs}]

It follows from the preceding lemmas that if $n$ is sufficiently large, and we know that $K(n) \in \EX(n, K_{q+1}(r_1, \ldots, r_{q+1}), m)$ for any $m \le q\epsilon n$,
then $K(n)$ is a subgraph of $\vee_{i=1}^q N_i$, where  $N_1$ is a $K_{r_1, r_2}$-free graph and $N_i$ is a $K_2(1, r_2)$-free graph for all $2 \le i \le q$. 
Therefore, $e(K(n)) \le E + \ex(n_1, K_2(r_1, r_2)) + \sum_{i=2}^q \ex(n_i, K_2(1, r_2))$.

In particular, if $m = 0$, that is, $K(n) \in \EX(n, K_{q+1}(r_1, \dots, r_{q+1}))$, then by (\ref{eqn: lower bound on e(K(n))}) we must have equality. This is possible if and only if $N_1$ is an extremal graph for $K_2(r_1, r_2)$ and $N_i$ are extremal graphs for $K_2(1, r_2)$ for all $i \ge 2$. 

This proves that if $K(n) \in \EX(n, K_{q+1}(r_1, \ldots, r_{q+1}))$, then $e(K(n)) = E + \ex(n_1, K_2(r_1, r_2)) + \sum_{i=2}^q \ex(n_i, K_2(1, r_2)).$

The second half of Theorem~\ref{thm: turan numbers multipartite graphs} now follows immediately since any $\hat{N_1}, \ldots, \hat{N_q}$ that satisfy items 4., 5., and 6. of Theorem~\ref{thm: turan numbers multipartite graphs} must be $K_{q+1}(r_1, \ldots, r_{q+1})$-free by Lemma~\ref{lem: product graphs not contain forbidden graph} and have $E + \ex(n_1, K_2(r_1, r_2)) + \sum_{i=2}^q \ex(n_i, K_2(1, r_2))$ edges, therefore belonging to $\EX(n, K_{q+1}(r_1, \ldots, r_{q+1}))$. 

Further, for general $m \le q \epsilon n$
 
there must exist non-negative integers $m_i$ for all $i =0, 1, \ldots, q$ satisfying $\sum_{i = 0}^q m_i \le m$ so that
the number of edges $xy$ with both endpoints coming from different parts (that is $x \in U_i$, $y \in U_j$ and $i\ne j$) are at least $E - m_0$ in number, $e(N_1) \ge \ex(n_{1}, K_2(r_1, r_2)) - m_1$, and $e(N_i) \ge \ex(n_{i}, K_2(1, r_2)) - m_i$ for all $2 \le i \le q$. 

Now, in Theorem~\ref{thm: connected complement close to turan numbers multipartite graphs} we are interested in $K_{q+1}(r_1, \ldots, r_{q+1})$-free graphs that have maximum number of edges among all $n$-vertex graphs with connected complements.  Let $G'(n)$ be one such graph. 

For $K(n) \in \EX(n, K_{q+1}(r_1, \ldots, r_{q+1}))$, note that the complement $\overline{K(n)}$ is disconnected and has connected components $\overline{N_i}$, $i = 1, \dots, q$. Here we know that each $\overline{N_i}$ is connected since $U_i' = U_i$ and therefore the minimum degree of each $\overline{N_i}$ is at least $n_i - \epsilon n > \frac{n_i}{2}$ implying that any two vertices of $\overline{N_i}$ have a common neighbor and so $\overline{N_i}$ is connected.

Observe that we must delete at least $q-1$ edges of $K(n)$ from between the parts (that is, from $E$) so that the complement is connected. And, deleting $q-1$ carefully chosen edges from between the parts suffices in making the complement connected. Thus, for any $G'(n) \in \EX_{cc}(n, K_{q+1}(r_1, \ldots, r_{q+1}))$, we have $e(G'(n)) \geq e(K(n)) - (q-1)$. 
Therefore,
$G'(n)$ is in $\EX(n, K_{q+1}(r_1, \ldots, r_{q+1}), q-1)$. 
Next we will observe that
$$e(G'(n)) = \ex(n, K_{q+1}(r_1, \ldots, r_{q+1})) - (q-1).$$ We know that since $G'(n) \in \EX(n, K_{q+1}(r_1, \ldots, r_{q+1}), m)$ where $m= q-1$, that there exist some non-negative $m_i$ for all $0\leq i\leq q$ such that $\sum_{i = 0}^q m_i \le q-1$.  Since $\overline{G'(n)}$ is connected, we must have that vertices lying in separate partite sets $U_i$ and $U_j$ must be connected in $\overline{G'(n)}$ by edges. Therefore, $\overline{G'(n)}$ must have at least $q-1$ edges connecting the partite sets. Consequently, $m_0 = q-1$ and $m_i = 0$ for all $1 \le i \le q$. Thus, $e(G'(n)) = e(K(n)) - (q-1)$. Further, adding the $q-1$ missing edges of $E$, we obtain a graph $G(n)$ in $\EX(n, K_{q+1}(r_1, \ldots, r_{q+1}))$, thus we can see that every $K_{q+1}(r_1, \ldots, r_{q+1}))$-free graph with a connected complement and maximum number of edges can be obtained by deleting some carefully chosen $q-1$ edges of a graph $G(n)$ in $\EX(n, K_{q+1}(r_1, \ldots, r_{q+1}))$. And conversely, we can take any $K(n)$ in $\EX(n, K_{q+1}(r_1, \ldots, r_{q+1}))$ and deleting some carefully chosen $q-1$ edges from it that go between its parts we can obtain a $K_{q+1}(r_1, \ldots, r_{q+1}))$-free graph with maximum number of edges among all graphs with connected complements. 
\end{proof}

\begin{remark}
\label{rem: stability remark}
    Note that it follows from the proof above that for any arbitrarily small positive constant $\epsilon$, as $n \to \infty$, if $H$ is some graph in $\EX(n, K_{q+1}(r_1, \ldots, r_{q+1}), m)$ for $m \le q \epsilon n$,
then there must exist non-negative integers $m_i$ for all $i =0, 1, \ldots, q$ satisfying $\sum_{i = 0}^q m_i \le m$ so that
the number of edges $xy$ with both endpoints coming from different parts (that is $x \in U_i$, $y \in U_j$ and $i\ne j$) are at least $E - m_0$ in number (where $E$ is the number of all pairs of vertices which come from different parts), $N_1$ is a $K_2(r_1, r_2)$-free graph such that $e(N_1) \ge \ex(n_1, K_2(r_1, r_2)) - m_1$, and $N_i$ are $K_2(1, r_2)$-free graphs with $e(N_i) \ge \ex(n_i, K_2(1, r_2)) - m_i$ for all $2 \le i \le d$.
\end{remark}

We end this section with a Corollary of Theorem~\ref{thm: connected complement close to turan numbers multipartite graphs}.

\begin{corollary}
    Let $\tau=2k$. Let $n$ be sufficiently large and $\tau \mid n$. Then the set $\mathcal{T}_{n,2k}$ is the complete set of edge minimizers in $\mathcal{D}_{n,2k}$.
\end{corollary}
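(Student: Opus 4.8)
The plan is to translate the edge-minimization problem in $\mathcal{D}_{n,2k}$ into the connected-complement Tur\'an problem for $L_{2k+1}$ via complementation, and then invoke Theorem~\ref{thm: connected complement close to turan numbers multipartite graphs}. First I would observe that $L_{2k+1} = CP_{2k}\vee K_1$ is exactly the complete multipartite graph $K_{k+1}(1, 2, \ldots, 2)$, with one part of size $1$ and $k$ parts of size $2$. Hence the hypotheses of Theorems~\ref{thm: turan numbers multipartite graphs} and~\ref{thm: connected complement close to turan numbers multipartite graphs} hold with $q = k$, $r_1 = 1$, and $r_2 = \cdots = r_{k+1} = 2$, since $r_2 = 2 \ge (r_1-1)! + 1 = 2$. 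For any $G \in \mathcal{D}_{n,2k}$ write $H = \overline{G}$. Because $\overline{L_{2k+1}}$ is a disjoint union of $k$ edges and one isolated vertex (hence has maximum degree $1$), any copy of $L_{2k+1}$ in $H$ would yield a dissociation set of size $2k+1$ in $G$; thus $\tau(G) = 2k$ forces $H$ to be $L_{2k+1}$-free, while $G$ connected is equivalent to $\overline{H}$ connected. Since $e(G) = \binom{n}{2} - e(H)$, minimizing $e(G)$ over $\mathcal{D}_{n,2k}$ corresponds precisely to maximizing $e(H)$ over $L_{2k+1}$-free graphs with connected complement, that is, to the quantity $\ex_{cc}(n, L_{2k+1})$.

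Next I would pin down the extremal structure. By Theorem~\ref{thm: turan numbers multipartite graphs} every graph in $\EX(n, L_{2k+1})$ is a join $\bigvee_{i=1}^k N_i$ where each $N_i$ is an extremal (that is, maximum-matching) graph for $K_2(1,2)=P_3$, so its edge count is $\binom{n}{2} - \sum_i\bigl(\binom{n_i}{2} - \lfloor n_i/2\rfloor\bigr)$. The map $m \mapsto \binom{m}{2} - \lfloor m/2\rfloor$ has nondecreasing increments $0,2,2,4,4,\ldots$, so a short exchange argument shows that, since $2k \mid n$ makes the balanced size $n/k$ even, moving a vertex from any part larger than $n/k$ to any part smaller than $n/k$ strictly decreases the sum; hence the unique maximizer is the fully balanced partition with all $n_i = n/k$, each carrying a perfect matching. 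Therefore $\EX(n, L_{2k+1})$ is the single isomorphism class $G_{n,2k}$, whose complement is $k$ disjoint copies of $CP_{n/k}$ and is disconnected. Applying Theorem~\ref{thm: connected complement close to turan numbers multipartite graphs} then gives $\ex_{cc}(n, L_{2k+1}) = \ex(n, L_{2k+1}) - (k-1)$ and, crucially, that every $H \in \EX_{cc}(n, L_{2k+1})$ arises from $G_{n,2k}$ by deleting $k-1$ edges so that $\overline{H}$ becomes connected. Complementing, $\overline{H}$ is $\overline{G_{n,2k}} = k\,CP_{n/k}$ together with $k-1$ edges joining its $k$ components, which is exactly the construction defining $\mathcal{T}_{n,2k}$; thus $\{\overline{H} : H \in \EX_{cc}(n, L_{2k+1})\} = \mathcal{T}_{n,2k}$.

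Finally I would close the loop by counting edges. Theorem~\ref{edge minimizer} already gives $\mathcal{T}_{n,2k} \subseteq \mathcal{D}_{n,2k}$ with $e_{min}(n,2k) = \binom{n}{2} - \ex(n, L_{2k+1}) + (k-1)$, so any edge minimizer $G$ satisfies $e(\overline{G}) = \binom{n}{2} - e_{min}(n,2k) = \ex(n, L_{2k+1}) - (k-1) = \ex_{cc}(n, L_{2k+1})$. As $\overline{G}$ is $L_{2k+1}$-free with connected complement $G$, this forces $\overline{G} \in \EX_{cc}(n, L_{2k+1})$, whence $G \in \mathcal{T}_{n,2k}$ by the previous paragraph; together with the reverse inclusion from Theorem~\ref{edge minimizer}, this identifies $\mathcal{T}_{n,2k}$ as the complete set of edge minimizers. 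The main obstacle I anticipate is not the final counting but cleanly securing the two structural inputs: confirming that the stability result applies at the boundary case $r_1=1,\,r_2=2$ encoding $L_{2k+1}$, and that the divisibility $2k \mid n$ genuinely forces uniqueness of the balanced extremal partition, so that $\EX(n, L_{2k+1})$ collapses to the single class $G_{n,2k}$ rather than a family with unequal even parts. It is this uniqueness that makes the complemented description of $\EX_{cc}(n, L_{2k+1})$ land exactly on $\mathcal{T}_{n,2k}$.
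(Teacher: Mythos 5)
Your proposal is correct and follows essentially the same route as the paper: complement to translate edge minimization in $\mathcal{D}_{n,2k}$ into the connected-complement Tur\'an problem for $L_{2k+1} = K_{k+1}(1,2,\ldots,2)$, then apply the multipartite structure theorem together with Theorem~\ref{thm: connected complement close to turan numbers multipartite graphs} to identify $\{\overline{H} : H \in \EX_{cc}(n, L_{2k+1})\}$ with $\mathcal{T}_{n,2k}$. The only difference is that you spell out details the paper leaves implicit --- the convexity/exchange argument showing that $2k \mid n$ forces the unique balanced extremal partition, and the final edge count forcing an arbitrary minimizer's complement into $\EX_{cc}(n, L_{2k+1})$ --- which is a faithful filling-in rather than a different method.
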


\begin{proof}
    We saw in Theorem~\ref{edge minimizer} that the graphs in $\mathcal{T}_{n, 2k}$ are edge minimizers in $\mathcal{D}_{n,2k}$. It remains to show that these are the only edge minimizers in $\mathcal{D}_{n,2k}$ when $n$ is sufficiently large and $\tau \mid n$. We have observed that the edge minimizers in $\mathcal{D}_{n, 2k}$ are the compliments of graphs in  $\EX_{cc}(n, L_{2k+1})$.
    Applying Theorems~\ref{thm: Erdos-Sim comp. multipartite} and \ref{thm: connected complement close to turan numbers multipartite graphs} to $L_{2k+1} = K_{k+1}(1, 2, \ldots, 2)$, we see that when $\tau \mid 2k$ and $n$ is sufficiently large, then all the edge maximizers in $\EX_{cc}(n, L_{2k+1})$ are obtained by deleting $k-1$ edges from $G_{n, 2k}$ (see Figure~\ref{graph in Gn2k}), so as to make the compliments connected.
    It can now be observed that the complement of any graph in $\EX_{cc}(n, L_{2k+1})$ is in $\mathcal{T}_{n, 2k}$. This completes the proof.
\end{proof}

\section{Lower bounds for edges and spectral radius of graphs with given $d$-independence number}
\label{sec: lower bounds on spectral minimizers for d-independnce number}
Generalizing the concepts of independence sets and dissociation sets for higher degrees $d$ leads to the concept of $d$-independent sets. 
 A subset of vertices of a graph $G$ is said to be $d$-independent if it induces a subgraph with maximum degree at most $d$. The cardinality of a largest  $d$-independent set in a graph $G$ is called the \textit{$d$-independence number} of $G$ and is denoted by $i_d(G)$. 
 In the following, we explore the minimum number of edges among all graphs on $n$ vertices with given $i_d(G)$, for any $d \ge 2$. Note that when $d = 0, 1$ this involves fixing the independence numbers and dissociation numbers, respectively. We will apply Theorem~\ref{thm: connected complement close to turan numbers multipartite graphs} to obtain lower bounds for the number of edges and spectral radius for graphs with $d$-independence number $s$, when the order $n$ is sufficiently large.

\subsection{Connecting edge minimizers for given $d$-independence number to Tur\'an problems for complete multipartite graphs}
Consider the following sets of graphs on $t$ vertices:
Let $\calL_{t, d} = \{L \mid v(L) = t, \Delta(L) \le d\}$ and $\calH_{t,d} = \{K_t - L \mid L \in \calL_{t,d}\}$. That is, $\calL_{t, d}$ is the collection of all graphs on $t$ vertices with maximum degree at most $d$, and $\mathcal{H}_{t, d}$ is the collection of their complementary graphs. In Section~\ref{sec: Edge minimizers and connection to Turan problems}, we connected the problem of minimizing the number of edges in an $n$-vertex graph with fixed dissociation number $\tau$ for $\tau =2s$ and $2s+1$ to the Tur\'an problems determining $\ex(n, K_{s+1}(1,2,2,\ldots, 2))$ and $\ex(n, K_{s+1}(2,2,2,\ldots, 2))$, respectively. The connection in Section~\ref{sec: Edge minimizers and connection to Turan problems} arose because \begin{itemize}
        \item $\calL_{\tau+1,1}$ consisted of all possible induced graphs which could be the evidence for the dissociation number of the graph being at least $\tau + 1$ and $\calH_{\tau+1,1}$ consisted of the complements of the graphs in $\calL_{\tau + 1, 1}$. Further, we established in Proposition~\ref{unique L5 free} that $\{K_n - H_n \mid H_n \in \EX(n, \calH_{\tau + 1, 1})\}$ is the collection of graphs with the minimum number of edges among all graphs with dissociation number $\tau$.
        \item Further, if $M_{n}$ denotes the maximal matching on $n$ vertices, then any graph in $\mathcal{L}_{\tau + 1, 1}$ is a subgraph of $M_{\tau + 1}$. Thus, for any $H \in \mathcal{H}_{\tau + 1, 1}$, we have that $K_{\tau + 1, 1} - M_{\tau + 1} \subset H$. Therefore, $\ex(n, \calH_{\tau+1, 1}) = \ex(n, K_{\tau+1} - 
        M_{\tau + 1})$.
    \end{itemize}
    
The problem of determining the minimum number of edges in an $n$-vertex graph with $d$-independence number fixed to be $s$ is similarly connected to the Tur\'an problem $\ex(n,\calH_{s+1, d})$. The following is the connection:
    \begin{itemize}
        
    \item  For any graph $H_n$ in $\EX(n, \calH_{s+1, d})$, its complement graph $K_n - H_n$ has $i_d(K_n-H_n)\leq s$. In fact, $i_d(K_n-H_n)=s$ (similar to Theorem~\ref{thm: turan connection}).  Consequently, $K_n-H_n$ has the minimum number of edges among all graphs on $n$ vertices with the $d$-independence number equal to $s$.

    \item Among all graphs in $\calH_{s+1, d}$, the graph $K_{\lceil\frac{s+1}{d+1}\rceil}(a,d+1,\ldots,d+1)$, where $a \equiv s+1 \pmod {d+1}$ and $1 \le a \le d+1$, has the least chromatic number. We will prove this fact in Lemma~\ref{lem: min chromatic number graph in H(s+1,d)}.
    
    \item Thus, by Remark~\ref{rem: ErdosSim on general structure of extremal graphs}, $\ex(n,\calH_{s+1, d}) = \Theta(\ex(n,K_{\lceil\frac{s+1}{d+1}\rceil}(a, d+1,\ldots,d+1))) = \binom{n}{2} (1 - \frac{1}{\lceil\frac{s+1}{d+1}\rceil-1} + o(1))$.  We determined  $\EX(n, K_{\lceil\frac{s+1}{d+1}\rceil}(a,d+1,\ldots,d+1))$ when $d+1 \ge (a - 1)! + 1$ (see Theorem~\ref{thm: turan numbers multipartite graphs}), which may be used to obtain an upper bound for $\ex(n,\calH_{s+1, d})$.
    \end{itemize}

Recall that for a given family of graphs $\mathcal{F}$, $\ex_{cc}(n, \mathcal{F})$ denotes the maximum number of edges in any $\mathcal{F}$-free graph on $n$ vertices with a connected complement, and $\EX_{cc}(n \mathcal{F})$ denotes the collection of all $\mathcal{F}$-free graphs on $n$ vertices with $\ex_{cc}(n, \mathcal{F})$ edges that have a connected complement.

\begin{lemma}
\label{lem: d-independence number for complements of extremal graphs}
    For any two integers $s, d$ with $s > d \ge 0$, and any positive integer $n$, let $G$ be any graph in $\EX(n, \mathcal{H}_{s+1, d})$. Then $\tau(\overline{G}) = s$. Further, if $G'$ is any graph in $\EX_{cc}(n, \mathcal{H}_{s+1, d})$, we also have that $\tau(\overline{G'}) = s$  
\end{lemma}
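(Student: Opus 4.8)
The plan is to prove both equalities by the same two–sided strategy that proves Theorem~\ref{thm: turan connection}: the $\mathcal{H}_{s+1,d}$-freeness of $G$ (resp. $G'$) gives the upper bound $i_d(\overline{G}) \le s$ on the $d$-independence number, and edge-maximality gives the matching lower bound. The single tool I would set up first is the complement correspondence: for a vertex set $S$ with $|S| = t$, the set $S$ is $d$-independent in $\overline{G}$ (that is, $\overline{G}[S]$ has maximum degree at most $d$) if and only if $\overline{G}[S]$ is a subgraph of some $L \in \mathcal{L}_{t,d}$, which happens exactly when $G[S]$ contains the corresponding $K_t - L \in \mathcal{H}_{t,d}$.

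First I would establish $i_d(\overline{G}) \le s$. If $\overline{G}$ had a $d$-independent set $S$ with $|S| = s+1$, then $\overline{G}[S] \in \mathcal{L}_{s+1,d}$, so $G[S] = K_{s+1} - \overline{G}[S]$ would be a copy of a member of $\mathcal{H}_{s+1,d}$ inside $G$, contradicting $\mathcal{H}_{s+1,d}$-freeness. The identical argument gives $i_d(\overline{G'}) \le s$.

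Next, for the lower bound $i_d(\overline{G}) \ge s$ I would use edge-addition. Since the empty graph on $s+1$ vertices has maximum degree $0 \le d$, we have $K_{s+1} \in \mathcal{H}_{s+1,d}$, so $G$ is in particular $K_{s+1}$-free and hence (for $n \ge s+1$) not complete. Choose a non-edge $e = xy$ of $G$. By maximality of $G$ in $\EX(n, \mathcal{H}_{s+1,d})$, the graph $G+e$ contains some $F = K_{s+1} - L$ with $L \in \mathcal{L}_{s+1,d}$, and $F$ must use $e$ (else $F \subseteq G$). Deleting the endpoint $x$ from $F$ gives $F - x = K_s - L[V(F)\setminus\{x\}]$, a member of $\mathcal{H}_{s,d}$ because deleting a vertex cannot raise the maximum degree of $L$; moreover $F - x \subseteq (G+e)-x = G-x \subseteq G$. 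Taking $S := V(F)\setminus\{x\}$, the correspondence forces $\overline{G}[S] \subseteq L[S]$, which has maximum degree at most $d$, so $S$ is a $d$-independent set of size $s$ and $i_d(\overline{G}) \ge s$. Together with the upper bound this yields $i_d(\overline{G}) = s$.

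Finally, for $G' \in \EX_{cc}$ the same deletion trick applies once we can add an edge to $G'$ while keeping $\overline{G'}$ connected. Here I would invoke that $s > d$ forces $q = \lceil\frac{s+1}{d+1}\rceil - 1 \ge 1$, so by Remark~\ref{rem: ErdosSim on general structure of extremal graphs} and $e(G') \le \ex(n, \mathcal{H}_{s+1,d})$ the complement has $\binom{n}{2} - \ex(n, \mathcal{H}_{s+1,d}) = \Theta(n^2)$ edges; for $n$ large this exceeds $n-1$, so $\overline{G'}$ contains a cycle and thus a non-bridge edge $f$. Then $\overline{G'} - f = \overline{G'+f}$ remains connected while $e(G'+f) > e(G') = \ex_{cc}(n, \mathcal{H}_{s+1,d})$, so $G'+f$ cannot be $\mathcal{H}_{s+1,d}$-free, and running the deletion argument on a forbidden copy through $f$ produces a $d$-independent set of size $s$ in $\overline{G'}$. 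I expect this last step to be the main obstacle: for small $n$ the complement $\overline{G'}$ can be a spanning tree, every edge a bridge, and then no edge may be added without disconnecting the complement. In that degenerate regime one must instead use a lower bound on the $d$-independence number of trees (the analogue of Proposition~\ref{prop: diss lower bound for trees}) to pin $i_d$ of any feasible tree-complement to $s$, which is precisely why the clean conclusion for $\EX_{cc}$ is genuinely an asymptotic statement valid for $n$ sufficiently large.
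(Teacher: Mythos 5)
Your proof follows the same two-sided strategy as the paper's: $\calH_{s+1,d}$-freeness gives the upper bound $i_d(\overline{G^*})\le s$ for $G^*\in\{G,G'\}$, and edge-maximality gives the matching lower bound by locating a forbidden copy through an added edge and deleting one endpoint of that edge to exhibit a $d$-independent set of size $s$ in the complement. For the $\EX$ part your argument and the paper's coincide step for step (you add the harmless but correct observation that $K_{s+1}\in\calH_{s+1,d}$ forces a non-edge to exist once $n\ge s+1$).

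Where you genuinely diverge is the $\EX_{cc}$ part, and your extra care there exposes a real gap in the paper's own proof. The paper asserts that adding an edge $e'$ to $G'$ so that $\overline{G'+e'}$ stays connected must create a copy of some $H'\in\calH_{s+1,d}$, but it never verifies that such an edge exists, i.e.\ that $\overline{G'}$ is not a tree. You supply this for $n$ large (since $\ex_{cc}\le\ex$, the complement has at least $\binom{n}{2}-\ex(n,\calH_{s+1,d})=\Theta(n^2)$ edges, hence contains a cycle, hence a non-bridge edge), and you flag the small-$n$ degeneracy. That caution is vindicated: the lemma as stated, for ``any positive integer $n$'', is false. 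Take $d=0$, $s=4$, $n=6$, so that $\calH_{5,0}=\{K_5\}$. Any graph on $6$ vertices with connected complement has at most $\binom{6}{2}-5=10$ edges, and $G'=\overline{P_6}$ attains this while being $K_5$-free (as $\alpha(P_6)=3<5$), so $G'\in\EX_{cc}(6,\calH_{5,0})$; yet $i_0(\overline{G'})=\alpha(P_6)=3\neq 4=s$. Here $\overline{G'}$ is a spanning tree and every edge is a bridge, which is exactly the degenerate case you describe, and no fallback argument can close it because the statement itself fails there; so restricting the $\EX_{cc}$ conclusion to $n$ sufficiently large, as you do, is the correct resolution rather than a weakness of your write-up.
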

\begin{proof}
   Let $G$ and $G'$ be arbitrary graphs in $\EX(n, \mathcal{H}_{s+1, d})$ and $\EX_{cc}(n, \mathcal{H}_{s+1, d})$, respectively. Then for any $G^* \in \{G, G'\}$, $G^*$ is $\mathcal{H}_{s+1, d}$-free and therefore any induced subgraph on $s+1$ vertices in $\overline{G^*}$ has maximum degree at least $d+1$. Thus, $\tau(\overline{G^*}) \le s$. 
   Further, adding any edge $e$ to $G$ creates a copy of some $H \in \mathcal{H}_{s+1, d}$, and adding any edge $e'$ to $G'$ such that the complement of the resultant graph remains connected, must also create a copy of some graph $H'$ in $\mathcal{H}_{s+1, d}$. Then $G$ and $G'$ have a copy of $H-e$ and $H'-e'$ as subgraphs.
   
   Thus, there are subsets $S$ and $S'$ of size $s+1$ in $\overline{G}-e$ and $\overline{G'} - e'$, respectively, which induce a graph with maximum degree at most $d$, but $\overline{G}$ and $\overline{G'}$ do not contain any set of $s+1$ vertices that induce a graph with maximum degree at most $d$.
   However, we show next that there are sets of size $s$ which induce a graph with maximum degree $d$ in both $G$ and $G'$.
   Say $e = uv$ and $e'= u'v'$. Then
   $G[S \setminus \{u\}]$ and $G[S' \setminus \{u'\}]$ have maximum degree $d$.  
 Thus, $\tau(\overline{G}) = \tau(\overline{G'}) = s$. 
\end{proof}
\begin{lemma}
\label{lem: min chromatic number graph in H(s+1,d)}
    For any two integers $s, d$ with $s > d \ge 0$ and $a \equiv s+1 \pmod {d+1}$ with $1 \le a \le d+1$, let $H$ be any graph in $\mathcal{H}_{s+1, d}$. Then $\chi(H) \ge \chi(K_{\lceil\frac{s+1}{d+1}\rceil}(a,d+1,\ldots,d+1))$.
\end{lemma}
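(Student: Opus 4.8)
The plan is to reduce the statement to an elementary relationship between the chromatic number of $H$, its independence number, and its order. First I would unpack the definitions: since $H \in \mathcal{H}_{s+1,d}$, we have $H = K_{s+1} - L$ for some $L$ on $s+1$ vertices with $\Delta(L) \le d$, so $H$ is a graph on $s+1$ vertices whose complement $\overline{H} = L$ satisfies $\Delta(\overline{H}) \le d$. I would also record that the target quantity is simply an integer: the complete multipartite graph $K_m(r_1,\ldots,r_m)$ has chromatic number exactly $m$, since one vertex from each part forms a clique of size $m$ while each part is a color class. Hence $\chi\bigl(K_{\lceil\frac{s+1}{d+1}\rceil}(a,d+1,\ldots,d+1)\bigr) = \lceil\frac{s+1}{d+1}\rceil$, and it suffices to prove $\chi(H) \ge \lceil\frac{s+1}{d+1}\rceil$.

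Next I would invoke the standard inequality $\chi(H)\cdot\alpha(H) \ge v(H) = s+1$, which holds because in any proper coloring the color classes are independent sets, each of size at most $\alpha(H)$. The key observation is that an independent set of $H$ is precisely a clique of $\overline{H}$, so $\alpha(H) = \omega(\overline{H})$. Since $\Delta(\overline{H}) \le d$, every clique of $\overline{H}$ has size at most $d+1$ (each vertex of a $k$-clique has degree at least $k-1$ within it, forcing $k \le d+1$), giving $\alpha(H) \le d+1$. Combining these yields $\chi(H) \ge \frac{s+1}{\alpha(H)} \ge \frac{s+1}{d+1}$, and because $\chi(H)$ is an integer we conclude $\chi(H) \ge \lceil\frac{s+1}{d+1}\rceil$, as required.

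There is no genuine obstacle here; the only points requiring a moment of care are the identification $\alpha(H) = \omega(\overline{H})$ and the bound on the clique number of a bounded-degree graph. It is worth noting in passing that the inequality is tight, which is what makes this the correct comparison graph for the subsequent Tur\'an-type arguments: the graph $K_{\lceil\frac{s+1}{d+1}\rceil}(a,d+1,\ldots,d+1)$ itself lies in $\mathcal{H}_{s+1,d}$, since its complement is a disjoint union of one $K_a$ together with $\lceil\frac{s+1}{d+1}\rceil - 1$ copies of $K_{d+1}$, whose maximum degree is $\max(a-1,d) = d$ (using $a \le d+1$). Thus $\lceil\frac{s+1}{d+1}\rceil$ is both a lower bound on $\chi(H)$ for every $H \in \mathcal{H}_{s+1,d}$ and a value attained within the family.
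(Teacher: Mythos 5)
Your proof is correct and follows essentially the same route as the paper's: bound $\alpha(H) \le d+1$ via the degree condition $\Delta(\overline{H}) \le d$, then use the fact that color classes are independent sets to get $\chi(H) \ge \lceil\frac{s+1}{d+1}\rceil$. Your added remarks (the explicit identification $\alpha(H) = \omega(\overline{H})$ and the tightness observation that $K_{\lceil\frac{s+1}{d+1}\rceil}(a,d+1,\ldots,d+1)$ itself lies in $\mathcal{H}_{s+1,d}$) are sound but not needed for the lemma.
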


\begin{proof}
    Let $H$ be an arbitrary member of $\mathcal{H}_{s+1, d}$. Then, by the definition of $\mathcal{H}_{s+1, d}$, the maximum degree of its complementary graph $\Delta(\overline{H}) \le d$. Consequently, the independence number $\alpha(H) \le d+1$, since otherwise there is a vertex with degree more than $d$ in $\overline{H}$. Since any color class of $H$ is an independent set, we have that the size of any color class of $H$ is at most $d+1$. Thus, $\chi(H) \ge  \lceil\frac{s+1}{d+1}\rceil= \chi(K_{\lceil\frac{s+1}{d+1}\rceil}(a,d+1,\ldots,d+1))$  as desired.   
\end{proof}

\begin{proposition}
\label{thm: lower bound for spectral radius given d-independence number}
For any positive integer $s$, if $G$ is a connected graph on $n$ vertices with $d$-independence number $s$,
then \[\lambda(G) \ge \dfrac{2(\binom{n}{2} - \ex_{cc}(n, \mathcal{H}_{s+1, d}))}{n} \ge \dfrac{2(\binom{n}{2} - \ex_{cc}(n, K_{\lceil\frac{s+1}{d+1}\rceil}(a, d+1, \ldots, d+1)))}{n},\]  where $a \equiv s+1 \pmod {d+1}$ and $1 \le a \le d+1$.

\end{proposition}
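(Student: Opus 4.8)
The plan is to reduce the statement to the standard Rayleigh-quotient bound $\lambda(G) \ge \frac{2e(G)}{n}$ for the largest adjacency eigenvalue, and to lower-bound $e(G)$ by passing to the complement $\overline{G}$ and reading off the $\ex_{cc}$ quantity. The one thing that needs to be set up carefully is the complement dictionary for $d$-independence. A set $S$ of $s+1$ vertices is $d$-independent in $G$ precisely when $G[S] \in \calL_{s+1,d}$, which happens precisely when $\overline{G}[S] = K_{s+1} - G[S]$ is a member of $\calH_{s+1,d}$. Consequently $i_d(G) \le s$ if and only if $\overline{G}$ contains no member of $\calH_{s+1,d}$ as a subgraph, i.e.\ $\overline{G}$ is $\calH_{s+1,d}$-free. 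This is the exact analogue of Lemmas~\ref{tau upper bound 1} and \ref{tau upper bound 2} for general $d$, and is the reverse reading of Lemma~\ref{lem: d-independence number for complements of extremal graphs}.

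With this dictionary in hand the argument is short. Since $G$ is connected with $i_d(G) = s$, its complement $\overline{G}$ is an $\calH_{s+1,d}$-free graph whose own complement (namely $G$) is connected; hence $\overline{G}$ is an admissible competitor in the definition of $\ex_{cc}$, giving $e(\overline{G}) \le \ex_{cc}(n, \calH_{s+1,d})$. Using $e(G) + e(\overline{G}) = \binom{n}{2}$ we obtain $e(G) \ge \binom{n}{2} - \ex_{cc}(n, \calH_{s+1,d})$. Combining this with the Rayleigh bound $\lambda(G) \ge \frac{\mathbf{1}^{\top} A(G) \mathbf{1}}{\mathbf{1}^{\top}\mathbf{1}} = \frac{2e(G)}{n}$ yields the first inequality.

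For the second inequality I would only observe that $K_{\lceil\frac{s+1}{d+1}\rceil}(a, d+1, \ldots, d+1)$ is itself a member of $\calH_{s+1,d}$: its complement is a disjoint union of cliques of sizes $a, d+1, \ldots, d+1$, which has maximum degree $d$, and the part sizes sum to $s+1$ because $a \equiv s+1 \pmod{d+1}$ with $1 \le a \le d+1$. Therefore every $\calH_{s+1,d}$-free graph is in particular $K_{\lceil\frac{s+1}{d+1}\rceil}(a, d+1, \ldots, d+1)$-free, so the admissible family defining $\ex_{cc}(n, \calH_{s+1,d})$ is contained in the one defining $\ex_{cc}(n, K_{\lceil\frac{s+1}{d+1}\rceil}(a, d+1, \ldots, d+1))$; hence $\ex_{cc}(n, \calH_{s+1,d}) \le \ex_{cc}(n, K_{\lceil\frac{s+1}{d+1}\rceil}(a, d+1, \ldots, d+1))$. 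Since $x \mapsto \frac{2(\binom{n}{2} - x)}{n}$ is decreasing, substituting this inequality produces the second inequality.

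The genuine content lies entirely in the complement dictionary of the first paragraph; once the equivalence ``$i_d(G) \le s \iff \overline{G}$ is $\calH_{s+1,d}$-free'' is established, the remaining steps are the textbook average-degree bound and a one-line family-inclusion monotonicity, so I do not anticipate a real obstacle. The only point requiring care is the subgraph-versus-induced-subgraph subtlety: containing some $H \in \calH_{s+1,d}$ merely as a subgraph already forces $G[S]$ to be a subgraph of a graph of maximum degree at most $d$, hence $d$-independent, which is exactly what is needed and keeps the dictionary valid.
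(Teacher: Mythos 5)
Your proposal is correct and follows essentially the same route as the paper's proof: pass to the complement, observe that $i_d(G)=s$ forces $\overline{G}$ to be $\calH_{s+1,d}$-free with connected complement so that $e(G) \ge \binom{n}{2} - \ex_{cc}(n,\calH_{s+1,d})$, apply the average-degree bound, and use $K_{\lceil\frac{s+1}{d+1}\rceil}(a,d+1,\ldots,d+1) \in \calH_{s+1,d}$ for the second inequality. The only cosmetic difference is that the paper additionally cites its Lemma on complements of extremal graphs to address tightness (when equality holds), which is not needed for the stated inequalities, while you spell out the subgraph-versus-induced-subgraph point more explicitly.
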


\begin{proof}
Let $s$ be any positive integer. Let $G$ be a connected graph on $n$ vertices that has $d$-independence number $s$. 
Then, 
$G$ cannot have any induced subgraph on $s+1$ vertices that belongs to the collection $\calL_{s+1, d}$.
Further, 
$\overline{G}$ is $\calH_{s+1, d}$-free. Moreover, by Lemma~\ref{lem: d-independence number for complements of extremal graphs} if $H$ is any graph in $\EX(n, \calH_{s+1, d})$ or $\EX_{cc}(n, \calH_{s+1, d})$, then $\overline{H}$ must have $d$-independence number $s$. If $\overline{H}$ is connected, that is, $H \in \EX_{cc}(n, \calH_{s+1, d})$, then $e(\overline{H})$ is the minimum number of edges in any connected $n$-vertex graph with $d$-independence number $s$. 
Therefore, $e(G) \ge \binom{n}{2} - \ex_{cc}(n, \calH_{s+1, d})$, with equality if and only if $\overline{G} \in \EX_{cc}(n, \calH_{s+1, d})$. Since the average degree of a graph is a lower bound for its spectral radius, we get that 
   \[\lambda(G) \ge \frac{2e(G)}{n} \ge \frac{2(\binom{n}{2} - \ex_{cc}(n, \calH_{s+1, d}))}{n}.\]

Further, for $a \equiv s+1 \pmod {d+1}$ and $1 \le a \le d+1$, we have $K_{\lceil\frac{s+1}{d+1}\rceil}(a, d+1, \ldots, d+1) \in \mathcal{H}_{s+1, d}$. Therefore, $$\ex_{cc}(n, K_{\lceil\frac{s+1}{d+1}\rceil}(a, d+1, \ldots, d+1)) \ge  \ex_{cc}(n, \calH_{s+1, d})$$ giving 
\[\lambda(G) \ge \frac{2e(G)}{n} \ge \frac{2(\binom{n}{2} - \ex_{cc}(n, \calH_{s+1, d}))}{n} \ge  \frac{2(\binom{n}{2} - \ex_{cc}(n, K_{\lceil\frac{s+1}{d+1}\rceil}(a, d+1, \ldots, d+1)))}{n}.\]
\end{proof}

From Remark~\ref{rem: ErdosSim on general structure of extremal graphs}(1) we know that the asymptotics of $\ex(n, \calH_{s+1, d})$ is controlled by the graphs in $\calH_{s+1, d}$ with the least chromatic number. 
By Lemma~\ref{lem: min chromatic number graph in H(s+1,d)}, we know that $\lceil\frac{s+1}{d+1}\rceil$ is the minimum chromatic number among all graphs in $\calH_{s+1, d}$. Now denote the subcollection of all graphs in $\calH_{s+1, d}$ with chromatic number $\lceil\frac{s+1}{d+1}\rceil$ by $X_{s+1, d}$. Then $X_{s+1, d}$ contains 
\[\left\{K_{\lceil\frac{s+1}{d+1}\rceil}(r_1, r_2, \ldots, r_{\lceil\frac{s+1}{d+1}\rceil}) \; \bigg| \; \sum_{i=1}^{{\lceil\frac{s+1}{d+1}\rceil}}r_i  = s+1, \textbf{ and } 1 \le r_i \le d+1 \text{ for all } 1 \le i \le \left\lceil\frac{s+1}{d+1}\right\rceil \right\}.\]
Here, we have ordered the $r_i$ such that $r_1 \le r_2 \le \ldots \leq r_{\left\lceil\frac{s+1}{d+1}\right\rceil}$. Then $r_1 \ge a$, where $a$ satisfies $a \equiv s+1 \pmod {d+1}$, $1 \le a \le d+1$. Moreover, it seems plausible that the member of $X_{s+1, d}$ that may best determine $\ex(n, X_{s+1, d})$ and consequently $\ex(n, \calH_{s+1, d})$, is $K_{\lceil\frac{s+1}{d+1}\rceil}(a, d+1, \ldots, d+1)$.  Similarly, if we require connected complements,  $\ex_{cc}(n, X_{s+1, d})$ and $\ex_{cc}(n, \calH_{s+1, d})$ are best determined by $K_{\lceil\frac{s+1}{d+1}\rceil}(a, d+1, \ldots, d+1)$.
\begin{remark}
    If we know that $d+1 \ge (a - 1)! +1$, then Theorem~\ref{thm: connected complement close to turan numbers multipartite graphs} gives us the structure of any graph in $\EX_{cc}(n, K_{\lceil\frac{s+1}{d+1}\rceil}(a, d+1, \ldots, d+1)$. Then if one obtains strong bounds on $\ex(n, K_2(a, d+1))$, we can use that to calculate $\ex(n, K_{\lceil\frac{s+1}{d+1}\rceil}(a, d+1, \ldots, d+1))$ and get a numerical lower bound for $\lambda(G)$ in Proposition~\ref{thm: lower bound for spectral radius given d-independence number}.
\end{remark}
\section{Spectral minimizers for $\tau =2k, k\geq 3$}
\label{sec: spectral minimizers for even dissociation numbers}

In this section, we apply Theorem~\ref{thm: connected complement close to turan numbers multipartite graphs} to obtain a few results on the spectral minimizers for a given even dissociation number, for sufficiently large order $n$ as long as $n$ satisfies some parity conditions.
For any fixed integer $l > 0$ and even integer $m > 0$, let $C_1, \ldots, C_l$ be $l$ copies of $CP_{m}$. For all $i \in [l]$, let $\{u_i, v_i\} \subset C_i$ be a pair of vertices. Let $S = \cup_{i=1}^l\{u_i, v_i\}$. 
\begin{definition}
  Let $\mathcal{P}_l(m, S)$ denote a graph isomorphic to the graph with vertex set $\cup_{i=1}^l V(C_i)$ and edge set $\cup_{i=1}^l E(C_i) \cup_{i=1}^{l-1} v_i u_{i+1}$. We will refer to graphs of this forms as \emph{CP-paths}. If $u_i$ and $v_i$ form a non-adjacent pair of vertices for each $i \in [l]$, we  will refer to these graphs as \emph{aligned} \emph{CP-paths} (see Figure~\ref{fig: CP-path} (left)).     
\end{definition}
\begin{definition}\label{def: cp cycle}
  Let $\mathcal{C}_l(m, S)$ be a graph isomorphic to one obtained by additionally also including the edge $v_l u_1$ to $\mathcal{P}_l(m, S)$.  We will refer to  graphs of this form as \emph{CP-cycles}. And if $u_i$ and $v_i$ form a non-adjacent pair of vertices for each $i \in [l]$, we  will refer to these graphs as \emph{aligned CP-cycles} (see Figure~\ref{fig: CP-path} (right)).
\end{definition}
When referring to aligned CP-paths and cycles we will drop $S$ from the notation and simply use $\mathcal{P}_l(m)$ and $\mathcal{C}_l(m)$.

Let $\tau  = 2k$ and $\tau \mid n$. Recall that $\mathcal{T}_{n,\tau}$ is the set of all graphs of order $n$ and dissociation number $\tau$ obtained by replacing each vertex of a tree on $k$ vertices with a cocktail party graph $CP_{\frac{n}{k}}$.
Then $\mathcal{P}_k(\frac{n}{k}) \in \mathcal{T}_{n, \tau}$ and both $\mathcal{P}_k(\frac{n}{k})$ and $\mathcal{C}_k(\frac{n}{k})$ are connected graphs of order $n$ with dissociation number $2k$. Hence, they are in the set $\mathcal{D}_{n,\tau}$. For simplicity, in this section when $m = \frac{n}{k}$, we will use $\mathcal{P}_l$ and $\mathcal{C}_l$ to denote $\mathcal{P}_l(\frac{n}{k})$
 and  $\mathcal{C}_l(\frac{n}{k})$, respectively and $\mathcal{P}_l(S)$ and $\mathcal{C}_l(S)$ to denote $\mathcal{P}_l(\frac{n}{k}, S)$ and $\mathcal{C}_l(\frac{n}{k}, S)$, respectively.

\begin{figure}[h!]
\begin{center}
     \begin{tikzpicture}[every path/.append style={thick}, scale =1.5]
       \draw (0,0) ellipse (0.5cm and 1cm);
     \draw (1.5,0) ellipse (0.5cm and 1cm); 
     \draw (3,0) ellipse (0.5cm and 1cm);
     \foreach \x in {(-0.2,0.3), (0.2,0.3), (1.3,0.3), (1.7,0.3), (2.8, 0.3), (3.2, 0.3)}
     {\draw[fill] \x circle[radius = 0.05cm];}
     \draw (0.2,0.3)--(1.3,0.3);
     \draw (1.7,0.3)--(2.8, 0.3);
  
    \node at (-0.2,0) {\small $u_1$};
    \node at (0.2,0) {\small$v_1$};
    \node at (1.3,0) {\small $u_2$};
    \node at (1.7,0) {\small $v_2$};
    \node at (2.8,0) {\small $u_3$};
    \node at (3.2,0) {\small $v_3$};
    \node at (0, -1.5) {\small $CP_{m}$};
    \node at (1.5, -1.5) {\small $CP_{m}$};
    \node at (3, -1.5) {\small $CP_{m}$};
    \end{tikzpicture}
    \hspace{0.5cm}
    \begin{tikzpicture}[every path/.append style={thick}, scale =1.5]
       \draw (0,0) ellipse (0.5cm and 1cm);
     \draw (1.5,0) ellipse (0.5cm and 1cm); 
     \draw (3,0) ellipse (0.5cm and 1cm);
     \foreach \x in {(-0.2,0.3), (0.2,0.3), (1.3,0.3), (1.7,0.3), (2.8, 0.3), (3.2, 0.3)}
     {\draw[fill] \x circle[radius = 0.05cm];}
     \draw (0.2,0.3)--(1.3,0.3);
     \draw (1.7,0.3)--(2.8, 0.3);
    \draw (3.2,0.3) to [out= 30, in=150](-0.2,0.3);
    \node at (-0.2,0) {\small $u_1$};
    \node at (0.2,0) {\small$v_1$};
    \node at (1.3,0) {\small $u_2$};
    \node at (1.7,0) {\small $v_2$};
    \node at (2.8,0) {\small $u_3$};
    \node at (3.2,0) {\small $v_3$};
    \node at (0, -1.5) {\small $CP_{m}$};
    \node at (1.5, -1.5) {\small $CP_{m}$};
    \node at (3, -1.5) {\small $CP_{m}$};
    \end{tikzpicture}
    \end{center}
    \caption{The CP-path $\mathcal{P}_{3}(m)$ and the CP-cycle $\mathcal{C}_{3}(m)$.}
    \label{fig: CP-path}
\end{figure}
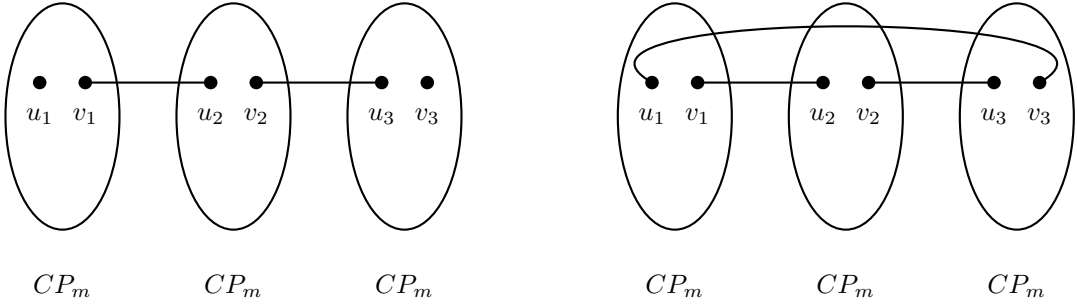

We believe that among all graphs in $\mathcal{D}_{n,2k}$, the aligned CP-path $\mathcal{P}_{k}$ is the unique spectral minimizer. Progressing in that direction, we first prove that if $G \in \mathcal{D}_{n, 2k}$ is a spectral minimizer, then $G\in\mathcal{T}_{n,2k}$.

\begin{theorem}\label{spec is edge minimizer}
  For given dissociation number $\tau =2k$, $k \in \mathbb{N}$, if $n$ is sufficiently large and $\tau \mid n$, then any $G$ which is a spectral minimizer in $\mathcal{D}_{n,\tau}$ is also an edge minimizer in $\mathcal{D}_{n,\tau}.$
\end{theorem}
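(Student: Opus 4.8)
The plan is to sandwich the edge count of a spectral minimizer $G$ between $e_{min}(\mathcal{D}_{n,2k})$ and $e_{min}(\mathcal{D}_{n,2k}) + O(1)$ by a crude average-degree estimate, and then to remove the remaining constant slack using the stability structure provided by Theorem~\ref{thm: connected complement close to turan numbers multipartite graphs}. As a reference edge minimizer I would fix the aligned CP-path $\mathcal{P}_k \in \mathcal{T}_{n,2k} \subseteq \mathcal{D}_{n,2k}$. Using the equitable partition of $\mathcal{P}_k$ into its $k$ blocks together with the non-adjacent endpoint pairs of the connecting edges, exactly as in the quotient-matrix computations of Theorems~\ref{spec minimizer1} and~\ref{spec minimizer2}, one bounds $\rho(\mathcal{P}_k) \le \frac{n}{k} - 2 + \frac{c}{n}$ for a constant $c = c(k)$. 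Since $G$ is a spectral minimizer, $\rho(G) \le \rho(\mathcal{P}_k)$, and combining this with the average-degree lower bound $\rho(G) \ge 2e(G)/n$ gives $e(G) \le \frac{n}{2}\rho(\mathcal{P}_k) \le \frac{n^2}{2k} - n + \frac{c}{2}$. As $e_{min}(\mathcal{D}_{n,2k}) = \frac{n^2}{2k} - n + (k-1)$ by Theorem~\ref{edge minimizer}, this forces $e(G) - e_{min}(\mathcal{D}_{n,2k}) \le m_0$ for an absolute constant $m_0 = m_0(k)$.

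Next I would pass to complements. Because $\tau(G) = 2k$, the complement $\overline{G}$ is $L_{2k+1}$-free, where $L_{2k+1} = K_{k+1}(1,2,\ldots,2)$, and $e(\overline{G}) = \binom{n}{2} - e(G) \ge \ex(n, L_{2k+1}) - m$ for the constant $m := m_0 + (k-1)$; moreover $G = \overline{\overline{G}}$ is connected, so $\overline{G}$ lies in $\EX(n, L_{2k+1}, m)$ and has connected complement. Here $q = k$, $r_1 = 1$ and $r_2 = \cdots = r_{k+1} = 2$, so I would invoke the stability conclusion recorded in Remark~\ref{rem: stability remark}: for $n$ large, $\overline{G}$ differs in at most $m$ edges from a join $\bigvee_{i=1}^k N_i$ in which each $N_i$ is a near-perfect matching and each block has $\frac{n}{k} + o(n)$ vertices. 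Taking complements, $G$ is, up to $O(1)$ edges, a disjoint union of $k$ cocktail-party blocks $B_1, \ldots, B_k$ (each within $O(1)$ edges of $CP_{n/k}$) that is made connected by $O(1)$ cross-block edges.

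Finally I would close the constant gap by an edge-deletion argument. Suppose, for contradiction, that $e(G) > e_{min}(\mathcal{D}_{n,2k})$. Then the block structure forces a redundant edge: either the multigraph obtained by contracting each $B_i$ to a point has more than $k-1$ edges, so some cross-block edge lies on a cycle, or some block $B_i$ carries strictly more edges than $CP_{n/k}$. In the first case I delete a cross-block edge lying on a cycle; in the second I delete the surplus internal edge. In both cases $G - e$ stays connected, since the blocks are dense with minimum internal degree $\frac{n}{k} - O(1)$. One then checks $\tau(G-e) = 2k$: as each vertex of a block is adjacent to all but $O(1)$ of the other vertices of that block, every dissociation set meets each $B_i$ in at most two vertices, so deleting a single edge cannot promote a maximum dissociation set from size $2k$ to size $2k+1$. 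Hence $G - e \in \mathcal{D}_{n,2k}$, and by the Perron--Frobenius theorem $\rho(G-e) < \rho(G)$, contradicting the spectral minimality of $G$. Therefore $e(G) = e_{min}(\mathcal{D}_{n,2k})$, i.e.\ $G$ is an edge minimizer.

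The main obstacle is the dissociation-number bookkeeping in the final step. A priori the $O(1)$ cross-block connecting edges could allow a dissociation set to straddle two blocks and gain an extra vertex, so it is not immediate that deleting a redundant edge keeps the dissociation number at exactly $2k$ rather than raising it to $2k+1$. Overcoming this requires the quantitative density furnished by the stability result, namely that each block vertex is non-adjacent to only $O(1)$ vertices of its own block; this caps the intersection of any dissociation set with each $B_i$ at two vertices and makes the global count $2k$ robust under the deletion of one carefully chosen edge. A secondary technical point is verifying that the reference bound $\rho(\mathcal{P}_k) \le \frac{n}{k} - 2 + \frac{c}{n}$ holds with a constant $c$ independent of $n$, which is where $\tau \mid n$ (equal block sizes) is used.
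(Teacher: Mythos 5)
Your overall route---bounding $\rho(G)$ by $\rho(\mathcal{P}_k)$, converting this into an edge-count sandwich $e_{min}(n,2k) \le e(G) \le e_{min}(n,2k) + O(1)$, invoking the stability structure of Remark~\ref{rem: stability remark}, and finishing with an edge-deletion/Perron--Frobenius contradiction---is essentially the paper's, but your final step has a genuine gap, and it sits exactly where you flagged the ``main obstacle''. The principle you rely on, namely that each block vertex has only $O(1)$ non-neighbors inside its own block and hence every dissociation set meets each block in at most two vertices \emph{even after deleting an edge}, is false. What actually caps the intersection at two is that the non-adjacencies inside each block form a matching (so any three vertices of a block span at least two edges); mere density does not give this. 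If you delete an internal edge $uv$ where $u$ already has a non-neighbor $u'$ in its block, the within-block non-adjacency graph acquires the path $u'uv$, and then $\{u',u,v\}$ induces a single edge in $G-e$, i.e.\ a dissociation set of size $3$ inside that block; thus $\tau(G-e)$ can jump to $2k+1$ and no contradiction with spectral minimality is obtained.

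This failure is not hypothetical, because your constant $m_0=m_0(k)$ is left unspecified (you never establish $m_0=1$), so the sandwich does not exclude an \emph{unbalanced} partition: for instance, blocks of sizes $\frac{n}{k}+1$ and $\frac{n}{k}-1$, each the complement of a maximum matching (i.e.\ $L_{\frac{n}{k}+1}$ and $L_{\frac{n}{k}-1}$), joined to the remaining blocks by exactly $k-1$ cross edges. Such a graph lies in $\mathcal{D}_{n,2k}$, has $e_{min}(n,2k)+2$ edges, and its larger block has strictly more edges than $CP_{n/k}$, so it falls into your case (b)---yet it is edge-critical: every single edge deletion either disconnects it or, by the mechanism above, raises the dissociation number, so the ``redundant edge'' you need does not exist. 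There are two repairs. Either sharpen the first step to $e(G) \le e_{min}(n,2k)+1$, as the paper does via $\rho(\mathcal{C}_k) < \frac{n}{k}-2+\frac{2k}{n-k}$; then unbalanced partitions are ruled out by pure counting (imbalance costs at least $2$ edges of $\overline{G}$ on top of the $k-1$ cross-pairs that connectivity of $G$ already forces), and in the two surviving balanced configurations your deletion works, provided in case (b) you delete an edge joining two vertices that are both fully adjacent within their block. Or keep the soft constant and kill the odd-block case spectrally: $L_{\frac{n}{k}+1}$ has minimum degree $\frac{n}{k}-1$, hence $\rho(G) \ge \frac{n}{k}-1 > \frac{n}{k}-2+\frac{c}{n}$ for large $n$, contradicting your upper bound on $\rho(G)$. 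That spectral contradiction is precisely the concluding step of the paper's proof, and it is the step your proposal omits.
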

\begin{proof}
    We have seen that any edge minimizer in $\mathcal{D}_{n, \tau}$ is obtained as the complement of some graph $G'(n) \in \EX(n, K_{k+1}(1, 2 \ldots, 2), k-1)$ with $\ex(n, K_{k+1}(1, 2, \ldots, 2)) - (k-1)$ edges, obtained itself from some graph $G(n) \in \EX(n, K_{k+1}(1, 2, \ldots, 2))$ by deleting some carefully chosen $k-1$ edges which make the complement graph connected. Thus, edge minimizers in $\mathcal{D}_{n, \tau}$ have $\binom{n}{2} - \ex(n, K_{k+1}(1, 2, \ldots, 2))+ k-1$ many edges. Next we will show that for $\tau = 2k$ and $\tau \mid n$, if $G \in \mathcal{D}_{n, \tau}$ is a spectral minimizer, then $G$ is also an edge minimizer of $\mathcal{D}_{n, \tau}$. For this, consider any $K(n) \in \EX(n, K_{k+1}(1, 2 \ldots, 2)) = \vee_{i=1}^k N_i$ where $N_i \in \EX(n_i, K_2(1,2)) = M_{n_i}$, a perfect matching on $n_i$ vertices, for all $i \in [k]$. It can be observed that each $n_i = \frac{n}{k}$ since $K(n)$ has the maximum number of edges. 
    Consequently, $e(G) \ge \frac{n}{2}\left(\frac{n}{k} - 2\right) + k - 1$.

Let $\Tilde{G}$ be the aligned CP-path $\mathcal{P}_{k}$ and let $\hat{G}$ be the aligned CP-cycle $\mathcal{C}_{k}$.  Since $G$ is a spectral minimizer in $\mathcal{D}_{n, \tau}$, $\rho(G) \le \rho(\Tilde{G}) < \rho(\hat{G})$. To calculate the spectral radius $\rho(\hat{G})$ consider the following $2 \times 2$ quotient matrix $Q$ of the adjacency matrix of $\hat{G}$ where the quotient corresponds to the vertex partition $A = \{u_1, v_1, \ldots, u_k, v_k\}$ and $B = V(G) \setminus A$  given as follows:
    \[Q = 
    \begin{bmatrix}
    1&\frac{n}{k}-2\\
    2&\frac{n}{k}-4\\
    \end{bmatrix}.
    \]

Since the partition is an equitable partition,
\begin{equation}\label{eq: spectral radius of cp cycle}
  \rho(\hat{G}) = \rho(Q) =  \frac{\frac{n}{k}-3 + \sqrt{(\frac{n}{k} - 1)^2 + 8}}{2} < \frac{n}{k} - 2 + \frac{2k}{n-k}. 
\end{equation} 
Therefore, $$e(G) \le \frac{n\rho(G)}{2} < \frac{n^2}{2k} - n + \frac{nk}{n-k} < \frac{n}{2}\left(\frac{n}{k} - 2\right) + (k -1) + 2,$$
where the last inequality holds for any $n > k^2 + k$.
This implies that $$\frac{n}{2}\left(\frac{n}{k} - 2\right) + (k -1) \le e(G) \le \frac{n}{2}\left(\frac{n}{k} - 2\right) + k,$$ for $n$ sufficiently large. This tells us that $\overline{G} \in \EX(n, K_{k+1}(1, 2, \ldots, 2), k)$.
Now, if $G$ is not an edge minimizer in $\mathcal{D}_{n, \tau}$, then $e(G) =  \frac{n}{2}(\frac{n}{k} - 2) + k$ and $\overline{G} \not\in \EX(n, K_{k+1}(1, 2, \ldots, 2), k-1)$. However, since the spectral radius increases on adding edges to a connected graph, it must be that $G$ does not contain any connected subgraph in $\mathcal{D}_{n, \tau}$. By Remark~\ref{rem: stability remark}, we have that $n_i = \frac{n}{k}+1$ for some $i \in [k]$ and the induced subgraph $G[U_i] = L_{\frac{n}{k}+ 1}$ has spectral radius at least $\frac{n}{k} - 1$. Then the minimum degree of $G[U_i] = \frac{n}{k} - 1$, which contradicts a previous observation that $\rho(G) < \frac{n}{k} - 2 + \frac{2k}{n-k}$. Thus, $G$ must also be an edge minimizer.
\end{proof}

Now, it is a natural question to ask here that among the graphs in the set $\mathcal{D}_{n, 2k}\setminus \mathcal{T}_{n,2k}$ which graphs have the smallest spectral radius. We will prove that the aligned CP-cycle $\mathcal{C}_k$ is the unique spectral minimizer in $\mathcal{D}_{n, 2k}\setminus \mathcal{T}_{n,2k}$. This will resolve Problem~\ref{prob: cycle minimizer}. We will need the following lemmas.

\begin{lemma}\label{lem: nonadjacent connectors are better}
Consider the graph $G$ in Figure~\ref{fig: nonadjacent connectors are better}. Let $G_1, G_2$ be any two graphs. Let $u_1, v_1$ be a non-adjacent pair in $CP_m$ and let $u_1\sim a$ and $v_1\sim b$, where $a\in V(G_1)$ and $b\in V(G_2)$.  Let $u_2, v_2$ be another non-adjacent pair in $CP_m$. Let $G' = G +u_1v_1 + u_2v_2 - u_1v_2 - u_2v_1$. Then, $\rho(G')>\rho(G)$.
\end{lemma}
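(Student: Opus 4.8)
The plan is to compare $\rho(G)$ and $\rho(G')$ through a single test vector together with the false-twin symmetry of $CP_m$, and—where that step is not decisive—through a reduction to a weighted cocktail-party graph. Let $y$ be the principal eigenvector of $G$. Every vertex of $CP_m$ other than $u_1,v_1$ is a false twin of its matching partner, and the automorphisms of $G$ that fix $u_1,v_1$ (hence $G_1,G_2$) act transitively on these internal pairs; thus all of them receive the same entry, and in particular $y(u_2)=y(v_2)=:t$. Writing $A=A(G)$ and $A'=A(G')$, the only changed adjacencies are the added pairs $u_1v_1,u_2v_2$ and the deleted pairs $u_1v_2,u_2v_1$, so
\[
y^{\top}(A'-A)\,y \;=\; 2\bigl(y(u_1)-y(u_2)\bigr)\bigl(y(v_1)-y(v_2)\bigr)\;=\;2\bigl(y(u_1)-t\bigr)\bigl(y(v_1)-t\bigr).
\]
Since $\rho(G')\ge y^{\top}A'y=\rho(G)+y^{\top}(A'-A)y$, it would suffice to prove that each attachment vertex carries strictly more eigenvector weight than the internal twins, i.e.\ $y(u_1)>t$ and $y(v_1)>t$.

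Feeding the eigenvalue equations at $u_1$, at $v_1$, and at an internal vertex into one another (using that $u_1$ and $v_1$ are each adjacent to all $m-2$ internal vertices) yields $(\rho+2)t=\sum_{w\in CP_m}y(w)$ and the clean identity $y(u_1)+y(v_1)-2t=\tfrac{y(a)+y(b)}{\rho+2}>0$. So the \emph{average} of the two factors always exceeds $t$, and when $G_1,G_2$ are comparable both factors are positive and the test vector already gives $\rho(G')>\rho(G)$. The main obstacle is that for very unbalanced $G_1,G_2$ one of $y(u_1)-t,\,y(v_1)-t$ can be negative; then the product is negative and the test-vector bound is inconclusive, so a more robust argument is required.

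To make the comparison robust I would eliminate $G_1,G_2$ in favour of their boundary weights. For $\lambda>\rho(G_i)$ the matrix $\lambda I-A(G_i)$ is invertible with positive diagonal, and the eigen-equation on $V(G_i)$ forces $y(a)=h_1(\lambda)\,y(u_1)$ and $y(b)=h_2(\lambda)\,y(v_1)$, where $h_1(\lambda)=[(\lambda I-A(G_1))^{-1}]_{aa}>0$ and $h_2(\lambda)=[(\lambda I-A(G_2))^{-1}]_{bb}>0$; this is legitimate at $\lambda=\rho(G)$ and $\lambda=\rho(G')$ because $G_i$ is a proper subgraph of the connected graph $G$, whence $\rho(G),\rho(G')>\rho(G_i)$. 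The problem then collapses to a weighted cocktail-party graph in which $u_1,v_1$ carry loops $h_1,h_2$. The equitable partition of $CP_m$—classes $\{u_1\},\{v_1\}$ and the single internal class in configuration $G$, versus $\{u_1\},\{v_1\},\{u_2\},\{v_2\}$ and the internal class in configuration $G'$—reduces each spectral radius to the largest root of an explicit quotient characteristic function $\Phi_N(\lambda)$, respectively $\Phi_A(\lambda)$.

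The decisive step is to evaluate $\Phi_A$ at $\lambda=\rho(G)$, the top root of $\Phi_N$. For $m=4$ the two equations are $\lambda(\lambda-h_1)(\lambda-h_2)=2(2\lambda-h_1-h_2)$ and $\lambda^2(\lambda-h_1)(\lambda-h_2)=(2\lambda-h_1)(2\lambda-h_2)$, and substituting a root of the first into the second gives the clean identity $\Phi_A(\rho(G))=-\,h_1(\rho(G))\,h_2(\rho(G))<0$; since $\Phi_A(\lambda)\to+\infty$, its largest root $\rho(G')$ exceeds $\rho(G)$. The remaining work—and the step I expect to demand the most care—is to show that this determinant stays strictly negative at $\lambda=\rho(G)$ for every even $m$, the controlling negative contribution always being the term proportional to $h_1h_2>0$; this yields $\rho(G')>\rho(G)$ uniformly in $G_1,G_2$ and in $m$, covering exactly the unbalanced cases where the test-vector argument fails.
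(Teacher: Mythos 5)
Your opening move coincides with the paper's: both plug the principal eigenvector of $G$ into the Rayleigh quotient of $G'$ and reduce to the sign of $\bigl(y(u_1)-y(u_2)\bigr)\bigl(y(v_1)-y(v_2)\bigr)$, and your twin-symmetry observation $y(u_2)=y(v_2)=t$ together with the identity $y(u_1)+y(v_1)-2t=\frac{y(a)+y(b)}{\rho+2}$ is correct and a genuine refinement. The gap is exactly the case you flag yourself: when one factor is negative. Your proposed repair --- eliminating $G_1,G_2$ via the resolvent weights $h_i(\lambda)$ and comparing largest roots of the quotient functions $\Phi_N,\Phi_A$ --- is only executed for $m=4$, and you explicitly leave all larger even $m$ as ``remaining work''; moreover, even for $m=4$ you use without proof that $\rho(G)$ and $\rho(G')$ are the \emph{largest} roots of these non-polynomial functions (this needs monotonicity of $h_i(\lambda)$ on $(\rho(G_i),\infty)$ and a sign analysis of $\Phi_N,\Phi_A$ above their top roots) and that the equitable-partition reduction remains valid with unequal, $\lambda$-dependent loops. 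So, as written, the argument does not prove the lemma.

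The idea you are missing is a one-line replacement for all of that machinery, and it is what the paper does. In the bad case, say $y(u_1)<y(u_2)$ (WLOG, by the symmetry exchanging $(u_1,a,G_1)$ with $(v_1,b,G_2)$), do not compare $G'$ to $G$ through the four-edge exchange at all; instead observe that $G'$ is isomorphic to $\hat{G}:=G-u_1a+u_2a$, since both graphs consist of a copy of $CP_m$ with $a$ attached to one vertex and $b$ attached to an \emph{adjacent} vertex of $CP_m$. Testing the same eigenvector against $\hat{G}$ gives
\[
\rho(G')=\rho(\hat{G})\;\ge\; y^{\top}A(\hat{G})\,y\;=\;\rho(G)+2\,y(a)\bigl(y(u_2)-y(u_1)\bigr)\;>\;\rho(G),
\]
strictly, since $y(a)>0$. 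This settles precisely the unbalanced case where your test vector fails, uniformly in $m$ and in $G_1,G_2$, with no determinant computations. (Both your balanced case and the paper's first case need the same small strictness patch when the product is exactly zero: equality would force $y$ to be a Perron vector of the connected graph $G'$ as well, whence $(A(G')-A(G))y=0$, i.e.\ $y(u_1)=y(u_2)$ and $y(v_1)=y(v_2)$, contradicting your identity $y(u_1)+y(v_1)>2t$.)
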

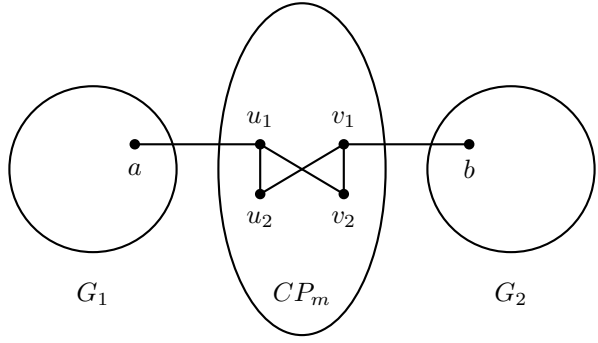
\begin{figure}
\centering
\begin{tikzpicture}[every path/.append style={thick}, scale =1.1]
    \draw (-1,0) circle (1cm);
     \draw (1.5,0) ellipse (1cm and 2cm); 
     \draw (4,0) circle (1cm);
     \foreach \x in {(-0.5,0.3), (1,0.3), (2,0.3), (1,-0.3), (2, -0.3), (3.5, 0.3)}
     {\draw[fill] \x circle[radius = 0.05cm];}
     \draw (-0.5,0.3)--(1,0.3);
     \draw (2,0.3)--(3.5, 0.3);
     \draw (1,0.3)--(1,-0.3)--(2,0.3)--(2,-0.3)--(1,0.3);

    \node at (-0.5,0.0){\small$a$};
    \node at (1,0.6) {\small $u_1$};
    \node at (2,0.6) {\small $v_1$};
     \node at (1,-0.6) {\small $u_2$};
    \node at (2,-0.6) {\small $v_2$};
    \node at (3.5, 0.0) {\small $b$};
    \node at (-1, -1.5) {\small $G_1$};
    \node at (1.5, -1.5) {\small $CP_m$};
    \node at (4, -1.5) {\small $G_2$};
    \end{tikzpicture}
    \caption{The graph $G$ in Lemma~\ref{lem: nonadjacent connectors are better}.}
 \label{fig: nonadjacent connectors are better}
\end{figure}
\begin{proof}
    Let $x$ be the principal eigenvector of $G$. Then by the Rayleigh quotients, we have that
    \begin{align*}
     \rho(G')&\geq x^TA(G')x\\
     &= x^TA(G)x + 2x(u_1)x(v_1) +2x(u_2)x(v_2) -2x(u_1)x(v_2)-2x(u_2)x(v_1)\\
     &=\rho(G) +2(x(u_1)-x(u_2))(x(v_1)-x(v_2)).
    \end{align*}
 If $x(u_1)\geq x(u_2)$ and $x(v_1)\geq x(v_2)$, then we are done. So, suppose $x(u_1)<x(u_2)$ or $x(v_1)<x(v_2)$. WLOG, suppose $x(u_1)<x(u_2)$. Consider the graph $\hat{G} = G_1 - u_1a + u_2a$. Note that $\hat{G}$ is isomorphic to graph $G'$ and 
 \begin{align*}
     \rho(\hat{G})&\geq x^TA(\hat{G})x\\
     &=x^TA(G)x + 2x(a)(x(u_2)-x(u_1))\\
     &>\rho(G).
 \end{align*}
 This completes the proof.
\end{proof}
\begin{lemma}
\label{lem: CP-cycle with minimum spectral radius}
Let $l, m$ be fixed positive integers and $m$ be an even number. If $S$ is any set of the form $\cup_{i=1}^l \{u_i, v_i\}$ where $\{u_i, v_i\} \subset C_i$ for all $i \in [l]$. 
Then among all CP-cycles of the form $\mathcal{C}_l(m, S)$, the aligned CP-cycle $\mathcal{C}_l(m)$ is the unique CP-cycle with minimum spectral radius.
\end{lemma}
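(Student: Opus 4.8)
The plan is to show that any CP-cycle that is not aligned can have its spectral radius strictly decreased by \emph{realigning} a single component, and then to iterate this move until every component is aligned. The engine for a single realignment is Lemma~\ref{lem: nonadjacent connectors are better}, which compares a configuration with a non-adjacent connector pair against the one obtained by swapping the internal matching of a single $CP_m$ so that the connector pair becomes adjacent.

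First I would record that the aligned CP-cycle $\mathcal{C}_l(m)$ is well-defined up to isomorphism: the automorphism group $\mathrm{Aut}(CP_m)$ (the hyperoctahedral group $S_2 \wr S_{m/2}$) acts transitively on ordered non-adjacent pairs of vertices, so the choice of the non-adjacent connector pair $\{u_i, v_i\}$ in each $C_i$ is immaterial, and all aligned CP-cycles on $l$ copies of $CP_m$ are isomorphic. It therefore suffices to prove that every $\mathcal{C}_l(m, S)$ having at least one \emph{adjacent} connector pair has strictly larger spectral radius than $\mathcal{C}_l(m)$. For $l = 1$ the wraparound edge $v_1 u_1$ forces the single connector pair to be non-adjacent, so the statement is trivial, and I would assume $l \ge 2$; for $m = 2$ the only pair in $CP_2$ is non-adjacent, so again there is nothing to prove, and I would assume $m \ge 4$.

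For the key step, suppose $\mathcal{C}_l(m, S)$ has a component $C_i$ with $u_i \sim v_i$. In the cycle $u_i$ is joined to $v_{i-1}$ and $v_i$ to $u_{i+1}$ (indices mod $l$), both lying outside $C_i$, so taking $C_i$ as the middle $CP_m$, with $a = v_{i-1}$ and $b = u_{i+1}$, places us exactly in the hypotheses of Lemma~\ref{lem: nonadjacent connectors are better}. Inside $C_i$ let $v_i^{\ast}$ be the unique non-neighbour of $u_i$ and $u_i^{\ast}$ the unique non-neighbour of $v_i$; since the non-neighbour relation is a perfect matching and $u_i \sim v_i$, the four vertices $u_i, v_i, u_i^{\ast}, v_i^{\ast}$ are distinct. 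Realigning $C_i$ means swapping its matching so that $\{u_i, v_i\}$ and $\{u_i^{\ast}, v_i^{\ast}\}$ become the matched (non-adjacent) pairs; this is precisely the move sending the graph $G$ of Lemma~\ref{lem: nonadjacent connectors are better} (aligned at $C_i$) to the graph $G'$ (adjacent at $C_i$). The lemma then yields $\rho(\text{realigned}) < \rho(\text{current})$, and the realigned graph is again a CP-cycle $\mathcal{C}_l(m, S)$ in which one more component has become aligned, while all other components are untouched.

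Finally I would iterate this realignment over the components carrying adjacent connector pairs: each application strictly lowers the spectral radius and reduces by exactly one the number of adjacent connector pairs, so after finitely many steps we reach a graph isomorphic to $\mathcal{C}_l(m)$ whose spectral radius is strictly below that of the original. Hence every non-aligned CP-cycle strictly exceeds $\mathcal{C}_l(m)$, which establishes that $\mathcal{C}_l(m)$ is the unique minimizer. I expect the only delicate point to be matching the realignment move to the hypotheses of Lemma~\ref{lem: nonadjacent connectors are better} in the cyclic setting, in particular when $l = 2$ and the two connecting edges of $C_i$ both lead into the single remaining component (so that $G_1$ and $G_2$ of the lemma coincide). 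This is not a genuine obstacle, however, because that lemma is proved for arbitrary $G_1, G_2$ and its argument only uses positivity of the principal eigenvector at the attachment vertices; no independence of $G_1$ and $G_2$ is required.
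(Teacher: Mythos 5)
Your proposal is correct and follows essentially the same route as the paper: both proofs use Lemma~\ref{lem: nonadjacent connectors are better} as the engine, repeatedly converting an adjacent connector pair in some $C_i$ into a non-adjacent one (a strict decrease in spectral radius each time) until the aligned CP-cycle is reached, which forces any non-aligned $\mathcal{C}_l(m,S)$ to have strictly larger spectral radius. Your treatment is somewhat more careful than the paper's two-sentence argument — in particular the check that the degenerate cases $l=1$, $m=2$, and the $l=2$ situation where $G_1$ and $G_2$ coincide pose no obstacle, and the explicit remark that all aligned CP-cycles are isomorphic — but these are refinements of, not departures from, the paper's proof.
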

\begin{proof}
We will apply Lemma~\ref{lem: nonadjacent connectors are better} to show that any graph of the form $\mathcal{C}_k(m, S)$ has spectral radius no less than $\rho(\mathcal{C}_l)$, with $\mathcal{C}_k(m, S) = \rho(\mathcal{C}_l)$ only if $\mathcal{C}_k(m, S) \cong \mathcal{C}_l$. Say $S$ contains an adjacent pair of vertices $\{u_i, v_i\} \subset C_i$ for some $i \in [l]$. Then by Lemma~\ref{lem: nonadjacent connectors are better} we have that $\mathcal{C}_k(m, S') < \mathcal{C}_k(m, S)$, where $S' = (S\setminus\{v_i\})\cup\{u_i'\}$ and $\{u_i, u_i'\}$ form a non-adjacent pair of vertices in $C_i$.  Thus, the CP-cycle with the minimum spectral radius must be an aligned CP-cycle.  
\end{proof}
\begin{theorem}
For given dissociation number $\tau =2k$, $k \in \mathbb{N}$, let $n$ be sufficiently large and $\tau \mid n$. If $G$ is a spectral minimizer in the set $\mathcal{D}_{n, 2k}\setminus \mathcal{T}_{n,2k}$, then $G\cong \mathcal{C}_k$.
\end{theorem}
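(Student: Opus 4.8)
The plan is to run the same ``reduce to the almost-minimizers, then compare CP-structures'' strategy used in Theorem~\ref{spec is edge minimizer}, but now one edge above the minimum. Write $m=\frac nk$. Since $\mathcal{C}_k\in\mathcal{D}_{n,2k}\setminus\mathcal{T}_{n,2k}$ (it is connected, has $\tau=2k$, and carries $k$ connecting edges rather than the $k-1$ of a tree), a spectral minimizer $G$ of this set satisfies $\rho(G)\le\rho(\mathcal{C}_k)$. Combining the average-degree bound $\rho(G)\ge\frac{2e(G)}{n}$ with the estimate $\rho(\mathcal{C}_k)<m-2+\frac{2k}{n-k}$ from \eqref{eq: spectral radius of cp cycle}, and using $n>k^2+k$ exactly as in Theorem~\ref{spec is edge minimizer}, I would first force $e(G)=e_{min}(\mathcal{D}_{n,2k})+1$. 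Equivalently $\overline{G}\in\EX(n,K_{k+1}(1,2,\ldots,2),k)$, so Remark~\ref{rem: stability remark} applies.

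Next I would pin down the coarse shape of $G$. Remark~\ref{rem: stability remark} gives a $k$-good partition $U_1,\ldots,U_k$ of $\overline G$ with each $N_i=\overline G[U_i]$ a near-matching and with at most $m_0$ cross non-edges, where $\sum_i m_i\le k$. The decisive observation is that $\rho(\mathcal{C}_k)$ is the Perron root of the quotient matrix $\begin{bmatrix}1&m-2\\2&m-4\end{bmatrix}$, which a direct computation shows equals $\rho(CP_m+e)$, the spectral radius of the complement of a matching missing one edge. Hence, using strict Perron--Frobenius monotonicity on the connected graph $G$: if some $N_i$ were a perfect matching minus an edge, then $G[U_i]=CP_m+e$ would be a proper subgraph with $\rho\ge\rho(\mathcal{C}_k)$, forcing $\rho(G)>\rho(\mathcal{C}_k)$, a contradiction; and if some $|U_i|>m$ then $\delta(G[U_i])\ge m-1>\rho(\mathcal{C}_k)$ gives the same contradiction via $\rho(G)\ge\rho(G[U_i])\ge\delta(G[U_i])$. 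Thus every part has size $m$, every $N_i$ is a perfect matching, so each $G[U_i]\cong CP_m$, and a short edge count (balanced parts, $\sum_i e(N_i)=\frac n2$) forces exactly $k$ cross edges of $G$. Since $G$ is connected, contracting each blob yields a connected unicyclic (multi)graph on $k$ blobs.

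It then remains to show that among all such configurations the aligned CP-cycle is the unique minimizer. Lemma~\ref{lem: nonadjacent connectors are better} lets me assume each connecting edge meets its blob in a distinct vertex and that the two ports of any blob of connecting-degree two form a non-adjacent (matched) pair, i.e.\ the connectors are aligned; once the blob graph is the $k$-cycle, Lemma~\ref{lem: CP-cycle with minimum spectral radius} finishes the uniqueness. The main obstacle is precisely the step ``the blob graph must be the full $k$-cycle'': for $k\ge 4$ the structural reduction still permits non-cyclic unicyclic blob graphs (e.g.\ a short cycle of blobs carrying a pendant blob) and parallel connecting edges, and none of these are excluded by the dissociation constraint, since each $CP_m$ contributes at most $2$ to any dissociation set and so every such graph still has $\tau=2k$. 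I expect to resolve this, thereby settling Problem~\ref{prob: cycle minimizer}, by showing that any pendant blob, any blob of connecting-degree $\ge 3$, or any parallel pair of connecting edges admits an edge-rerouting (Kelmans-type) move that strictly lowers $\rho$ while keeping the graph in $\mathcal{D}_{n,2k}\setminus\mathcal{T}_{n,2k}$; this makes every non-cycle configuration satisfy $\rho>\rho(\mathcal{C}_k)$, and combined with $\rho(G)\le\rho(\mathcal{C}_k)$ it pins $G\cong\mathcal{C}_k$.
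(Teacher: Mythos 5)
Your first two stages match the paper's own proof: forcing $e(G)=e_{min}(\mathcal{D}_{n,2k})+1$ from $\rho(G)\le\rho(\mathcal{C}_k)$ and the average-degree bound, passing to $\overline{G}\in\EX(n,K_{k+1}(1,2,\ldots,2),k)$ and invoking Remark~\ref{rem: stability remark}, and excluding an internal deficiency $m_i=1$ by noting that $G[U_i]$ would then be $CP_m+e$, a proper subgraph of the connected graph $G$ with $\rho(CP_m+e)=\rho(\mathcal{C}_k)$ (the paper asserts this equality; your quotient-matrix computation is exactly the justification it leaves implicit). Up to the conclusion that every blob is a $CP_m$, there are exactly $k$ cross edges, and the contracted blob multigraph is connected and unicyclic, you and the paper agree.

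The genuine gap is the step you flag yourself: you have no proof that the blob multigraph is the full $k$-cycle, and the Kelmans-type rerouting you ``expect'' to supply is not carried out -- nor is it the natural tool here, since Kelmans moves \emph{increase} the spectral radius, so you would need reverse moves that provably stay in $\mathcal{D}_{n,2k}\setminus\mathcal{T}_{n,2k}$, preserve connectivity, and terminate at $\mathcal{C}_k$. The paper closes this step with no structural elimination of pendant blobs, branching blobs, or parallel cross edges at all, by exploiting an observation you already made but used only for the deficiency step: the equitable quotient matrix $\begin{bmatrix}1&m-2\\2&m-4\end{bmatrix}$ of an aligned CP-cycle does not depend on the number of blobs, so $\rho(\mathcal{C}_l)=\rho(\mathcal{C}_k)$ for \emph{every} $l\ge 2$. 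Any connected unicyclic blob configuration contains a blob-cycle of some length $l\in[2,k]$, hence contains some CP-cycle $\mathcal{C}_l(S)$ as a subgraph, giving
$$\rho(G)\;\ge\;\rho(\mathcal{C}_l(S))\;\ge\;\rho(\mathcal{C}_l)\;=\;\rho(\mathcal{C}_k),$$
where the first inequality is strict unless $\mathcal{C}_l(S)$ is all of $G$ (Perron--Frobenius on the connected graph $G$), which forces $l=k$, and the second is strict unless the cycle is aligned (Lemma~\ref{lem: CP-cycle with minimum spectral radius}). Combined with $\rho(G)\le\rho(\mathcal{C}_k)$, this pins $G\cong\mathcal{C}_k$ in one stroke: a pendant blob, a blob of connecting-degree three, or a parallel pair of cross edges each make the contained CP-cycle a proper subgraph and immediately yield $\rho(G)>\rho(\mathcal{C}_k)$. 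So nothing you wrote is incorrect, but the proposal is incomplete precisely at the decisive step, and the missing argument is subgraph monotonicity applied to the contained CP-cycle, not a rerouting scheme.
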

\begin{proof}
Since $G$ is a spectral minimizer of $\mathcal{D}_{n, 2k}\setminus \mathcal{T}_{n,2k}$, and $\mathcal{C}_k \in \mathcal{D}_{n, 2k}\setminus \mathcal{T}_{n,2k}$, it is true that $\rho(G) \le \rho(\mathcal{C}_k)$.
   Consequently. $e(G) \le \rho(\mathcal{C}_k)\frac{n}{2} < \frac{n}{2}\left(\frac{n}{k} - 2\right) + k+1$. 
   Since the only graphs in $\mathcal{D}_{n, 2k}$ with $\frac{n}{2}\left(\frac{n}{k} - 2\right) + k-1$ edges are those in $\mathcal{T}_{n, 2k}$, we have that $e(G) = \frac{n}{2}\left(\frac{n}{k} - 2\right) + k$.
   Thus, $e(\overline{G}) = \ex(n, K_{k+1}(1, 2, \ldots, 2) + k$ and $\overline{G} \in \EX(n, K_{k+1}(1, 2, \ldots, 2), k)$.
   Using the notation of Section~\ref{sec: edge minimizers d-independence} and results there in, we know that $V(\overline{G})$ has a $k$-partition $V(G) = \cup_{i=1}^k U_i$, and $N_i = G[U_i]$ for $i \in [k]$, for which by Remark~\ref{rem: stability remark}, there exist non-negative integers $m_i$ for all $i = 0, 1, \ldots, k$, satisfying $\sum_{i=0}^k m_i = k$. Now, $k-1 \le m_0 \le k$ since $G$ is connected, and consequently $0 \le m_i \le 1$ for all $i \in [k]$, with $m_i = 1$, for at most one $i \in [k]$. 
   If $m_i = 1$ for any particular $i \in [k]$, we know that $\rho(N_i) = \rho(\mathcal{C}_k)$. Since $G$ is connected, $\rho(G) > \rho(N_i)$, and hence we have a contradiction to the minimality of the spectral radius of $G$. Thus, $m_i = 0$ for all $i \in [k]$ and $m_0 = k$. Consequently, for some integer $l \in [2, k]$, there must be a CP-cycle $\mathcal{C}_l(S)$ contained in $G$ with respect to pairs $\{u_{i_j}, v_{i_j}\} \subset C_{i_j}$ as $j \in [l]$ and $S=\cup_{j=1}^l\{u_{i_j}, v_{i_j}\}$. 
   Thus, 
   $$\rho(G) \ge \rho(\mathcal{C}_l(S)) \ge \rho(\mathcal{C}_l) = \rho(\mathcal{C}_k),$$
where the first inequality is an equality if and only if $l = k$, and the second inequality is an equality if and only if $\mathcal{C}_l(S) = \mathcal{C}_l$. Thus, $G \cong \mathcal{C}_k$.
\end{proof}

We end this section with the following results which describe a few structural properties of a spectral minimizer in $\mathcal{D}_{n,2k}$. Let $G\in \mathcal{T}_{n,2k}$. Let $C_i$ be the $i^{th}$ copy of $CP_{\frac{n}{k}}$ in $G$. If a vertex $u\in C_i$ is connected to a vertex $v \in C_j$ ($i\not =j$), we call $u$ a \emph{connector} in $C_i$.
\begin{lemma}\label{lem: four connectors are not allowed}
Consider the graph $G$ in Figure~\ref{fig: four connectors are not allowed}. Each $C_i \cong CP_{\frac{n}{k}}$, $i\in\{1,2,3,4,5\}$. Let $u_1, v_1$ and $u_2, v_2$ be non-adjacent pairs in $C_1$. Let $a, a'$, $b,b'$, $c, c'$, and $d,d'$ be non-adjacent pairs in $C_2, C_3$, $C_4$, and $C_5$, respectively. Then for $n>4k$, we have that 
$$\rho(G)> \rho(\mathcal{C}_k),$$ where $\mathcal{C}_k$ is an aligned CP-cycle.  
\end{lemma}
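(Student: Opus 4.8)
The plan is to exploit the heavy symmetry of the star configuration through an equitable partition, reducing the statement to a one-variable inequality. Write $m := \tfrac nk$ (even, and $m\ge 6$ since $n>4k$). Partition $V(G)$ into five classes: $V_1=\{u_1,v_1,u_2,v_2\}$ (the four connectors of $C_1$, which induce a $4$-cycle $CP_4$); $V_2=V(C_1)\setminus V_1$ (the remaining $m-4$ vertices of $C_1$); $V_3=\{a,b,c,d\}$ (the four leaf endpoints of the connecting edges); $V_4=\{a',b',c',d'\}$ (their $CP$-partners in the leaves); and $V_5=\bigcup_{i=2}^{5}\big(V(C_i)\setminus\{\text{connector},\text{partner}\}\big)$. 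Because the four leaves play symmetric roles and each connector of $C_1$ meets a distinct leaf, one checks this is an equitable partition with quotient matrix
\[
B=\begin{pmatrix}
2 & m-4 & 1 & 0 & 0\\
4 & m-6 & 0 & 0 & 0\\
1 & 0 & 0 & 0 & m-2\\
0 & 0 & 0 & 0 & m-2\\
0 & 0 & 1 & 1 & m-4
\end{pmatrix},
\]
so that $\rho(G)=\rho(B)$. (This is where $m>4$ is needed, to keep $V_2$ a genuine cocktail party graph.)

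Next I would set up the comparison against the benchmark $\beta:=\rho(\mathcal{C}_k)=\tfrac{m-3+\sqrt{(m-1)^2+8}}{2}$, which by \eqref{eq: spectral radius of cp cycle} is the positive root of $\lambda^2-(m-3)\lambda-m$; thus $\beta^2=(m-3)\beta+m$ and $\beta>m-2$. Letting $x$ be the Perron vector, which is constant on each $V_i$ by symmetry, the system $Bx=\rho x$ can be solved coordinate by coordinate (eliminating the leaf values $x_4,x_5,x_3$, then $x_2$) to give the scalar secular equation $g(\rho)=0$, where
\[
g(\rho)=(\rho-2)-\frac{4(m-4)}{\rho-m+6}-\frac{P(\rho)+m-2}{\rho\,P(\rho)},\qquad P(\rho)=\rho^2-(m-4)\rho-2(m-2).
\]
One verifies $\rho(G)>m-2$ (as $G\supsetneq CP_m$) and that $g$ is strictly increasing on $(m-2,\infty)$ with $g\to-\infty$ as $\rho\to(m-2)^+$, so $\rho(G)$ is the unique zero of $g$ there and $g(\beta)<0$ would force $\beta<\rho(G)$. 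The decisive simplification is that $P$ factors as $(\rho-m+2)(\rho+2)$, and substituting $\beta^2=(m-3)\beta+m$ collapses the leaf term: $P(\beta)=\beta-m+4$ and $\beta P(\beta)=\beta+m$, whence $\tfrac{P(\beta)+m-2}{\beta P(\beta)}=\tfrac{\beta+2}{\beta+m}$.

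Clearing the (positive) denominators $D_1=\beta-m+6$ and $D_2=\beta+m$ and repeatedly using $\beta^2=(m-3)\beta+m$ to lower degrees reduces $g(\beta)D_1D_2$ to the linear form $(m-4)\beta-m^2+6m-12$; it remains to show this is negative, i.e. $(m-4)\beta<m^2-6m+12$. Isolating the radical and squaring turns this into $(m-4)^2\big[(m-1)^2+8\big]<(m^2-5m+12)^2$, which after expansion is exactly $0<8m(m-2)$, true for all $m>2$. Hence $g(\beta)<0$ and $\rho(G)>\beta=\rho(\mathcal{C}_k)$. I expect the main obstacle to be purely organizational rather than conceptual: correctly verifying that the five-class partition is equitable across the four symmetric leaves (distinguishing leaf connectors, their partners, and leaf interiors), and justifying the branch/monotonicity statement that places $\beta$ in the region $(m-2,\rho(G))$ so that the sign of $g(\beta)$ yields the conclusion; the algebra itself, guided by the factorization of $P$ and the quadratic relation for $\beta$, terminates cleanly.
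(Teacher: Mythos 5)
Your proof is correct and takes essentially the same route as the paper: you use the identical five-class equitable partition (your quotient matrix $B$ is the paper's $Q$ up to a reordering of the classes), and then compare against $\rho(\mathcal{C}_k)$ through its quadratic relation $\beta^2=(m-3)\beta+m$, exactly as the paper does. The only divergence is in the final bookkeeping --- the paper evaluates the quintic characteristic polynomial of $Q$ at $\beta$ and checks its sign, while you eliminate eigenvector coordinates to get a rational secular equation and invoke monotonicity on $(m-2,\infty)$; after squaring, both reduce to the same inequality, your $8m(m-2)>0$ being the paper's $8k^2n(n-2k)>0$ with $m=n/k$.
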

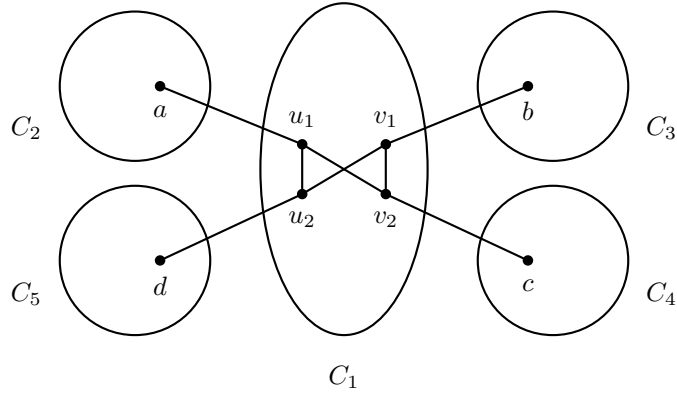
\begin{figure}
\centering
\begin{tikzpicture}[every path/.append style={thick}, scale =1.1]
\draw (-1,1) circle (0.9cm);
\draw (1.5,0) ellipse (1cm and 2cm);
\draw (4, -1.1) circle (0.9cm);
\draw (-1,-1.1) circle (0.9cm);
\draw (4,1) circle (0.9cm);
\foreach \x in {(-0.7,1), (1,0.3), (2,0.3), (1,-0.3), (2, -0.3), (3.7, 1), (3.7,-1.1), (-0.7, -1.1)}
     {\draw[fill] \x circle[radius = 0.05cm];}
     \draw (-0.7,1)--(1,0.3);
     \draw (2,0.3)--(3.7, 1);
     \draw (-0.7,-1.1)--(1,-0.3);
     \draw (1,0.3)--(1,-0.3)--(2,0.3)--(2,-0.3)--(1,0.3);
     \draw (2,-0.3)--(3.7,-1.1);

    \node at (-0.7,0.7){\small$a$};
    \node at (1,0.6) {\small $u_1$};
    \node at (2,0.6) {\small $v_1$};
    \node at (1,-0.6) {\small $u_2$};
    \node at (2,-0.6) {\small $v_2$};
    \node at (3.7, 0.7) {\small $b$};
    \node at (3.7, -1.4) {\small $c$};
    \node at (-0.7, -1.4) {\small $d$};
    \node at (-2.3, 0.5) {\small $C_2$};
    \node at (1.5, -2.5) {\small $C_1$};
    \node at (5.3, 0.5) {\small $C_3$};
    \node at (5.3, -1.5) {\small $C_4$};
    \node at (-2.3, -1.5) {\small $C_5$};
    \end{tikzpicture}
    \caption{The graph $G$ in Lemma~\ref{lem: four connectors are not allowed}.}
 \label{fig: four connectors are not allowed}
\end{figure}
\begin{proof}
    Consider the partition of $V(G)$ into the following five parts: $\{a,b,c,d\}, \{a',b',c',d'\}, V(C_2)$ $\cup V(C_3)$ $\cup V(C_4) \cup V(C_5) \setminus \{a,a',b,b',c,c', d, d'\}, \{u_1, u_2, v_1, v_2\}, V(C_1)\setminus\{u_1, u_2, v_1, v_2\}$. The corresponding quotient matrix is
    $$Q = \begin{bmatrix}
        0&0&\frac{n}{k}-2&1&0\\
        0&0&\frac{n}{k}-2&0&0\\
        1&1&\frac{n}{k}-4&0&0\\
        1&0&0&2&\frac{n}{k}-4\\
        0&0&0&4&\frac{n}{k}-6
    \end{bmatrix}.$$
The characteristic polynomial of $Q$ is 
\begin{align*}
  p(x) =& \;   x^5 + \left(8-\frac{2n}{k}\right)x^4 + \left(23-\frac{12n}{k} + \frac{n^2}{k^2}\right)x^3 + \left(22-\frac{22n}{k}+\frac{4n^2}{k^2}\right)x^2\\
  &+ \left(-10-\frac{5n}{k}+\frac{3n^2}{k^2}\right)x
  -12+\frac{8n}{k}-\frac{n^2}{k^2} .  
\end{align*}
When $x = \rho(\mathcal{C}_k) = \frac{\frac{n}{k} -3 + \sqrt{(\frac{n}{k}-1)^2 + 8}}{2}$ (from Equation~\ref{eq: spectral radius of cp cycle}), we have 
\begin{align*}
    p(\rho(\mathcal{C}_k)) =& \frac{-1}{2k^2}\left(n^2 -kn\sqrt{\left(\frac{n}{k}-1\right)^2 +8} +4k^2\sqrt{\left(\frac{n}{k}-1\right)^2 +8} -5kn+12k^2\right).
\end{align*}
We observe that for $n>4k$, we have $p(\rho(\mathcal{C}_k))<0$ if and only if 
\begin{align*}
   &\left(n^2 -kn\sqrt{\left(\frac{n}{k}-1\right)^2 +8} +4k^2\sqrt{\left(\frac{n}{k}-1\right)^2 +8} -5kn+12k^2\right)>0\\
  \iff & n^2 -5kn+12k^2> kn\sqrt{\left(\frac{n}{k}-1\right)^2 +8} -4k^2\sqrt{\left(\frac{n}{k}-1\right)^2 +8}\\
  \iff & (n^2 -5kn+12k^2)^2 - \left(kn\sqrt{\left(\frac{n}{k}-1\right)^2 +8} -4k^2\sqrt{\left(\frac{n}{k}-1\right)^2 +8}\right)^2>0\\
 \iff & 8k^2n(n-2k)>0,
\end{align*}
which is true for our choice of $n$.
This proves that $\rho(Q)>\rho(\mathcal{C}_k)$. Since the partition is equitable, we have that $\rho(G)= \rho(Q)$. Therefore, $\rho(G)>\rho(\mathcal{C}_k)$, as desired. 
\end{proof}

\begin{proposition}\label{prop: structure of spec minimizer}
     For given dissociation number $\tau =2k$, $k \in \mathbb{N}$, let $n$ be sufficiently large and $\tau \mid n$. If $G_0$ is a spectral minimizer in $\mathcal{D}_{n,\tau}$, then $G_0$ has the following properties.
\begin{enumerate}
\item If $u,v$ are two non-adjacent vertices in a copy of $CP_{\frac{n}{k}}\in G_0$ and $d_{G_0}(u)> \frac{n}{k}-1$, then $d_{G_0}(v)= \frac{n}{k}-1$.
\item If there are exactly two connectors in a copy of $CP_{\frac{n}{k}}\in G_0$, then they must be non-adjacent.
\item If there are three connectors (say $u,v,w$) in a copy of $CP_{\frac{n}{k}}\in G_0$, then $u\not\sim v$ and $d_{G_0}(w)=\frac{n}{k}-1$.
\item Any copy of $CP_{\frac{n}{k}}\in G_0$ can have at most three connectors.
\item  $\delta(G_0) = \frac{n}{k}-2$ and $\Delta(G_0)\leq \frac{n}{k}$.
\end{enumerate}   
\end{proposition}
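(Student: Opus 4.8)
The plan is to build on Theorem~\ref{spec is edge minimizer}, which tells us that a spectral minimizer $G_0 \in \mathcal{D}_{n,2k}$ must lie in $\mathcal{T}_{n,2k}$. Thus $G_0$ consists of $k$ disjoint copies $C_1, \ldots, C_k$ of $CP_{m}$, where $m = \frac{n}{k}$ is even, joined by exactly $k-1$ \emph{connecting edges} whose contraction produces a spanning tree on the copies. For a vertex $w$ I write $c(w)$ for the number of connecting edges at $w$, so $d_{G_0}(w) = m - 2 + c(w)$ and $\sum_w c(w) = 2(k-1)$. The constant benchmark throughout is Equation~\ref{eq: spectral radius of cp cycle}: since $\mathcal{P}_k \in \mathcal{D}_{n,2k}$, minimality gives $\rho(G_0) \le \rho(\mathcal{P}_k) < \rho(\mathcal{C}_k) < m - 2 + \frac{2k}{n-k}$. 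The minimum-degree half of (5) is then immediate by counting: every vertex has degree at least $m-2$, and since $2(k-1) < km = n$ there is a vertex with $c(w) = 0$, so $\delta(G_0) = m-2$.

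For the remaining claims I would repeatedly apply two local surgeries, each of which keeps the graph in $\mathcal{D}_{n,2k}$ and then contradicts minimality of $\rho(G_0)$. Re-pairing the non-edges inside one $CP_m$, or relocating a connecting edge to another vertex of the same copy, leaves every component a cocktail party graph, leaves exactly $k-1$ connecting edges, preserves the spanning-tree pattern, and hence preserves both connectivity and $\tau = 2k$. First, Lemma~\ref{lem: nonadjacent connectors are better} shows that re-pairing two connectors of a common $C_i$ so that they become \emph{non-adjacent} strictly lowers the spectral radius; applying this when $C_i$ has exactly two connectors proves (2), and applying it to a suitable pair among three mutually adjacent connectors proves the non-adjacency assertion $u \not\sim v$ in (3). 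Second, the Kelmans operation used in Lemma~\ref{bad connection} handles the degree assertions: if a partner $v$ of a vertex $u$ with $c(u) \ge 2$ satisfied $c(v) = 0$, I would move one connecting edge of $u$ onto $v$ to form $G'$; since all internal neighbors of $v$ already lie in $N(u)$, one checks that $G_0 = G'_{v \to u}$ is a single nontrivial Kelmans move applied to the connected graph $G'$, forcing $\rho(G') < \rho(G_0)$, a contradiction. The same relocation principle, moving an edge off an overloaded vertex onto its unloaded partner, is the mechanism for $\Delta(G_0) \le m$ in (5) and for pinning the third connector in (1) and (3) to degree exactly $m-1$.

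To prove (4) I argue by contradiction from four connectors in some $C_i$. Using the non-adjacency reduction above, I re-pair two disjoint non-adjacent connector pairs, which only decreases the spectral radius; the resulting graph $G''$ then contains the configuration of Lemma~\ref{lem: four connectors are not allowed} as a subgraph, so $\rho(G'') > \rho(\mathcal{C}_k)$. Combined with $\rho(G'') \le \rho(G_0) < \rho(\mathcal{C}_k)$ this is a contradiction, so at most three connectors are possible. The step requiring care is that the four connecting edges of $C_i$ should go to four distinct copies, so that the five-component subgraph of the lemma genuinely embeds; when several connecting edges share a target copy one falls back to the same quotient-matrix computation specialized to that local configuration.

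The main obstacle will be the borderline degree cases in (1), (3), and the upper half of (5) where the overloaded vertex's partner is \emph{also} loaded (for instance $c(u) = c(v) = 2$, or $c(w) \ge 3$ with $c(v) \ge 1$ for the partner $v$): there the clean single-Kelmans identity $G_0 = G'_{v \to u}$ no longer holds. In these cases I expect to either perform a second relocation onto a fresh non-connector or, as in Lemma~\ref{lem: four connectors are not allowed}, set up an explicit equitable-partition quotient matrix for the offending local configuration and verify that its spectral radius already exceeds $\rho(\mathcal{C}_k)$. A secondary technical point, needed wherever I compare $G_0$ with a subgraph, is to confirm that the relevant configuration really embeds into $G_0$ (so that $\rho$ is monotone under the inclusion) and that every intermediate graph produced by a surgery remains connected with dissociation number exactly $2k$.
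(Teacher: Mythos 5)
Your proposal is correct and follows essentially the same route as the paper's own proof: reduce to $\mathcal{T}_{n,2k}$ via Theorem~\ref{spec is edge minimizer}, prove the degree constraints by relocating a connecting edge onto an unloaded partner and inverting it as a single Kelmans move, get the non-adjacency claims in (2)--(3) from Lemma~\ref{lem: nonadjacent connectors are better}, and handle (4), the both-partners-loaded case of (1), and $\Delta(G_0)\le \frac{n}{k}$ by embedding the local configurations of Lemma~\ref{lem: four connectors are not allowed} and comparing quotient-matrix spectral radii against $\rho(\mathcal{C}_k)$. The only difference is that you defer the two explicit quotient-matrix polynomial evaluations (the paper's five-part partitions in its Figures 9 and 10), but you correctly pinpoint where they are needed and what they must establish, so this is a matter of unexecuted computation rather than a differing or flawed strategy.
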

\begin{proof}
Let $u_i$, $v_i$ be two non adjacent vertices which belong to the $i^{th}$ copy of $CP_{\frac{n}{k}}$ in $G_0$ for some $i\in[k]$. Let $u_i$ be adjacent to $w_1, w_2, \cdots, w_t$ which belong to $t>1$ vertex disjoint copies of $CP_{\frac{n}{k}}$ in $G_0$ and $d_{G_0}(v_i) = \frac{n}{k}-2$. Consider the graph $G' = G_0 -u_iw_j + v_iw_j$ for some $j\in[t]$. Note that $G'\in \mathcal{D}_{n,2k}$ and that we can obtain graph $G_0$ from $G'$ by doing the Kelmans operation on the vertices $u_i$ and $v_i$. Therefore, $\rho(G')<\rho(G_0)$, a contradiction. This proves that $d_{G_0}(v_i)\geq \frac{n}{k}-1$. To prove the equality, consider the graph $G$ in Figure~\ref{fig: outdegree of nonadjacent vertex}. Each $C_i \cong CP_{\frac{n}{k}}$, $i\in\{1,2,3,4,5\}$. Let $u_1, v_1$ be a non-adjacent pair in $C_1$. Let $a, a'$, $b,b'$, $c, c'$, and $d,d'$ be non-adjacent pairs in $C_2, C_3$, $C_4$, and $C_5$, respectively. Then if $d_{G_0}(u_i)> \frac{n}{k}-1$ and $d_{G_0}(v_i)> \frac{n}{k}-1$, we have that $G$ must be a subgraph of $G_0$. Therefore, $\rho(G_0)\geq \rho(G)$. We will now prove that $\rho(G)>\rho(\mathcal{C}_k)$ to get a contradiction. 
\begin{figure}
\centering
\begin{tikzpicture}[every path/.append style={thick}, scale =1.1]
\draw (-1,1) circle (0.9cm);
\draw (1.5,0) ellipse (1cm and 2cm);
\draw (4, -1.1) circle (0.9cm);
\draw (-1,-1.1) circle (0.9cm);
\draw (4,1) circle (0.9cm);
\foreach \x in {(-0.7,1), (1,0), (2,0), (3.7, 1), (3.7,-1.1), (-0.7, -1.1)}
     {\draw[fill] \x circle[radius = 0.05cm];}
     \draw (-0.7,1)--(1,0);
     \draw (2,0)--(3.7, 1);
     \draw (-0.7,-1.1)--(1,0);
     \draw (2,0)--(3.7,-1.1);

    \node at (-0.7,0.7){\small$a$};
    \node at (1,0.3) {\small $u_1$};
    \node at (2,0.3) {\small $v_1$};
    \node at (3.7, 0.7) {\small $b$};
    \node at (3.7, -1.4) {\small $c$};
    \node at (-0.7, -1.4) {\small $d$};
    \node at (-2.3, 0.5) {\small $C_2$};
    \node at (1.5, -2.5) {\small $C_1$};
    \node at (5.3, 0.5) {\small $C_3$};
    \node at (5.3, -1.5) {\small $C_4$};
    \node at (-2.3, -1.5) {\small $C_5$};
    \end{tikzpicture}
    \caption{The graph $G$ in Lemma~\ref{prop: structure of spec minimizer}, part 1.}
 \label{fig: outdegree of nonadjacent vertex}
\end{figure}
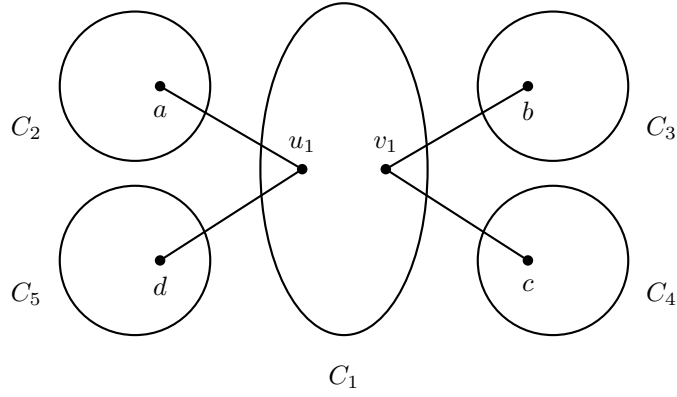
Consider the partition of $V(G)$ into the following five parts: $\{a,b,c,d\}, \{a',b',c',d'\}, \bigcup_{i=2}^5V(C_i)$ $ \setminus \{a,a',b,b',c,c', d, d'\}, \{u_1, v_1\}, V(C_1)\setminus\{u_1, v_1\}$. The corresponding quotient matrix is
\begin{equation}\label{eq: same quotient matrix}
    Q = \begin{bmatrix}
        0&0&\frac{n}{k}-2&1&0\\
        0&0&\frac{n}{k}-2&0&0\\
        1&1&\frac{n}{k}-4&0&0\\
        2&0&0&0&\frac{n}{k}-2\\
        0&0&0&2&\frac{n}{k}-4
    \end{bmatrix}.
\end{equation}
The characteristic polynomial of $Q$ is 
\begin{align*}
  p(x) =& \; \frac{1}{k^5}\bigg[k^5x^5 - (2k^4n-8k^5)x^4 -(12k^4n-22k^5-k^3n^2)x^3
  -(20k^4n-16k^5-4k^3n^2)x^2\\ & -(20k^5-2k^4n-2k^3n^2)x-(16k^5-12k^4n+2k^3n^2)\bigg].  
\end{align*}
When $x = \rho(\mathcal{C}_k) = \frac{\frac{n}{k} -3 + \sqrt{(\frac{n}{k}-1)^2 + 8}}{2}$ (from Equation~\ref{eq: spectral radius of cp cycle}), we have 
\begin{align*}
    p(\rho(\mathcal{C}_k)) =& \frac{-1}{2k^2}\left(n^2 -kn\sqrt{\left(\frac{n}{k}-1\right)^2 +8} +5k^2\sqrt{\left(\frac{n}{k}-1\right)^2 +8} -6kn+17k^2\right).
\end{align*}
We observe that for $n>5k$, we have $p(\rho(\mathcal{C}_k))<0$ if and only if 
\begin{align*}
   &\left(n^2 -kn\sqrt{\left(\frac{n}{k}-1\right)^2 +8} +5k^2\sqrt{\left(\frac{n}{k}-1\right)^2 +8} -6kn+17k^2\right)>0\\
  \iff & n^2 -6kn+17k^2> kn\sqrt{\left(\frac{n}{k}-1\right)^2 +8} -5k^2\sqrt{\left(\frac{n}{k}-1\right)^2 +8}\\
  \iff & (n^2 -6kn+17k^2)^2 - \left(kn\sqrt{\left(\frac{n}{k}-1\right)^2 +8} -5k^2\sqrt{\left(\frac{n}{k}-1\right)^2 +8}\right)^2>0\\
 \iff & 16k^2(n-2k)^2>0
 ,
\end{align*}
which is true for our choice of $n$.
This proves that $\rho(Q)>\rho(\mathcal{C}_k)$. Since the partition is equitable, we have that $\rho(G)= \rho(Q)$. Therefore, $\rho(G)>\rho(\mathcal{C}_k)$, as desired.

Part $2$ follows from Lemma~\ref{lem: nonadjacent connectors are better}. Part $3$ follows from Lemma~\ref{lem: nonadjacent connectors are better} and part $1$. 
Let $H\in \mathcal{D}_{n,2k}$ be a graph that contains a copy of $CP_{\frac{n}{k}}$ with four or more connectors. Using Lemma~\ref{lem: nonadjacent connectors are better}, we note that $H$ contains the graph $G$ from Lemma~\ref{lem: four connectors are not allowed}. Therefore, $\rho(H)\geq \rho(G)>\rho(\mathcal{C}_k)> \rho_{min}(n, 2k)$. This proves part $4$.

In part 5, $\delta(G_0) = \frac{n}{k}-2$ follows from Theorem~\ref{spec is edge minimizer}. We will prove the upper bound on the maximum degree of $G_0$ by contradiction. Let $G_0\in \mathcal{D}_{n,2k}$ be a spectral minimizer with $\Delta(G_0)\geq \frac{n}{k}+1$. Consider the graph $G$ in Figure~\ref{fig: 3 external edges not allowed}. Each $C_i \cong CP_{\frac{n}{k}}$, $i\in\{1,2,3,4,5\}$. Let $u_1, v_1$ be a non-adjacent pair in $C_1$. Let $a, a'$, $b,b'$, $c, c'$, and $d,d'$ be non-adjacent pairs in $C_2, C_3$, $C_4$, and $C_5$, respectively. Then by part 2 above, we have that $G$ must be a subgraph of $G_0$. Therefore, $\rho(G_0)\geq \rho(G)$. We will now prove that $\rho(G)>\rho(\mathcal{C}_k)$ to get a contradiction. 
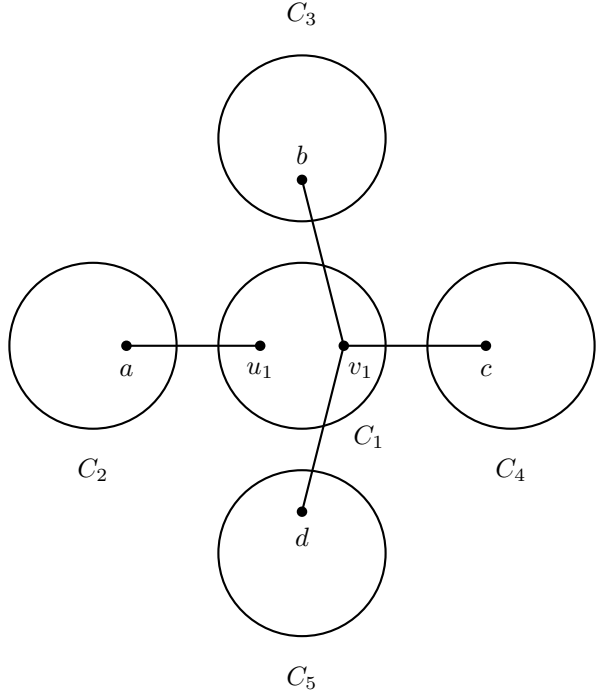
\begin{figure}
\centering
\begin{tikzpicture}[every path/.append style={thick}, scale =1.1]
\draw (-1,0) circle (1cm);
\draw (1.5,0) circle (1cm);
\draw (1.5, 2.5) circle (1cm);
\draw (4, 0) circle (1cm);
\draw (1.5, -2.5) circle (1cm);
     \foreach \x in {(-0.6,0), (1,0), (2,0), (3.7, 0), (1.5, -2), (1.5,2)}
     {\draw[fill] \x circle[radius = 0.05cm];}
     \draw (-0.6,0)--(1,0);
     \draw (2,0)--(3.7,0);
     \draw (1.5,2)--(2,0)--(1.5,-2);

    \node at (-0.6,-0.3){\small$a$};
    \node at (1,-0.3) {\small $u_1$};
    \node at (2.2,-0.3) {\small $v_1$};
    \node at (1.5, 2.3) {\small $b$};
    \node at (3.7,-0.3) {\small $c$};
    \node at (1.5, -2.3) {\small $d$};
    \node at (-1, -1.5) {\small $C_2$};
    \node at (2.3, -1.1) {\small $C_1$};
    \node at (1.5, 4) {\small $C_3$};
    \node at (4, -1.5) {\small $C_4$};
    \node at (1.5, -4) {\small $C_5$};
    \end{tikzpicture}
    \caption{The graph $G$ in Proposition~\ref{prop: structure of spec minimizer}, part 5.}
 \label{fig: 3 external edges not allowed}
\end{figure}
Consider the partition of $V(G)$ into the following five parts: $\{a,b,c,d\}, \{a',b',c',d'\}, \bigcup_{i=2}^5V(C_i) \setminus \{a,a',b,b',c,c', d, d'\}, \{u_1, v_1\}, V(C_1)\setminus\{u_1, v_1\}$. The corresponding quotient matrix is the same as in Equation~\ref{eq: same quotient matrix}.
Therefore, by part 1 above, we obtain that $\rho(Q)>\rho(\mathcal{C}_k)$. By eigenvalue interlacing, we have that $\rho(G)\geq \rho(Q)$. Therefore, $\rho(G)>\rho(\mathcal{C}_k)$, as desired. 
\end{proof}
\begin{remark}
 The final hurdle we face in proving that the aligned CP-path $\mathcal{P}_k$ is the unique spectral minimizer in $\mathcal{D}_{n,2k}$ is proving that if $G$ is a spectral minimizer in $\mathcal{D}_{n,2k}$, then any copy of $CP_{\frac{n}{k}}\in G$ cannot have three connectors. Once we have this, proving that a connector must have degree $\frac{n}{k}-1$ is not difficult.
\end{remark}

\section{Final remarks}
We conclude this paper with the following result on the spectral maximizers among all simple connected graphs of order $n$ and $d$-independence number $i_d$ and two bounds on the dissociation number of a graph.

Let $\mathcal{D}_{n, i_d}$ be the set of all simple connected graphs on $n$ vertices with $d$-independence number $i_d$. Let $G$ be a graph on $n$ vertices. We know that $\rho(G)\leq \Delta(G)$ with equality if and only if $G$ has a $\Delta(G)$-regular component. Now, the following result for spectral maximizers among graphs of given order and $d$-independence number is straightforward from the Perron-Frobenius Theorem. Because for any graph $H$ with $d$-independence number equal to $s$ where $ds$ is even, we can find a $d$-regular graph $G$, such that $H$ is a subgraph of $G\vee K_{n-s}$.
\begin{proposition}
 Let $i_d = s$. If $ds$ is even, then $G\vee K_{n-s}$ has the maximum spectral radius in $\mathcal{D}_{n, s}$, where $G$ is any $d$-regular graph on $s$ vertices.    
\end{proposition}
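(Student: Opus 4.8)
The plan is to first verify that $G \vee K_{n-s}$ genuinely lies in $\mathcal{D}_{n,s}$, and then to establish extremality by a two-step comparison. For membership, observe that a $d$-regular graph on $s$ vertices exists precisely because $ds$ is even (handshake lemma), and this forces $s \ge d+1$. The graph $G \vee K_{n-s}$ is connected, and its $d$-independence number is exactly $s$: the $s$ vertices of the $G$-part induce $G$, which has maximum degree $d$, so $i_d \ge s$; conversely, any $d$-independent set $S'$ that meets the clique part $B = V(K_{n-s})$ in a vertex $b$ forces $d_{(G\vee K_{n-s})[S']}(b) = |S'|-1 \le d$, whence $|S'| \le d+1 \le s$, while a $d$-independent set avoiding $B$ lies in the $G$-part and has size at most $s$. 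Thus $i_d(G\vee K_{n-s}) = s$.

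For the upper bound, I would take an arbitrary $H \in \mathcal{D}_{n,s}$ and let $S$ be a maximum $d$-independent set, so $|S| = s$ and $F := H[S]$ satisfies $\Delta(F) \le d$. Writing $B = V(H) \setminus S$, every edge of $H$ lies inside $S$ (and hence in $F$), crosses between $S$ and $B$, or lies inside $B$; all cross edges and all edges inside $B$ are present in the join $F \vee K_{n-s}$. Hence $H$ is a spanning subgraph of $F \vee K_{n-s}$, and by monotonicity of the spectral radius under edge addition, $\rho(H) \le \rho(F \vee K_{n-s})$.

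It remains to compare $\rho(F \vee K_{n-s})$ with $\rho(G \vee K_{n-s})$, and this is where I expect the real difficulty. The tempting move---to extend $F$ to a $d$-regular graph on $S$ so that $F \vee K_{n-s}$ becomes a subgraph of $G \vee K_{n-s}$---fails in general: a graph with $\Delta(F) \le d$ need not embed in any $d$-regular graph (for instance, a triangle together with an isolated vertex has maximum degree $2$ but is not a subgraph of $C_4$, the only $2$-regular graph on four vertices). Instead I would argue directly with a Collatz--Wielandt / test-vector inequality. Let $y$ be the positive Perron eigenvector of $G \vee K_{n-s}$; since $G$ and $K_{n-s}$ are both regular, $y$ is constant on the $G$-part (value $\alpha$) and on $B$ (value $\beta$), and $\rho := \rho(G \vee K_{n-s})$ satisfies $d\alpha + (n-s)\beta = \rho\alpha$ and $s\alpha + (n-s-1)\beta = \rho\beta$. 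Using $y$ as a test vector for $M = A(F \vee K_{n-s})$, a vertex $v$ of the $F$-part gives $(My)_v = d_F(v)\,\alpha + (n-s)\beta \le d\alpha + (n-s)\beta = \rho\,y_v$ because $d_F(v) \le d$, while for $b \in B$ one gets $(My)_b = \rho\,y_b$ exactly. Thus $My \le \rho\, y$ coordinatewise with $y > 0$, and the standard nonnegative-matrix bound then yields $\rho(F \vee K_{n-s}) = \rho(M) \le \rho = \rho(G \vee K_{n-s})$.

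Chaining the two inequalities gives $\rho(H) \le \rho(G \vee K_{n-s})$ for every $H \in \mathcal{D}_{n,s}$, which is the claim; note that $\rho(G \vee K_{n-s})$ is independent of the particular $d$-regular graph $G$, being the largest root of the $2 \times 2$ quotient matrix $\left[\begin{smallmatrix} d & n-s \\ s & n-s-1 \end{smallmatrix}\right]$ of the equitable partition into the two parts. The main obstacle, as flagged, is the third step: recognizing that the naive ``extend-to-regular'' subgraph argument is insufficient and replacing it with the eigenvector-comparison inequality; the membership and subgraph steps are then routine.
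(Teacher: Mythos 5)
Your proof is correct, and it takes a genuinely different route from the paper's --- indeed, the paper's own proof is precisely the ``tempting move'' you rejected. The paper argues that for any $H$ with $i_d(H)=s$ and $ds$ even one can find a $d$-regular graph $G$ on $s$ vertices with $H \subseteq G \vee K_{n-s}$, and then concludes by Perron--Frobenius monotonicity of the spectral radius under edge addition. As your counterexample shows, that embedding claim is false in general: since both graphs have $n$ vertices, any embedding of $H$ into $G \vee K_{n-s}$ is a bijection whose preimage of $V(G)$ is a maximum $d$-independent set $T$ of $H$ with $H[T]$ embedding into $G$; but with $d=2$, $s=4$, the induced graph $K_3 \cup K_1$ embeds in no $2$-regular graph on four vertices. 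The failure is not hypothetical: take $H$ to be $K_4$ with a pendant vertex, so $n=5$, $i_2(H)=4$, and the unique maximum $2$-independent set induces $K_3\cup K_1$; then $H \not\subseteq C_4 \vee K_1 = W_4$ because $W_4$ is $K_4$-free, yet one checks $\rho(H) < 3.1 < 1+\sqrt{5} = \rho(W_4)$, so the proposition itself survives even though the paper's embedding step breaks. Your two-step replacement --- first $\rho(H) \le \rho(F \vee K_{n-s})$ with $F = H[S]$ via the spanning-subgraph comparison, then $\rho(F \vee K_{n-s}) \le \rho(G \vee K_{n-s})$ via the Collatz--Wielandt inequality applied to the Perron vector of $G \vee K_{n-s}$, which is constant on the two parts by regularity --- closes exactly this gap, and as a byproduct identifies $\rho(G \vee K_{n-s})$ with the Perron root of the quotient matrix $\left[\begin{smallmatrix} d & n-s \\ s & n-s-1 \end{smallmatrix}\right]$, which is what justifies the ``any $d$-regular $G$'' phrasing in the statement (the paper instead gestures at this via $\rho \le \Delta$). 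In short: the paper's route buys a one-line proof that does not actually go through in general; yours buys a slightly longer argument that is valid for every $H \in \mathcal{D}_{n,s}$.
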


We now prove an upper bound on the value of the dissociation number $\tau(G)$ of a regular graph $G$  in terms of its eigenvalues, similar to the Hoffman ratio bound for the independence number. Additionally, we derive a lower bound on $\tau(G)$ for any connected graph $G$ using the probabilistic method inspired by a result of Caro and Wei (see Theorem 1 on p.91 in \cite{nogaalonspencer}). 
\begin{proposition}\label{prop: hoffman type bound}
    Let $G = (V, E)$ be a $k$-regular graph on $n$ vertices. Then 
    $$\tau(G) \leq \frac{n(1-\lambda_{min}(G))}{k-\lambda_{min}(G)}.$$
\end{proposition}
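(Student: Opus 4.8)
The plan is to mimic Hoffman's ratio bound for the independence number, with one change that accounts for the weaker structure of a dissociation set. Let $S$ be a maximum dissociation set, so that $s := \tau(G) = |S|$, and let $x = \mathbf{1}_S \in \mathbb{R}^n$ be its characteristic vector. The single observation that separates this argument from the classical one is the following: since $G[S]$ has maximum degree at most $1$, it is a disjoint union of edges and isolated vertices, so $e(G[S]) \le \lfloor s/2 \rfloor$. Consequently, writing $A = A(G)$, the quadratic form satisfies $x^T A x = 2\,e(G[S]) \le s$. (In the independence-number case this quantity is exactly $0$, which is precisely where the extra ``$+1$'' in the numerator will come from.)

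Next I would run the usual spectral decomposition. Since $G$ is $k$-regular, $\mathbf{1}$ is an eigenvector of $A$ with eigenvalue $k = \rho(G)$, and it is the top eigenvector. Decompose $x = \tfrac{s}{n}\mathbf{1} + z$ with $z \perp \mathbf{1}$; then $\|x\|^2 = s$ forces $\|z\|^2 = s - \tfrac{s^2}{n}$. Using $A\mathbf{1} = k\mathbf{1}$, the symmetry of $A$, and $\mathbf{1}^T z = 0$, the cross terms vanish and
$$x^T A x = \frac{k s^2}{n} + z^T A z \ge \frac{k s^2}{n} + \lambda_{min}\!\left(s - \frac{s^2}{n}\right),$$
where the inequality uses $z^T A z \ge \lambda_{min}\|z\|^2$, valid for every vector $z$.

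Finally I would combine the two estimates. The upper bound $x^T A x \le s$ together with the lower bound above gives $s \ge \tfrac{ks^2}{n} + \lambda_{min}\bigl(s - \tfrac{s^2}{n}\bigr)$; dividing by $s > 0$ and collecting terms yields $\tfrac{s}{n}(k - \lambda_{min}) \le 1 - \lambda_{min}$. Since any $k$-regular graph with $k \ge 1$ has $\lambda_{min} < 0 < k$, we have $k - \lambda_{min} > 0$, and rearranging produces exactly $\tau(G) = s \le \frac{n(1 - \lambda_{min})}{k - \lambda_{min}}$.

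There is no genuine obstacle in this argument: the entire content is in the first paragraph's bound $x^T A x \le s$, and the remainder is the standard Hoffman computation. The only point that warrants a sentence of care is verifying $k - \lambda_{min} > 0$, so that the final division preserves the direction of the inequality; this holds for every nonempty $k$-regular graph.
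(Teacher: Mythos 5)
Your proof is correct, and it reaches the bound by a genuinely different route than the paper. The paper's argument is the quotient-matrix form of Hoffman's bound: it partitions $V$ into $S$ and $V\setminus S$, computes the $2\times 2$ quotient matrix $Q$ of this partition (whose eigenvalues are $k$ and $\frac{k\tau^2-2mn}{\tau(\tau-n)}$, where $m=e(G[S])$), invokes eigenvalue interlacing to get $\lambda_2(Q)\ge\lambda_{min}(G)$, and only then uses the combinatorial fact $2m\le\tau$ to relax the resulting inequality. You instead run the classical Rayleigh-quotient form: decompose the characteristic vector $x=\mathbf{1}_S$ as $\frac{s}{n}\mathbf{1}+z$ with $z\perp\mathbf{1}$, bound $z^TAz\ge\lambda_{min}\|z\|^2$, and feed in the same combinatorial fact in the equivalent guise $x^TAx=2e(G[S])\le s$. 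So both proofs rest on exactly one shared insight --- a dissociation set induces at most $|S|/2$ edges, which is where the numerator $1-\lambda_{min}$ (rather than Hoffman's $-\lambda_{min}$) comes from --- but the spectral machinery differs. Your version is more self-contained: it needs only the variational characterization of $\lambda_{min}$, avoids quotient matrices and interlacing, and remains valid in the degenerate case $\tau=n$ (e.g.\ $k=1$, a perfect matching), where the paper's quotient matrix is undefined because of the $n-\tau$ denominator. You also make explicit the check $k-\lambda_{min}>0$ (via $\operatorname{tr}A=0$ and $k\ge 1$), which the paper leaves implicit; note that both proofs tacitly assume $k\ge 1$, since for $k=0$ the stated bound is meaningless.
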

\begin{proof}
    Let $S$ be a largest dissociation set in $G$. Suppose $G[S]$ has $m$ edges and hence $\tau-2m$ isolated vertices. Consider the partition of $V$ into the following two sets:  $S$ and $V\setminus S$. The quotient matrix of this partition is
    $$Q = \begin{bmatrix}
        \frac{2m}{\tau}& k - \frac{2m}{\tau}\\
        \frac{k\tau - 2m}{n-\tau}& k - \frac{k\tau -2m}{n-\tau}
    \end{bmatrix}.$$ The eigenvalues of $Q$ are $\lambda_1(Q) = k$ and $\lambda_2(Q) = \frac{k\tau^2 -2mn}{\tau(\tau-n)}$.  By eigenvalue interlacing, we have $\lambda_2(Q) \geq \lambda_{min}(G).$ 
    Denote by  $\lambda = \lambda_{min}(G)$. Therefore,
    \begin{align*}
    0&\geq   \tau^2(k-\lambda) + \tau n \lambda -2mn\\
&\geq \tau^2(k-\lambda) + \tau(\lambda - 1)n
\end{align*}
since $\tau - m > \frac{\tau}{2}.$  Now a simple calculation gives the result.  
\end{proof}
See also \cite{acyclicnumber} for an upper bound on the \emph{acyclic number} which is the maximum number of vertices that induce a forest in a graph. 

\begin{proposition}\label{probab lower bound}
Let $G= (V,E)$ be a connected graph. Its dissociation number $$\tau(G)\geq 2 \left\lceil \sum_{e = \{u,v\}\in E}\frac{1}{(d(u) + d(v))\Delta(G) -1}\right\rceil .$$     
\end{proposition}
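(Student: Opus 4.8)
The plan is to reduce the problem to the Caro--Wei lower bound for the independence number, applied not to $G$ itself but to an auxiliary \emph{conflict graph} $H$ built on the edge set $E$. I would declare two edges $e=\{u,v\}$ and $f=\{x,y\}$ adjacent in $H$ precisely when they cannot coexist in an induced matching of $G$, that is, when they share an endpoint or when some endpoint of $f$ is adjacent in $G$ to an endpoint of $e$. An independent set $I$ in $H$ is then exactly an induced matching of $G$: the set $\bigcup_{e\in I}e$ induces in $G$ only the edges of $I$ and nothing else, so $G\bigl[\bigcup_{e\in I}e\bigr]$ has maximum degree $1$ and $\bigcup_{e\in I}e$ is a dissociation set of size $2|I|$. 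Hence $\tau(G)\ge 2\,\alpha(H)$, and the whole problem is to lower bound $\alpha(H)$.

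The heart of the argument is bounding the degree of a vertex $e=\{u,v\}$ of $H$. A conflicting edge is one with at least one endpoint in $W:=N(u)\cup N(v)$, and note $u,v\in W$ since $uv\in E$. Counting via the identity $\sum_{w\in W}d(w)=m_W+e(W)$, where $m_W$ is the number of edges meeting $W$ and $e(W)$ the number of edges lying inside $W$, and using $\sum_{w\in W}d(w)\le |W|\,\Delta(G)\le (d(u)+d(v))\Delta(G)$ together with $e(W)\ge 1$ (the edge $e$ itself lies inside $W$), I obtain $m_W\le (d(u)+d(v))\Delta(G)-1$, and therefore $\deg_H(e)=m_W-1\le (d(u)+d(v))\Delta(G)-2$. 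The factor $-2$ here, rather than a naive $-1$, is exactly what produces the denominator $(d(u)+d(v))\Delta(G)-1$ in the statement, so getting this double-count right is the delicate step; I expect it to be the main obstacle, since everything downstream is standard.

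With the degree bound in hand I would invoke the Caro--Wei theorem in its random-permutation form (Theorem~1, p.~91 of \cite{nogaalonspencer}) applied to $H$: ordering $E$ uniformly at random and keeping each edge that precedes all of its $H$-neighbours yields an independent set of $H$, and the selected edge $e$ survives with probability $\tfrac{1}{\deg_H(e)+1}$, so by linearity of expectation some independent set has size at least $\sum_{e}\tfrac{1}{\deg_H(e)+1}$. Using $\deg_H(e)+1\le (d(u)+d(v))\Delta(G)-1$ gives
$$\alpha(H)\ \ge\ \sum_{e=\{u,v\}\in E}\frac{1}{(d(u)+d(v))\Delta(G)-1}.$$
Finally, since $\alpha(H)$ is an integer it is at least the ceiling of the right-hand side, and combining this with $\tau(G)\ge 2\alpha(H)$ yields the claimed bound. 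Connectivity of $G$ is not essential to the argument and would only be used to guarantee that each denominator $(d(u)+d(v))\Delta(G)-1$ is positive, which in any case follows since $uv\in E$ forces $d(u)+d(v)\ge 2$ and $\Delta(G)\ge 1$.
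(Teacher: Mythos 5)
Your proof is correct and takes essentially the same approach as the paper: your conflict graph $H$ is exactly the ``distance at most $2$ in the line graph'' relation the paper uses to define $D_2(e)$, and applying Caro--Wei to $H$ via a uniformly random ordering of $E$ is precisely the paper's random-permutation argument. If anything, your double-counting identity $\sum_{w\in W}d(w)=m_W+e(W)$ with $e(W)\ge 1$ makes explicit the bound $|D_2(e)\cup\{e\}|\le (d(u)+d(v))\Delta(G)-1$ behind the probability estimate that the paper only asserts.
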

\begin{proof}
 For $e\in E$, let $D_i(e)$ be the subset of edges in $E$ that are at a distance (in the line graph of $G$) at most $i$ from $e.$ Pick a total ordering $<$ of $E$ uniformly at random. Define
 $$I = \{e\in E \; : \; e<e' \text{ for all } e'\in D_2(e)\}.$$ Let $X_e$ be the indicator random variable for $e\in I$ and $X = \sum_{e\in E} X_e = |I|.$  Probability $\mathbf{P}[e = \{u,v\}\in I] \geq \frac{1}{(d(u) + d(v))\Delta(G)-1}$. Hence
 $$\mathbf{E}[X] \geq \sum_{e = \{u,v\}\in E}\frac{1}{(d(u) + d(v))\Delta(G) -1} . $$ Therefore, there exists a total ordering for which $$|I|\geq \left\lceil\sum_{e = \{u,v\}\in E}\frac{1}{(d(u) + d(v))\Delta(G) -1}\right\rceil.$$ Note that the subgraph induced by the vertices incident to edges in $I$ has maximum degree 1, therefore $\tau(G)\geq 2|I|$.
\end{proof}

\section*{Acknowledgments}
The authors are grateful to Sebastian Cioab\u{a}, David Conlon, and Michael Tait for their helpful comments and suggestions. 
\bibliographystyle{abbrv}
\bibliography{bibliography}
\end{document}